\title{On topological groupoids that represent theories}
\author{J. L. Wrigley\footnote{School of Mathematical Sciences, Queen Mary University London, London, E1 4NS UK, email: \texttt{j.wrigley@qmul.ac.uk}}}
\definecolor{Forest}{rgb}{0.13, 0.55, 0.13}
\definecolor{Darkcyan}{rgb}{0.0, 0.55, 0.55}
\tikzset{curve/.style={settings={#1},to path={(\tikztostart)
			.. controls ($(\tikztostart)!\pv{pos}!(\tikztotarget)!\pv{height}!270:(\tikztotarget)$)
			and ($(\tikztostart)!1-\pv{pos}!(\tikztotarget)!\pv{height}!270:(\tikztotarget)$)
			.. (\tikztotarget)\tikztonodes}},
	settings/.code={\tikzset{quiver/.cd,#1}
		\def\pv##1{\pgfkeysvalueof{/tikz/quiver/##1}}},
	quiver/.cd,pos/.initial=0.35,height/.initial=0}
\tikzset{tail reversed/.code={\pgfsetarrowsstart{tikzcd to}}}
\tikzset{2tail/.code={\pgfsetarrowsstart{Implies[reversed]}}}
\tikzset{2tail reversed/.code={\pgfsetarrowsstart{Implies}}}
\tikzset{no body/.style={/tikz/dash pattern=on 0 off 1mm}}
\tikzset{
	labl/.style={anchor=south, rotate=90, inner sep=.5mm}
}
\setlist{listparindent = \parindent, parsep=0pt,}
\theoremstyle{plain}
\newtheorem{thm}{Theorem}[section]
\newtheorem{lem}[thm]{Lemma}
\newtheorem{coro}[thm]{Corollary}
\newtheorem{prop}[thm]{Proposition}
\theoremstyle{definition}
\newtheorem{df}[thm]{Definition}
\newtheorem{dfs}[thm]{Definitions}
\newtheorem{rem}[thm]{Remark}
\newtheorem{rems}[thm]{Remarks}
\newtheorem{ex}[thm]{Example}
\newtheorem{exs}[thm]{Examples}
\renewcommand{\phi}{\varphi}
\newcommand{\op}{^{\rm op}}
\newcommand{\id}{{\rm{id}}}
\newcommand{\1}{{\bf 1}}
\newcommand{\Hom}{{\rm{Hom}}}
\newcommand{\lrset}[2]{\left\{\,{#1}\,\middle\vert\,{#2}\,\right\}}
\newcommand{\lrangle}[1]{\left \langle {#1} \right \rangle}
\newcommand{\2}{{\bf 2}}
\newcommand{\N}{\mathbb{N}}
\newcommand{\Q}{\mathbb{Q}}
\newcommand{\Z}{\mathbb{Z}}
\newcommand{\R}{\mathbb{R}}
\newcommand{\theory}{\mathbb{T}}
\newcommand{\topos}{\mathcal{E}}
\newcommand{\ftopos}{\mathcal{F}}
\newcommand{\sets}{{\bf Sets}}
\newcommand{\Top}{{\bf Top}}
\newcommand{\Topos}{{\bf Topos}}
\newcommand{\Sh}{{\bf Sh}}
\newcommand{\opens}{\mathcal{O}}
\newcommand{\Sub}{{\rm{Sub}}}
\newcommand{\Geom}{{\bf Geom}}
\newcommand{\rmcan}{\text{\rm can}}
\newcommand{\B}{{\bf B}}
\newcommand{\BG}{{\bf B}G}
\newcommand{\G}{\mathbb{G}}
\newcommand{\X}{\mathbb{X}}
\newcommand{\Xdd}{\mathbb{X}_\delta^\delta}
\newcommand{\Xtd}{\mathbb{X}_{\tau_0}^\delta}
\newcommand{\Xtt}{\mathbb{X}_{\tau_0}^{\tau_1}}
\newcommand{\Xlog}{\mathbb{X}_{\taulogo}^{\tauloga}}
\newcommand{\FG}{\mathcal{FG}}
\newcommand{\BM}{\mathcal{BM}}
\newcommand{\Aut}{{\rm Aut}}
\newcommand{\Iso}{{\bf Iso}}
\newcommand{\form}[2]{\{\,\vec{{#2}} : {#1}\,\}}
\newcommand{\class}[1]{\llbracket\, {#1} \,\rrbracket}
\newcommand{\classv}[2]{\llbracket\, \vec{{#2}} : {{#1}} \,\rrbracket}
\newcommand{\lrclass}[1]{\left\llbracket\, {#1} \,\right\rrbracket}
\newcommand{\Tmodels}[1]{\mathbb{T}\text{-}{\bf Mod}({{#1}})}
\newcommand{\Dtheory}{\mathbb{D}}
\newcommand{\AItheory}{\mathbb{AI}}
\newcommand{\AIform}{\mathcal{AI}}
\newcommand{\Etheory}{\mathbb{E}}
\newcommand{\Th}{{\rm Th}}
\newcommand{\DLO}{{\mathbb{L}_\infty}}
\newcommand{\p}{{\bf p}}
\newcommand{\taulogo}{{\tau\text{-}{\rm log}_0}}
\newcommand{\tauloga}{{\tau\text{-}{\rm log}_1}}
\renewcommand{\log}{{\rm log}}
\newcommand{\Index}{\mathfrak{K}}
\newcommand{\Indexset}{\mathfrak{K}}
\newcommand{\tp}{{\rm tp}}
\newcommand{\paronto}{%
	\rightharpoondown\mathrel{\mspace{-15mu}}\rightharpoondown
}
\newcommand{\Frac}{{\rm Frac}}
\begin{document}
	
	\maketitle
	
		\begin{abstract}
		Grothendieck toposes, and by extension, logical theories, can be represented by topological structures.  Butz and Moerdijk showed that every topos with enough points can be represented as the topos of sheaves on an open topological groupoid.  This paper tackles a follow-up question: we characterise, in model-theoretic terms, which open topological groupoids can represent the classifying topos of a theory.  Intuitively, this characterises which groupoids of models contain enough information to reconstruct the theory.  Our treatment subsumes many of the previous approaches found in the literature, such as that of Awodey, Forssell, Butz and Moerdijk.
	\end{abstract}
	

\tableofcontents


\section{Introduction}

\paragraph{Representing theories with groupoids.}

For some theories, there exist models whose automorphism structure is sufficiently rich that the data of the theory ought to be recoverable from the group of automorphisms.  For example, the rationals with the usual ordering is a conservative, ultrahomogeneous model for the theory of dense linear orders without endpoints.  Only complete and atomic theories can be represented by the automorphism group of a single model.  To represent an arbitrary theory, we must make the leap to groupoids of models.

Grothendieck topos theory (hereafter, just topos theory) is a natural language in which to formalise the problem of representing theories using topological categories since the discipline sits at the intersection of categorical logic and topological algebra.

On the topological side, a topos can be associated to topological/algebraic data, e.g.\ each topological group yields a topos of continuous actions on discrete sets.  In \cite{BM}, Butz and Moerdijk show that a topos has enough points if and only if it is the topos of sheaves on an open topological groupoid (in practice, most toposes encountered in the wild have enough points).  In this way, a topos can be viewed as a `generalised space' in which points possess non-trivial isomorphisms.

Meanwhile, on the logical side, every theory of geometric first-order logic possesses a \emph{classifying topos}, a topos that embodies the ``essence'' of the theory.  If a topos can be likened to a `generalised space' in the above sense, then the classifying topos of a theory is the `generalised space' whose points are models of the theory.

We will say that a theory is \emph{represented} by a (open) topological groupoid if the topos of sheaves on the groupoid classifies the theory.  The result of Butz and Moerdijk expresses that a geometric theory is represented by an open topological groupoid if and only if the set-based models are a conservative class of models (see \cite[\S 1.4.5]{TST}).  

\paragraph{Our Contribution.}

This paper concerns itself with the next obvious question: which open topological groupoids represent a given theory?  Informally, this question is equivalent to asking: which groupoids of models have enough information to recover the theory?  The main result of this paper (\cref{maintheorem}) is a model-theoretic characterisation of which groupoids of models can be endowed with topologies to yield representing open topological groupoids.

We will observe that, just as with Caramello's \emph{topological Galois theory} of \cite{caramellogalois}, it does not suffice to have only a \emph{conservative} set of models for the theory, but instead model-theoretic conditions must be imposed on the groupoid as well.  The condition that must be imposed is a natural notion of the groupoid \emph{eliminating parameters}.  In addition to yielding novel applications, our characterisation subsumes the previous examples of representing groupoids found in the literature.


\paragraph{Review of the previous literature.}

The previous literature on using groupoids to represent toposes can be divided as to whether \emph{localic} (i.e.\ pointfree) or topological groupoids are used.  Both approaches have markedly different flavours.

By Caramello's {topological Galois theory} \cite{caramellogalois}, a complete and atomic theory is represented by the topological group of automorphisms of a model if and only if that model is ultrahomogeneous, i.e.\ any finite partial isomorphism of the model can be extended to a total automorphism.  This is in contrast to the \emph{localic Galois theory} developed by Joyal, Tierney and Dubuc (\cite[Theorem VIII.3.1]{JT} and \cite{dubuc}),  wherein it is shown that the classifying topos of a complete and atomic theory is equivalent to the topos of continuous actions by the {localic} automorphism group of \emph{any} model.  We take this as evidence that while the disciplines of categorical logic and classical model theory, in which ultrahomogeneous models play an important role, are normally viewed as entirely distinct, this is not the case when we prioritise a topological rather than localic viewpoint.

In a similar fashion, Blass and \v{S}\v{c}edrov characterise Boolean coherent toposes in \cite{blassscedrov} as those toposes that can be expressed as the coproduct of toposes of continuous actions by \emph{coherent} topological groups.  Moreover, these groups can be taken as the automorphism groups of ultrahomogeneous models for the theory classified by the topos.

Joyal and Tierney famously showed in \cite{JT} that every topos is the topos of sheaves on some open localic groupoid.  The parallel topological result was given in \cite{BM}, where Butz and Moerdijk show that every topos with enough points is represented by an open topological groupoid.  When a topos with enough points is known to classify a theory $\theory$, Forssell's thesis and subsequent papers with Awodey \cite{awodeyforssell,forssell,forssellphd} give an explicitly logical description of a representing open topological groupoid.  Namely, their results express that $\theory$ is recoverable from the groupoid of all $\Indexset$-\emph{indexed models}, for a sufficiently large cardinal $\Indexset$ (we shall call such groupoids \emph{Forssell groupoids}).

In summary, the relevant literature on localic and topological representing groupoids for toposes can be divided as follows.
{
	\renewcommand{\arraystretch}{1.2}
	\begin{center}
		\begin{tabular}{@{}l@{\hskip7pt}l@{\hskip7pt}l @{}}
			\toprule
			&Localic representation &Topological representation \\
			\midrule
			\emph{Connected atomic toposes} & localic groups \cite{JT,dubuc}, & topological groups \cite{caramellogalois}, \\
			\emph{Boolean coherent toposes} & & coproduct of coherent topological groups \cite{blassscedrov}, \\
			\emph{All toposes (with enough points)} & open localic groupoids \cite{JT}, & open topological groupoids \cite{BM,awodeyforssell,forssell,forssellphd}. \\
			\bottomrule			
		\end{tabular}
	\end{center}
}
Our characterisation of the representing open topological groupoids recovers the previous results for the right-hand column of the above table.

\paragraph{Overview.}  The paper proceeds as follows.

\begin{enumerate}[label = (\arabic*)]
	
	\item Some preliminaries are included in \cref{sec:prelims}.  In \cref{subsec:prelim:classtopos}, we recall the notion of a classifying topos for a geometric theory.  Throughout this paper, we will use a different site, rather than the standard \emph{syntactic site}, to present the classifying topos of a theory as this will ease our computations.  A description of this `alternative' site is given in \cref{subsec:prelim:classtopos}.  The definition of the topos of sheaves on a topological groupoid is recalled in in \cref{subsec:eqsheaves}.
	
	\item \cref{sec:thetheorem} is divided into three parts.  In the former two, \cref{subsec:index} and \cref{subsec:definables}, we define indexings of sets of models and an extension of the notion of definable subset of a model to groupoids of models.  This will allow us to express the statement of our classification theorem for representing open topological groupoids, completed in \cref{sec:thmstatement}.  \cref{sec:thmstatement} also includes a brief discussion of the relation between our result and the descent theory of Joyal and Tierney \cite{JT}.
	
	\item The proof of our classification result is contained in \crefrange{sec:bigpicture}{sec:mainproof}.  In \cref{sec:bigpicture}, we prove some properties about how toposes of sheaves on topological groupoids behave when we vary the topologies with which the space of arrows and the space of objects are endowed.  The method we will follow in \crefrange{sec:facttopobjs}{sec:mainproof} is laid out in \cref{subsec:method}.  Given a groupoid $\X = (X_1 \rightrightarrows X_0)$, the possible topologies with which $X_0$, respectively $X_1$, can be endowed that lead to a representing open topological groupoid are characterised in \cref{sec:facttopobjs}, resp., \cref{sec:facttopars}.  The final steps of the proof of the classification result are completed in \cref{sec:mainproof}.

	\item In \cref{sec:applications}, we present some applications of our characterisation, including a demonstration that the other logical treatments of representing open topological groupoids considered in the literature can be recovered via our classification result. 
	
	\begin{enumerate}[label = (\alph*)]
		\item In \cref{sec:atomic}, we recover the principle result of \cite{caramellogalois}, that is an atomic theory is represented by the automorphism group of a single model if and only if that model is conservative and ultrahomogeneous, as well as a characterisation of Boolean toposes with enough points reminiscent of \cite{blassscedrov}. 
		
		\item \cref{subsec:decidable} concerns the representing groupoids of decidable theories.
		
		\item In \cref{sec:etalecomplete} we show that every open representing model groupoid is Morita-equivalent to its \emph{étale completion}. 
		
		\item The representation results of Awodey, Butz, Forssell and Moerdijk \cite{awodeyforssell,BM,forssell}, including the case of Forssell groupoids, are recovered in \cref{sec:forssell}. 
		
		\item Having described representing groupoids for a given theory, in \cref{sec:theory_of_groupoid} we answer the converse problem by adapting the methods of \cite[Theorem 4.14]{hodges} to describe a theory represented by a given groupoid of indexed structures.
		
	\end{enumerate}
	
	\item Finally, in \cref{sec:algint} we give a worked example in further detail of a representing groupoid for the theory of algebraic integers.
	
\end{enumerate}

\section{Preliminaries}\label{sec:prelims}

We will recall some preliminary material on classifying toposes and toposes of equivariant sheaves necessary for understanding the results of this paper.  First, however, we mention a few conventions:
\begin{enumerate}
	\item As aforementioned, by topos we mean a Grothendieck topos;
	\item We will follow some standard abuses of notation in model theory and write $\vec{n} \in N$ to mean that $\vec{n}$ is a finite tuple of elements from a set (or model) $N$.
\end{enumerate}

\subsection{Classifying toposes}\label{subsec:prelim:classtopos}

A \emph{classifying topos} of a first order theory $\theory$ is a topos $\topos_\theory$ for which there is an equivalence
\begin{equation}\label{eq:classtopos}
	\Tmodels{\ftopos} \simeq \Geom(\ftopos,\topos_\theory)
\end{equation}
natural in $\ftopos$, where $\Tmodels{\ftopos}$ is the category of models of $\theory$ in the topos $\ftopos$ (see \cite[\S D1]{elephant} for how to construct models in an arbitrary topos) and $\Geom(\ftopos,\topos_\theory)$ is the category of \emph{geometric morphisms} from $\ftopos$ to $\topos_\theory$.  By the universal property \cref{eq:classtopos}, the classifying topos of a theory is unique up to equivalence.  Not only does every \emph{geometric theory} have a classifying topos (see \cite[Theorem 2.1.10]{TST}), but every topos is the classifying topos for some geometric theory (see \cite[Theorem 2.1.11]{TST}).  Intuitively, a classifying topos contains the essential information about a theory.  In particular, by the equivalence \cref{eq:classtopos}, a classifying topos contains the semantic data of the models of $\theory$, not only in $\sets$, but in any topos.

\paragraph{Geometric logic.}  The reader is directed to \cite[\S D1]{elephant} for a full account of geometric logic.  In short, it is the fragment of infinitary first order logic whose permissible symbols are finite conjunction $\land$ (the empty conjunct yielding `truth' $\top$), infinitary disjunction $\bigvee$ (the empty disjunct yielding `falsity' $\bot$), an equality predicate $=$ and existential quantification $\exists$.  A \emph{geometric theory} consists of a set of axioms of the form $\phi \vdash_{\vec{x}} \psi$, which has the interpretation ``for all $\vec{x}$, if $\phi(\vec{x})$ then $\psi(\vec{x})$''.

A theory of geometric logic is allowed to have function symbols in its underlying signature.  In contrast, many of the constructions we will see in this paper (e.g., in \cref{df:theory-of-indexed-groupoid}) use an exclusively relational signature.  This can be explained since, by replacing a function symbol with a relation symbol expressing its graph (\cite[Lemma D1.4.9]{elephant}), every geometric theory is equivalent to a theory without function symbols.  Similarly, while we will allow geometric theories to be multi-sorted, by combining the disparate sorts of a multi-sorted theory into one and keeping track of the original sorts by new relations (\cite[Lemma D1.4.13]{elephant}), every theory is equivalent to a single-sorted one, and so without loss of generality one is free to assume that theories are single-sorted.

\begin{rem}\label{rem:morleyization}
	Geometric logic subsumes \emph{coherent logic} -- the fragment of first order logic involving finite conjunction, finite disjunction, equality and existential quantification.  Moreover, via a process dubbed \emph{Morleyization}, for every theory $\theory$ of classical first order logic, i.e.\ coherent logic with negation $\neg$ and universal quantification $\forall$ satisfying the law of excluded middle, {etc.}, there is a coherent theory $\theory'$ such that the models of $\theory$ and $\theory'$ (at least in $\sets$) coincide.  The idea is to introduce, for each coherent formula $\phi$ in context $\vec{x}$, a new relation symbol $(\neg \phi)(\vec{x})$ and axioms expressing that $(\neg \phi)$ is the classical negation of $\phi$.  See \cite[Lemma D1.5.13]{elephant} for the full construction.  Thus, theories of classical first order logic have classifying toposes.
\end{rem}

\paragraph{The universal model.}

The classifying topos $\topos_\theory$ has an internal model of $\theory$ known as the \emph{universal model} $U_\theory$ (which corresponds to the morphism $\id_{\topos_\theory}$ under the equivalence \cref{eq:classtopos}; see \cite[\S 2.1.2]{TST}).  The universal model is an entirely syntactic construction -- indeed, it is witnessed by the inclusion of the \emph{syntactic category} of $\theory$ as a full subcategory of $\topos_\theory$.  For a geometric formula $\phi$ in context $\vec{x}$, we will use $\form{\phi}{x}$ to denote the object of $\topos_\theory$ that interprets $\phi$ in the universal model.  Given a geometric morphism $f \colon \ftopos \to \topos_\theory$, the interpretation of $\phi$ in the model $M$ corresponding to $f$ under the equivalence \cref{eq:classtopos} is obtained by applying the inverse image part $f^\ast$ to the object $\form{\phi}{x}$.

Two properties of the objects $\form{\phi}{x}$ will prove important in our narrative:
\begin{enumerate}
	\item First, the objects $\form{\phi}{x}$ form a \emph{generating set} for $\topos_\theory$ (this follows from the standard construction of the classifying topos, e.g.\ \cite[\S D3.1]{elephant}).
	\item The second, which follows from \cite[Lemma D1.4.4(iv)]{elephant}, will become important enough later to warrant a separate lemma.
\end{enumerate}

\begin{lem}\label{lem:subobjofform}
	The subobjects of $\form{\phi}{x}$ in $\topos_\theory$ are precisely the objects $\form{\psi}{x}$, where $\psi$ is a geometric formula for which $\theory$ proves $\psi \vdash_{\vec{x}} \phi$.
\end{lem}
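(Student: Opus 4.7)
The plan is to transfer the question from the classifying topos back to the syntactic category $\cat_\theory$ of $\theory$, where the analogous statement about subobjects is precisely Lemma D1.4.4(iv) cited in the excerpt. Concretely, I would work with the standard presentation of $\topos_\theory$ as $\Sh(\cat_\theory, J_\theory)$, the sheaves for the geometric coverage on the syntactic category, under which $\form{\phi}{x}$ is the sheafified representable on the syntactic object $\{\vec{x}.\phi\}$. Since the geometric coverage is subcanonical, the Yoneda embedding $\cat_\theory \hookrightarrow \topos_\theory$ is fully faithful, and in particular monomorphisms are preserved and reflected between the two categories; the only non-trivial step is to show every subobject $S \hookrightarrow \form{\phi}{x}$ in the topos arises, up to isomorphism, from one already present in $\cat_\theory$.

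For that step, I would use that the representables $\form{\psi}{y}$ jointly generate $\topos_\theory$, so $S$ is the union of the images of all morphisms $\form{\psi}{y} \to \form{\phi}{x}$ that factor through $S$. By full faithfulness, each such morphism corresponds to a $\theory$-provably functional formula $\theta(\vec{y},\vec{x})$, and its image is itself a formula subobject $\form{\exists \vec{y}\,\theta}{x}$, available inside $\cat_\theory$ thanks to the existential quantifier of geometric logic. Collecting these, the large join indexing over all such morphisms collapses to a small join of $\theory$-provable equivalence classes of formulas, and the infinitary disjunction of geometric logic makes this join again a formula subobject. Hence $S \cong \form{\psi}{x}$ for some geometric $\psi$. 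Combining this identification with Lemma D1.4.4(iv) of the Elephant, which parameterises subobjects of $\form{\phi}{x}$ in $\cat_\theory$ by $\theory$-provable equivalence classes of formulas $\psi$ satisfying $\theory \vdash \psi \vdash_{\vec{x}} \phi$, ordered by provable implication, yields the lemma.

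The main obstacle lies in that intermediate step: verifying that $\cat_\theory$ is closed under precisely the two categorical operations --- images and set-indexed joins of subobjects --- needed to realise every topos-theoretic subobject syntactically. This is really a structural design feature of the syntactic category for geometric logic, and is where the restriction to the geometric fragment (rather than classical first-order logic in general) becomes essential. Once one has these closure properties, the reduction to the syntactic category is essentially formal, and the lemma becomes a repackaging of D1.4.4(iv).
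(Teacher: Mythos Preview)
Your proposal is correct and aligned with the paper's approach: the paper simply records the lemma as following from \cite[Lemma D1.4.4(iv)]{elephant} without any further argument, whereas you spell out the bridging step---that the Yoneda embedding of the geometric syntactic category into $\topos_\theory$ induces an isomorphism on subobject lattices of representables, via closure under images and set-indexed joins---needed to transfer D1.4.4(iv) from $\cat_\theory$ to $\topos_\theory$. Both routes rest on the same cited lemma; you have simply made explicit what the paper leaves implicit.
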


\subsection{Equivariant sheaves}\label{subsec:eqsheaves}

A \emph{topological groupoid} is a groupoid internal to the category $\Top$ of topological spaces and continuous maps.  That is, a topological groupoid $\X$ consists of a diagram
\begin{equation}\label{eq:groupoid}\begin{tikzcd}
		X_1 \times_{X_0} X_1 \ar[shift left = 4]{r}{\pi_{2}} \ar{r}{m} \ar[shift right = 4]{r}{\pi_{1}} & X_1  \ar[loop, distance=2em, in=305, out=235, "i"'] \ar[shift left = 4]{r}{t} \ar[shift right = 4]{r}{s} & \ar{l}[']{e} X_0 ,
\end{tikzcd}\end{equation}
in $\Top$ such that the equations
\[s \circ e = t \circ e = \id_{X_0}, \ \ s \circ m = s \circ \pi_1, \ \ t \circ m = t \circ \pi_2,\]
\[m \circ (\id_{X_1} \times_{X_0} m) = m \circ (m \times_{X_0} \id_{X_1}), \ \  m \circ (\id_{X_1} \times_{X_0} e \circ t)= \id_{X_1} = m \circ (e \circ s \times_{X_0} \id_{X_1}) ,\]
expressing that \cref{eq:groupoid} is an internal category (where $s$ and $t$ send an arrow to, respectively, its source and target, $e$ sends an object to its identity morphism and $m$ sends a pair of composable arrows to their composite), and
\[s \circ i = t, \ \ t \circ i = s, \ \ i \circ i = \id_{X_1} ,\]
\[ m \circ (\id_{X_1} \times_{X_0} i) = e \circ s, \ \  m \circ (i \times_{X_0} \id_{X_1}) = e \circ t, \]
expressing that $i$ sends an arrow to its inverse, are all satisfied.  Equivalently, a topological groupoid is a (small) groupoid in the usual sense and a choice of topologies for the set of objects and the set of arrows such that all the groupoid structure morphisms (i.e.\ the displayed arrows in \cref{eq:groupoid}) are continuous with respect to these topologies.  

Since we will mostly be concerned with the `source' and `target' maps $s$ and $t$, we will often write $\X = (X_1 \rightrightarrows X_0)$ to denote the topological groupoid.  As $s \circ i = t$, and $i$ is a homeomorphism, $s$ is open if and only if $t$ is open.  This allows us to simplify the definition of an open topological groupoid.

\begin{df}
	A topological groupoid is said to be \emph{open} if either $s$ or $t$ are open maps (and hence both are).  
\end{df}

Open topological groupoids will play an important role in our approach.  Of particular importance for us is the fact that in an open topological groupoid, the \emph{orbit} $t(s^{-1}(U))$ of an open $U \subseteq X_0$, i.e.\ the closure of $U$ under the action of $X_1$, is still open.  The restriction to open topological groupoids is not prohibitive since every topological groupoid is \emph{Morita-equivalent} to an open one, in the sense that for every topological groupoid $\X$, its \emph{topos of equivariant sheaves} (defined below) is equivalent to the sheaves on an open topological groupoid.  This follows from \cite{BM}.

\begin{df}\label{df:toposofsheaves}
	Given a topological groupoid $\X$, we can construct the topos of \emph{equivariant sheaves} $\Sh(\X)$.  This is a construction that generalises simultaneously both toposes of sheaves on spaces and toposes of continuous group actions.
	\begin{enumerate}
		\item\label{df:enum:sheaves} Objects of $\Sh(\X)$, called $\X$-\emph{sheaves}, consist of triples $(Y,q,\beta)$ where $q \colon Y \to X_0$ is a local homeomorphism and $\beta$ is a continuous $X_1$-\emph{action} on $Y$, by which mean a continuous map
		\[
		\beta \colon Y \times_{X_0} X_1 \to Y,
		\]
		where $Y \times_{X_0} X_1$ is the pullback
		\[
		\begin{tikzcd}
			Y \times_{X_0} X_1 \ar{r} \ar{d} & Y \ar{d}{q} \\
			X_1 \ar{r}{s} & X_0,
		\end{tikzcd}
		\]
		satisfying the equations
		\[\beta(\beta(y,g),h) = \beta(y,m(g,h)), \ q(\beta(y,g)) = t(g), \ \beta(y,e(q(y))) = y.\]
		\item\label{df:enum:morphofsheaves} An arrow $(Y,q,\beta) \xrightarrow{f} (Y',q',\beta')$ of $\Sh(\X)$ consists of a continuous map $f \colon Y \to Y'$ such that the diagram
		\begin{equation}\label{eq:morphofsheaves}
			\begin{tikzcd}[column sep=tiny]
				Y \times_{X_0} X_1 \ar{rr}{f \times_{X_0} \id_{X_1}} \ar{d}[']{\beta} && Y' \times_{X_0} X_1  \ar{d}{\beta'} \\
				Y \ar{rr}{f} \ar{rd}[']{q} && Y' \ar{ld}{q'} \\
				& X_0  &
			\end{tikzcd}
		\end{equation}
		commutes.
	\end{enumerate}
\end{df}

\begin{exs}
	That the topos of equivariant sheaves on a topological groupoid simultaneously generalises the topos of sheaves on a space and the topos of continuous actions by a topological group is clear by the following examples.
	\begin{enumerate}
		\item Let $X$ be a topological space.  The diagram
		\[
		\begin{tikzcd}
			X \ar{r}{\id_X} \ar[shift left = 4]{r}{\id_X} \ar[shift right = 4]{r}{\id_X} & X \arrow[loop,  "\id_X "', distance=2em, in=305, out=235] \ar[shift left = 4]{r}{\id_X} \ar[shift right = 4]{r}{\id_X} & X \ar{l}[']{\id_X}
		\end{tikzcd}
		\]
		is a topological groupoid whose topos of sheaves is the familiar topos of sheaves on a space $\Sh(X)$.
		\item Let $(G,e,m,i)$ be a topological group.  The diagram
		\[\begin{tikzcd}
			G \times G \ar[shift left = 4]{r}{\pi_1} \ar{r}{m} \ar[shift right = 4]{r}{\pi_2} & G  \arrow[loop,  "i "', distance=2em, in=305, out=235] \ar[shift left = 3]{r}{!} \ar[shift right = 3]{r}{!} & 1 \ar{l}[']{e}
		\end{tikzcd}\]
		is a topological groupoid whose topos of sheaves is the topos $\BG$ of continuous group actions by $G$ on discrete sets (see \S III.9 \cite{SGL}).
	\end{enumerate}
\end{exs}

\paragraph{Subsheaves.}  The subobjects of an $\X$-sheaf $(Y,q,\beta)$ are easy to describe.  A morphism $(Y,q,\beta) \xrightarrow{f} (Y',q',\beta')$ of $\X$-sheaves is a monomorphism in $\Sh(\X)$ if and only if $ Y \xrightarrow{f} Y'$ is a monomorphism in $\Sh(X_0)$, i.e.\ $f$ is the inclusion of an open subspace.  The requirement that $f$ makes the diagram \cref{eq:morphofsheaves} forces the subspace $Y \subseteq Y'$ to also be \emph{stable}\footnote{Note that we are following the terminology of \cite{awodeyforssell,forssellphd,forssell}, where the term `stable' was used to reduce confusion with closed subspaces.} under the $X_1$-action $\beta$ on $Y'$, by which we mean that if $y \in Y \subseteq Y'$ then $\beta(y,\alpha) \in Y \subseteq Y'$ too, for any suitable $\alpha \in X_1$.  Thus, we arrive at the following.

\begin{lem}\label{lem:subobjofsheaves}
	The subobjects of a $\X$-sheaf $(Y,q,\beta)$ can be identified with the $\X$-sheaves $(U,q \circ i , \beta|_{U})$, where $i \colon U \hookrightarrow Y$ is the inclusion of an open and stable subspace. 
\end{lem}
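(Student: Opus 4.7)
The plan is to reduce the identification of subobjects to the standard description of subsheaves in $\Sh(X_0)$, by transporting everything along the forgetful functor $\Sh(\X) \to \Sh(X_0)$ sending $(Y,q,\beta) \mapsto (Y,q)$. My first step would be to observe that a morphism of $\X$-sheaves is a monomorphism in $\Sh(\X)$ if and only if its underlying continuous map is a monomorphism in $\Sh(X_0)$. This follows because monomorphisms in either topos are detected pointwise on stalks, and the forgetful functor is faithful and preserves the underlying étale space; concretely, a morphism of the form \cref{eq:morphofsheaves} is monic precisely when the map $Y \to Y'$ is injective. By the familiar description of subobjects in $\Sh(X_0)$, monomorphisms into $(Y,q)$ are, up to isomorphism, the inclusions $i \colon U \hookrightarrow Y$ of open subspaces of the étale space $Y$, equipped with the restricted local homeomorphism $q \circ i$.

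With this in hand, the proof reduces to checking that the residual equivariant structure on an open subspace $U \subseteq Y$ amounts precisely to the condition of stability. In the forward direction, given an open and stable subspace $U \subseteq Y$, the fibre product $U \times_{X_0} X_1$ is an open subspace of $Y \times_{X_0} X_1$, so $\beta$ restricts continuously to a map $\beta|_{U} \colon U \times_{X_0} X_1 \to Y$; stability ensures this map factors through $U$, and the action axioms for $\beta|_{U}$ are inherited from those for $\beta$. The inclusion $i$ then lifts to a morphism $(U, q \circ i, \beta|_{U}) \to (Y,q,\beta)$ of $\X$-sheaves that is monic by the first step.

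Conversely, given a monomorphism $f \colon (Y', q', \beta') \hookrightarrow (Y, q, \beta)$ in $\Sh(\X)$, the underlying map $Y' \to Y$ is a monic local homeomorphism, hence identifiable up to isomorphism with the inclusion $i$ of an open subspace $U \subseteq Y$. The commutativity of \cref{eq:morphofsheaves} then forces $\beta'$ to coincide with the restriction of $\beta$ along $U \times_{X_0} X_1 \hookrightarrow Y \times_{X_0} X_1$; in particular, this restriction must land in $U$, which is precisely stability. I do not expect any substantial obstacle: the content of the lemma lies almost entirely in the direct translation between the commutative square \cref{eq:morphofsheaves} and the stability of $U$ under the $X_1$-action, while the reduction to monomorphisms in $\Sh(X_0)$ is a standard topos-theoretic fact about forgetful functors arising from internal groupoids.
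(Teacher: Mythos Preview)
Your proposal is correct and follows essentially the same approach as the paper: reduce the detection of monomorphisms to $\Sh(X_0)$ via the forgetful functor, identify subobjects there with inclusions of open subspaces, and then observe that the commutativity of \cref{eq:morphofsheaves} is exactly the stability condition. The paper's argument is terser but structurally identical.
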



\section{The classification theorem}\label{sec:thetheorem}

In order to state the classification theorem for representing open topological groupoids, we must first develop our terminology for \emph{indexed structures} and \emph{definables}.  The former notion is jointly inspired the signature of the \emph{diagram} of a model (see \cite[Definition 2.3.2]{marker}) and the \emph{enumerated models} and \emph{indexed models} studied in, respectively, \cite{BM} and \cite{awodeyforssell,forssell,forssellphd} (the connection with these works will be fully illustrated in \cref{sec:forssell}).  Indexed structures capture the intuition of constructing models from a list of parameter names.  Meanwhile, the latter notion of definables extends the standard notion of definable subset found in model theory (see \cite[\S 3]{hodges}).  The two properties that characterise representing open topological groupoids, `conservativity' and `elimination of parameters', are introduced in \cref{sec:thmstatement}, in which we also state the classification theorem.  `Conservativity' will be a familiar notion to the logician, but we believe `elimination of parameters' to be a novel addition.

\subsection{Indexed structures}\label{subsec:index}


Let $\Sigma$ be a signature.  Given a $\Sigma$-structure $M$, a standard model-theoretic construction is to consider $M$ as a structure over the expanded signature $\Sigma \cup \{\,c_n \mid n \in M \,\}$, the signature of the \emph{diagram} of $M$, where we have added a constant symbol for each element of $M$.  This allows us to express via formulae over the expanded signature those subsets of $M$ that are defined in relation finite tuples of elements of $M$.  We present a modification of this construction below.

\begin{df}
	Let $\Sigma$ be a signature with $N$ sorts, and let $\Index = (\Index_k)_{k \in N}$ be an $N$-tuple of sets.  We denote by $\Sigma \cup \Index$ the expanded signature obtained by adding, for each $m \in \Index_k$, a constant symbol $c_m$ (of the $k$th sort) to $\Sigma$.  We will call these added constant symbols \emph{parameters}.  
	
	A $\Index$-\emph{indexing} of a $\Sigma$-structure $M$ consists of:
	\begin{enumerate}
		\item a \emph{sub-expansion} of $\Sigma \cup \Index$, that is the signature $\Sigma \cup \lrset{c_m}{m \in \Index'_k, \, k \in N}$ for a tuple $\Index' = (\Index'_k)_{k \in N}$ of subsets $\Index'_k \subseteq \Index_k$,
		\item and an interpretation of $M$ as a $\Sigma \cup \lrset{c_m}{m \in \Index'_k, \, k \in N}$-structure such that, for each $k \in N$, $M$ satisfies the sequent
		\[
		\top \vdash_x \bigvee_{m \in \Index'_k} x = c_m.
		\]
	\end{enumerate} 
	In other words, we have interpreted in $M$ some of the parameters introduced by $\Index$ in such a way that every element $n \in M$ is the interpretation of a parameter.
	
\end{df}

Our definition of $\Index$-indexed structure is equivalent to the homonymous notion found in \cite{awodeyforssell,forssell,forssellphd} that a $\Sigma$-structure is $\Index$-indexed if the interpretation of the $k$th sort $M^{A_k}$ is presented as a subquotient of $\Index_k$, i.e.\:
\begin{enumerate}
	\item there is a partial surjection $\Index_k \paronto M^{A_k}$,
	\item or equivalently, there is a subset $S \subseteq \Index_k$ and an equivalence relation $\sim$ on $S$ such that $M^{A_k} = S/\sim$.
\end{enumerate}
We will abuse notation and write $m$ for both the parameter as an element of $\Index$ and its interpretation in an $\Index$-indexed structure $M$.  We denote a choice of $\Index$-indexing of $M$ by $\Index \paronto M$.  

Now let $\X = (X_1 \rightrightarrows X_0)$ be a (small) groupoid with a functor $p \colon \X \to \Tmodels{\sets}$.  Because our applications exclusively concern the case where the groupoid $\X$ is already a groupoid of set-based models for $\theory$ and $p$ is simply the inclusion functor, we will notationally conflate an object of $\X$ with its image under $p$, i.e.\ a set-based model of $\theory$, and similarly we will conflate an arrow of $\X$ with its image, i.e.\ an isomorphism of $\theory$-models.  Additionally, when $p$ is the inclusion of a subgroupoid, we will omit it from our notation.

\begin{df}
	Let $\theory$ be a geometric theory over a signature $\Sigma$, and let $\X = (X_1 \rightrightarrows X_0)$ be a groupoid with a functor $p \colon \X \to \Tmodels{\sets}$.  An $\Index$-\emph{indexing} of the pair $(\X,p)$ is a $\Index$-indexing $\Index \paronto M$ for each model $M $ in the image of $p$.  We will write $\Index \paronto \X$ for such a choice of indexing.
	
	Note that an element $n \in M$ can be the interpretation of multiple parameters, and also that models are allowed to share parameters, i.e.\ for $M, N \in X_0$, the same parameter $m \in \Index$ can be interpreted in both $M$ and $N$.

\end{df}

\begin{exs}\label{ex:indexedstructures}
	Every such functor $p \colon \X \to \Tmodels{\sets}$ admits multiple indexings by various sets of parameters.
	\begin{enumerate}
		\item\label{ex:indexedstructures:enum:trivial} Every model is trivially indexed by its own elements, and so $\X$ can be indexed by the set $\bigcup_{M \in X_0} M$.
		\item Since $\X$ is a small groupoid, there is a sufficiently large cardinal $\Indexset$ such that every $M \in X_0$ is of cardinality at most $\Indexset$, and thus there is a choice of partial surjection $\Indexset \paronto M$ for each $M \in X_0$.
	\end{enumerate}
\end{exs}


\subsection{Definables}\label{subsec:definables}

In this subsection, we generalise definable subsets to groupoids of models, and show that this generalisation naturally carries the structure of an equivariant sheaf over the groupoid of models in question (when endowed with the discrete topologies).  Let $M$ be a model of a theory $\theory$.  We use the notation $\classv{\varphi}{x}_M$ to denote the subset \emph{defined} by the formula in context $\form{\varphi}{x}$, i.e.\ $\classv{\varphi}{x}_M = \{\,\vec{n} \in M \mid M \vDash \varphi(\vec{n})\,\}$.

\begin{dfs}Let $p \colon \X \to \Tmodels{\sets}$ be a functor whose domain is a (small) groupoid.
	\begin{enumerate}
		\item The \emph{definable} of a formula in context $\form{\varphi}{x}$, which we denote by $\classv{\varphi}{x}_\X$, is the coproduct $\coprod_{M \in X_0} \classv{\varphi}{x}_{M}$.  Elements of $\classv{\varphi}{x}_\X$ we denote as pairs $\langle \vec{n} , M\rangle$, where $\vec{n} \in M$ and $M \in X_0$;
		
		\item Suppose each model in the image of $p$ is indexed by a set of parameters $\Index$.  For a tuple $\vec{m}$ of parameters of $\Index$, we denote by $\class{\vec{x},\vec{m}:\psi}_\X$ the \emph{definable with parameters}
		\[\class{\vec{x},\vec{m}:\psi}_\X =  \{\,\langle\vec{n},  M \rangle \mid \vec{m} \in M \in X_0 \text{ and } M \vDash \psi(\vec{n},\vec{m})\,\}.\]
		If $\vec{m} = \emptyset$, we say that the definable $\class{\vec{x},\vec{m}:\psi}_\X$ is \emph{definable without parameters}.  Note that our models may share parameters.
	\end{enumerate}
	
\end{dfs}

A definable $\classv{\varphi}{x}_\X$ possesses an evident projection $\pi_{\classv{\varphi}{x}}$ to $X_0$ which sends the pair $\langle \vec{n},M\rangle$ to the model $M \in X_0$, as visually represented in the bundle diagram:
\[
\begin{tikzpicture}
	\draw (-.5,0) node {$M$};
	\draw (1,0) node {$M'$};
	\draw (2.25,0) node {$\dots$};
	\draw (3.5,0) node {$N$};
	\draw[color = gray, fill = none, rounded corners, very thick, dashed] (-1,-0.5) rectangle (4,0.5);
	\draw[color = gray] (4.5,-0.5) node {$X_0$.};

	\draw (2.25,3.5) node {$\dots$};
	
	\draw (-0.5,5) node {$\vec{a}$};
	\draw (-0.5,4) node {$\vec{a}'$};
	\draw (-0.5,3) node {$\vdots$};
	\draw (-0.5,2) node {$\vec{a}''$};
	\draw[color = Maroon, fill = none, rounded corners, very thick, dashed] (-1,1.5) rectangle (0,5.5);
	\draw[color = Maroon] (-1.5,5.8) node {$\classv{\varphi}{x}_M$};
	
	\draw (1,5) node {$\vec{b}$};
	\draw (1,3.5) node {$\vdots$};
	\draw (1,2) node {$\vec{b}'$};
	\draw[color = Plum, fill = none, rounded corners, very thick, dashed] (0.5,1.5) rectangle (1.5,5.5);
	\draw[color = Plum] (2,5.8) node {$\classv{\varphi}{x}_{M'}$};
	
	\draw (3.5,4.5) node {$\vec{c}$};
	\draw (3.5,3.5) node {$\vdots$};
	\draw (3.5,2.5) node {$\vec{c}\,'$};
	\draw[color = Orchid, fill = none, rounded corners, very thick, dashed] (3,2) rectangle (4,5);
	\draw[color = Orchid] (4.5,5.3) node {$\classv{\varphi}{x}_N$};
	
	\draw[->] (-.5,1.25) -- (-.5,.75);
	\draw[->] (1,1.25) -- (1,.75);
	\draw[->] (3.5,1.75) -- (3.5,.75);
	
\end{tikzpicture}
\]

\paragraph{Actions on definables.}
The bundle $\pi_{\classv{\varphi}{x}} \colon \classv{\varphi}{x}_\X \to X_0$ admits a canonical \emph{lifting} of the $X_1$-action on $X_0$.  By this we mean there is a map $\theta_{\classv{\varphi}{x}} \colon \classv{\varphi}{x}_\X \times_{X_0} X_1 \to \classv{\varphi}{x}_\X$, where $\classv{\varphi}{x}_\X \times_{X_0} X_1$ is the pullback
\[
\begin{tikzcd}
	\classv{\varphi}{x}_\X \times_{X_0} X_1 \ar{r} \ar{d} & \classv{\varphi}{x}_\X \ar{d}{\pi_{\classv{\varphi}{x}}} \\
	X_1 \ar{r}{s} & X_0,
\end{tikzcd}
\]
satisfying the equations
\[
\theta_{\classv{\varphi}{x}}(\theta_{\classv{\varphi}{x}}(\langle \vec{m}, M \rangle,\alpha),\gamma) = \theta_{\classv{\varphi}{x}}(\vec{m},\gamma \circ \alpha), \ \ \theta_{\classv{\varphi}{x}}(\langle \vec{m}, M \rangle, \id_M) = \langle \vec{m} , M \rangle.
\]
This defines $(\classv{\varphi}{x}_\X, \pi_{\classv{\varphi}{x}},\theta_{\classv{\varphi}{x}})$ as a sheaf on the groupoid $\X = (X_1 \rightrightarrows X_0)$, in the sense of \cref{df:toposofsheaves}\cref{df:enum:sheaves}, when $X_1$ and $X_0$ are both endowed with the discrete topology.

For each $\theory$-model isomorphism $M \xrightarrow{\alpha} M' \in X_1$, the map $\theta_{\classv{\varphi}{x}}$ acts by sending the pair $(\langle \vec{a} , M \rangle,\alpha)$, where $\langle \vec{a} , M \rangle \in \classv{\varphi}{x}_\X$, to $\langle \alpha(\vec{a}), M' \rangle$.  Since $\alpha$ is a morphism of $\theory$-models, $M' \vDash \varphi(\alpha(\vec{a}))$ and so $\theta_{\classv{\varphi}{x}_\X}$ is well-defined.  The action $\theta_{\classv{\varphi}{x}}$ can be visualised as acting on the bundle $\pi_{\classv{\varphi}{x}} \colon \classv{\varphi}{x}_\X \to X_0$ as in the diagram
\[
\begin{tikzpicture}
	\node (M) at (-.5,0) {$M$};
	\node (M') at (1,0) {$M'$};
	\draw (2.25,0) node {$\dots$};
	\draw (3.5,0) node {$N$};
	\draw[color = gray, fill = none, rounded corners, very thick, dashed] (-1,-0.5) rectangle (4,0.5);
	\draw[color = gray] (4.5,-0.5) node {$X_0$.};

	\draw (2.25,3.5) node {$\dots$};
	
	\node (a) at (-0.5,5) {$\vec{a}$};
	\draw (-0.5,4) node {$\vec{a}'$};
	\draw (-0.5,3) node {$\vdots$};
	\draw (-0.5,2) node {$\vec{a}''$};
	\draw[color = Maroon, fill = none, rounded corners, very thick, dashed] (-1,1.5) rectangle (0,5.5);
	\draw[color = Maroon] (-1.5,5.8) node {$\classv{\varphi}{x}_M$};
	
	\draw (1,5) node {$\vec{b}$};
	\node[color = Emerald] (aimg) at (1,4) {$\alpha(\vec{a})$};
	\draw (1,3) node {$\vdots$};
	\draw (1,2) node {$\vec{b}'$};
	\draw[color = Plum, fill = none, rounded corners, very thick, dashed] (0.5,1.5) rectangle (1.5,5.5);
	\draw[color = Plum] (2,5.8) node {$\classv{\varphi}{x}_{M'}$};
	
	\draw (3.5,4.5) node {$\vec{c}$};
	\draw (3.5,3.5) node {$\vdots$};
	\draw (3.5,2.5) node {$\vec{c}\,'$};
	\draw[color = Orchid, fill = none, rounded corners, very thick, dashed] (3,2) rectangle (4,5);
	\draw[color = Orchid] (4.5,5.3) node {$\classv{\varphi}{x}_N$};
	
	\draw[->] (-.5,1.25) -- (-.5,.75);
	\draw[->] (1,1.25) -- (1,.75);
	\draw[->] (3.5,1.75) -- (3.5,.75);

	\draw[color = Emerald, ->] (M) to[bend right] node[midway, above] {$\alpha$} (M');
	\draw[color = White, line width = 1mm] (a) to[bend left] (aimg);
	\draw[color = Emerald, ->] (a) to[bend left]  (aimg);
\end{tikzpicture}
\]

\paragraph{Orbits of definables.}  Note that every definable with parameters forms a subset of a definable without parameters, e.g.\ $\class{\vec{x},\vec{m}:\varphi}_\X \subseteq \classv{\exists \vec{y} \, \varphi}{x}_\X, \classv{\top}{x}_\X$.  The $X_1$-action $\theta_{\classv{\top}{x}}$ does not restrict to an $X_1$-action on $\class{\vec{x},\vec{m}:\varphi}_\X$ since the subset may not be stable under the action, i.e.\ if $\langle \vec{a}, M\rangle \in \class{\vec{x},\vec{m}:\varphi}$ and there is an isomorphism of $\theory$-models $M \xrightarrow{\alpha} M' \in X_1$, it does not follow that $M' \vDash \varphi(\alpha(\vec{a}),\vec{m})$.  This is because $\alpha$ is an isomorphism only of the $\Sigma$-structure and need not preserve the interpretation of any of the parameters we have added.  Indeed, $\vec{m}$ might not even be interpreted in $M'$.  Of course, if $\vec{m} = \emptyset$, then $\classv{\varphi}{x}_\X$ is stable under the $X_1$-action $\theta_{\classv{\top}{x}}$, the restricted action being precisely $\theta_{\classv{\varphi}{x}}$.

\begin{df}\label{df:orbit}
	The \emph{orbit} $\overline{\class{\vec{x},\vec{m}:\psi}}_\X$ of a definable with parameters $\class{\vec{x},\vec{m}:\psi}_\X$ is the closure under the isomorphisms contained in $X_1$, explicitly:
	\[\overline{\class{\vec{x},\vec{m}:\psi}}_\X = \lrset{\langle \vec{n}, N \rangle}{ \exists \, M \xrightarrow{\alpha} N \in X_1 \text{ such that } M \vDash \psi(\alpha^{-1}(\vec{n}),\vec{m}) }.\]
	Equivalently, $\overline{\class{\vec{x},\vec{m}:\psi}}_\X$ is the smallest stable subset of $\classv{\top}{x}_\X$ containing $\class{\vec{x},\vec{m}:\psi}_\X$.
\end{df}


\subsection{Statement of the classification theorem}\label{sec:thmstatement}

Having developed our terminology for the definables of a groupoid of $\theory$-models indexed by $\Index$, we can now state the classification theorem.

\begin{dfs}\label{df:conservandelimpara}
	Let $\theory$ be a geometric theory over a signature $\Sigma$ and let $\X = (X_1 \rightrightarrows X_0)$ be a (small) groupoid with a functor $p \colon \X \to \Tmodels{\sets}$ where each model in the image of $p$ is indexed by a set of parameters $\Index$.
	\begin{enumerate}
		\item We say that the pair $(\X,p)$ is \emph{conservative} if the set of models in the image of $p$ is a conservative, i.e.\ for each pair of geometric formulae over $\Sigma$ in context $\vec{x}$, if $\classv{\varphi}{x}_\X \subseteq \classv{\psi}{x}_\X$, then $\theory$ proves the sequent $\varphi \vdash_{\vec{x}} \psi$.
		\item We say that $(\X,p)$ \emph{eliminates parameters} if the orbit of each definable with parameters $\class{\vec{x},\vec{m}:\psi}_\X$ is definable without parameters, i.e.\ there exists a geometric formula in context $\form{\varphi}{x}$ such that
		\[\overline{\class{\vec{x},\vec{m}:\psi}}_\X = \classv{\varphi}{x}_\X.\]
	\end{enumerate}
\end{dfs}

\begin{rems}\label{rem:df:conservandelimpara}
	Let $\theory$ be a geometric theory over a signature $\Sigma$, let $\X$ be a groupoid and let $p \colon \X \to \Tmodels{\sets}$ be a functor with a choice of indexing $\Index \paronto \X$ by a set of parameters $\Index$.
	\begin{enumerate}
		\item Recall that a topos $\topos$ \emph{has enough points} if the points $ \sets \to \topos$	are jointly conservative -- that is the inverse image functors are jointly faithful. If $\topos$ classifies a theory $\theory$, then $\topos$ has enough points if and only if the set-based models of $\theory$ are conservative.  By \cite[Corollary 7.17]{topos}, if $\topos$ has enough points, a \emph{small} set of conservative models can always be chosen.  We will therefore mix our terminology and say that a theory has enough points to mean there exists a conservative set of set-based models.
		
		\item Our terminology `elimination of parameters' is justified since, in the special case of field theory, it is demonstrably the groupoid removing the parameters from the defining polynomial of a solution set.  As a simple example from traditional Galois theory, the orbit of the definable with parameters $\class{x = i}_{\Q(i)} = \{\,i\,\} $ under the automorphisms of $\Q(i)$ that fix $\Q$ is definable without parameters, namely
		\[
		\overline{\class{x = i}}_{\Aut(\Q(i))} = \{\,i,-i\,\} = \class{x^2 = -1}_{\Aut(\Q(i))}.
		\]
		Our terminology `elimination of parameters' is also inspired by the parallel model-theoretic study of Galois theory and the theory of elimination of imaginaries of Bruno Poizat \cite[\S 2]{poizat} that arises therein.

		\item\label{enum:rem:df:conservandelimpara:sufficetochecktuple} To check that the pair $(\X,p)$ eliminates parameters, it suffices to show that, for each tuple of parameters $\vec{m}$, there exists a formula in context $\form{\chi}{y}$ without parameters such that
		\[
		\overline{\class{\vec{y}= \vec{m}}}_\X = \classv{\chi}{y}_\X
		\]
		since, for an arbitrary definable with parameters $\class{\vec{x},\vec{m}:\psi}_\X$, we have that
		\[
		\overline{\class{\vec{x},\vec{m}:\psi}}_\X = \overline{\class{\vec{x}:\exists \vec{y} \, \psi[\vec{y}/\vec{m}] \land \vec{y} = \vec{m}}}_\X = \classv{\exists \vec{y} \, \psi[\vec{y}/\vec{m}] \land \chi}{x}_\X.
		\]

		\item\label{enum:rem:df:elimpara:elimpara_and_sigma} We note that the condition that $(\X,p)$ eliminates parameters does not depend on the theory $\theory$ but only on the signature $\Sigma$, in the sense that if $(\X,p)$ eliminates parameters, then $(\X,q)$ also eliminates parameters, where $q$ is the composite $\X \xrightarrow{p} \Tmodels{\sets} \subseteq \mathbb{E}_\Sigma\text{-}{\bf Mod}(\sets)$ and $\mathbb{E}_\Sigma$ denotes the empty theory over the signature $\Sigma$.  We will revisit this observation in \cref{rem:subtoposiselimpara}.  Conservativity, by contrast, evidently depends on the theory $\theory$.
		
		\item\label{enum:rem:df:elimpara:maxformula} Suppose that the pair $(\X,p)$ is conservative.  Given a definable formula with parameters $\class{\vec{x},\vec{m}:\psi}_\X$, we may wonder what restrictions can be made on the formula $\varphi$ for which
		\[\overline{\class{\vec{x},\vec{m}:\psi}}_\X = \classv{\varphi}{x}_\X.\]
		We note that, since $\class{\vec{x},\vec{m}:\psi}_\X \subseteq \overline{\class{\vec{x},\vec{m}:\psi}}_\X \subseteq \classv{\exists \vec{y} \ \psi}{x}_\X$ and $(\X,p)$ is conservative, $\theory$ must prove the sequent $\varphi \vdash_{\vec{x}} \exists \vec{y} \ \psi$.  Similarly, if every instance of the parameters $\vec{m}$ in a model $M \in X_0$ satisfies a formula $\chi$, then $\theory$ also proves the sequent $\varphi \vdash_{\vec{x}} \exists \vec{y} \ \psi \land \chi$.  In particular, we have that
		\[
		\varphi \vdash_{\vec{x}} \exists \vec{y} \ \psi \land \bigwedge_{ m_i = m_j} y_i = y_j,
		\]
		where the conjunction $\bigwedge_{ m_i = m_j} y_i = y_j$ ranges over the elements of the tuple $\vec{m}$ that are equal.  Conversely, there is in general no `lower bound' on the formula $\varphi$.
		
		\item Conservative sets of models are commonly considered outside of geometric logic.  Some readers may wonder how our notion of elimination of parameters applies to {classical} first order logic, since theories of classical logic also have classifying toposes (see \cref{rem:morleyization}).  Considering the process of Morleyization by which an equivalent geometric theory $\theory''$ (over a signature $\Sigma''$) is produced from a classical one $\theory'$ (over a signature $\Sigma'$), we conclude that a geometric formula over $\Sigma''$ is $\theory''$-provably equivalent to a join of coherent formulae over $\Sigma'$ and their formal negations.  Thus, an indexed groupoid of models for a classical theory eliminates parameters if, for every definable with parameters $\class{\vec{x},\vec{m}:\psi}_\X$ -- where $\psi$ is a classical formula, there is a set $\lrset{\varphi_i}{i \in I}$ of classical formulae in context $\vec{x}$ such that
		\[
		\overline{\class{\vec{x},\vec{m}:\psi}}_\X = \bigcup_{i \in I} \classv{\varphi_i}{x}_\X.
		\]
	\end{enumerate}
	
\end{rems}


\begin{thm}[Classification of representing open topological groupoids for a geometric theory]\label{maintheorem}
	Let $\theory$ be a geometric theory, and let $\X = (X_1 \rightrightarrows X_0)$ be a (small) groupoid.  The following are equivalent:
	\begin{enumerate}
		\item There exist topologies on $X_0$ and $X_1$ making $\X$ an open topological groupoid such that there is an equivalence of toposes $\Sh(\X) \simeq \topos_\theory$; 
		
		\item there is a functor $p \colon \X \to \Tmodels{\sets}$ such that the pair $(\X,p)$ is conservative and each model in the image of $p$ admits an indexing by a set of parameters $\Index$ for which $(\X,p)$ eliminates parameters.
	\end{enumerate}
\end{thm}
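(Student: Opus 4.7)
The plan is to prove the two implications separately, with the overwhelming majority of the work in (2) $\Rightarrow$ (1).

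For the direction (2) $\Rightarrow$ (1), given the conservative indexed groupoid $(\X,p)$ with $\Index \paronto \X$, I would topologize $X_0$ by taking as sub-basic opens the sets $U_{\psi,\vec{m}} = \{\,M \in X_0 \mid \vec{m} \text{ is interpreted in } M \text{ and } M \vDash \psi(\vec{m})\,\}$, as $\psi$ ranges over geometric formulae over $\Sigma$ and $\vec{m}$ over tuples in $\Index$, and topologize $X_1$ analogously by sub-basic opens consisting of isomorphisms that carry one indexed tuple to another while the source and target models satisfy prescribed geometric formulae at those tuples. The first task is to check all structure maps of \cref{eq:groupoid} are continuous; continuity of $s$ and $t$ is immediate, $m$ and $i$ translate into algebraic identities on parameters, and $e$ is continuous because the identity map lies in every appropriate sub-basic open. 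The openness of $s$ is exactly where elimination of parameters enters: the image $s(V)$ of a basic open of $X_1$ is (the intersection of) an orbit of a definable with parameters, which by hypothesis equals some $\classv{\varphi}{x}_\X$ and so is open in $X_0$.

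Next I would build the candidate equivalence $\Sh(\X) \simeq \topos_\theory$ via the alternative site of \cref{subsec:prelim:classtopos}. The functor $\form{\varphi}{x} \mapsto \classv{\varphi}{x}_\X$, with the topology inherited from $X_0$ through the projection $\pi_{\classv{\varphi}{x}}$ and the canonical action $\theta_{\classv{\varphi}{x}}$, is the key assignment; I would check it preserves the covers of the alternative site and thereby extends to a geometric morphism $f\colon \Sh(\X) \to \topos_\theory$ whose inverse image sends generators to the prescribed sheaves. Conservativity of $(\X,p)$ then ensures, via \cref{lem:subobjofform} and \cref{lem:subobjofsheaves}, that $f^\ast$ is fully faithful on the generating subobjects $\form{\varphi}{x} \hookrightarrow \form{\top}{x}$ (any inclusion of definables in $\Sh(\X)$ reflects to a derivable sequent). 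Essential surjectivity reduces to showing the sheaves $\classv{\varphi}{x}_\X$ generate $\Sh(\X)$: every open of $X_0$ is a union of $U_{\psi,\vec{m}}$, and closing under the open action gives a stable open, which by elimination of parameters equals some $\classv{\varphi}{x}_\X$; a standard descent argument then promotes this to the level of arbitrary $\X$-sheaves.

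For (1) $\Rightarrow$ (2), each $x\in X_0$ yields a topos point $\sets \to \Sh(\X) \simeq \topos_\theory$ and hence by \cref{eq:classtopos} a model $p(x)$, with arrows in $X_1$ inducing $\theory$-model isomorphisms; joint conservativity of the points of $\Sh(\X)$ indexed by $X_0$ (which holds for an open topological groupoid by \cite{BM}) gives conservativity of $p$. With the trivial indexing of \cref{ex:indexedstructures}\cref{ex:indexedstructures:enum:trivial}, elimination of parameters follows by reading \cref{lem:subobjofform} through the equivalence: the orbit of a definable with parameters is a stable open subsheaf of $\classv{\top}{x}_\X$ (openness uses openness of $s$), and any such stable open is, by \cref{lem:subobjofsheaves} together with the fact that the $\form{\varphi}{x}$ generate $\topos_\theory$, a union of $\classv{\varphi_i}{x}_\X$; joining the formulae $\varphi_i$ yields a single geometric formula witnessing elimination of parameters.

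The main obstacle is verifying the equivalence in (2) $\Rightarrow$ (1). Two delicate points stand out: first, justifying that the assignment on generators genuinely extends to a geometric morphism requires verifying that every geometric cover in the alternative site is sent to a jointly surjective family of local homeomorphisms in $\Sh(\X)$, which uses conservativity of $(\X,p)$ in an essential way; second, showing essential surjectivity demands that every object of $\Sh(\X)$ is covered by definables, and this is precisely where elimination of parameters must be leveraged beyond the level of $\classv{\top}{x}_\X$. I expect these two steps to be substantially easier to organise once the preparatory results of \cref{sec:bigpicture}--\cref{sec:facttopars} on how $\Sh(\X)$ varies with the topologies on $X_0$ and $X_1$ are in place, since they reduce the problem to the two separate questions of topologising $X_0$ and topologising $X_1$ correctly.
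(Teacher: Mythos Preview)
Your outline for (2) $\Rightarrow$ (1) is broadly compatible with the paper's, though organised differently: you aim to verify the equivalence directly (full faithfulness from conservativity, essential surjectivity from elimination of parameters), whereas the paper shows the induced geometric morphism $\p^{\log}\colon \Sh(\Xlog)\to\topos_\theory$ is simultaneously a surjection (\cref{lem:surj_iff_conservative}) and an inclusion (\cref{thm:elimparaimpliesrepr}), hence an equivalence. Your ``standard descent argument'' is doing the work of \cref{forsselllemma} and \cref{prop:pislocalic}, which is not entirely routine, but the strategy is sound.

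There is, however, a genuine gap in your (1) $\Rightarrow$ (2). You propose to use the \emph{trivial} indexing of \cref{ex:indexedstructures}\cref{ex:indexedstructures:enum:trivial}. This does not work in general. Your argument requires the orbit $\overline{\class{\vec{x}=\vec{m}}}_\X$ to be a stable \emph{open} subset of $\classv{\top}{x}_\X$, and you justify openness by appealing to openness of $s$. But openness of $s$ only ensures that the orbit of an open set is open; you still need $\class{\vec{x}=\vec{m}}_\X$ itself to be open in the étale space $\classv{\top}{x}_\X$. With the trivial indexing, a parameter $m$ is a single element of some model $M$, and $\class{x=m}_\X$ is (essentially) a single point of $\classv{\top}{x}_\X$ lying over $M$; there is no reason for this to be open unless $\{M\}$ is open in $X_0^{\tau_0}$. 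Indeed, the paper later proves (\cref{prop:subsetsimpliesdecidable}) that if a conservative model groupoid eliminates parameters for the trivial indexing then $\theory$ must be \emph{decidable}, so your choice of indexing cannot succeed for arbitrary geometric theories.

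The fix, which is the content of \cref{thm:logictopo} and \cref{rem:alwaysdefwithparam}, is to let the topology $\tau_0$ dictate the indexing: take as parameters the local sections (with open image) of the local homeomorphism $\pi_{\class{x:\top}}\colon \class{x:\top}_\X\to X_0^{\tau_0}$, interpreting a section $s\colon U\to\class{x:\top}_\X$ in $M\in U$ as the element picked out by $s(M)$. For this indexing the definables with parameters are open by construction, and then your argument (orbit is stable open, hence a subobject, hence corresponds under the equivalence to some $\classv{\varphi}{x}$) goes through exactly as in \cref{thm:reprimplieselimpara}.
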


The proof of \cref{maintheorem} is completed in \crefrange{sec:bigpicture}{sec:mainproof}.  We note that, if we require that the topologies involved are $T_0$ (also known as \emph{Kolmogorov} spaces), then it is possible to simplify \cref{maintheorem} as follows.

\begin{coro}\label{coro:when-topologies-are-T0}
	Let $\theory$ be a geometric theory and let $\X = (X_1 \rightrightarrows X_0)$ be a (small) groupoid.  The following are equivalent:
	\begin{enumerate}
		\item\label{enum:coro:topologies-T0:eqv} there exist topologies on $X_0$ and $X_1$, satisfying the $T_0$ separation axiom, making $\X$ an open topological groupoid such that there is an equivalence $\Sh(\X) \simeq \topos_\theory$;
		\item\label{enum:coro:topologies-T0:elimpara} the groupoid $\X$ is a groupoid of set-based models of $\theory$, i.e.\ a subgroupoid of $\Tmodels{\sets}$, such that $X_0$ is a conservative set of models and each model $M\in X_0$ admits an indexing by a set of parameters $\Index$ for which $\X$ eliminates parameters.
	\end{enumerate}
\end{coro}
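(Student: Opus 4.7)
The plan is to derive \cref{coro:when-topologies-are-T0} from \cref{maintheorem} by reading the $T_0$ separation axiom as the assertion that the functor $p$ of the main theorem may be chosen to be the inclusion of $\X$ as a subgroupoid of $\Tmodels{\sets}$. The two directions are handled separately, each by invoking the main theorem and then exploiting the explicit form of the sub-basic opens produced by the construction developed in \crefrange{sec:facttopobjs}{sec:mainproof}.

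For \cref{enum:coro:topologies-T0:elimpara}$\Rightarrow$\cref{enum:coro:topologies-T0:eqv}, the inclusion $\X \hookrightarrow \Tmodels{\sets}$ plays the role of $p$ in \cref{maintheorem}, which then furnishes open topologies on $X_0$ and $X_1$ with $\Sh(\X) \simeq \topos_\theory$. To verify that these topologies are $T_0$, I will appeal to the sub-basic opens supplied by the construction, which on $X_0$ take the form $V_{\vec{m},\phi} = \{\, M \in X_0 \mid \vec{m} \subseteq M \text{ and } M \vDash \phi(\vec{m}) \,\}$ for a geometric formula $\phi$ in context and a tuple $\vec{m}$ of parameters. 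Any two distinct set-based models in the subgroupoid $\X$ differ either on which parameters they interpret or on the interpretation of some $\Sigma$-symbol, hence are separated by such a sub-basic open; the analogous description of the sub-basic opens on $X_1$ (involving how an isomorphism acts on parameters) yields the $T_0$ property for the space of arrows.

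For \cref{enum:coro:topologies-T0:eqv}$\Rightarrow$\cref{enum:coro:topologies-T0:elimpara}, \cref{maintheorem} supplies a functor $p \colon \X \to \Tmodels{\sets}$, an indexing $\Index \paronto \X$, and the conservativity and elimination-of-parameters properties. It remains to exhibit $p$ as the inclusion of a subgroupoid. Since the constructed topology on $X_0$ is generated by sub-basic opens $V_{\vec{m},\phi}$ depending on $p$ and the indexing, the $T_0$ hypothesis forbids two distinct objects of $X_0$ from being sent by $p$ to the same indexed model; where the underlying $\Sigma$-structures of $p(x)$ and $p(y)$ happen to coincide, we may harmlessly rename one of them by an isomorphic copy in $\Tmodels{\sets}$ without disturbing the topology or the equivalence $\Sh(\X) \simeq \topos_\theory$. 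After such a rectification $p$ is injective on objects, identifying $\X$ with its image as a subgroupoid of $\Tmodels{\sets}$, and both conservativity and elimination of parameters persist through the identification. The main obstacle is thus the identification of the sub-basic opens as the parametric sets $V_{\vec{m},\phi}$, which depends on the detailed construction in \crefrange{sec:facttopobjs}{sec:mainproof}; once this is in hand, both directions reduce to the observation that such sub-basic opens separate two points of $X_0$ (or $X_1$) exactly when they are distinct in $\Tmodels{\sets}$.
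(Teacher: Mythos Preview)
Your treatment of \cref{enum:coro:topologies-T0:elimpara}$\Rightarrow$\cref{enum:coro:topologies-T0:eqv} is sound and in fact more explicit than the paper's, which simply declares this direction ``immediate'' from \cref{maintheorem} without spelling out why the logical topologies are $T_0$.

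For \cref{enum:coro:topologies-T0:eqv}$\Rightarrow$\cref{enum:coro:topologies-T0:elimpara}, however, your argument has a genuine gap and also diverges from the paper's route. You assume that the given $T_0$ topology $\tau_0$ is \emph{generated} by the sub-basic opens $V_{\vec{m},\phi}$, so that two points with the same indexed-model image would be $\tau_0$-indistinguishable. But \cref{thm:logictopo} only yields $\taulogo \subseteq \tau_0$, not equality; equality is established in \cref{rem:alwaysdefwithparam} only for \emph{inhabited} theories. In general $\tau_0$ may be strictly finer, so $T_0$ for $\tau_0$ does not by itself force the $V_{\vec{m},\phi}$ to separate points. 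Your subsequent ``renaming'' step then attempts to patch the case where $p(x)$ and $p(y)$ share an underlying $\Sigma$-structure, but this is both informal (it changes $p$, and you would need to argue the altered $p$ still satisfies conservativity and elimination of parameters for the same indexing) and unnecessary in the paper's approach. Finally, you do not separately argue injectivity on \emph{arrows}; identifying $\X$ with a subgroupoid requires $p$ to be faithful, not merely injective on objects.

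The paper's argument avoids all of this by working directly at the level of geometric morphisms: for each $M \in X_0$ the model $p(M)$ is the composite $u_\X \circ \Tilde{M} \colon \sets \to \Sh(X_0^{\tau_0}) \to \Sh(\Xtt) \simeq \topos_\theory$, and the $T_0$ hypothesis on $\tau_0$ makes $M \mapsto \Tilde{M}$ injective, whence (it is argued) so is $M \mapsto u_\X \circ \Tilde{M}$. The same shape of argument, now using the $T_0$ hypothesis on $\tau_1$ and the canonical $2$-cell $u_\X \circ \Sh(s) \cong u_\X \circ \Sh(t)$, handles arrows. No renaming is needed, and no appeal to the particular generators of $\tau_0$ is made.
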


\begin{rem}
	Let $\theory$ be a geometric theory and let $p \colon \X \to \Tmodels{\sets}$ be a functor whose domain is a groupoid satisfying the hypotheses of \cref{maintheorem}.  
	\begin{enumerate}
		\item In general, we can not {\it a priori} infer an indexing of $\X$ for which $(\X,p)$ eliminates parameters without knowledge of the topologies on $\X$ for which $\Sh(\X) \simeq \topos_\theory$.  In other words, there does not exist a canonical indexing $\Index_\rmcan \paronto \X$ with the property that there exist topologies making $\X$ an open topological groupoid with $\Sh(\X) \simeq \topos_\theory$ if and only if $\X$ eliminates parameters for the canonical indexing $\Index_\rmcan \paronto \X$.  This is because, as will become apparent in \cref{sec:facttopobjs}, a choice of indexing for $\X$ yields a choice of topology on the set of objects $X_0$ and, vice versa, a choice of topology on $X_0$ yields a  choice of indexing for $\X$.
		
		\item If $\X'$ is another groupoid for which there is an equivalence of categories $\X \simeq \X'$, then it is not necessarily true that $\X'$ can be endowed with topologies for which $\Sh(\X') \simeq \topos_\theory$.  This apparent defect arises because we are considering topological categories.  Indeed, if $\X'$ and $\X$ were equivalent \emph{as topological categories}, by which we mean that the functors $F \colon \X \to \X'$ and $G \colon \X' \to \X$ that witness the equivalence are continuous on both objects and arrows, then there would be an equivalence $\Sh(\X') \simeq \Sh(\X) \simeq \topos_\theory$.
	\end{enumerate}
\end{rem}



\section{The big picture}\label{sec:bigpicture}

In this section we lay out the necessary facts regarding the sheaves on a topological groupoid we will use in the proof of Theorem \ref{maintheorem}.  Let $\X = (X_1 \rightrightarrows X_0)$ be a (small) groupoid, which can also be considered as a topological groupoid where both $X_0$ and $X_1$ have both been endowed with the discrete topology.  We will write $\mathbb{X}^\delta_\delta = (X_1^\delta \rightrightarrows X_0^\delta)$ to emphasise this fact.  Let $\tau_0$ and $\tau_1$ be topologies on $X_0$ and $X_1$ respectively such that all the structure morphisms of $\X$ are continuous with respect to these topologies, i.e.\ $\mathbb{X}^{\tau_1}_{\tau_0} = (X_1^{\tau_1} \rightrightarrows X_0^{\tau_0})$ is a topological groupoid.

\begin{dfs}

	\begin{enumerate}
		\item As above, let $\Sh(\X_{\tau_0}^{\tau_1})$ (respectively, $\Sh(\X_{\delta}^\delta)$) denote the topos of sheaves on the topological groupoid $\X_{\tau_0}^{\tau_1} = (X_1^{\tau_1} \rightrightarrows X_0^{\tau_0})$ (resp., $\mathbb{X}^\delta_\delta = (X_1^\delta \rightrightarrows X_0^\delta)$).
		
		\item By $\Sh(\X_{\tau_0}^\delta)$ we denote the category whose objects are local homeomorphisms $q \colon Y \to X_0^{\tau_0}$ equipped with a (not necessarily continuous) action $\beta \colon Y \times_{X_0} X_1 \to Y$, satisfying the same equations as in \cref{df:toposofsheaves}\cref{df:enum:sheaves}.  Arrows $(Y,q,\beta) \to (Y',q',\beta')$ are continuous maps $f \colon Y \to Y'$ such that the diagram
		\[
		\begin{tikzcd}[column sep = tiny]
			Y \times_{X_0} X_1 \ar{d}[']{\beta} \ar{rr}{f \times_{X_0} \id_{X_1}} & & Y' \times_{X_0} X_1 \ar{d}{\beta'} \\
			Y \ar{rr}{f} \ar{rd}[']{q} && Y' \ar{ld}{q'} \\
			& X_0 &
		\end{tikzcd}
		\]
		commutes as in \cref{df:toposofsheaves}\cref{df:enum:morphofsheaves}.
	\end{enumerate}

\end{dfs}

\begin{rem}
	
	Note that $\mathbb{X}_{\tau_0}^\delta = (X_1^\delta \rightrightarrows X_0^{\tau_0})$ is \emph{not} a topological groupoid (unless $\tau_0$ is also the discrete topology).  If $\mathbb{X}_{\tau_0}^\delta = (X_1^\delta \rightrightarrows X_0^{\tau_0})$ were a topological groupoid, then, for each $x \in X_0$, the singleton 
	\[\{\,x\,\} = e^{-1}(\{\,\id_x\,\})\]
	would be an open subset of $X_0^{\tau_0}$. 
	
\end{rem}

We note that the toposes $\Sh(X_0^\delta)$ and $\Sh(\Xdd)$ can be written in a more familiar manner.  There are, of course, evident equivalences
\[
\Sh(X_0^\delta) \simeq \sets/X_0 \simeq \sets^{X_0}, \ \ \Sh(\Xdd) \simeq \sets^\X.
\]
We will construct a commutative diagram of toposes and geometric morphisms
\begin{equation}\label{diag:bigpicture}
	\begin{tikzcd}[row sep = tiny]
		\Sh(X_0^\delta) \ar{r}{j} \ar{dd}[']{v} & \Sh(X_0^{\tau_0}) \ar{rd}{u} \ar{dd}{u^\delta} & \\
		&& \Sh(\Xtt) \\
		\Sh(\Xdd) \ar{r}{j'} & \Sh(\Xtd) \ar{ru}[']{w} & 
	\end{tikzcd}
\end{equation}
such that the following are satisfied:
\begin{enumerate}
	\item $j$ and $u^\delta$ are both localic surjections,
	\item $u$ is a localic surjection and, additionally, open if $\Xtt$ is an open topological groupoid,
	\item $v$ is an open localic surjection,
	\item $j'$ is a surjection,
	\item $w$ is a hyperconnected geometric morphism,
\end{enumerate}
and the left-hand square is a pushout of toposes.

The overarching method we will follow to prove Theorem \ref{maintheorem} is laid out in \S \ref{subsec:method}.  Our first obligation is to construct the diagram (\ref{diag:bigpicture}) and demonstrate the listed properties.  To do so, we will make repeated use of \cite[Theorem B2.4.6]{elephant} to deduce that whenever a functor between toposes preserves finite limits and arbitrary colimits, it is the inverse image part of a geometric morphism.

\paragraph{Forgetting the action.}

We first note that the forgetful functors $U \colon \Sh(\Xtt) \to \Sh(X_0^{\tau_0})$ and $U^\delta \colon \Sh(\Xtd) \to \Sh(X_0^{\tau_0})$,
which forget the $X_1^{\tau_1}$-action (respectively, $X_1^\delta$-action), create all colimits and finite limits.  This is deduced since a colimit or finite limit in $\Sh(X_0^{\tau_0})$ of spaces with an $X_1^{\tau_1}$-action (resp., $X_1^\delta$-action) can be given an obvious $X_1^{\tau_1}$-action (resp., $X_1^\delta$-action) making it an object of $\Sh(\Xtt)$ (resp., $\Sh(\Xtd)$).

\begin{ex}\label{ex:product_preserved}
	
	We prove in more detail, as an example, that $U \colon \Sh(\Xtt) \to \Sh(X_0^{\tau_0})$ preserves binary products, and remark that the other finite limits and arbitrary colimits follow just as easily.
	
	Let $(Y,q,\beta)$, $(Y',q',\beta')$ be objects of $\Sh(\Xtt)$.  The product of $(Y,q)$ and $(Y,q')$ in the topos $\Sh(X_0^{\tau_0})$ is given by the pullback of spaces:
	\[\begin{tikzcd}
		Y \times_{X_0} Y' \ar{r}{\pi_2} \ar{d}[']{\pi_1} & Y' \ar{d}{q'} \\
		Y \ar{r}{q} & X^{\tau_0}.
	\end{tikzcd}\]
	Let $(y,y',\alpha)$ be an element of $ Y \times_{X_0} Y' \times_{X_0} X_1^{\tau_1} $, where $Y \times_{X_0} Y' \times_{X_0} X_1^{\tau_1} $ is the pullback
	\[\begin{tikzcd}
		Y \times_{X_0} Y' \times_{X_0} X_1^{\tau_1} \ar{r} \ar{d} &  Y \times_{X_0} Y'\ar{d}  \\
		X_1^{\tau_1} \ar{r}{s} & X^{\tau_0}.
	\end{tikzcd}\]
	The definition $B(y,y',\alpha) = (\beta(y,\alpha), \beta'(y',\alpha))$ yields a $X_1^{\tau_1}$-action $B \colon Y \times_{X_0} Y' \times_{X_0} X_1^{\tau_1} \to  Y \times_{X_0} Y'$.  The action $B$ is continuous since $\beta$ and $\beta'$ are both continuous and the necessary equations on $B$ are direct  consequences of the equivalent equivalent equations for $(Y,q,\beta)$ and $(Y',q',\beta')$.  Hence $(Y \times_{X_0} Y',q \circ \pi_1, B)$ is an object of $\Sh(\Xtt)$.
	
	The projections $\pi_1 \colon Y \times_{X_0} Y' \to Y$ and $\pi_2 \colon Y \times_{X_0} Y' \to Y'$ define morphisms in $\Sh(\Xtt)$ since, by a simple diagram chase, the following diagrams both commute
	\begin{center}
		\begin{tikzcd}
			Y \times_{X_0} Y' \times_{X_0} X_1^{\tau_1}  \ar{d}[']{B} \ar{rr}{ \pi_1 \times_{X_0} \id_{X_1}} && Y \times_{X_0} X_1^{\tau_1}  \ar{d}{\beta}  \\
			Y \times_{X_0} Y' \ar{rr}{\pi_1} \ar{dr}[']{q \circ \pi_1 } && Y \ar{dl}{q} \\
			& X^{\tau_0}_0, &
		\end{tikzcd}
		\begin{tikzcd}
			Y \times_{X_0} Y' \times_{X_0} X_1^{\tau_1}  \ar{d}[']{B} \ar{rr}{ \pi_2 \times_{X_0} \id_{X_1}} &&  Y' \times_{X_0} X_1^{\tau_1} \ar{d}{\beta'}  \\
			Y \times_{X_0} Y' \ar{rr}{\pi_2} \ar{dr}[']{q \circ \pi_1 = q' \circ \pi_2 } && Y' \ar{dl}{q'} \\
			& X^{\tau_0}_0 .&
		\end{tikzcd}
	\end{center}
	
	Finally, we can demonstrate the universal property of $(Y\times_{X_0} Y', q \circ \pi_1, B)$.  Let $(Z,p,\gamma)$ be a $\X_{\tau_0}^{\tau_1}$-sheaf with morphisms $f$ and $g$ to $(Y,q,\beta)$ and $(Y',q',\beta')$ respectively.  By the universal property of the pullback, there is a unique commuting continuous map
	\[\begin{tikzcd}
		Z \ar[dashed]{rd}{h}\ar[bend left]{rrd}{g} \ar[bend right]{rdd}{f} && \\
		& Y \times_{X_0} Y' \ar{r} \ar{d} & Y' \ar{d}{q'} \\
		& Y \ar{r}{q} & X_0^{\tau_0}
	\end{tikzcd}\]
	that sends $z \in Z$ to $(f(z),g(z)) \in Y \times_{X_0} Y'$.  Thus, $(Y \times_{X_0} Y',q \circ \pi_1,B)$ is the product of $(Y,q,\beta)$ and $(Y',q',\beta')$ in the category $\Sh(\Xtt)$ if $h$ makes the diagram
	\[\begin{tikzcd}
		Z \times_{X_0} X_1^{\tau_1} \ar{d}[']{\gamma} \ar{rr}{h \times_{X_0} \id_{X_1}} &&   Y \times_{X_0} Y' \times_{X_0} X_1^{\tau_1} \ar{d}{B}  \\
		Z \ar{rr}{h} \ar{dr}[']{p} &&  Y \times_{X_0} Y'\ar{dl}{q \circ \pi_1} \\
		& X^{\tau_0}_0 &
	\end{tikzcd}\]
	commute.  This is easily checked by another diagram chase.  And so, $U \colon \Sh(\Xtt) \to \Sh(X_0^{\tau_0})$ preserves binary products.
	
\end{ex}

Since $U$ (respectively, $U^\delta$) preserves finite limits and all colimits, by \cite[Theorem B2.4.6]{elephant}, $U$ (resp., $U^\delta$) is the inverse image of a geometric morphism $u \colon \Sh(X_0^{\tau_0}) \to \Sh(\Xtt)$ (resp., $u^\delta \colon \Sh(X_0^{\tau_0}) \to \Sh(\Xtd)$) between toposes.

\begin{lem}\label{lem:forget_action_surj}
	The geometric morphisms $u \colon \Sh(X_0^{\tau_0}) \to \Sh(\Xtt)$ and $ u^\delta \colon \Sh(X_0^{\tau_0}) \to \Sh(\Xtd)$ are both localic surjections.
\end{lem}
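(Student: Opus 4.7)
My plan is to treat the two claims — that $u$ and $u^\delta$ are surjections, and that they are localic — separately, using standard characterisations of each class of geometric morphism. For surjectivity I would use that a geometric morphism $f$ is a surjection iff $f^*$ is faithful. Here the inverse images $u^* = U$ and $(u^\delta)^* = U^\delta$ are the forgetful functors which, on morphisms, retain only the underlying continuous map between total spaces. Since a morphism in $\Sh(\Xtt)$ or $\Sh(\Xtd)$ is \emph{defined} to be a continuous map satisfying the equivariance diagram \cref{eq:morphofsheaves}, two such morphisms agree precisely when their underlying continuous maps agree; so $U$ and $U^\delta$ are faithful by construction.

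For localic-ness I would invoke the characterisation that $f \colon \topos \to \ftopos$ is localic iff $\topos$ admits a generating family consisting of subobjects of objects of the form $f^*(Y)$. Since $\Sh(X_0^{\tau_0})$ is a localic topos, its subterminal objects — namely, the open subsets of $X_0^{\tau_0}$ — already form a generating family; it therefore suffices to exhibit a single $\Xtt$-sheaf (respectively, $\Xtd$-sheaf) whose image under $U$ (resp.\ $U^\delta$) is the terminal sheaf $(X_0^{\tau_0}, \id_{X_0})$ of $\Sh(X_0^{\tau_0})$. The natural candidate is the triple $\1 = (X_0^{\tau_0}, \id_{X_0}, t)$ in which the action $X_0 \times_{X_0} X_1 \cong X_1 \to X_0$ is the target map $t$. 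Checking that $\1$ is a well-defined equivariant sheaf reduces to verifying the action axioms of \cref{df:toposofsheaves}\cref{df:enum:sheaves}, which follow from the groupoid equations $t \circ e = \id_{X_0}$ and $t \circ m = t \circ \pi_2$; continuity of $t$ is part of the topological groupoid structure for $u$, and is automatic for $u^\delta$ since $X_1^\delta$ is discrete. A short diagram chase then shows that $\1$ is in fact the terminal object of $\Sh(\Xtt)$ (resp.\ $\Sh(\Xtd)$), with $q \colon (Y,q,\beta) \to \1$ as the unique morphism.

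I do not anticipate any serious obstacle. The only slightly subtle point is to notice that one need not construct equivariant sheaves whose underlying sheaf matches an arbitrary local homeomorphism $Y \to X_0^{\tau_0}$: because $\Sh(X_0^{\tau_0})$ is localic, everything reduces to subterminal objects, and these are then subsumed by the single observation that $U(\1) = U^\delta(\1) = 1$ is the terminal of $\Sh(X_0^{\tau_0})$.
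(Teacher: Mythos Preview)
Your proposal is correct and follows essentially the same approach as the paper: faithfulness of the forgetful functor gives surjectivity, and $\Sh(X_0^{\tau_0})$ being a localic topos gives that any geometric morphism out of it is localic. Your explicit construction of the terminal object $\1 = (X_0^{\tau_0}, \id_{X_0}, t)$ is unnecessary, however, since $u^\ast$ preserves finite limits and hence automatically sends the terminal of $\Sh(\Xtt)$ to the terminal of $\Sh(X_0^{\tau_0})$; the paper's proof accordingly dispenses with this step.
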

\begin{proof}
	As $U$ (respectively, $U^\delta$) is clearly a faithful functor whose codomain $\Sh(X_0^{\tau_0})$ is a localic topos, $u$ (resp., $u^\delta$) is a surjective localic geometric morphism.
\end{proof}

\begin{lem}[Proposition 4.4 \cite{cont1}]\label{lem:forget_action_open}
	If $\Xtt$ is an open topological groupoid, $u \colon \Sh(X_0^{\tau_0}) \to \Sh(\Xtt)$ is additionally open.
\end{lem}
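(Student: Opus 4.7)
The plan is to use the standard characterisation (as in \cite[Corollary C3.1.12]{elephant}) that a geometric morphism is open precisely when its inverse image functor preserves Heyting implication on subobjects. Since $u^\ast$ is just the forgetful functor on underlying sheaves, and because \cref{lem:subobjofsheaves} identifies subobjects in $\Sh(\Xtt)$ and $\Sh(X_0^{\tau_0})$ with open stable and open subspaces respectively, openness of $u$ will reduce to a concrete statement about such subspaces.

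Concretely, I would fix a $\Xtt$-sheaf $(Y,q,\beta)$ together with subsheaves $A',B' \hookrightarrow Y$, corresponding to open stable subspaces. The Heyting implication $A' \Rightarrow B'$ computed in $\Sh(\Xtt)$ is the largest \emph{stable} open subspace $C \subseteq Y$ with $C \cap A' \subseteq B'$, whereas $u^\ast A' \Rightarrow u^\ast B'$ computed in $\Sh(X_0^{\tau_0})$ is the largest open subspace $D \subseteq Y$ with $D \cap A' \subseteq B'$, with no stability requirement. The inclusion $u^\ast(A' \Rightarrow B') \subseteq u^\ast A' \Rightarrow u^\ast B'$ holds automatically for any geometric morphism, so the content is the reverse inclusion $D \subseteq u^\ast(A' \Rightarrow B')$.

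To establish this reverse inclusion, I would form the orbit $\overline{D} = \beta(D \times_{X_0} X_1^{\tau_1})$ and argue it is itself an admissible candidate for $A' \Rightarrow B'$, from which $D \subseteq \overline{D} \subseteq A' \Rightarrow B'$ follows. Two facts must be verified. First, stability of $A'$ and $B'$ gives $\overline{D} \cap A' \subseteq B'$: if $y' = \beta(y,\alpha)$ with $y \in D$ and $y' \in A'$, then $y = \beta(y',\alpha^{-1}) \in A'$ by stability of $A'$, hence $y \in D \cap A' \subseteq B'$, and then $y' \in B'$ by stability of $B'$. Second, $\overline{D}$ is open in $Y$, and this is the only place where the hypothesis that $\Xtt$ is open actually intervenes. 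Here I would use that the action induces a homeomorphism $Y \times^{s}_{X_0} X_1^{\tau_1} \xrightarrow{\sim} Y \times^{t}_{X_0} X_1^{\tau_1}$ sending $(y,\alpha) \mapsto (\beta(y,\alpha),\alpha)$, so that $\overline{D}$ is the image of the open set $D \times^{s}_{X_0} X_1^{\tau_1}$ under the composite of this homeomorphism with the projection $Y \times^{t}_{X_0} X_1^{\tau_1} \to Y$; that projection is open as a pullback of the open map $t \colon X_1^{\tau_1} \to X_0^{\tau_0}$.

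The anticipated main obstacle is purely bookkeeping, namely keeping the two pullbacks $Y \times^{s}_{X_0} X_1^{\tau_1}$ and $Y \times^{t}_{X_0} X_1^{\tau_1}$ straight and checking that the source and target homeomorphism is correctly set up so that openness of $t$ really does transfer to openness of $\overline{D}$. Once these verifications are in place, the claim that $u^\ast$ preserves Heyting implication, and hence that $u$ is open, is immediate.
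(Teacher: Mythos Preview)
Your argument is correct. The paper does not actually supply its own proof of this lemma; it simply cites Proposition~4.4 of \cite{cont1} and moves on. So there is nothing to compare against beyond noting that you have filled in a complete, self-contained proof where the paper defers to the literature.

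Your approach---checking that $u^\ast$ preserves Heyting implication on subobject lattices---is the natural one and goes through cleanly. The key computation, that the orbit $\overline{D}$ of an open $D \subseteq Y$ is again open when the groupoid is open, is exactly the point where the hypothesis enters, and your factorisation of $\beta$ through the homeomorphism $Y \times^{s}_{X_0} X_1^{\tau_1} \xrightarrow{\sim} Y \times^{t}_{X_0} X_1^{\tau_1}$ followed by the open projection is the standard way to see it. One small thing you left implicit but which is needed: $\overline{D}$ must also be \emph{stable} to be a candidate for $A' \Rightarrow B'$ in $\Sh(\Xtt)$; this is immediate from the groupoid axioms (the orbit of any set is stable), but worth stating.
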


\begin{rem}\label{rem:epiandmonoofsheaves}
	
	The functors $U$ and $U^\delta$ above reflect jointly epimorphic families and monomorphisms.  As shown in \cite[Proposition II.6.6]{SGL}, a family of morphisms in $\Sh(X_0^{\tau_0})$ is jointly epimorphic if and only if they are jointly surjective, and hence so too in $\Sh(\Xtt)$ and $\Sh(\Xtd)$.  Also, a morphism in $\Sh(X_0^{\tau_0})$ is a monomorphism if and only if it is injective, and hence so too in $\Sh(\Xtt)$ and $\Sh(\Xtd)$.
	
\end{rem}

Let $V \colon \Sh(\Xdd) \to \Sh(X_0^\delta)$ denote the analogous functor that forgets the $X_1^\delta$-action.  By an identical analysis to the above, we conclude the following.

\begin{lem}
	The functor $V$ is the inverse image functor of a geometric morphism $v \colon \Sh(X_0^\delta) \to \Sh(\Xdd) $ that is open, localic and surjective.
\end{lem}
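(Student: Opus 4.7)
The plan is to observe that this statement is essentially a specialisation to the discrete case of the three preceding lemmas combined, and that each of the requisite properties transfers with essentially no additional work.

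First, I would repeat the analysis used for the forgetful functors $U$ and $U^\delta$, in particular the verification carried out for binary products in \cref{ex:product_preserved}, but now with both topologies replaced by the discrete ones throughout. A direct check shows that $V$ creates all colimits and finite limits: given any diagram of $\Xdd$-sheaves, its limit or colimit taken in $\Sh(X_0^\delta)$ carries a canonical $X_1^\delta$-action defined componentwise, and because $X_1^\delta$ is discrete there is no continuity condition to verify. Hence $V$ preserves finite limits and arbitrary colimits, and an appeal to \cite[Theorem B2.4.6]{elephant} produces the geometric morphism $v \colon \Sh(X_0^\delta) \to \Sh(\Xdd)$ with $v^\ast = V$.

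Localicness and surjectivity then follow exactly as in \cref{lem:forget_action_surj}: the functor $V$ is faithful, since a morphism of equivariant sheaves is determined by the underlying map of bundles over $X_0$, and its codomain $\Sh(X_0^\delta) \simeq \sets/X_0$ is manifestly a localic topos. This is the standard criterion ensuring $v$ is both localic and surjective.

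The only genuinely new ingredient is openness, but this is immediate as well. Endowed with the discrete topologies, the source and target maps $s,t \colon X_1^\delta \to X_0^\delta$ are automatically open, so $\Xdd$ is trivially an open topological groupoid. Consequently \cref{lem:forget_action_open} applies verbatim and yields openness of $v$. There is, in effect, no real obstacle here: the work amounts to confirming that the arguments already used for $u$ and $u^\delta$ specialise without change to the fully discrete setting, where the openness hypothesis of \cref{lem:forget_action_open} comes for free.
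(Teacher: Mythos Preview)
Your proposal is correct and follows essentially the same approach as the paper, which simply states ``By an identical analysis to the above'' before the lemma and leaves the details implicit. You have correctly identified and spelled out those details: the creation of finite limits and colimits as in \cref{ex:product_preserved}, localicness and surjectivity via faithfulness into a localic topos as in \cref{lem:forget_action_surj}, and openness via \cref{lem:forget_action_open} together with the observation that any discrete groupoid is automatically open.
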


\paragraph{Forgetting the topology on arrows.}

The topos $\Sh(\Xtt)$ is evidently a full subcategory of $\Sh(\Xtd)$.  Denote the inclusion functor by $W \colon \Sh(\Xtt) \to \Sh(\Xtd)$.  Clearly, there is a commuting triangle of functors
\[\begin{tikzcd}
	\Sh(\Xtt) \ar{r}{W} \ar{rd}[']{U}& \Sh(\Xtd) \ar{d}{U^\delta}\\
	& \Sh(X_0^{\tau_0}).
\end{tikzcd}\]
Since finite limits and arbitrary colimits in $\Sh(\Xtt)$ and $\Sh(\Xtd)$ are computed in the same fashion, $W$ preserves them, and therefore $W$ is the inverse image of a geometric morphism $w \colon \Sh(\Xtd) \to \Sh(\Xtt)$.

\begin{prop}\label{prop:coarsehyp}
	The geometric morphism $w \colon \Sh(\Xtd) \to \Sh(\Xtt)$ is hyperconnected.
\end{prop}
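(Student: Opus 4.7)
The plan is to invoke the standard characterisation (Elephant, A4.6.6) that $w$ is hyperconnected if and only if $w^{\ast} = W$ is fully faithful and its essential image is closed in $\Sh(\Xtd)$ under both subobjects and quotients. Full faithfulness is immediate because $W$ is, by construction, the inclusion of a full subcategory: an object of $\Sh(\Xtt)$ is merely an object of $\Sh(\Xtd)$ whose action $\beta$ happens to be continuous with respect to $\tau_1$, and morphisms in both categories are the continuous maps over $X_0$ commuting with the action.

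For closure under subobjects, I would appeal directly to \cref{lem:subobjofsheaves}: any subobject of $(Y,q,\beta)$ in $\Sh(\Xtd)$ has the form $(U, q|_U, \beta|_U)$ with $U \subseteq Y$ an open stable subspace. When $\beta$ is already continuous with respect to $\tau_1$, its restriction $\beta|_U$ to the open subspace $U \times_{X_0} X_1^{\tau_1} \subseteq Y \times_{X_0} X_1^{\tau_1}$ is automatically continuous, so the subobject lifts to $\Sh(\Xtt)$.

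For closure under quotients, let $\pi \colon (Y,q,\beta) \to (Z,r,\gamma)$ be an epimorphism in $\Sh(\Xtd)$ with $(Y,q,\beta) \in \Sh(\Xtt)$. By \cref{rem:epiandmonoofsheaves}, $\pi$ is a surjective local homeomorphism. Since pullbacks of (surjective) local homeomorphisms along arbitrary continuous maps are again (surjective) local homeomorphisms, the map $\pi \times_{X_0} \id_{X_1} \colon Y \times_{X_0} X_1^{\tau_1} \to Z \times_{X_0} X_1^{\tau_1}$ is a surjective local homeomorphism, and in particular a topological quotient map. The commuting square
\[
\begin{tikzcd}
Y \times_{X_0} X_1^{\tau_1} \ar{r}{\beta} \ar{d}[']{\pi \times_{X_0} \id_{X_1}} & Y \ar{d}{\pi} \\
Z \times_{X_0} X_1^{\tau_1} \ar{r}{\gamma} & Z
\end{tikzcd}
\]
together with continuity of $\pi \circ \beta$ then forces $\gamma$ to be continuous by the universal property of the quotient topology, placing $(Z,r,\gamma)$ in $\Sh(\Xtt)$.

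The only point requiring genuine attention is the quotient case, and in particular the observation that pullback along $s \colon X_1^{\tau_1} \to X_0^{\tau_0}$ preserves surjective local homeomorphisms and hence topological quotient maps; this is the ingredient that allows continuity of $\gamma$ to be deduced from continuity of $\beta$, and is what makes the subcategory $\Sh(\Xtt) \hookrightarrow \Sh(\Xtd)$ closed under quotients as required.
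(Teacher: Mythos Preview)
Your proof is correct and follows the same strategy as the paper: verify the characterisation from \cite[Proposition A4.6.6]{elephant} by observing that $W$ is full and faithful by construction, and that its image is closed under subobjects because the restriction of a continuous action to an open stable subspace remains continuous. Your additional argument for closure under quotients, while valid, is superfluous---the paper invokes only the subobject clause of A4.6.6, which already suffices for hyperconnectedness, so the quotient case need not be checked.
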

\begin{proof}
	A hyperconnected geometric morphism is one whose inverse image functor is full and faithful and its image is closed under subobjects (see \cite[Proposition A4.6.6]{elephant}).  The functor $W \colon \Sh(\Xtt) \to \Sh(\Xtd)$ is already full and faithful by construction.
	
	Let $(Y,q,\beta)$ be a $\Xtd$-space whose $X_1^\delta$-action $\beta$ becomes a continuous map $\beta \colon Y \times_{X_0} X_1^{\tau_1} \to Y$ when $X_1$ is endowed with the topology ${\tau_1}$.  If $Z$ is a subobject of $Y$ in the topos $\Sh(\Xtd)$, then $Z$ is an open subspace of $Y$ whose $X_1^\delta$-action is the restriction of $\beta$ to the subset $Z \times_{X_0} X_1 \subseteq Y \times_{X_0} X_1$.  Since, for each $U \in \opens(Z)$, $\beta|_{Z \times_{X_0} X_1}^{-1}(U) = \beta^{-1}(U) \cap ( Z \times_{X_0} X_1)$, and as $\beta$ is continuous for the topology on $Y \times_{X_0} X_1^{\tau_1}$, so too is $\beta|_{Z \times_{X_0} X_1} \colon Z \times_{X_0} X_1^{\tau_1}  \to Z$.  Thus, the image of $W$ is closed under subobjects.
\end{proof}

\begin{rem}\label{rem:prop:coarsehyp}
	
	Let $\X = (X_1 \rightrightarrows X_0)$ be a groupoid that becomes a topological groupoid when $X_0$ is endowed with the topology $\tau_0$ and $X_1$ with $\tau_1$.  The construction of the hyperconnected morphism $w \colon \Sh(\Xtd) \to \Sh(\Xtt)$ relied only on the fact that the discrete topology $\delta$ contains the topology $\tau_1$.  Indeed, if $\sigma$ is another topology on $X_1$, containing $\tau_1$, such that $\X_{\tau_0}^{\sigma} = \left(X_1^{\sigma} \rightrightarrows X_0^{\tau_0}\right)$ is also a topological groupoid, then there is a hyperconnected geometric morphism $\Sh\left(\X_{\tau_0}^{\sigma}\right) \to \Sh(\Xtt)$ whose inverse image is the inclusion of $\Sh(\Xtt)$ into $\Sh\left(\X_{\tau_0}^{\sigma}\right)$.
	
\end{rem}

\paragraph{Forgetting the topology on objects.}

The identity $\id_{X_0} \colon X_0^\delta \to X_0^{\tau_0}$ is a surjective continuous map of topological spaces and so induces (see \cite[\S II.9 \& \S IX.4]{SGL}) a surjective localic geometric morphism 
\[
\begin{tikzcd}
	j \colon \Sh(X_0^\delta) \ar{r} & \Sh(X_0^{\tau_0}).
\end{tikzcd}\]
The inverse image of $j$ is the functor $J \colon \Sh(X_0^{\tau_0}) \to \Sh(X_0^\delta)$ that sends a local homeomorphism $q \colon Y \to X_0^{\tau_0}$ the pullback of $q$ along $\id_{X_0} \colon X_0^\delta \to X_0^{\tau_0}$.  In other words, $J$ is the functor that forgets the topology.  We denote $J(Y)$ by $Y^\delta$.

There is a similar forgetful functor $J' \colon \Sh(\Xtd) \to \Sh(\Xdd)$ that sends a $\Xtd$-space $(Y,q,\beta)$ to $(Y^\delta,q,\beta)$.  Clearly, there is a commutative square
\[\begin{tikzcd}
	\Sh(\Xtd) \ar{r}{U^\delta} \ar{d}[']{J'} & \Sh(X_0^{\tau_0})\ar{d}{J}   \\
	\Sh(\Xdd) \ar{r}{V}  & \Sh(X_0^\delta) .
\end{tikzcd}\]
Once again, it is easily shown that $J'$ preserves finite limits and arbitrary colimits too.  Therefore, $J'$ is the inverse image of a geometric morphism $j' \colon \Sh(\Xdd) \to \Sh(\Xtd)$ which makes the square
\[\begin{tikzcd}
	\Sh(X_0^\delta) \ar{r}{j} \ar{d}[']{v} & \Sh(X_0^{\tau_0}) \ar{d}{u^\delta} \\
	\Sh(\Xdd) \ar{r}{j'} & \Sh(\Xtd)
\end{tikzcd}\]
commute.  Moreover, since $j$, $u^\delta$ and $v$ are surjective geometric morphisms, so too is $j'$ (since surjective geometric morphisms are the left class in an orthogonal factorisation system, see Theorem A4.2.10 \cite{elephant}).

\begin{lem}\label{pushout}
	The square
	\[\begin{tikzcd}
		\Sh(X_0^\delta) \ar{r}{j} \ar{d}[']{v} & \Sh(X_0^{\tau_0}) \ar{d}{u^\delta} \\
		\Sh(\Xdd) \ar{r}{j'} & \Sh(\Xtd)
	\end{tikzcd}\]
	is a (bi)pushout in the (bi)category $\Topos$ of Grothendieck toposes and geometric morphisms.
\end{lem}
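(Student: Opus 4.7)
The strategy is to verify the bipushout property directly: for every topos $\ftopos$, the functor
\[
\Phi \colon \Geom(\Sh(\Xtd), \ftopos) \longrightarrow \Geom(\Sh(X_0^{\tau_0}), \ftopos) \times_{\Geom(\Sh(X_0^\delta), \ftopos)} \Geom(\Sh(\Xdd), \ftopos),
\]
obtained by pre-composition with $u^\delta$ and $j'$, should be an equivalence. Via the anti-equivalence between geometric morphisms and their inverse image functors, this amounts to exhibiting $\Sh(\Xtd)$ as a bipullback of $J$ along $V$ in the $2$-category of toposes and inverse image functors.

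First, I would treat the category-level pullback: the canonical functor
\[
\Sh(\Xtd) \longrightarrow \Sh(X_0^{\tau_0}) \times_{\Sh(X_0^\delta)} \Sh(\Xdd),
\]
sending $(Y, q, \beta)$ to $\bigl((Y, q),\, (Y^\delta, q, \beta)\bigr)$ with the identity coherence isomorphism, is an equivalence of categories. For essential surjectivity, given $(Y, q) \in \Sh(X_0^{\tau_0})$, $(Z, p, \gamma) \in \Sh(\Xdd)$, and an isomorphism $(Y^\delta, q) \xrightarrow{\sim} (Z, p)$ in $\sets/X_0 \simeq \Sh(X_0^\delta)$, I would transport the $X_1$-action $\gamma$ along this isomorphism to an action $\beta$ on $Y$. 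Crucially, the definition of $\Sh(\Xtd)$ imposes \emph{no} continuity requirement on the action, so $(Y, q, \beta)$ is a bona fide object of $\Sh(\Xtd)$. Fullness and faithfulness are verified by the analogous set-theoretic analysis: a morphism in the pullback is a continuous map over $X_0^{\tau_0}$ that additionally intertwines the $X_1$-actions, which is exactly a morphism in $\Sh(\Xtd)$.

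Next, I would upgrade this to the level of mapping categories. Given a test topos $\ftopos$ together with inverse image functors $a^* \colon \ftopos \to \Sh(X_0^{\tau_0})$, $b^* \colon \ftopos \to \Sh(\Xdd)$ equipped with an isomorphism $J a^* \cong V b^*$, the first step produces a functor $c^* \colon \ftopos \to \Sh(\Xtd)$ defined pointwise, where the action on $c^*(F)$ is obtained by transport along the coherence isomorphism. To conclude, I must verify that $c^*$ inherits left exactness and cocontinuity: since $U^\delta$ creates finite limits and arbitrary colimits (as recorded in the discussion preceding \cref{ex:product_preserved}), a functor into $\Sh(\Xtd)$ preserves them iff its composition with $U^\delta$ does, and $U^\delta c^* \cong a^*$ does by assumption. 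Uniqueness of $c^*$ up to unique isomorphism follows from the full faithfulness from the first step.

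The main point requiring care is the bookkeeping between the strict category-level pullback and the bipullback of mapping categories: because the coherence isomorphism $J a^* \cong V b^*$ need not be the identity, the construction of $c^*$ genuinely uses transport along this isomorphism. This is tractable because $J$ and $V$ are both ``identity on underlying sets'' forgetful functors, but it is exactly this subtlety that prevents the square from being a strict pushout in $\Topos$ and compels the bicategorical formulation.
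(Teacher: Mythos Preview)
Your proposal is correct and follows essentially the same approach as the paper: both arguments reduce the bipushout in $\Topos$ to exhibiting $\Sh(\Xtd)$ as the bipullback of the inverse image square
\[
\begin{tikzcd}
\Sh(\Xtd) \ar{r}{U^\delta} \ar{d}[']{J'} & \Sh(X_0^{\tau_0}) \ar{d}{J} \\
\Sh(\Xdd) \ar{r}{V} & \Sh(X_0^\delta)
\end{tikzcd}
\]
in $\mathfrak{CAT}$, and both verify this by the explicit transport-of-action construction you describe.

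The only noteworthy difference is one of packaging. The paper outsources two steps to the literature: it invokes \cite[Theorem 2.5]{cont1} for the statement that the bipushout in $\Topos$ is computed as the bipullback of inverse images in $\mathfrak{CAT}$, and it invokes \cite{joyalstreet} to identify the bipullback with the strict $1$-pullback (using that $V$ has the invertible-path lifting property, i.e.\ isomorphisms between underlying sets over $X_0$ lift to equivariant isomorphisms). You instead argue both points by hand: your ``upgrade'' step, using that $U^\delta$ creates finite limits and colimits to deduce that the induced $c^\ast$ is an inverse image functor, is precisely the content of the Moerdijk reduction specialised to this square, and your handling of the non-identity coherence isomorphism by transport is what makes the Joyal--Street citation unnecessary. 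Your version is thus more self-contained; the paper's is shorter.
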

\begin{proof}
	By \cite[Theorem 2.5]{cont1}, the (bi)pushout of the diagram
	\[\begin{tikzcd}
		\Sh(X_0^\delta) \ar{r}{j} \ar{d}[']{v} & \Sh(X_0^{\tau_0})  \\
		\Sh(\Xdd)  & 
	\end{tikzcd}\]
	in $\Topos$ is computed as the bipullback (see \cite[Example 15]{lack}) of the inverse image functors
	\begin{equation}\label{lem:pushout:eq:cospan}\begin{tikzcd}
			& \Sh(X_0^{\tau_0})   \ar{d}{J} \\
			\Sh(\Xdd)\ar{r}{V} & \Sh(X_0^\delta)  
	\end{tikzcd}\end{equation}
	in $\mathfrak{CAT}$, the category of (large) categories.

	It is then easy to see that the commuting square
	\begin{equation}\label{pushout:eq:bicone}\begin{tikzcd}
			\Sh(\Xtd) \ar{r}{U^\delta} \ar{d}[']{J'} & \Sh(X_0^{\tau_0})\ar{d}{J}   \\
			\Sh(\Xdd) \ar{r}{V}  & \Sh(X_0^\delta) 
	\end{tikzcd}\end{equation}
	is said bipullback.  Given a bicone $F \colon A \to \Sh(X_0^{\tau_0})$, $G \colon A \to \Sh(\Xdd)$ of the cospan \cref{lem:pushout:eq:cospan}, i.e.\ $J \circ F \cong V \circ G$, there is a unique (up to isomorphism) functor such that the diagram
	\[\begin{tikzcd}
		A \ar[dashed]{rd} \ar[bend left]{rrd}{F} \ar[bend right]{rdd}[']{G}&& \\
		&\Sh(\Xtd) \ar{r}{U^\delta} \ar{d}[']{J'} & \Sh(X_0^{\tau_0})\ar{d}{J}   \\
		&\Sh(\Xdd) \ar{r}{V}  & \Sh(X_0^\delta) .
	\end{tikzcd}\]
	commutes up to isomorphism.  The functor $\begin{tikzcd}[column sep = small]
		&[-15pt] A \ar[dashed]{r} & \Sh(\Xtd) &[-15pt]
	\end{tikzcd}$ is constructed as follows.
	\begin{enumerate}
		\item For an object $a \in A$, $F(a)$ is a local homeomorphism $Y \to X_0^{\tau_0}$.  Since $Y^\delta = J \circ F(a) \cong V \circ G(a)$, the set $Y^\delta$ can be endowed with the (non-continuous) $X_1^\delta$-action given by $G(a)$, thus defining $Y \to X_0^{\tau_0}$ as an object of $\Sh(\Xtd)$.
		\item Each arrow $g$ of $A$ is sent by $F$ to a continuous map $f \colon Y \to Y'$ for which the triangle
		\[
		\begin{tikzcd}[column sep=tiny]
			Y \ar{rd} \ar{rr}{f} && Y' \ar{ld}\\
			& X_0^{\tau_0}  &
		\end{tikzcd}
		\]
		commutes.  Again using that $f = V \circ G(g) \cong J \circ F(g)$, we deduce that $f$ is also equivariant with respect to the imposed $X_1^\delta$-actions on $Y^\delta$ and ${Y'}^\delta$.  Thus, $f$ also defines an arrow of $\Sh(\Xtd)$.
	\end{enumerate} 
	It is clear by definition that the diagram \cref{pushout:eq:bicone} commutes up to natural isomorphism.
	
	It remains to show that these natural isomorphisms also satisfy the universal property required by the bipullback.  However, we can elide these details since, as the perceptive reader will notice, by applying the same reasoning as above, $\Sh(\Xtd)$ is also the 1-pullback of the cospan \cref{lem:pushout:eq:cospan}.  By \cite{joyalstreet}, we know that the bipullback and the 1-pullback are equivalent if the functor $V \colon \Sh(\Xdd) \to \Sh(X_0^\delta)$ satisfies the \emph{invertible-path lifting property}.  This is indeed the case: if $f$ is an $X_1^\delta$-equivariant map of sets over $X_0^\delta$ such that $V(f)$ has an inverse in $\Sh(X_0^\delta)$, then this inverse must also be $X_1^\delta$-equivariant, and so defines an inverse for $f$ in $\Sh(\Xdd)$.
\end{proof}

This completes the construction of the diagram \cref{diag:bigpicture} and the demonstration of its required properties.

\subsection{Our method}\label{subsec:method}

We now lay out the method that we will follow in \crefrange{sec:facttopobjs}{sec:mainproof} in order to prove \cref{maintheorem}.  Let $\theory$ be a geometric theory and $\X = (X_1 \rightrightarrows X_0)$ a groupoid.  First, recall from \cite[Corollary D1.2.14]{elephant} that a model of $\theory$ internal to the topos $\sets^\X$ is simply a functor $\X \to \Tmodels{\sets}$.  Thus, the datum of a functor $p \colon \X \to \Tmodels{\sets}$ is equivalent to the datum of a geometric morphism
\[\begin{tikzcd}
\p \colon \sets^\X \simeq \Sh(\Xdd) \ar{r} & \topos_\theory.
\end{tikzcd}\]
By construction, the inverse image functor $\p^\ast$ sends the object $\form{\phi}{x} \in \topos_\theory$ to $\classv{\phi}{x}_\X \in \Sh(\X)$.  We will denote the composite
\[
\begin{tikzcd}
	\Sh(X_0^\delta) \ar{r}{v} & \Sh(\Xdd) \ar{r}{\p} & \topos_\theory
\end{tikzcd}
\]
by $\p_0$.

\begin{df}
	
	A \emph{factoring topology for objects} is a topology $\tau_0$ on $X_0$ such that the geometric morphism $\p_0$ factors as
	\[\begin{tikzcd}
		\Sh(X_0^\delta) \ar{r}{j} & \Sh(X_0^{\tau_0}) \ar[dashed]{r} & \topos_\theory.
	\end{tikzcd}\]
	
\end{df}

Given a factoring topology for objects $\tau_0$, recall that, by \cref{pushout}, there exists a unique (up to isomorphism) geometric morphism $\p'$ such that
\[\begin{tikzcd}
	\Sh(X_0^\delta) \ar{r}{j} \ar{d}{v} & \Sh(X_0^{\tau_0}) \ar{d}{u^\delta} \ar[bend left]{ddr} & \\
	\Sh(\Xdd) \ar{r}{j'} \ar[bend right = 2em]{drr}[']{\p} & \Sh(\Xtd) \ar[dashed]{dr}{\p'}& \\
	&& \topos_\theory
\end{tikzcd}\]
commutes.

\begin{df}
	Given a factoring topology for objects $\tau_0$ on $X_0$, a \emph{factoring topology for arrows} is a topology $\tau_1$ on $X_1$ such that $\Xtt = (X_1^{\tau_1} \rightrightarrows X_0^{\tau_0})$ is a topological groupoid and $\p'$ factors as
	\[\begin{tikzcd}
		\Sh(\Xtd) \ar{r}{w} & \Sh(\Xtt) \ar[dashed]{r} & \topos_\theory.
	\end{tikzcd}\]
	We denote the factoring geometric morphism by $\p'' \colon \Sh(\Xtt) \to \topos_\theory$.
\end{df}

Our method for proving \cref{maintheorem} is broken down into four intermediate steps as follows.

\begin{enumerate}[label = {(\Alph*)}]
	\item In \cref{sec:facttopobjs} we classify the possible factoring topologies for $X_0$.  We will show that, when the models in the image of $p$ are indexed by some set of parameters $\Index$, the logical topology on objects, introduced in \cite{forssell,forssellphd}, is a factoring topology for $X_0$ and that, up to a choice of indexing for each $M \in X_0$, every factoring topology for $X_0$ contains a logical topology on objects.
	
	\item Given a factoring topology on objects for $X_0$, we classify in \cref{sec:facttopars} the factoring topologies on arrows for $X_1$.  We will show that a topology $\tau_1$ on $X_1$ is a factoring topology for arrows if and only if $\tau_1$ contains the logical topology for arrows, another topology utilised in \cite{forssell,forssellphd} (where we have assumed that $\tau_0$ contains a logical topology for objects for some indexing of the models).
	
	\item We demonstrate in \cref{subsec:logtopislocalic} that, if $X_0$ and $X_1$ are endowed with the logical topologies $\taulogo$ and $\tauloga$, and that the resulting topological groupoid is \emph{open}, then the factoring geometric morphism 
	\begin{equation}\label{eq:localicmorphinmethod}
		\begin{tikzcd}
			\p'' \colon \Sh\left(\Xlog\right) \ar{r} & \topos_\theory
		\end{tikzcd}
	\end{equation}
	is localic.
	
	\item Finally in \cref{sec:mainproof}, we deduce \cref{maintheorem} by studying the homomorphisms of subobject lattices induced by $\p''$.
\end{enumerate}



\paragraph{Comparison with Joyal-Tierney descent.}  

Before embarking on the main proof, we elaborate further the connection between the representation of toposes by topological data and localic data.  

Let $\topos$ be a topos.  Let $X_0$ be a set of points of $\topos$, i.e.\ $X_0 \subseteq \Geom(\sets,\topos)$, and let $\tau_0$ be a factoring topology for objects on $X_0$, i.e.\ there is a canonically induced geometric morphism
\[
\begin{tikzcd}
	\Sh\left(X_0^{\tau_0}\right) \ar{r} & \topos.
\end{tikzcd}
\]
There are two groupoids that can naturally be associated with the pair $(X_0,\tau_0)$.
\begin{enumerate}
	\item Firstly, there is the \emph{concrete} groupoid $\X = (X_1 \rightrightarrows X_0)$ (i.e.\ a groupoid internal to $\sets$) obtained by taking $X_1$ as the set of all isomorphisms of the points $X_0$ (which is always small).
	
	\item Secondly, there is the \emph{localic} groupoid $\X^{\rm loc} = (X_1^{\rm loc} \rightrightarrows X_0^{\rm loc})$, whose locale of objects is the frame of opens $\opens(X_0^{\tau_0})$ and whose locale of arrows is the locale in the (bi)pullback of toposes
	\[
	\begin{tikzcd}
		\Sh(X_1^{\rm loc}) \ar{r} \ar{d} & \Sh(X_0^{\tau_0}) \ar{d} \\
		\Sh(X_0^{\tau_0}) \ar{r} & \topos.
	\end{tikzcd}
	\] 
	Thus, a point of the locale $X_1^{\rm loc}$ corresponds to an isomorphism between the points of $X_0$, i.e.\ a point of $X_1$.  However, the locale $X_1^{\rm loc}$ may be non-spatial.
\end{enumerate}
It is natural to ask: if $\X$ is the underlying groupoid of an (open) representing topological groupoid for $\topos$, is $\X^{\rm loc}$ a representing \emph{localic} groupoid, and vice versa?

One implication is true.  Suppose that there exists a topology $\tau_1$ on $X_1$ making $\Xtt = (X_1^{\tau_1} \rightrightarrows X_0^{\tau_0})$ an open topological groupoid for which there is an equivalence of toposes $\topos \simeq \Sh(\Xtt)$.  Then, by \cref{lem:forget_action_surj} and \cref{lem:forget_action_open}, the canonical geometric morphism
\[
\begin{tikzcd}
	u_\X \colon \Sh\left(X^{\tau_0}\right) \ar{r} & \Sh(\Xtt) \simeq \topos
\end{tikzcd}
\]
is an open surjection, and so, by an application of the descent theory of Joyal and Tierney \cite{JT}, the topos $\topos$ is also the topos of sheaves on the localic groupoid $\X^{\rm loc}$.

The converse, however, is not true.  In \cref{ex:atomic_but_not_ultra} we give a counterexample consisting of a topos $\topos$ and an open surjective point $p \colon \sets \to \topos$ for which there is an equivalence $\topos \simeq \B \Aut(p)^{\rm loc}$ (here $\Aut(p)^{\rm loc}$ denotes the localic automorphism group of $p$) but where there is not an equivalence $\topos \not \simeq \B \Aut(p)^{\tau_1}$ for any topology $\tau_1$ on the concrete automorphism group $\Aut(p)$.




\section{Factoring topologies for objects}\label{sec:facttopobjs}

Let $\theory$ be a geometric theory and let $p_0 \colon X_0 \to \Tmodels{\sets}$ be a function picking out set-based models of $\theory$.  We wish to classify the possible factoring topologies for $X_0$.  We will begin by recalling from \cite{forssellphd} a choice of factoring topology is present when the constituent models of $X_0$ are endowed with some indexing by a set of parameters.  Next, we demonstrate that any factoring topology yields such an indexing of the models in $X_0$, and hence we arrive at our characterisation of factoring topologies for objects.


\paragraph{The logical topology on objects.}

Given an indexing of each model in our set of objects by some parameters, we can define a certain factoring topology for objects: the \emph{logical topology for objects}.  This is an adaptation of the topology used in the papers by Awodey and Forssell (see \cite[Definition 3.1.1.2]{forssellphd}, \cite[Definition 3.1]{forssell}), itself an adaptation of the topology used by Butz and Moerdijk in \cite[\S 2]{BM}.  As we will see in \cref{thm:logictopo}, any factoring topology for objects $\tau_0$ on $X_0$ can, essentially, be chosen to contain a logical topology.

\begin{df}[Definition 3.1 \cite{forssell}, Definition 1.2.1 \cite{awodeyforssell}]
	
	Let $\theory$ be a geometric theory and let $p_0 \colon X_0 \to \Tmodels{\sets}$ be a function where each model in the image of $p_0$ is indexed by a set of parameters $\Index$.  The \emph{logical topology for objects} $\taulogo$ on $X_0$ is the topology generated by the basis whose opens are \emph{sentences with parameters} 
	\[\class{\vec{m}:\varphi}_\X = \lrset{M \in X_0}{M \vDash \phi(\vec{m})},\]
	where $\phi$ is a geometric formula and $\vec{m} \in \Index$ is a tuple of parameters.
\end{df}

\begin{rem}\label{rem:taulogo:atomic_sents_only}
	We note that, when generating the logical topology for objects $\taulogo$ on $X_0$, we can focus our attention on only those opens $\classv{\varphi}{m}_\X$ where $\varphi$ is an \emph{atomic formula} (see \cite[Definition D1.1.3]{elephant}).  This is because any of the logical symbols $\left\{\,\land, \bigvee, \exists\,\right\}$ used to construct a composite geometric formula from atomic ones can be replaced by topological constructions:
	\begin{enumerate}
		\item $\classv{\varphi \land \psi}{m}_\X = \classv{\varphi}{m}_\X \cap \classv{\psi}{m}_\X$,
		\item $\lrclass{\vec{m}:\bigvee_{i \in I} \varphi_i}_\X = \bigcup_{i \in I} \classv{\varphi_i}{m}_\X$,
		\item $\classv{\exists y \, \varphi}{m}_\X = \bigcup_{{m}' \in \Index} \class{\vec{m},{m}':\varphi}_\X$.
	\end{enumerate}
\end{rem}

\begin{lem}[Lemmas 3.1.1.6 \& 3.1.1.10 \cite{forssellphd}]\label{logtopisfact}
	Let $\theory$ be a geometric theory and let $p_0 \colon X_0 \to \Tmodels{\sets}$ be a function.  For any indexing of the models in the image of $p_0$ by parameters, the resultant `logical topology for objects' is a factoring topology for objects.
\end{lem}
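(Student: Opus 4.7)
The plan is to use the universal property of the classifying topos: a geometric morphism $q \colon \Sh(X_0^{\taulogo}) \to \topos_\theory$ corresponds to a $\theory$-model in the target topos. I will construct such a model $\tilde M$ in $\Sh(X_0^{\taulogo})$ whose inverse-image pullback along $j$ agrees, on the generating objects $\form{\phi}{x}$, with the model classified by $\p_0$. Since $\p_0^\ast$ is determined by its action on those generators, this will give the desired factorisation $\p_0 \simeq q \circ j$.

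For each sort $A$ of the signature, I will set $\tilde M(A) := \classv{\top_A}{x}_\X = \coprod_{M \in X_0} M^A$ and equip it with the topology generated by the basis
\[U_{\vec{m},\psi,i} := \{\,\langle m_i^M, M\rangle \mid M \vDash \psi(\vec{m})\,\},\]
where $\vec{m}$ ranges over tuples of parameters from $\Index$, $\psi$ over atomic formulae in matching context, and the distinguished coordinate $m_i$ is of sort $A$. The projection $\pi_A \colon \tilde M(A) \to X_0^{\taulogo}$ sending $\langle n, M\rangle$ to $M$ is then a local homeomorphism, since each $U_{\vec{m},\psi,i}$ maps bijectively onto the basic open $\class{\vec{m}:\psi}_\X$ of $\taulogo$ — here I invoke \cref{rem:taulogo:atomic_sents_only}, that $\taulogo$ is generated by atomic formulae with parameters. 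Each function and relation symbol of $\Sigma$ is interpreted fibrewise using the models $M \in X_0$, and continuity is verified by unpacking the action on the basis.

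Next, I will show by induction on the construction of geometric formulae that the interpretation of $\phi(\vec{x})$ in $\tilde M$ is the open subsheaf $\classv{\phi}{x}_\X \hookrightarrow \tilde M(\vec{x})$ with its subspace topology. Conjunction corresponds to intersection of open subsheaves, infinitary disjunction to union, and existential quantification to the image along a projection $\tilde M(\vec{x},y) \to \tilde M(\vec{x})$; all three operations are computed fibrewise in terms of the set-theoretic interpretations in each $M \in X_0$. Since each fibre of $\pi$ over $M$ is the set-based model $M$, every axiom $\phi \vdash_{\vec{x}} \psi$ of $\theory$ holds fibrewise, and therefore as a containment of open subsheaves in $\Sh(X_0^{\taulogo})$. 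Hence $\tilde M$ is a $\theory$-model, and the resulting geometric morphism $q$ satisfies $q^\ast(\form{\phi}{x}) \cong \classv{\phi}{x}_\X$; forgetting the topology via $j^\ast$ recovers $\p_0^\ast(\form{\phi}{x})$ in $\Sh(X_0^\delta)$, yielding the factorisation.

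The main obstacle I anticipate is the inductive step for existential quantification: one must verify that the direct-image topology on $\classv{\exists y\,\phi}{x}_\X$ obtained as the (open) image of the projection $\tilde M(\vec{x},y) \to \tilde M(\vec{x})$ coincides with the topology it should carry as an open subsheaf of $\tilde M(\vec{x})$, namely the one generated by those basic opens $U_{\vec{m},\chi,\cdot}$ that lie inside $\classv{\exists y\,\phi}{x}_\X$. This amounts to showing that the projections between the $\tilde M(\vec{x})$ are open — which follows from the fact that dropping a coordinate from a tuple of parameters sends basic opens to basic opens — together with a cofinality argument ensuring that if $\langle \vec{m}^M, M\rangle \in \classv{\exists y\,\phi}{x}_\X$ then $M$ witnesses the existential by the interpretation of some parameter $m' \in \Index$, so that $U_{(\vec{m},m'),\phi,\vec{x}}$ is a basis neighbourhood; both facts are immediate from the definition of an $\Index$-indexing.
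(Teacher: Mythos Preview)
Your proposal is correct and follows essentially the same approach as the paper's sketch: both put the ``definables with parameters'' topology on $\classv{\phi}{x}_\X$ and verify that the projection to $X_0^{\taulogo}$ is a local homeomorphism via the sections $M \mapsto \langle \vec{m}^M, M\rangle$. The only difference is packaging: you phrase the argument as constructing a $\theory$-model in $\Sh(X_0^{\taulogo})$ and invoking the universal property of the classifying topos, whereas the paper phrases it as showing that $\p_0^\ast$ restricted to the generating objects $\form{\phi}{x}$ factors through $J$; these are equivalent formulations, and your basis $U_{\vec{m},\psi,i} = \class{x,\vec{m}: x = m_i \land \psi}_\X$ generates the same topology as the paper's basis of arbitrary definables with parameters.
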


The proof of \cref{logtopisfact} is easily adapted from the exposition found in \cite[\S 3.1.1]{forssellphd}, which deal with the specific case of Forssell groupoids (which we will study in more detail in \cref{sec:forssell}).  We sketch the salient points here.  To demonstrate that $\p_0 \colon \Sh(X_0^\delta) \to \topos_\theory$ factors through $j \colon \Sh(X_0^\delta) \to \Sh(X_0^\taulogo)$, it suffices to show that the restriction of the inverse image $\p_0^\ast$ to (the full subcategory spanned by) a set of generating objects for $\topos_\theory$ factors through $J \colon \Sh(X_0^\taulogo) \to \Sh(X_0^\delta)$.  

In particular, for each formula $\phi$ in context $\vec{x}$, we must find a topology $T_{\vec{x}}$ on $\classv{\phi }{x}_\X$ for which $\pi_{\classv{\phi}{x}} \colon \classv{\phi}{x}_\X \to X_0^\taulogo$ is a local homeomorphism.  The topology generated by the basis consisting of definables with parameters
\[
\class{\vec{x},\vec{m} : \phi \land \psi}_\X = \lrset{\lrangle{\vec{n},M}}{M \vDash \phi(\vec{n}) \land \psi(\vec{n},\vec{m})} \subseteq \classv{\phi}{x}_\X
\]
suffices.  This is because each $\lrangle{\vec{n},M} \in \classv{\phi}{x}_\X$ is contained in the image of the evident local section $s \colon \classv{\phi}{m}_\X \to \classv{\phi \land \vec{x} = \vec{m}}{x}_\X \subseteq  \classv{\phi}{x}_\X$, which acts by $M \mapsto \lrangle{\vec{m},M}$, where $\vec{m}$ is a tuple of parameters indexing $\vec{n} \in M$.  Further details can be found in \cite[\S 3.1.1]{forssellphd}.


\paragraph{Every factoring topology for objects contains the logical topology.}

There is a sense in which the `logical topology for objects' is essentially the only factoring topology that need be considered.  Specifically, we prove the following, constituting our characterisation of factoring topologies for objects.

\begin{prop}\label{thm:logictopo}
	Let $\theory$ be a geometric theory and let $p_0 \colon X_0 \to \Tmodels{\sets}$ be a function.  A topology $\tau_0$ on $X_0$ is a factoring topology for objects if and only if there exists an indexing of each model in the image of $p_0$ by a set of parameters $\Index$ such that $\tau_0$ contains a `logical topology for objects' $\taulogo$.
\end{prop}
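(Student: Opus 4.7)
The plan is to prove both directions separately. The easy direction is \textit{if}: suppose some indexing yields a logical topology $\taulogo$ contained in $\tau_0$. By \cref{logtopisfact}, $\p_0$ factors as $\Sh(X_0^\delta) \xrightarrow{j_{\log}} \Sh(X_0^{\taulogo}) \xrightarrow{q} \topos_\theory$. Since $\tau_0 \supseteq \taulogo$, the identity $X_0^{\tau_0} \to X_0^{\taulogo}$ is continuous, inducing $k \colon \Sh(X_0^{\tau_0}) \to \Sh(X_0^{\taulogo})$ with $k \circ j_{\tau_0} \cong j_{\log}$, so that $\p_0 \cong (q \circ k) \circ j_{\tau_0}$, giving the factoring through $\tau_0$.

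For the \textit{only if} direction, suppose $\tau_0$ is a factoring topology, with factor $\bar{\p}_0 \colon \Sh(X_0^{\tau_0}) \to \topos_\theory$. The strategy is to read off an indexing of the models in $X_0$ from the local homeomorphism structure that $\bar{\p}_0^\ast$ imposes on the definables $\classv{\varphi}{x}_\X$. Concretely, for each $\form{\varphi}{x} \in \topos_\theory$, we have $\bar{\p}_0^\ast \form{\varphi}{x} = \classv{\varphi}{x}_\X$ (as a set over $X_0$, by commutativity with $J$) and this is a local homeomorphism $\pi \colon \classv{\varphi}{x}_\X \to X_0^{\tau_0}$. For each sort $k$, I will index by the set
\[
\Index_k \;=\; \lrset{(V,s)}{V \in \tau_0,\; s \colon V \to \classv{\top}{x}_\X \text{ is a local section of } \pi \text{ with } x \text{ of sort } k},
\]
interpreting the parameter $m_{(V,s)}$ in a model $M \in V$ as the unique element $n \in M$ with $s(M) = \langle n, M \rangle$ (and leaving it uninterpreted for $M \notin V$). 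This defines a genuine indexing because the local homeomorphism property guarantees that every point $\langle n, M \rangle \in \classv{\top}{x}_\X$ lies in the image of some local section.

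It then remains to show $\taulogo \subseteq \tau_0$; it suffices to verify that each basic open $\class{\vec{m} : \varphi}_\X$ lies in $\tau_0$, where $\vec{m} = (m_{(V_1,s_1)}, \ldots, m_{(V_k,s_k)})$. Since $\bar{\p}_0^\ast$ preserves finite limits and $\form{\top}{x_1,\dots,x_k}$ is the product of the $\form{\top}{x_i}$'s in $\topos_\theory$, the tuple $(s_1,\dots,s_k)$ assembles into a local section $s \colon V_1 \cap \cdots \cap V_k \to \classv{\top}{x_1,\dots,x_k}_\X$. By \cref{lem:subobjofform} and preservation of monomorphisms, $\classv{\varphi}{x_1,\dots,x_k}_\X$ is an open subspace of $\classv{\top}{x_1,\dots,x_k}_\X$. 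Unwinding the definitions shows that $\class{\vec{m} : \varphi}_\X = s^{-1}\bigl(\classv{\varphi}{x_1,\dots,x_k}_\X\bigr)$, which is open in $V_1 \cap \cdots \cap V_k \in \tau_0$, hence open in $\tau_0$. The main obstacle in this argument is bookkeeping: one must ensure that the local homeomorphism structure on $\classv{\varphi}{x}_\X$ induced by $\bar{\p}_0^\ast$ is truly compatible across all $\varphi$ and all arities, so that tuples of single-element parameters transform into local sections of the multi-variable sheaves. This compatibility is exactly what left-exactness of $\bar{\p}_0^\ast$ supplies.
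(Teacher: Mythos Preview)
Your proof is correct and follows essentially the same approach as the paper: both construct the indexing set from the local sections of the local homeomorphism $\pi \colon \class{x:\top}_\X \to X_0^{\tau_0}$, and both use left-exactness of $\bar{\p}_0^\ast$ to handle tuples of parameters via the fibre product $\classv{\top}{x}_\X = \class{x_1:\top}_\X \times_{X_0} \cdots \times_{X_0} \class{x_n:\top}_\X$. The only cosmetic difference is that you verify $\class{\vec{m}:\varphi}_\X \in \tau_0$ by pulling back the open $\classv{\varphi}{x}_\X$ along the continuous section $s$, whereas the paper pushes forward the open $\classv{\varphi \land \vec{x} = \vec{s}}{x}_\X$ along the open map $\pi_{\classv{\top}{x}}$; since $s$ is a homeomorphism onto its open image these are equivalent computations.
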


\begin{proof}
	One direction is clear: if $\tau_0$ contains a `logical topology for objects' $\taulogo$ then, by \cref{logtopisfact}, the geometric morphism $\p_0$ factors as
	\[\begin{tikzcd}
		\Sh(X_0^\delta) \ar{r} & \Sh(X_0^{\tau_0}) \ar{r} & \Sh(X_0^{\taulogo}) \ar{r} & \topos_\theory.
	\end{tikzcd}\]
	
	When we assume instead that $\tau_0$ is a factoring topology for objects, then, in particular, for each singleton context, the projection $\pi_{\class{x:\top}} \colon \class{x:\top}_\X \to X_0^{\tau_0}$ is a local homeomorphism for some choice of topology $T_x$ on $\class{x:\top}_\X$.  In particular, every element $\lrangle{n,M} $ of $ \class{x:\top}_\X$ lies in the image of some local section $s \colon U \to \class{x:\top}_\X$ of $\pi_{\class{x:\top}}$ whose image is open.  Let $\Index$ be the set of parameters whose elements are such local sections of $\pi_{\class{x:\top}}$.  We can index each $M \in X_0$ by interpreting the parameter $s \colon U  \to \class{x:\top}_\X$ by $n \in M$ if $\lrangle{n,M}$ lies in the image $s(U)$.  Thus, the open $s(U) \subseteq \class{x:\top}_\X$ is the interpretation of the definable with parameters $\class{x = s}_\X$, as in the diagram
	\begin{center}
		\begin{tikzpicture}
			\node (M) at (-.5,0) {$M$};
			\node (M') at (1,0) {$M'$};
			\draw (2.25,0) node {$\dots$};
			\draw (3.5,0) node {$N$};
			\draw[color = gray, fill = none, rounded corners, very thick, dashed] (-1,-0.5) rectangle (4,0.5);
			\draw[color = gray] (4.5,-0.3) node {$X_0$,};

			\draw (2.25,3.5) node {$\dots$};
			
			\node (a) at (-0.5,5) {$a$};
			{\node (bimg) at (-0.5,4) {$a'$};}
			\draw (-0.5,3) node {$\vdots$};
			\draw (-0.5,2) node {$a''$};
			\draw[color = Maroon, fill = none, rounded corners, very thick, dashed] (-1,1.5) rectangle (0,5.5);
			\draw[color = Maroon] (-1.5,5.8) node {$\class{x:\top}_M$};
			
			\node (b) at (1,5) {$b$};
			{\node (aimg) at (1,4) {$b'$};}
			\draw (1,3) node {$\vdots$};
			\draw (1,2) node {$b''$};
			\draw[color = Plum, fill = none, rounded corners, very thick, dashed] (0.5,1.5) rectangle (1.5,5.5);
			\draw[color = Plum] (2,5.8) node {$\class{x:\top}_{M'}$};
			
			\draw (3.5,4.5) node {$c$};
			\draw (3.5,3.5) node {$\vdots$};
			\draw (3.5,2.5) node {$c'$};
			\draw[color = Orchid, fill = none, rounded corners, very thick, dashed] (3,2) rectangle (4,5);
			\draw[color = Orchid] (4.5,5.3) node {$\class{x:\top}_N$};
			
			\draw[->] (-.5,1.25) -- (-.5,.75);
			\draw[->] (1,1.25) -- (1,.75);
			\draw[->] (3.5,1.75) -- (3.5,.75);
			
			{
				\draw[color = Darkcyan, fill = none, rounded corners, very thick, dashed] (-0.9,4.5) rectangle (1.4,5.4);
				\draw[color = Darkcyan] (-2,4.5) node {$\class{x = s}_\X$};
			}
			
			{
				\draw[color = Darkcyan, fill = none, rounded corners, very thick, dashed] (-0.9,-0.4) rectangle (1.4,0.4);
				\draw[color = Darkcyan] (-1.3,0) node {$U$};
				\draw[->, color=Darkcyan] (-1.1,0.6) to[bend left] node[midway, left] {$s$} (-1.3,4.1);
			}

		\end{tikzpicture}
	\end{center}
	As the local sections with open image are jointly surjective, this does indeed define an indexing of each $M \in X_0$ by the parameters $\Index$.
	
	It remains to show that $\tau_0$ contains the logical topology for this indexing.  We first note that, by construction, $s(U) = \class{x = s}_\X$ is open in $\class{x:\top}_\X$.  Let $k \colon \Sh(X_0^{\tau_0}) \to \topos_\theory$ denote the factoring geometric morphism.  As $k^\ast$ preserves finite limits and $\form{\top}{x} = \prod_{{x_i \in \vec{x}}} \{x_i : \top\}$ for any context $\vec{x}$, we deduce that $\classv{\top}{x}_\X = k^\ast(\form{\top}{x})$ is the product in $\Sh(X_0^{\tau_0})$ of $ \class{x_i : \top}_\X = k^\ast(\{x_i : \top\})$ over each $x_i \in \vec{x}$.  That is, $\classv{\top}{x}_\X$ is the wide pullback of topological spaces $\class{x_1 : \top}_\X \times_{X_0} \dots \times_{X_0} \class{x_n: \top}_\X$.  In particular, $\classv{\top}{x}_\X$ is endowed with the product topology.  Therefore, for any tuple of parameters $\vec{s} \in \Index$ and geometric formula $\phi$, we have that
	\begin{align*}
		\classv{\varphi \land \vec{x} = \vec{s}}{x}_\X & = \classv{\varphi}{x}_\X \cap \prod_{x_i \in \vec{x}} \class{x_i = s_i}_\X , \\
		&= \lrset{\lrangle{\vec{n},M}}{ M \vDash \varphi(\vec{s}), \,  \vec{n} = \vec{s}}
	\end{align*}
	is an open subset of $\classv{\top}{x}_\X$.  Since the local homeomorphism  $\pi_{\classv{\top}{x}} \colon \classv{\top}{x}_\X \to X_0^{\tau_0}$ is, in particular, an open map, the image $\pi_{\classv{\top}{x}}(\classv{\varphi \land \vec{x} = \vec{s}}{x}_\X) = \classv{\varphi[\vec{s}/\vec{x}]}{s}_\X$ is open in $\tau_0$, i.e.\ $\tau_0$ contains the `logical topology for objects', as desired.
	
\end{proof}

\begin{rem}\label{rem:alwaysdefwithparam}
	Let $\theory$ be geometric theory, $p_0 \colon X_0 \to \Tmodels{\sets}$ a function, and let $\tau_0$ be a factoring topology for objects on $X_0$.  
	\begin{enumerate}
		\item When we discover an indexing of $\X$ by the set of parameters $\Index$ such that $\tau_0$ contains the `logical topology for objects' $\taulogo$ via the method of \cref{thm:logictopo}, we note that we have also forced the chosen topology on $\classv{\top}{x}_\X$ to contain as opens the definables with parameters $\class{\vec{x},\vec{m}:\varphi}_\X$.
		
		\item Suppose that $\theory$ is an \emph{inhabited} (single-sorted) theory, meaning that $\theory$ proves the sequent $ \vdash \exists x \, \top$.  Then  every model $M \in X_0$ is non-empty and so $\pi_{\class{x : \top}} \colon \class{x: \top} \to X_0^{\tau_0}$ is a surjective local homeomorphism.  This entails that $\tau_0$ is generated by the opens of the form $U = \pi_{\class{x:\top}} s(U)$, where $s \colon U \to \class{x:\top}_\X$ is a local section of $\pi_{\class{x : \top}}$ whose image is open, i.e.\ $\tau_0$ is generated by the subsets $\class{s : \top}_\X \subseteq X_0$ where $s$ is a parameter in the indexing set $\Index$ constructed in \cref{thm:logictopo}.  Thus, for this indexing of $\X$, the logical topology for objects coincides with $\tau_0$.  That is, when $\theory$ is an inhabited theory, every factoring topology for objects is the logical topology for objects for some indexing $\Index \paronto \X$.
	\end{enumerate}
\end{rem}



\section{Factoring topologies for arrows}\label{sec:facttopars}

Let $\theory$ be a geometric theory, $\mathbb{X} = (X_1 \rightrightarrows X_0)$ a groupoid with a functor $p \colon \X \to \Tmodels{\sets}$, and let $\tau_0$ be a factoring topology for objects on $X_0$.  We seek to classify the possible factoring topologies for arrows on $X_1$.  As in our sketch proof of \cref{logtopisfact}, we note that there is a factorisation
\[
\begin{tikzcd}
	\Sh(\Xtd) \ar{rd}{\p'} \ar{d}[']{w} & \\
	\Sh(\Xtt) \ar[dashed]{r} & \topos_\theory
\end{tikzcd}
\]
if and only if the inverse image functor ${\p'}^\ast$ restricted to a generating set of objects for $\topos_\theory$ factors through $W \colon \Sh(\Xtt) \to \Sh(\Xtd)$.  That is to say, $\tau_1$ is a factoring topology for arrows if and only if  the $X_1^\delta$-action on $\class{\vec{x}:\varphi}_\X$ is continuous, for each geometric formula $\phi$. In fact, it suffices to only check that, for each context $\vec{x}$, the action $\theta_{\classv{\top}{x}} \colon \classv{\top}{x}_\X \times_{X_0} X_1^{\tau_1} \to  \classv{\top}{x}_\X $ is continuous since then the restriction of $\theta_{\classv{\top}{x}}$ to the subspace $\classv{\varphi}{x}_\X \subseteq \classv{\top}{x}_\X$, i.e.\ the action $\theta_{\classv{\varphi}{x}}$, is continuous as well.



\paragraph{Logical topology for arrows.}  

We first recall the logical topology for arrows, another variation on a topology utilised in \cite{awodeyforssell,forssellphd,forssell,BM}, and that this is a factoring topology for arrows.  Much like the logical topology for objects, we will observe in \cref{prop:allfactaislog} that the logical topology for arrows plays a special role among all factoring topologies for arrows.

\begin{df}[Definition 3.1 \cite{forssell}, Definition 1.2.1 \cite{awodeyforssell}]\label{df:logtopforar}
	Let $\theory$ a geometric theory, $\X = (X_1 \rightrightarrows X_0)$ a groupoid and $p \colon \X \to \Tmodels{\sets}$ a functor such that each model in the image of $p$ is indexed by the set of parameters $\Index$.  The \emph{logical topology for arrows} is the topology on $X_1$ generated by basic opens of the form
	\[
	\lrclass{
		\begin{matrix}
			\vec{a}:\varphi \\
			\vec{b} \mapsto \vec{c} \\
			\vec{d} : \psi
		\end{matrix}
	}_\X =
	\lrset{M \xrightarrow{\alpha} N \in X_1}{ 
		\begin{matrix}
			\vec{a} \in M, \, M \vDash \varphi(\vec{a}), \\
			\vec{b} \in M, \, \vec{c} \in N, \, \alpha(\vec{b}) = \vec{c}, \\
			\vec{d} \in N , \, N \vDash \psi(\vec{d})
	\end{matrix}},
	\]
	where $\form{\varphi}{x}, \form{\psi}{y}$ are formulae in context, and $\vec{a},\vec{b},\vec{c},\vec{d}$ are tuples of parameters in $\Index$.  We will write $\class{\vec{b} \mapsto \vec{c}}_\X$ for the subset $\{\,\alpha \in X_1\mid \alpha(\vec{b}) = \vec{c}\,\} \subseteq X_1$ (i.e., when $\vec{a} = \vec{d} = \emptyset$ and $\phi = \psi = \top$).
\end{df}

\newcommand\hugecup{%
	\scaleobj{2}{%
		\bigcup\limits_{\smash{\raisebox{0\baselineskip}{\(\scaleobj{0.425}{\vec{e} \in \Index}\)}}}
	}
}

\begin{lem}[Lemma 3.2.1.1 \cite{forssellphd}]\label{lem:topologicalgrpdwhenlogtopa}
	The groupoid $\Xlog$,
	\[\begin{tikzcd}
		X^{\tauloga}_1 \times_{X_0} X^{\tauloga}_1 \ar[shift left = 4]{r}{\pi_{2}} \ar{r}{m} \ar[shift right = 4]{r}{\pi_{1}} & X^{\tauloga}_1 \ar[loop, distance=2em, in=305, out=235, "i"'] \ar[shift left = 4]{r}{t} \ar[shift right = 4]{r}{s} & \ar{l}[']{e} X^{\taulogo}_0 ,
	\end{tikzcd}\]
	is a topological groupoid, i.e.\ the maps $s$, $t$, etc., are continuous.
\end{lem}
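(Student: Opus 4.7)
\medskip

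The plan is to verify continuity of each structure morphism individually by showing that the preimage of each basic open in the target topology is (a union of) basic opens in the source topology. Most of the work is bookkeeping: rewriting the defining conditions of a basic open in terms of the source, target, inversion, or composition. Recall also from \cref{rem:taulogo:atomic_sents_only} that to generate $\taulogo$ one may restrict to basic opens $\class{\vec{m}:\varphi}_\X$ built from atomic $\varphi$, but this refinement is not actually needed here.

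The maps $s$ and $t$ are handled at once: given a basic open $\class{\vec{a}:\varphi}_\X \in \taulogo$, we have
\[
s^{-1}\class{\vec{a}:\varphi}_\X = \lrclass{\begin{matrix}\vec{a}:\varphi\\ \emptyset\mapsto\emptyset\\ \emptyset:\top\end{matrix}}_\X, \quad t^{-1}\class{\vec{a}:\varphi}_\X = \lrclass{\begin{matrix}\emptyset:\top\\ \emptyset\mapsto\emptyset\\ \vec{a}:\varphi\end{matrix}}_\X.
\]
For the unit $e$, the identity $\id_M$ lies in the basic open $\lrclass{\begin{smallmatrix}\vec{a}:\varphi\\ \vec{b}\mapsto\vec{c}\\ \vec{d}:\psi\end{smallmatrix}}_\X$ precisely when all of $\vec{a}, \vec{b}, \vec{c}, \vec{d}$ are interpreted in $M$, $\vec{b}=\vec{c}$ holds, $M\vDash\varphi(\vec{a})$ and $M\vDash\psi(\vec{d})$; this describes exactly the basic open $\class{\vec{a},\vec{b},\vec{c},\vec{d}:\varphi(\vec{x}_a)\wedge \vec{x}_b=\vec{x}_c\wedge\psi(\vec{x}_d)}_\X$ of $\taulogo$. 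For inversion, an arrow $\alpha$ lies in $i^{-1}$ of the same basic open iff $\alpha^{-1}$ satisfies the stated conditions, equivalently iff $\alpha \in \lrclass{\begin{smallmatrix}\vec{d}:\psi\\ \vec{c}\mapsto\vec{b}\\ \vec{a}:\varphi\end{smallmatrix}}_\X$, which is again basic.

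The only step requiring slightly more care is continuity of composition $m\colon X_1\times_{X_0}X_1 \to X_1$, since we must pull a basic open back to the fibered product topology. Given a composable pair $(\alpha,\beta)$ with $\alpha\colon M\to N$ and $\beta\colon N\to P$, the composite $\beta\circ\alpha$ lies in the basic open $\lrclass{\begin{smallmatrix}\vec{a}:\varphi\\ \vec{b}\mapsto\vec{c}\\ \vec{d}:\psi\end{smallmatrix}}_\X$ iff $\vec{a}\in M$ with $M\vDash\varphi(\vec{a})$, $\vec{d}\in P$ with $P\vDash\psi(\vec{d})$, and there exists a tuple $\vec{e}$ in $N$ (of the same arity as $\vec{b}$) with $\alpha(\vec{b})=\vec{e}$ and $\beta(\vec{e})=\vec{c}$. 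Since each element of $N$ is named by some parameter in $\Index$, we can let $\vec{e}$ range over $\Index^{|\vec{b}|}$, giving
\[
m^{-1}\lrclass{\begin{matrix}\vec{a}:\varphi\\ \vec{b}\mapsto\vec{c}\\ \vec{d}:\psi\end{matrix}}_\X \;=\; \bigcup_{\vec{e}\in\Index^{|\vec{b}|}} \lrclass{\begin{matrix}\vec{a}:\varphi\\ \vec{b}\mapsto\vec{e}\\ \emptyset:\top\end{matrix}}_\X \times_{X_0} \lrclass{\begin{matrix}\emptyset:\top\\ \vec{e}\mapsto\vec{c}\\ \vec{d}:\psi\end{matrix}}_\X,
\]
which is a union of basic opens in the fibered product topology on $X_1^{\tauloga}\times_{X_0}X_1^{\tauloga}$. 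The main conceptual obstacle in the whole lemma is this decomposition: recognising that the existential quantifier ``there is an intermediate element of $N$'' must be unfolded into a union indexed by parameter names, and that this is possible precisely because the indexing surjects onto $N$. Once this is noted, every case reduces to direct inspection of basic opens.
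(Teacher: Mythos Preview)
Your proof is correct and is the standard direct verification. The paper itself does not give a proof of this lemma; it simply cites Lemma 3.2.1.1 of Forssell's thesis \cite{forssellphd} and moves on. Your argument---checking continuity of each structure map by computing preimages of basic opens, with the key observation for $m$ that the intermediate element $\alpha(\vec{b})\in N$ is named by some parameter tuple $\vec{e}$ because the indexing $\Index\paronto N$ is surjective---is exactly the routine computation one expects and matches the approach in Forssell's thesis.
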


\begin{lem}[Lemma 3.2.2.1 \cite{forssellphd}]\label{lem:logtopaisfact}
	Suppose that $X_0$ is endowed with the logical topology on objects $\taulogo$.  When $X_1$ is given the logical topology on arrows $\tauloga$, the action $\theta_{\classv{\top}{x}} \colon \classv{\top}{x}_\X \times_{X_0} X_1^{\tau_1} \to  \classv{\top}{x}_\X $ is continuous, and hence $\tauloga$ is a factoring topology for arrows.
\end{lem}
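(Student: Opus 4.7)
The plan is to prove continuity by checking it pointwise on a generating basis for the topology of $\classv{\top}{x}_\X \times_{X_0} X_1^{\tauloga}$, and then invoke the observation made in the paragraph preceding \cref{df:logtopforar}: once $\theta_{\classv{\top}{x}}$ is continuous, the restriction $\theta_{\classv{\varphi}{x}}$ to the open subspace $\classv{\varphi}{x}_\X \subseteq \classv{\top}{x}_\X$ is automatically continuous, and so $\tauloga$ is a factoring topology for arrows. From \cref{rem:alwaysdefwithparam} and the construction in \cref{thm:logictopo}, we may assume the topology on $\classv{\top}{x}_\X$ is generated by the definables with parameters $\class{\vec{x},\vec{m}:\varphi}_\X$, so continuity will follow once every preimage $\theta_{\classv{\top}{x}}^{-1}\bigl(\class{\vec{x},\vec{m}:\varphi}_\X\bigr)$ is shown to be open.

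First I would fix a basic open $\class{\vec{x},\vec{m}:\varphi}_\X$ in $\classv{\top}{x}_\X$ and an arbitrary point $(\langle \vec{n},M\rangle, \alpha)$ of its preimage, where $\alpha \colon M \to M'$ belongs to $X_1$, the parameters $\vec{m}$ are interpreted in $M'$, and $M' \vDash \varphi(\alpha(\vec{n}),\vec{m})$. Using the indexings $\Index \paronto M$ and $\Index \paronto M'$, I would choose tuples of parameters $\vec{p}, \vec{q} \in \Index$ whose interpretations in $M$ and $M'$ are $\vec{n}$ and $\alpha(\vec{n})$ respectively. The candidate basic open neighbourhood of $(\langle\vec{n},M\rangle,\alpha)$ in the pullback $\classv{\top}{x}_\X \times_{X_0} X_1^{\tauloga}$ is
\[
\class{\vec{x} = \vec{p}}_\X \ \times_{X_0} \ \lrclass{\begin{matrix} \vec{p} : \top \\ \vec{p} \mapsto \vec{q} \\ \vec{q},\vec{m} : \varphi(\vec{q},\vec{m}) \end{matrix}}_\X .
\]

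The main verification — and the only part that requires care — is to check that $\theta_{\classv{\top}{x}}$ sends this neighbourhood into $\class{\vec{x},\vec{m}:\varphi}_\X$. For any $(\langle \vec{n}^\ast, M^\ast\rangle, \alpha^\ast)$ in it, with $\alpha^\ast \colon M^\ast \to M^{\ast\ast}$, we have $\vec{n}^\ast = \vec{p}$ in $M^\ast$ and $\alpha^\ast(\vec{p}) = \vec{q}$ in $M^{\ast\ast}$, whence $\alpha^\ast(\vec{n}^\ast) = \vec{q}$ and $M^{\ast\ast} \vDash \varphi(\vec{q},\vec{m}) = \varphi(\alpha^\ast(\vec{n}^\ast),\vec{m})$. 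Hence $\theta_{\classv{\top}{x}}(\langle \vec{n}^\ast, M^\ast\rangle,\alpha^\ast) = \langle \alpha^\ast(\vec{n}^\ast), M^{\ast\ast}\rangle \in \class{\vec{x},\vec{m}:\varphi}_\X$, as required.

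I expect the bookkeeping with parameters to be the only real obstacle: one must be attentive that the parameters $\vec{p}, \vec{q}$ genuinely come from the indexings $\Index \paronto M$, $\Index \paronto M'$ and that the formula $\varphi(\vec{q},\vec{m})$ appearing in the basic open of $X_1^{\tauloga}$ is well-formed in the sense of \cref{df:logtopforar}. Once the basic neighbourhood above is exhibited, the preimage is manifestly open, which yields continuity of $\theta_{\classv{\top}{x}}$. Combining this for every context $\vec{x}$ with the restriction argument from the paragraph preceding \cref{df:logtopforar} gives that $\theta_{\classv{\varphi}{x}}$ is continuous for every geometric formula $\varphi$, so the inverse image ${\p'}^\ast$ restricted to the generating objects $\classv{\varphi}{x}_\X$ factors through $W \colon \Sh(\Xlog) \to \Sh(\Xtd)$, which is precisely the statement that $\tauloga$ is a factoring topology for arrows.
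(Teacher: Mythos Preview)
Your argument is correct. The paper does not actually supply its own proof of this lemma; it simply cites Lemma~3.2.2.1 of Forssell's thesis, so there is nothing in the paper to compare against directly. Your proof is the natural one and is essentially what one finds in the cited source: pick parameters $\vec{p}$ and $\vec{q}$ indexing $\vec{n}$ and $\alpha(\vec{n})$, and use the basic open of $X_1^{\tauloga}$ that pins down $\vec{p} \mapsto \vec{q}$ together with the target condition $\varphi(\vec{q},\vec{m})$.

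Two minor comments on presentation. First, your appeal to \cref{rem:alwaysdefwithparam} and \cref{thm:logictopo} is slightly circuitous: here $X_0$ carries the logical topology $\taulogo$ \emph{by hypothesis}, so the topology on $\classv{\top}{x}_\X$ is, by the construction sketched after \cref{logtopisfact}, already generated by the definables with parameters $\class{\vec{x},\vec{m}:\psi}_\X$. You do not need to pass through the more general analysis of arbitrary factoring topologies. Second, the ``$\vec{p}:\top$'' component of your basic open in $X_1^{\tauloga}$ is redundant, since the condition $\vec{p} \mapsto \vec{q}$ already forces $\vec{p}$ to be interpreted in the source; but this is harmless.
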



Next, we express the special role the logical topology for arrows plays amongst all factoring topologies.  This constitutes our characterisation of the latter.

\begin{prop}\label{prop:allfactaislog}
	Let $\theory$ be a geometric theory, $\mathbb{X} = (X_1 \rightrightarrows X_0)$ a groupoid, $p \colon \X \to \Tmodels{\sets}$ a functor, and let $\tau_0$ be a factoring topology for objects and $\tau_1$ a factoring topology for arrows.  If $\tau_0$ contains the logical topology for objects when each model in the image of $p$ is indexed by a set of parameters $\Index$, then $\tau_1$ contains the logical topology on arrows for this indexing.
\end{prop}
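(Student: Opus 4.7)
The plan is to reduce the statement to showing that, for tuples $\vec{b},\vec{c}\in\Index$ of parameters, the set $\class{\vec{b}\mapsto\vec{c}}_\X = \{\,\alpha\in X_1\mid \alpha(\vec{b})=\vec{c}\,\}$ is open in $X_1^{\tau_1}$. Indeed, a general basic open of $\tauloga$ decomposes as
\[
s^{-1}\bigl(\class{\vec{a}:\varphi}_\X\bigr)\,\cap\, t^{-1}\bigl(\class{\vec{d}:\psi}_\X\bigr)\,\cap\, \class{\vec{b}\mapsto\vec{c}}_\X,
\]
and since $s,t\colon X_1^{\tau_1}\to X_0^{\tau_0}$ are continuous and $\tau_0\supseteq\taulogo$ contains both $\class{\vec{a}:\varphi}_\X$ and $\class{\vec{d}:\psi}_\X$ by hypothesis, the first two factors automatically belong to $\tau_1$.

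To handle the remaining factor I would exploit that $\tau_1$ being a factoring topology for arrows forces the action $\theta_{\classv{\top}{x}}\colon \classv{\top}{x}_\X \times_{X_0} X_1^{\tau_1}\to \classv{\top}{x}_\X$ to be continuous. By Remark~\ref{rem:alwaysdefwithparam}\,(i), the unique topology on $\classv{\top}{x}_\X$ supplied by the factorisation through $\topos_\theory$ contains every definable with parameters as an open; in particular $\class{\vec{x}=\vec{b}}_\X$ and $\class{\vec{x}=\vec{c}}_\X$ are open in $\classv{\top}{x}_\X$. Consequently the subset
\[
\theta_{\classv{\top}{x}}^{-1}\bigl(\class{\vec{x}=\vec{c}}_\X\bigr)\,\cap\,\bigl(\class{\vec{x}=\vec{b}}_\X \times_{X_0} X_1^{\tau_1}\bigr)
\]
is open in the pullback $\classv{\top}{x}_\X \times_{X_0} X_1^{\tau_1}$. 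Unwinding the definition of $\theta_{\classv{\top}{x}}$ shows this open set consists exactly of the pairs $(\langle\vec{b},M\rangle,\alpha)$ with $\vec{b}\in M=s(\alpha)$ and $\alpha(\vec{b})=\vec{c}$.

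The final step is to push this open set forward along the second projection $\pi_2\colon \classv{\top}{x}_\X\times_{X_0} X_1^{\tau_1}\to X_1^{\tau_1}$. Because $\pi_{\classv{\top}{x}}$ is a local homeomorphism and local homeomorphisms are stable under pullback, $\pi_2$ is itself a local homeomorphism and hence open. A direct inspection identifies the image of the open set above under $\pi_2$ with $\class{\vec{b}\mapsto\vec{c}}_\X$, proving that this set is open in $X_1^{\tau_1}$ and completing the argument.

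I do not anticipate a serious obstacle: the proof is a diagram chase combining continuity of the action, openness of the definables with parameters inside $\classv{\top}{x}_\X$, and openness of the pullback projection $\pi_2$. The one point worth verifying carefully is that Remark~\ref{rem:alwaysdefwithparam}\,(i) really applies to the \emph{given} indexing rather than to the indexing one might reconstruct from $\tau_0$ via the proof of Proposition~\ref{thm:logictopo}. This is seen by pulling back along the continuous identity $X_0^{\tau_0}\to X_0^{\taulogo}$ the local homeomorphism structure on $\classv{\top}{x}_\X$ furnished by Lemma~\ref{logtopisfact}: the resulting pullback topology is visibly finer than the $\taulogo$-topology and still contains every definable with parameters, which is the content we need.
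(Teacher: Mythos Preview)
Your proposal is correct and follows essentially the same approach as the paper: reduce to showing $\class{\vec{b}\mapsto\vec{c}}_\X$ is $\tau_1$-open, then use continuity of $\theta_{\classv{\top}{x}}$ together with openness of the definables with parameters in $\classv{\top}{x}_\X$ and openness of the pulled-back projection $\pi_2$. The extra care you take in the final paragraph---justifying that the definables with parameters for the \emph{given} indexing are open in the topology on $\classv{\top}{x}_\X$, by pulling back the local homeomorphism from $X_0^{\taulogo}$ along the continuous identity $X_0^{\tau_0}\to X_0^{\taulogo}$---is a point the paper glosses over by citing \cref{rem:alwaysdefwithparam} directly, so your version is slightly more explicit but not substantively different.
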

\begin{proof}
	We note that, for each basic open in $\tauloga$,
	\[
	\lrclass{
		\begin{matrix}
			\vec{a}:\varphi \\
			\vec{b} \mapsto \vec{c} \\
			\vec{d} : \psi
		\end{matrix}
	}_\X
	= s^{-1}(\classv{\varphi}{a}_\X) \cap \class{\vec{b} \mapsto \vec{c}}_\X \cap \, t^{-1}(\classv{\psi}{d}_\X).
	\]
	As $s,t$ are continuous maps and $\classv{\varphi}{a}_\X, \classv{\psi}{d}_\X$ are both open in $X_0^{\tau_0}$, since $\tau_0 \supseteq \taulogo$, we deduce that $s^{-1}(\classv{\varphi}{a}_\X), t^{-1}(\classv{\psi}{d}_\X) \subseteq X_1^{\tau_1}$ are open.  Thus, it suffices to show that $\tau_1$ contains the subset
	\[
	\class{\vec{b} \mapsto\vec{c}}_\X = \lrset{M \xrightarrow{\alpha} N \in X_1 }{ M \vDash \alpha(\vec{b}) = \vec{c}}.
	\]
	By \cref{rem:alwaysdefwithparam}, we may assume that the topology $T_{\vec{x}}$ on $\classv{\top}{x}_\X$, for which $\pi_{\classv{\top}{x}} \colon \classv{\top}{x}_\X^{T_{\vec{x}}} \to X_0^{\tau_0}$ is a local homeomorphism, contains as opens the definables with parameters $\class{\vec{x},\vec{m}:\varphi}_\X$.
	
	Since $\theta_{\classv{\top}{x}}$ is continuous, the subset
	\[ \theta_{\classv{\top}{x}}^{-1}(\class{x= \vec{c}}_\X) \cap \left( \class{\vec{x}=\vec{b}}_\X \times_{X_0} X_1 \right) = \{\,(\langle \vec{n}, M \rangle, \alpha) \mid \vec{n} = \vec{b}, \, \alpha(\vec{n}) = \vec{c} \,\}\]
	is open in $\classv{\top}{x}_\X \times_{X_0} X_1^{\tau_1}$.  The projection $\pi_2 $ in the pullback square
	\[
	\begin{tikzcd}
		\classv{\top}{x}_\X \times_{X_0} X_1^{\tau_1}  \ar{r} \ar{d}[']{\pi_2} & \classv{\top}{x}_\X \ar{d}{\pi_{\classv{\top}{x}}} \\
		X_1^{\tau_1} \ar{r}{s} & X_0^{\tau_0}
	\end{tikzcd}
	\]
	is an open map since the local homeomorphism $\pi_{\classv{\top}{x}} \colon \classv{\top}{x}_X \to X_0^{\tau_0}$ is an open map too, and open maps are stable under pullback.  Therefore,
	\[\pi_2 \left( \theta_{\classv{\top}{x}}^{-1}(\class{x= \vec{c}}_\X) \cap \left( \class{\vec{x}=\vec{b}}_\X \times_{X_0} X_1 \right) \right)
	= \class{\vec{b} \mapsto\vec{c}}_\X
	\]
	is an open subset of $X_1^{\tau_1}$, and thus $\tau_1$ contains the logical topology on arrows.
\end{proof}

\subsection{Characterising the logical topology for arrows}\label{subsec:logtopislocalic}

Let $\theory$ be a geometric theory and let $p \colon \X \to \Tmodels{\sets}$ be a functor whose domain is a groupoid and where each model in the image of $p$ is indexed by a set of parameters $\Index$.  By \cref{lem:logtopaisfact} and \cref{prop:allfactaislog}, for any factoring topology on arrows $\tau_1$, when $X_0$ is endowed with the logical topology for objects $\taulogo$, there is a factorisation of the geometric morphism $
\p' \colon \Sh(\X_\taulogo^\delta) \to \topos_\theory$ as
\[
\begin{tikzcd}
	\Sh(\X_\taulogo^\delta) \ar{r} & \Sh(\X_\taulogo^{\tau_1}) \ar{r} &
	\Sh(\X_\taulogo^\tauloga) \ar{r} & \topos_\theory.
\end{tikzcd}
\]
Moreover, by \cref{prop:coarsehyp} and \cref{rem:prop:coarsehyp}, the factoring geometric morphisms $
\Sh(\X_\taulogo^\delta) \to \Sh(\X_\taulogo^{\tau_1})$ and $
\Sh(\X_\taulogo^{\tau_1})\to
\Sh(\X_\taulogo^\tauloga)$ are both hyperconnected morphisms.  We may therefore wonder whether the the factorisation
\begin{equation}\label{eq:hyploc}
	\begin{tikzcd}
		\Sh(\X_\taulogo^\delta) \ar{r} &
		\Sh(\X_\taulogo^\tauloga) \ar{r} & \topos_\theory.
	\end{tikzcd}
\end{equation}
is the hyperconnected-localic factorisation of the geometric morphism $\p'$.

We answer affirmatively under the condition that 
\[\Xlog = \left(X_1^\tauloga \rightrightarrows X_0^\taulogo\right)\]
is an \emph{open} topological groupoid.  In general, there is no reason for $\Xlog$ to be an open topological groupoid, though the groupoid eliminating imaginaries is a sufficient condition, as observed in \cref{lem:logicaltopsgiveopengrpd}.

The proof that \cref{eq:hyploc} is the hyperconnected-localic factorisation is essentially contained in Lemmas 2.3.4.10-13 of \cite{forssellphd} (see also Lemmas 3.2.2.8 \& 3.2.2.9 \cite{forssellphd}).  We sketch some of the details of the proof to assure the reader that the only required assumption is that $\Xlog$ is an open topological groupoid.

\begin{lem}[Lemma 2.3.4.10 \cite{forssellphd}]\label{forsselllemma}
	Suppose that $\Xlog$ is an open topological groupoid.  Let $(Y,\beta,q)$ be an $\Xlog$-sheaf.  For each $y \in Y$, there exists a basic open $\classv{\xi}{m}_\X$ of $X_0^\taulogo$ with a local section $f \colon \classv{\xi}{m}_\X \to Y$ of $q$ such that:
	\begin{enumerate}
		\item the point $y$ is in the image of $f$,
		\item for any $M \in \classv{\xi}{m}_\X$ and an isomorphism $M \xrightarrow{\alpha} N \in X_1$ such that $\alpha$ also preserves the interpretation of the parameters $\vec{m}$ (and so $N \in \classv{\xi}{m}_\X$), $\beta\left(f(M), \alpha \right) = f(N)$.
	\end{enumerate}
\end{lem}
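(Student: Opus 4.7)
The plan is to begin with an arbitrary local section of $q$ through $y$ and then shrink its domain, via an equalizer argument for two continuous maps into $Y$, until the equivariance condition (ii) is forced to hold. Concretely, since $q \colon Y \to X_0^{\taulogo}$ is a local homeomorphism and $\taulogo$ is generated by the basic opens $\classv{\xi}{m}_\X$, I pick a basic open $U = \classv{\xi_0}{m_0}_\X$ around $M := q(y)$ together with a continuous section $g \colon U \to Y$ of $q$ satisfying $g(M) = y$. On the open set $s^{-1}(U) \cap t^{-1}(U) \subseteq X_1^{\tauloga}$ I then consider the two continuous maps
\[
\Phi_1(\alpha) := \beta(g(s(\alpha)),\alpha), \qquad \Phi_2(\alpha) := g(t(\alpha)),
\]
both of which satisfy $q \circ \Phi_i = t$; by the sheaf-action identity axiom $\beta(-,e(q(-))) = \id$, they agree on every identity arrow $e(M')$ for $M' \in U$, and in particular at $e(M)$.

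Because $q$ is a local homeomorphism and $q \Phi_1 = q \Phi_2$, the equalizer $Z := \{\,\alpha \mid \Phi_1(\alpha) = \Phi_2(\alpha)\,\}$ is open in $s^{-1}(U) \cap t^{-1}(U)$: at any $\alpha_0 \in Z$ with common image $y_0$, choose an open $W \ni y_0$ on which $q|_W$ is a homeomorphism; on the neighbourhood $\Phi_1^{-1}(W) \cap \Phi_2^{-1}(W)$ of $\alpha_0$, the identity $q \Phi_1 = q \Phi_2$ forces $\Phi_1 = \Phi_2$. Thus $Z$ is an open neighbourhood of $e(M)$ in $X_1^{\tauloga}$. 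Finite intersections of the basic opens of \cref{df:logtopforar} are themselves basic opens, so these form a neighbourhood basis at $e(M)$, and hence there exists a basic open
\[
B = \lrclass{\begin{matrix} \vec{a} : \varphi \\ \vec{b} \mapsto \vec{c} \\ \vec{d} : \psi \end{matrix}}_\X
\]
with $e(M) \in B \subseteq Z$.

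The remaining step is to repackage $B$ into the constrained shape $s^{-1}(\classv{\xi}{m}_\X) \cap \class{\vec{m} \mapsto \vec{m}}_\X \cap t^{-1}(\classv{\xi}{m}_\X)$ required for the conclusion. I set $\vec{m} := (\vec{a},\vec{b},\vec{c},\vec{d})$ and $\xi(\vec{m}) := \varphi(\vec{a}) \land \psi(\vec{d}) \land \vec{b} = \vec{c}$; the inclusion $e(M) \in B$ guarantees $M \vDash \xi(\vec{m})$, and a direct verification confirms that every arrow in the resulting basic open lies in $B$, hence in $Z$. Setting $f := g|_{\classv{\xi}{m}_\X}$ then produces the desired local section: (i) holds by construction, and (ii) is exactly the containment of this symmetry neighbourhood in $Z$. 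The main subtlety I anticipate lies in this last repackaging step: the equality $\vec{b} = \vec{c}$ holds at $M$ only as an equality of interpretations rather than as a globally valid formula, and must be absorbed into $\xi$ by promoting both $\vec{b}$ and $\vec{c}$ to the parameter tuple. Beyond this, only the local-homeomorphism property of $q$ and continuity of $\beta$ and of the groupoid structure maps appear to be invoked in the present argument; the standing openness hypothesis on $\Xlog$ seems to feed into neighbouring results of the subsection rather than into this particular lemma.
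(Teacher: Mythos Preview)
The paper does not supply its own proof of this lemma; it simply attributes the result to Forssell's thesis (Lemma 2.3.4.10 of \cite{forssellphd}) and moves on. So there is no in-paper argument to compare against, and your proposal must be judged on its own merits.

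Your argument is correct. The key steps---taking an initial local section $g$ on a basic open $U$, forming the two continuous maps $\Phi_1,\Phi_2 \colon s^{-1}(U)\cap t^{-1}(U)\to Y$ over $t$, observing that their equalizer $Z$ is open because $q$ is \'etale, and then shrinking to a basic open $B\subseteq Z$ of $X_1^{\tauloga}$ containing $e(M)$---are all sound. The repackaging step is the delicate part, and you handle it properly: since $e(M)\in B$ forces the interpretations of $\vec{b}$ and $\vec{c}$ to coincide in $M$, setting $\vec{m}=(\vec{a},\vec{b},\vec{c},\vec{d})$ and $\xi=\varphi(\vec{a})\land\psi(\vec{d})\land\vec{b}=\vec{c}$ ensures that any $\alpha$ fixing $\vec{m}$ and starting in $\classv{\xi}{m}_\X$ lands in $B$. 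One point worth making explicit (you use it implicitly when defining $f=g|_{\classv{\xi}{m}_\X}$) is that $\classv{\xi}{m}_\X\subseteq U$: this holds because $e(M')\in B\subseteq s^{-1}(U)$ for every $M'\in\classv{\xi}{m}_\X$.

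Your closing remark that the openness hypothesis on $\Xlog$ appears not to be used in this particular lemma is accurate for the argument you give; openness enters in the surrounding material (e.g.\ to ensure orbits of opens are open in the proof of \cref{prop:pislocalic}), not here.
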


Recall that the subobjects of a $\Xlog$-sheaf $(Y,q,\beta)$ are given precisely by the open subsets of $Y$ that are stable under the action of $\beta$.  If $\Xlog$ is an open topological groupoid, then, since $\theta_{\classv{\top}{x}}$ is open and
\[\overline{\class{\vec{x},\vec{m}:\varphi}}_\X  = \theta_{\classv{\top}{x}}(\class{\vec{x},\vec{m}:\varphi}_\X \times_{X_0} X_1),\]
the orbit of a definable with parameters $\class{\vec{x},\vec{m}:\varphi}_\X$, is an open, stable subset and hence defines a subobject of $\classv{\top}{x}_\X$.

\begin{prop}[Lemmas 2.3.4.11-13 \cite{forssellphd}]\label{prop:pislocalic}
	If $\Xlog = (X_1^\tauloga \rightrightarrows X_0^\taulogo)$ is an open topological groupoid, the factoring geometric morphism $\p^\log \colon \Sh(\Xlog) \to \topos_\theory$ is localic.
\end{prop}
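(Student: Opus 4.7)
The plan is to verify that $\p^\log$ is localic through the standard criterion: a geometric morphism $f \colon \ftopos \to \topos$ is localic if and only if every object of $\ftopos$ is a subquotient of an object of the form $f^\ast(E)$ with $E \in \topos$, or equivalently, iff the subobjects of objects of the form $f^\ast(E)$ form a separating family in $\ftopos$. Since the objects $\form{\phi}{x}$ generate $\topos_\theory$ and are sent by $(\p^\log)^\ast$ to $\classv{\phi}{x}_\X$, and since by \cref{lem:subobjofsheaves} the subobjects of $\classv{\phi}{x}_\X$ in $\Sh(\Xlog)$ are precisely its open, stable subspaces, it suffices to exhibit, for every $\Xlog$-sheaf $(Y,q,\beta)$, a jointly epimorphic family of morphisms into $(Y,q,\beta)$ whose domains are open, stable subsheaves of the various $\classv{\phi}{x}_\X$.

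Given $y \in Y$, \cref{forsselllemma} produces a basic open $\classv{\xi}{m}_\X \subseteq X_0^\taulogo$ and a local section $f \colon \classv{\xi}{m}_\X \to Y$ of $q$ whose image contains $y$ and which is equivariant with respect to all $\alpha \in X_1$ preserving the interpretation of $\vec{m}$. Transporting along the section $M \mapsto \langle \vec{m}, M \rangle$, which identifies $\classv{\xi}{m}_\X$ homeomorphically with the open subspace $\class{\vec{x} = \vec{m}:\xi}_\X$ of $\classv{\xi}{x}_\X = (\p^\log)^\ast(\form{\xi}{x})$ (this is a basic open for the topology on $\classv{\xi}{x}_\X$, by \cref{rem:alwaysdefwithparam}), I reinterpret $f$ as a continuous map $\tilde f \colon \class{\vec{x} = \vec{m}:\xi}_\X \to Y$ over $X_0^\taulogo$. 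I then extend $\tilde f$ equivariantly to the orbit $\overline{\class{\vec{x} = \vec{m}:\xi}}_\X$, which, by openness of $\Xlog$, is an open stable subsheaf of $\classv{\xi}{x}_\X$, via the formula
\[
\bar f(\langle \vec{n}, N \rangle) = \beta\bigl(\tilde f(\langle \vec{m}, M \rangle),\alpha\bigr)
\]
for any $\alpha \colon M \to N$ in $X_1$ with $M \vDash \xi(\vec{m})$ and $\alpha(\vec{m}) = \vec{n}$.

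Well-definedness of $\bar f$ follows from the stabilising equivariance of $f$: for any second choice $\alpha' \colon M' \to N$, the isomorphism $\alpha'^{-1}\alpha \colon M \to M'$ fixes $\vec{m}$, whence $\beta(f(M),\alpha'^{-1}\alpha) = f(M')$, and the action axioms identify the two candidate values. Equivariance of $\bar f$ is then automatic, so the main obstacle is continuity. The assignment $(\langle \vec{m}, M \rangle,\alpha) \mapsto \beta(\tilde f(\langle \vec{m}, M \rangle),\alpha)$ is a continuous map out of $\class{\vec{x}=\vec{m}:\xi}_\X \times_{X_0} X_1^\tauloga$, and it factors as the restriction of $\theta_{\classv{\top}{x}}$ to this subspace followed by $\bar f$. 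Openness of $\Xlog$ makes $\theta_{\classv{\top}{x}}$ an open map (via the standard homeomorphism $Y \times_{X_0}^{q,s} X_1 \cong Y \times_{X_0}^{q,t} X_1$, which exhibits $\theta_{\classv{\top}{x}}$ as a pullback of the open map $t$), so its restriction onto the orbit is an open surjection and hence a quotient map, forcing $\bar f$ to be continuous.

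Ranging over all $y \in Y$ produces a jointly surjective, and hence by \cref{rem:epiandmonoofsheaves} jointly epimorphic, family of morphisms $\bar f_y$ in $\Sh(\Xlog)$ from subobjects of objects of the form $\classv{\xi_y}{x_y}_\X = (\p^\log)^\ast(\form{\xi_y}{x_y})$ onto $(Y,q,\beta)$. Taking a coproduct then displays $Y$ as a quotient of a subobject of $(\p^\log)^\ast(E)$, with $E$ the coproduct in $\topos_\theory$ of the $\form{\xi_y}{x_y}$, so $Y$ is a subquotient of an object in the image of $(\p^\log)^\ast$. This establishes that $\p^\log$ is localic, as required.
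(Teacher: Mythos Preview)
Your proof is correct and follows essentially the same approach as the paper: both use \cref{forsselllemma} to obtain the local section $f$, transport it to the definable $\class{\vec{x}:\vec{x} = \vec{m} \land \xi}_\X$, extend equivariantly to the orbit via the formula $\bar f(\langle \alpha(\vec{m}), N\rangle) = \beta(f(M),\alpha)$, and verify well-definedness using the groupoid structure in the same way. The only notable difference is that you actually supply a continuity argument for $\bar f$ (via the quotient-map trick using openness of $\theta_{\classv{\top}{x}}$), whereas the paper defers this step to \cite{forssellphd}; your argument here is sound.
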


We sketch the proof to \cref{prop:pislocalic}.  To show that $\p$ is localic, it suffices to show that the subobjects of $\classv{\top}{x}_\X$ form a generating set of $\Sh(\Xlog)$.  Given an object $(Y,q,\beta)$ of $\Sh(\Xlog)$ and a point $y$ of $Y$, by Lemma \ref{forsselllemma}, there exists a basic open $\classv{\xi}{m}_\X$ of $X_0^\taulogo$ with a local section $f \colon \classv{\xi}{m}_\X \to Y$ of $q$ such that:
\begin{enumerate}
	\item the point $y$ is in the image of $f$,
	\item for any $M \in \classv{\xi}{m}_\X$ and an isomorphism $M \xrightarrow{\alpha} N \in X_1$ such that $\alpha(\vec{m}) = \vec{m}$, we have that $\beta\left(f(M), \alpha \right) = f(N)$.
\end{enumerate}

Let $\vec{x}$ be a context with the same type as $\vec{m}$.  Evidently, there is a local section $g \colon \classv{\xi}{m}_\X \to \classv{\top}{x}_\X$ of $\pi_{\classv{\top}{x}} \colon \classv{\top}{x}_\X \to X_0$ that sends $M \in \classv{\xi}{m}_\X$ to $\langle \vec{m}, M \rangle$.  The image of $g$ is thus the open subset $\class{\vec{x}:\vec{x}=\vec{m} \land \xi}_\X \subseteq \classv{\top}{x}_\X$.  Hence, there is a commuting diagram of continuous maps
\[\begin{tikzcd}
	\classv{\top}{x}_\X & \ar[tail]{l} \overline{\classv{\vec{x}=\vec{m} \land \xi}{x}}_\X & Y \\
	& \ar[tail, bend left = 2em]{lu} \ar[tail]{u} \classv{\vec{x}=\vec{m} \land \xi}{x}_\X & \\
	& \classv{\xi}{m}_\X . \ar[bend left = 4em]{luu}{g} \ar[two heads]{u} \ar[bend right = 4em]{ruu}[']{f} &
\end{tikzcd}\]
The remainder of the proof consists of constructing a continuous map $h \colon \overline{\class{\vec{x}=\vec{m} \land \xi}}_\X \to Y$ which completes the above diagram and moreover constitutes a morphism of $\Xlog$-sheaves.

As each element of $\overline{\class{\vec{x}=\vec{m} \land \xi}}_\X$ is of the form $\langle \alpha(\vec{m}), N \rangle$ where $M \xrightarrow{\alpha} N$ is a $\theory$-model isomorphism in $X_1$ and $M \in \classv{\xi}{m}$, we take the obvious definition and set $h(\langle \alpha(\vec{m}), N \rangle)$ as $\beta(f(M),\alpha)$.  It must first be checked that this is well-defined, and here we use the fact that $\X$ is a groupoid.  Given a second isomorphism $M \xrightarrow{\gamma} N$ such that $\langle \alpha(\vec{m}) , N \rangle = \langle \gamma(\vec{m}) , N \rangle$, then $ \gamma^{-1} \circ \alpha$ is an automorphism of $M$, contained in $X_1$, such that $\gamma^{-1} \circ \alpha(\vec{m}) = \vec{m}$.  Hence, by hypothesis, $ \beta(f(M),\gamma^{-1} \circ \alpha) = f(M)$, and so
\[\beta(f(M),\gamma) = \beta(\beta(f(M),\gamma^{-1} \circ \alpha),\gamma)  = \beta(f(M),\alpha).\]
It remains to show that $h$ is continuous and that $h$ is a morphism of $\Xlog$-sheaves.  For these details, the reader is directed to \cite{forssellphd}.

Thus, for each object $(Y,q,\beta)$ of $\Sh(\Xlog)$, the arrows to $(Y,q,\beta)$ in $\Sh(\Xlog)$ whose domains are subobjects of $\classv{\top}{x}_\X$ are jointly surjective and therefore the geometric morphism $\p^\log \colon \Sh(\Xlog) \to \topos_\theory$ is localic.

\begin{coro}\label{coro:hyplocfactgiveslogicaltopa}
	If $\Xlog$ is an open topological groupoid, the geometric morphism $\p^\log \colon \Sh(\Xlog) \to \topos_\theory$ is the localic part of the hyperconnected-localic factorisation of $\p' \colon \Sh(\X_\taulogo^\delta) \to \topos_\theory$.
\end{coro}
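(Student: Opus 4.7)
The plan is straightforward: invoke the uniqueness of the hyperconnected–localic factorisation (a standard fact, e.g.\ \cite[Theorem A4.6.5]{elephant}) and exhibit $w$ and $\p^\log$ as the two factors.

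First I would recall that, by \cref{lem:logtopaisfact}, the logical topology $\tauloga$ is a factoring topology for arrows once $X_0$ carries $\taulogo$, so the geometric morphism $\p'\colon \Sh(\X_\taulogo^\delta) \to \topos_\theory$ admits a factorisation
\[
\begin{tikzcd}
\Sh(\X_\taulogo^\delta) \ar{r}{w} & \Sh(\Xlog) \ar{r}{\p^\log} & \topos_\theory,
\end{tikzcd}
\]
where $w$ is the geometric morphism that forgets the arrow topology from $\delta$ down to $\tauloga$ (noting that $\delta \supseteq \tauloga$, so \cref{rem:prop:coarsehyp} applies).

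Next I would verify the two defining properties of a hyperconnected–localic factorisation. For the first factor, \cref{prop:coarsehyp} (together with \cref{rem:prop:coarsehyp} applied to the comparison between $\delta$ and $\tauloga$) tells us that $w$ is hyperconnected. For the second factor, \cref{prop:pislocalic} uses precisely the hypothesis that $\Xlog$ is an open topological groupoid to conclude that $\p^\log$ is localic.

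Finally, since the hyperconnected–localic factorisation of any geometric morphism is unique up to canonical equivalence, the displayed factorisation of $\p'$ must coincide with its hyperconnected–localic factorisation, proving the claim. There is no real obstacle here: the work is already done in \cref{prop:coarsehyp} and \cref{prop:pislocalic}, and the corollary is essentially an assembly step. The only point worth double-checking is that the containments of topologies go in the correct direction so that \cref{rem:prop:coarsehyp} genuinely applies to our $w$, but this is immediate from the fact that $\delta$ is the finest topology on $X_1$.
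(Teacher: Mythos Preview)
Your proposal is correct and follows essentially the same approach as the paper: exhibit the factorisation $\p' = \p^\log \circ w$, cite \cref{prop:coarsehyp} for the hyperconnectedness of $w$ and \cref{prop:pislocalic} for the localicness of $\p^\log$, and conclude by uniqueness of the hyperconnected--localic factorisation. The paper's proof is slightly terser (it cites \cref{prop:coarsehyp} directly rather than via \cref{rem:prop:coarsehyp}, which is fine since $\tau_1 = \tauloga$ is already the case covered by that proposition), but the substance is identical.
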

\begin{proof}
	There is a commutative triangle
	\[\begin{tikzcd}
		\Sh(\X_\taulogo^\delta) \ar{r}{w} \ar{rd}[']{\p'} & \Sh(\Xlog) \ar{d}{\p^\log} \\
		& \topos_\theory,
	\end{tikzcd}\]
	for which $w$ is hyperconnected by \cref{prop:coarsehyp} and $\p^\log$ is localic by \cref{prop:pislocalic}.
\end{proof}


\section{The proof of the classification theorem}\label{sec:mainproof}

We are now in a position to combine the results of \cref{sec:facttopobjs} and \cref{sec:facttopars} to obtain the classification theorem stated in \cref{maintheorem}.  We separate the different steps of the proof to show clearly the interaction between the two conditions: conservativity and elimination of parameters.  Conservativity, unsurprisingly, is equivalent to the induced geometric morphism being a geometric surjection.  Conversely, elimination of parameters is equivalent the induced geometric morphism being a geometric embedding.  Penultimately, we also give a sense in which the logical topologies are the only topologies that need be considered, and finally demonstrate how \cref{coro:when-topologies-are-T0} can be deduced from \cref{maintheorem}.

\begin{lem}\label{lem:surj_iff_conservative}
	Let $\theory$ be a geometric theory and let $\X = (X_1 \rightrightarrows X_0)$ be a groupoid with a choice of functor $p \colon \X \to \Tmodels{\sets}$.  Given a pair of factoring topologies $\tau_0$ on $X_0$ and $\tau_1$ on $X_1$, the factoring geometric morphism $\p'' \colon \Sh\left(\Xtt\right) \to \topos_\theory$ is a geometric surjection if and only if $(\X,p)$ is conservative, i.e.\ the set of models in the image of $p$ is conservative.
\end{lem}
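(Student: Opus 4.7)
The plan is to reduce the claim to the standard characterisation that a geometric morphism $f \colon \ftopos \to \topos$ is a geometric surjection if and only if its inverse image $f^\ast$ is faithful, equivalently, if and only if for every object $A$ of $\topos$ the induced map of subobject lattices $\Sub_\topos(A) \to \Sub_\ftopos(f^\ast A)$ is injective (see, e.g., \cite[Proposition A4.2.6]{elephant}). Since each such map is a frame homomorphism, injectivity is the same as being order-reflecting.

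Next, because the objects $\form{\phi}{x}$ (as $\phi$ ranges over geometric formulae in context $\vec{x}$) form a generating family for $\topos_\theory$, I would argue that it suffices to check order-reflection of the subobject map at the generators $\form{\phi}{x}$: every subobject of an arbitrary $A \in \topos_\theory$ is a union of subobjects of $A$ pulled back along arrows from the $\form{\phi}{x}$, and $\p''^\ast$ preserves unions and pullbacks, so order-reflection (and hence injectivity) on subobjects of each $\form{\phi}{x}$ transfers to every $A$.

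I would then invoke \cref{lem:subobjofform} to identify $\Sub_{\topos_\theory}(\form{\phi}{x})$ with the set of $\theory$-provable equivalence classes of geometric formulae $\psi$ satisfying $\theory \vdash \psi \vdash_{\vec{x}} \phi$, ordered by $\theory$-provable entailment, and \cref{lem:subobjofsheaves} to identify $\Sub_{\Sh(\Xtt)}\!\bigl(\classv{\phi}{x}_\X\bigr)$ with open stable subspaces of $\classv{\phi}{x}_\X$ under set inclusion. By the construction of $\p''$ in \cref{subsec:method}, its inverse image sends $\form{\psi}{x} \hookrightarrow \form{\phi}{x}$ to $\classv{\psi}{x}_\X \hookrightarrow \classv{\phi}{x}_\X$. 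Hence the subobject map in question takes the form
\[
\form{\psi}{x}\;\longmapsto\;\classv{\psi}{x}_\X,
\]
and its order-reflection (for every $\phi$, equivalently for $\phi = \top$) is exactly the statement that, for all geometric $\psi_1, \psi_2$ in context $\vec{x}$, the inclusion $\classv{\psi_1}{x}_\X \subseteq \classv{\psi_2}{x}_\X$ implies $\theory \vdash \psi_1 \vdash_{\vec{x}} \psi_2$. This is precisely the definition of $(\X,p)$ being conservative, completing both directions simultaneously.

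No step here is genuinely difficult once \cref{lem:subobjofform,lem:subobjofsheaves} are available; the only bookkeeping worth care is the reduction from faithfulness of $\p''^\ast$ on all objects to order-reflection of the subobject map at the generators $\form{\phi}{x}$. This is a standard site-theoretic argument and poses no real obstacle, so the lemma follows.
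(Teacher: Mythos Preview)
Your argument is correct, but it differs from the paper's proof in a meaningful way. The paper argues via the commutative triangle
\[
\begin{tikzcd}
\sets^{X_0} \simeq \Sh(X_0^\delta) \ar{rrd}[']{\p_0} \ar{r}{j} & \Sh(X_0^{\tau_0}) \ar{r}{u} & \Sh(\Xtt) \ar{d}{\p''} \\
&& \topos_\theory,
\end{tikzcd}
\]
where the top composite $u \circ j$ is a surjection (by \cref{lem:forget_action_surj} and the fact that $j$ is a surjection), and uses the standard equivalence between conservativity of the set of models and surjectivity of $\p_0$. The conclusion then follows purely from the closure and right-cancellation properties of surjections as the left class in the (surjection, inclusion) orthogonal factorisation system. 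By contrast, you bypass the diagram \cref{diag:bigpicture} entirely and compute the induced maps on subobject lattices directly at the syntactic generators $\form{\phi}{x}$, reading off conservativity as order-reflection of $\psi \mapsto \classv{\psi}{x}_\X$.

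Both approaches are sound. The paper's route is shorter and exploits the infrastructure already built in \cref{sec:bigpicture}; yours is more self-contained and makes the link between the topos-theoretic condition (faithfulness of $\p''^\ast$) and the logical one (conservativity) completely explicit, at the cost of the small reduction-to-generators bookkeeping you flag. That reduction is indeed routine: if the subobject map is injective at each generator, then for an arbitrary equaliser $E \hookrightarrow A$ with $\p''^\ast E = \p''^\ast A$, pulling back along generating maps $c_i \to A$ and using injectivity on $\Sub(c_i)$ forces each $c_i \to A$ to factor through $E$, whence $E = A$.
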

\begin{proof}
	Recall that there is commutative diagram of geometric morphisms
	\[
	\begin{tikzcd}
		\sets^{X_0} \simeq \Sh(X_0^\delta) \ar{rrd}[']{\p_0} \ar{r}{j} & \Sh(X_0^{\tau_0}) \ar{r}{u} & \Sh\left(\Xtt\right) \ar{d}{\p''} \\
		&& \topos_\theory
	\end{tikzcd}
	\]
	where the top horizontal composite $u \circ j$ is a geometric surjection.  Recall also that $\{ \, M \in X_0\,\}$ is a conservative set of models for $\theory$ if and only if the geometric morphism $\p_0 \colon \sets^{X_0} \to \topos_\theory$ is a surjection.
	
	Thus, using that geometric surjections are the left class in an orthogonal factorisation system for geometric morphisms -- the (surjection,inclusion)-factorisation (see \cite[\S A4.2]{elephant}) -- and are therefore closed under composites and have the right cancellation property (if $f \circ g$ and $g$ are surjections, then so is $f$), we conclude that $\p'' \colon \Sh\left(\Xtt\right) \to \topos_\theory$ is a geometric surjection if and only if $\{ \, M \in X_0\,\}$ is a conservative set of models for $\theory$.
\end{proof}

\paragraph{From a representing groupoid to elimination of parameters.}

We now continue with the proof for one implication of \cref{maintheorem}.  We first show that an open representing groupoid $\X$ yields a functor $p \colon \X \to \Tmodels{\sets}$ which can be given an indexing by parameters for which $(\X,p)$ is conservative and eliminates parameters.

\begin{prop}\label{thm:reprimplieselimpara}
	Let $\theory$ be a geometric theory and let $\X = (X_1 \rightrightarrows X_0)$ be a groupoid.  If there exist topologies $\tau_0$ and $\tau_1$ on $X_0$ and $X_1$ making $\X$ an open topological groupoid such that there is an equivalence $\Sh(\Xtt)\simeq \topos_\theory$, then there is a functor $p \colon \X \to \Tmodels{\sets}$ such that each model in the image of $p$ admits an indexing by a set of parameters $\Index$ such that $(\X,p)$ is conservative and eliminates parameters.
\end{prop}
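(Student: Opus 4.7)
The plan is to use the equivalence $\Sh(\Xtt) \simeq \topos_\theory$ to transport the logical structure of $\topos_\theory$ back onto the groupoid $\X$. The functor $p \colon \X \to \Tmodels{\sets}$ is obtained by identifying each $M \in X_0$ with the point $\sets \to \Sh(\Xtt) \simeq \topos_\theory$ it represents and taking the corresponding set-based model; arrows of $\X$ act on these models by naturality of the equivalence. Under this identification, the factoring geometric morphism $\p''$ of \cref{lem:surj_iff_conservative} is precisely the given equivalence, so $(\X,p)$ is automatically conservative. Moreover, $\tau_0$ and $\tau_1$ are factoring topologies for objects and arrows respectively, so by \cref{thm:logictopo} there exists an indexing $\Index \paronto \X$ such that $\tau_0 \supseteq \taulogo$; by \cref{rem:alwaysdefwithparam}, the topology on $\classv{\top}{x}_\X$ (as an object of $\Sh(\Xtt)$) then contains every definable with parameters $\class{\vec{x},\vec{m}:\psi}_\X$ as an open subspace.

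The bulk of the work is establishing elimination of parameters for this indexing. My plan is to argue, for a given definable with parameters $\class{\vec{x},\vec{m}:\psi}_\X$, that its orbit $\overline{\class{\vec{x},\vec{m}:\psi}}_\X$ is a subobject of $\classv{\top}{x}_\X$ in $\Sh(\Xtt)$ and then to read off a defining formula using the equivalence with $\topos_\theory$. By \cref{lem:subobjofsheaves} it suffices to show the orbit is open and stable in $\classv{\top}{x}_\X$; stability is built into the definition (\cref{df:orbit}), so the task reduces to openness.

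For openness, I would use the hypothesis that $\Xtt$ is \emph{open}. The action map $\beta \colon \classv{\top}{x}_\X \times_{X_0} X_1^{\tau_1} \to \classv{\top}{x}_\X$ factors as the canonical homeomorphism $Y \times_{X_0}^{s} X_1 \cong Y \times_{X_0}^{t} X_1$ (induced by $g \mapsto g^{-1}$) followed by the projection onto $Y$, which is the pullback of the target map $t$ along $\pi_{\classv{\top}{x}}$. Since $t$ is open and open maps are pullback-stable, $\beta$ itself is open; hence the orbit $\overline{\class{\vec{x},\vec{m}:\psi}}_\X = \beta\!\left(\class{\vec{x},\vec{m}:\psi}_\X \times_{X_0} X_1\right)$ is open. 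Thus the orbit defines a subobject $S \hookrightarrow \classv{\top}{x}_\X$ in $\Sh(\Xtt)$. Transporting $S$ across the equivalence $\Sh(\Xtt) \simeq \topos_\theory$ yields a subobject of $\form{\top}{x}$, which by \cref{lem:subobjofform} is of the form $\form{\varphi}{x}$ for some geometric formula $\varphi$. Transporting back shows $\overline{\class{\vec{x},\vec{m}:\psi}}_\X = \classv{\varphi}{x}_\X$, proving elimination of parameters.

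The main obstacle I anticipate is the verification that the action map on an equivariant sheaf over an open groupoid is indeed open; I expect the factorisation through the swap of pullbacks described above to settle this cleanly. A secondary point is making sure that the indexing produced by \cref{thm:logictopo} really does force the topology on $\classv{\top}{x}_\X$ (as dictated by the equivalence with $\topos_\theory$) to contain the definables with parameters as opens, but this is exactly the content of \cref{rem:alwaysdefwithparam}.
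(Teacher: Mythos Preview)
Your approach is essentially identical to the paper's: construct $p$ from the equivalence (the paper phrases this as $\p = w \circ j'$), extract the indexing via \cref{thm:logictopo} and \cref{rem:alwaysdefwithparam}, argue that orbits of definables with parameters are stable open subsets of $\classv{\top}{x}_\X$ (hence subobjects), and read off a defining formula via \cref{lem:subobjofform} across the equivalence, with conservativity following from \cref{lem:surj_iff_conservative}. One small slip: the factorisation you describe for the action map --- the swap $(y,g)\mapsto(y,g^{-1})$ followed by projection --- yields the first projection $\pi_Y$, not $\beta$; the correct factorisation is the homeomorphism $(y,g)\mapsto(\beta(y,g),g)$ from $Y\times_{X_0}^{s} X_1$ to $Y\times_{X_0}^{t} X_1$ (with inverse $(y',g)\mapsto(\beta(y',g^{-1}),g)$) followed by the projection, which does exhibit $\beta$ as a homeomorphism composed with a pullback of the open map $t$.
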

\begin{proof}
	Let $\p$ denote the composite geometric morphism $w \circ j' \colon \Sh(\Xdd) \to \Sh(\Xtt) \simeq \topos_\theory$.  Thus, as in \cref{subsec:method}, the geometric morphism $\p$ yields a functor $p \colon \X \to \Tmodels{\sets}$.  We apply \cref{thm:logictopo} and \cref{rem:alwaysdefwithparam} to deduce that there exists an indexing of each $M \in X_0$ by parameters $\Index$ for which the space $\classv{\top}{x}_\X$, i.e.\ the sheaf corresponding to $\form{\top}{x} \in \topos_\theory$ under the equivalence $\topos_\theory \simeq \Sh(\Xtt)$, contains as open subsets the definables with parameters.

	Since $\Xtt$ is an open groupoid, the orbit of each open subset of the form $\class{\vec{x},\vec{m}:\psi}_{\X}$ is still an open subset of $\classv{\top}{x}_\X$.  Therefore, being a stable open, $\overline{\class{\vec{x},\vec{m}:\psi}}_{\X}$ defines a subobject of $\classv{\top}{x}_\X$.  Under the equivalence $\topos_\theory \simeq \Sh(\Xtt)$, the subobjects of $\form{\top}{x}$ and $\classv{\top}{x}_\X$ must also be identified.  Hence, recalling that the subobjects of $\form{\top}{x}$ are formulae in the context $\vec{x}$, we deduce that there exists a formula $\varphi$ such that
	\[
	\overline{\class{\vec{x},\vec{m}:\psi}}_{\X} = \classv{\varphi}{x}_\X.
	\]
	Therefore the pair $(\X,p)$, as indexed by $\Index$, eliminates parameters.  Finally, since an equivalence of toposes is, in particular, a surjection, we can apply \cref{lem:surj_iff_conservative} to deduce that $(\X,p)$ is also conservative.
\end{proof}

\paragraph{From elimination of parameters to a representing groupoid.}

We now prove the converse statement of \cref{maintheorem} -- that for a groupoid $\X$ with a functor $p \colon \X \to \Tmodels{\sets}$ where the models $M \in X_0$ admit an indexing such that $(\X,p)$ conservative and eliminates parameters, there exist topologies on $\X$ making it an open representing groupoid for the theory $\theory$.  Unsurprisingly, the topologies we consider are the logical topologies studied in \crefrange{sec:facttopobjs}{sec:facttopars}.  We first demonstrate that the condition that the pair $(\X,p)$ eliminates parameters is equivalent to the induced geometric morphism being an inclusion of a subtopos.

\begin{lem}[cf.\ Lemma 3.2.1.2 \cite{forssellphd}]\label{lem:logicaltopsgiveopengrpd}
	Let $\theory$ be a geometric theory, $\X = (X_1 \rightrightarrows X_0)$ a groupoid, and $p \colon \X \to \Tmodels{\sets}$ a functor where each model in the image of $p$ is indexed by parameters $\Index$.  If $(\X,p)$ eliminates parameters, then, when both $X_1$ and $X_0$ are endowed with the logical topologies, $\Xlog$ becomes an \emph{open} topological groupoid.
\end{lem}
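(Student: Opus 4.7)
The plan is to verify directly that the source map $s \colon X_1^\tauloga \to X_0^\taulogo$ is an open map; since the inverse map $i$ is a self-homeomorphism of $X_1^\tauloga$ and $t = s \circ i$, this is equivalent to $\Xlog$ being an open topological groupoid. Since the union of opens is open, it suffices to show that $s$ sends each basic open of $\tauloga$ to an open of $\taulogo$. As observed in the proof of \cref{prop:allfactaislog}, every basic open decomposes as
\[
B = \lrclass{\begin{matrix} \vec{a}:\varphi \\ \vec{b} \mapsto \vec{c} \\ \vec{d}:\psi \end{matrix}}_\X = s^{-1}\!\left(\class{\vec{a}{:}\varphi \land \vec{b} = \vec{b}}_\X\right) \cap \class{\vec{b} \mapsto \vec{c}}_\X \cap t^{-1}\!\left(\class{\vec{c},\vec{d}{:}\psi \land \vec{c} = \vec{c}}_\X\right),
\]
so the identity $s(s^{-1}(V) \cap W) = V \cap s(W)$ reduces the question to showing that $s\bigl(\class{\vec{b}\mapsto\vec{c}}_\X \cap t^{-1}(\class{\vec{n}{:}\chi}_\X)\bigr)$ is open for each basic open $\class{\vec{n}{:}\chi}_\X$ of $\taulogo$.

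The key step is to reinterpret this image as the projection of an orbit. Unwinding definitions, $M \in X_0$ lies in $s\bigl(\class{\vec{b}\mapsto\vec{c}}_\X \cap t^{-1}(\class{\vec{n}{:}\chi}_\X)\bigr)$ if and only if $\vec{b}$ is indexed in $M$ and there exists an isomorphism $\alpha \colon M \to N$ in $X_1$ with $\alpha(\vec{b}) = \vec{c}$, with $\vec{n}$ indexed in $N$, and with $N \vDash \chi(\vec{n})$. Pulling $\vec{n}$ back to $\vec{n}' := \alpha^{-1}(\vec{n}) \in M$ (and noting $X_1$ is closed under inverses), this is equivalent to the existence of a tuple $\vec{n}' \in M$ such that the element $\langle (\vec{b}, \vec{n}'), M\rangle$ lies in the orbit
\[
\overline{\class{(\vec{y},\vec{z}) : \vec{y} = \vec{c} \land \vec{z} = \vec{n} \land \chi(\vec{z})}}_\X,
\]
where $\vec{y}, \vec{z}$ are chosen in the same contexts as $\vec{c}, \vec{n}$ respectively.

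Now I invoke the hypothesis that $(\X,p)$ eliminates parameters: there exists a geometric formula $\varphi(\vec{y},\vec{z})$ without parameters with
\[
\classv{\varphi}{y,z}_\X = \overline{\class{(\vec{y},\vec{z}) : \vec{y} = \vec{c} \land \vec{z} = \vec{n} \land \chi(\vec{z})}}_\X.
\]
Substituting back, $M$ lies in our image if and only if $\vec{b}$ is indexed in $M$ and $M \vDash \exists \vec{z} \, \varphi(\vec{b}, \vec{z})$, i.e.\ the image coincides with the basic open $\class{\vec{b} : \exists \vec{z} \, \varphi(\vec{y}, \vec{z})}_\X$ of $\taulogo$. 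Combining with the decomposition above yields $s(B)$ as an intersection of basic opens of $\taulogo$, so $s$ is open. The only real subtlety is the bookkeeping between parameters of $\Index$ and the formal variables of the context, and recognising that quantifying out $\vec{n}'$ in $M$ amounts to the existential $\exists \vec{z}$ in the formula $\varphi$; once one identifies the relevant definable-with-parameters, elimination of parameters does all the work.
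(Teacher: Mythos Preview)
Your proof is correct and follows essentially the same strategy as the paper's: both arguments compute the image of a basic open of $\tauloga$ as an explicit basic open of $\taulogo$ by invoking elimination of parameters on a single well-chosen definable with parameters. The only cosmetic differences are that you show $s$ is open while the paper shows $t$ is open (these are interchangeable via the inversion homeomorphism), and you first peel off the $s^{-1}(V)$ factor using the identity $s(s^{-1}(V)\cap W)=V\cap s(W)$ whereas the paper treats the full basic open in one go; the underlying use of the hypothesis is the same.
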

\begin{proof}
	We have already seen that $\Xlog$ is a topological groupoid in \cref{lem:topologicalgrpdwhenlogtopa}, so it remains to show that either of the continuous maps $s, t \colon X_1^\tauloga \rightrightarrows X_0^\taulogo$ are open too.  We will show that $t$ is open.
	
	It suffices to show that the image of each basic open of $X_1^\tauloga$ is open in $X_0^\taulogo$.  Suppose that
	\[
	N \in t\left(\lrclass{
		\begin{matrix}
			\vec{a}:\varphi \\
			\vec{b} \mapsto \vec{c} \\
			\vec{d} : \psi
		\end{matrix}
	}_\X\right).
	\]
	Then there is some isomorphism $M \xrightarrow{\alpha} N$ of $X_1$ such that $M \vDash \varphi(\vec{a})$ and $\alpha(\vec{b}) = \vec{c}$, in addition to $N \vDash \psi(\vec{d})$.  Therefore,
	\[
	\langle\vec{c},N\rangle \in \overline{\lrclass{\vec{x},\vec{a} : \vec{b} = \vec{x} \land \varphi }}_\X.
	\]
	Since $\X$ eliminates parameters, there is some formula $\chi$ without parameters such that
	\[
	\overline{\lrclass{\vec{x},\vec{a}: \vec{b} = \vec{x} \land \varphi }}_\X = \classv{\chi}{x}_\X .
	\]
	We thus conclude that $N$ is contained in the open subset $ \class{\vec{c},\vec{d}:\chi \land \psi}_\X$ of $X_0^\taulogo$.
	
	Given any other $N' \in \class{\vec{c},\vec{d}:\chi \land \psi}_\X$, we have that
	\[ \langle \vec{c}, N' \rangle \in \classv{\chi}{x}_\X = 	\overline{\lrclass{\vec{x},\vec{a}: \vec{b} = \vec{x} \land \varphi }}_\X. \]
	Thus, there exists another isomorphism $M' \xrightarrow{\gamma} N'$ of $X_1$ such that $M' \vDash \varphi(\vec{a})$ and $\gamma(\vec{b}) = \vec{c}$.  Hence,
	\[
	N, N' \in \overline{\lrclass{\vec{c},\vec{d}:\chi \land \psi}}_\X \subseteq 
	t\left(\lrclass{
		\begin{matrix}
			\vec{a}:\varphi \\
			\vec{b} \mapsto \vec{c} \\
			\vec{d} : \psi
		\end{matrix}
	}_\X\right).
	\]
	
\end{proof}

\begin{prop}\label{thm:elimparaimpliesrepr}
	Let $\theory$ be a geometric theory, $\X = (X_1 \rightrightarrows X_0)$ a groupoid, and $p \colon \X \to \Tmodels{\sets}$ a functor where each model in the image of $p$ is indexed by parameters $\Index$.  The factoring geometric morphism $\p^{\log}$ is an inclusion of a subtopos
	\[\begin{tikzcd}
		\p^\log \colon 	\Sh(\Xlog) \ar[tail]{r} & \topos_\theory
	\end{tikzcd}\]
	if and only if $(\X,p)$ eliminates parameters.
\end{prop}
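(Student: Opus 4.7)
The plan is to exploit the correspondence between subobjects of $\classv{\top}{x}_\X$ in $\Sh(\Xlog)$---which by \cref{lem:subobjofsheaves} are exactly the $X_1^\tauloga$-stable open subsets---and subobjects of $\form{\top}{x}$ in $\topos_\theory$, which by \cref{lem:subobjofform} are exactly the formulas $\form{\varphi}{x}$. Since the objects $\form{\top}{x}$ form a generating family for $\topos_\theory$, and a localic geometric morphism between Grothendieck toposes is an inclusion precisely when, for each generator $X$, every subobject of $(\p^{\log})^*(X)$ arises as $(\p^{\log})^*$ of a subobject of $X$, the question reduces to showing that the map $\Sub_{\topos_\theory}(\form{\top}{x}) \to \Sub_{\Sh(\Xlog)}(\classv{\top}{x}_\X)$ is surjective for every context $\vec{x}$.

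For the direction $(\Leftarrow)$, assuming $(\X,p)$ eliminates parameters, \cref{lem:logicaltopsgiveopengrpd} gives openness of $\Xlog$, whence \cref{prop:pislocalic} ensures $\p^{\log}$ is localic. By \cref{rem:alwaysdefwithparam}, the logical topology on $\classv{\top}{x}_\X$ has as basis the definables with parameters, so any stable open subset $U \subseteq \classv{\top}{x}_\X$ may be written as $U = \bigcup_i \class{\vec{x},\vec{m}_i:\psi_i}_\X$. Stability forces $U = \bigcup_i \overline{\class{\vec{x},\vec{m}_i:\psi_i}}_\X$, and elimination of parameters rewrites each orbit as $\classv{\varphi_i}{x}_\X$, yielding $U = \classv{\bigvee_i \varphi_i}{x}_\X$ in the image of $(\p^{\log})^*$.

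For the direction $(\Rightarrow)$, the inclusion hypothesis gives that every subobject of $\classv{\top}{x}_\X$ in $\Sh(\Xlog)$ is of the form $\classv{\varphi}{x}_\X$. Given a definable with parameters $\class{\vec{x},\vec{m}:\psi}_\X$, I would let $\classv{\varphi}{x}_\X$ be the smallest subobject of $\classv{\top}{x}_\X$ containing this basic open, obtained as the meet in the complete lattice of subobjects of all those containing it. By stability, $\overline{\class{\vec{x},\vec{m}:\psi}}_\X \subseteq \classv{\varphi}{x}_\X$; the task is then to establish the reverse inclusion, which gives the desired equality $\overline{\class{\vec{x},\vec{m}:\psi}}_\X = \classv{\varphi}{x}_\X$.

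The main obstacle is precisely this reverse inclusion, since the orbit is \emph{a priori} stable but not necessarily open, so it need not itself qualify as a subobject. My plan is to derive the reverse inclusion by minimality of $\classv{\varphi}{x}_\X$: for any hypothetical point $\langle \vec{n}_0, N_0 \rangle \in \classv{\varphi}{x}_\X \setminus \overline{\class{\vec{x},\vec{m}:\psi}}_\X$, use a basic open neighbourhood $\class{\vec{x},\vec{m}_0:\xi}_\X$ of this point---where $\vec{m}_0$ is an indexing of $\vec{n}_0$ in $N_0$---together with the inclusion hypothesis to exhibit a strictly smaller subobject $\classv{\varphi'}{x}_\X \subsetneq \classv{\varphi}{x}_\X$ still containing $\class{\vec{x},\vec{m}:\psi}_\X$, thereby contradicting the minimality of $\classv{\varphi}{x}_\X$.
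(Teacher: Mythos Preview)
Your $(\Leftarrow)$ direction is correct and matches the paper's argument.

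For $(\Rightarrow)$, your contradiction step has a genuine gap. Given $\langle\vec{n}_0,N_0\rangle\in\classv{\varphi}{x}_\X\setminus\overline{\class{\vec{x},\vec{m}:\psi}}_\X$, a basic open neighbourhood $\class{\vec{x},\vec{m}_0:\xi}_\X$ of it will typically meet the orbit---the parameters $\vec{m}_0$ may be interpreted in other models, and nothing prevents points of the orbit from satisfying $\xi(\vec{x},\vec{m}_0)$---so there is no evident way to produce a strictly smaller stable open still containing $\class{\vec{x},\vec{m}:\psi}_\X$ but omitting $\langle\vec{n}_0,N_0\rangle$. You have no separation principle at your disposal, and the sketch does not overcome the very obstacle you correctly flag.

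The paper avoids the pointwise argument altogether. Having established that $\p^\log$ is localic under either hypothesis, it invokes the criterion that a localic morphism is an inclusion iff the induced maps $\Sub_{\topos_\theory}(\form{\phi}{x})\to\Sub_{\Sh(\Xlog)}(\classv{\phi}{x}_\X)$ are surjective, and then argues at the frame level: every stable open of $\classv{\phi}{x}_\X$ decomposes as $\bigcup_i\overline{\class{\vec{x},\vec{m}_i:\psi_i}}_\X$, so the orbits form a basis for the frame of subobjects, whence surjectivity of $\class{-}_\X$ is equivalent to each individual orbit lying in its image---precisely elimination of parameters. No minimality or contradiction is needed. The paper does treat the orbits as stable \emph{opens} when calling them ``basic subobjects''; your caution on this point is well-placed, though the paper's proof does not supply a separate justification for it in the $(\Rightarrow)$ direction either.
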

\begin{proof}
	The first step is to deduce that, under either hypothesis, the geometric morphism $\p^\log$ is a localic geometric morphism.  This is clear if $\p^\log$ is an inclusion of a subtopos since every inclusion is localic (see \cite[Examples A4.6.2(a)]{elephant}).  Conversly, if $\Xlog$ eliminates parameters then, by \cref{lem:logicaltopsgiveopengrpd}, $\Xlog$ is an open topological groupoid.  Thus, by applying \cref{prop:pislocalic}, the factoring geometric morphism $
	\p^\log \colon 	\Sh(\Xlog) \to \topos_\theory$ is a localic geometric morphism.

	Thus, by \cite{myselfintloc}, the geometric morphism $\p^\log$ is a geometric embedding if and only if, for each object $A$ in a generating set of $\topos_\theory$, the induced map on subobjects 
	\[{\p^\log_A}^\ast \colon \Sub_{\topos_\theory}(A) \to \Sub_{\Sh(\Xlog)}({\p^\log}^\ast A)\]
	is surjective.  Therefore, applying \cref{lem:subobjofform}, $\p^\log$ is an embedding if and only if, for each geometric formula $\phi$, the map
	\begin{equation}\label{proof-of-classification:the-map-class}
	\class{-}_\X \colon \lrset{\psi }{\theory \text{ proves } \psi \vdash_{\vec{x}} \phi} \cong \Sub_{\topos_\theory}(\form{\phi}{x}) \to \Sub_{\Sh(\Xlog)}(\classv{\phi}{x}_\X) \
\end{equation}
	which sends $\form{\psi}{x} $ to $ \classv{\psi}{x}_\X$	is surjective.

	Recall from \cref{lem:subobjofsheaves} that a subobject of $\classv{\phi}{x}_\X$ is an open subset $U$ that is stable under the action $\theta_{\classv{\top}{x}}$.  As the opens of the form $\class{\vec{x},\vec{m}:\psi}_\X$, where $\theory$ proves $\psi \vdash_{\vec{x}} \phi$, form a basis for the topology on $\classv{\phi}{x}_\X$, every stable open subset $U$ is of the form
	\[
	U = \bigcup_{i \in I} \overline{\class{\vec{x},\vec{m}_i: \psi_i}}_\X .
	\]
	Therefore, the stable opens of the form $\overline{\class{\vec{x},\vec{m}:\psi}}_\X$ form a basis for the frame of subobjects of $\classv{\phi}{x}$, and so the map $\class{-}_\X$ in \cref{proof-of-classification:the-map-class} is surjective if and only if every basic subobject $\overline{\class{\vec{x},\vec{m}:\psi}}_\X$ is in the image of ${\class{-}_\X}$. This is precisely the condition that $\X$ eliminates parameters, from which we deduce the result.
\end{proof}

\begin{rem}\label{rem:subtoposiselimpara}
	Let $\theory$ be a geometric theory over a signature $\Sigma$, and let $p \colon \X \to \Tmodels{\sets}$ be a functor, where $\X$ is a groupoid and each model $M \in X_0$ is indexed by parameters $\Index$.  As remarked in \cref{rem:df:conservandelimpara}\cref{enum:rem:df:elimpara:elimpara_and_sigma}, the condition that $(\X,p)$ eliminates parameters depends only on the signature of the theory $\theory$.  This can be retroactively justified topos-theoretically in light of \cref{thm:elimparaimpliesrepr} by equating those pairs $(\X,p)$ that eliminate parameters with those pairs for which $\p^\log$ is an inclusion.  
	
	As $\theory$ is a \emph{quotient theory} of $\mathbb{E}_\Sigma$, the empty theory over the signature $\Sigma$, the theory $\theory$ is classified by a subtopos $\topos_\theory \rightarrowtail \topos_{\mathbb{E}_\Sigma}$ (see \cite[Examples B4.2.8(i)]{elephant} and \cite[Theorem 3.2.5]{TST}), and so, by \cref{thm:elimparaimpliesrepr}, $(\X,p)$ eliminates parameters if and only if there are inclusions of subtoposes
	\[
	\begin{tikzcd}
		\Sh(\Xlog) \ar[tail]{r} & \topos_\theory \ar[tail]{r} & \topos_{\mathbb{E}_\Sigma},
	\end{tikzcd}
	\]
	whence we deduce that $(\X,p)$ eliminating parameters depended only on the signature $\Sigma$.
\end{rem}

Since a geometric morphism is an equivalence of toposes if and only if it is both a surjection and an inclusion (see \cite[Corollary A4.2.11]{elephant}), we deduce the following.

\begin{coro}\label{coro:conservative_and_elimpara_implies_repr}
	Let $\theory$ be a geometric theory, $\X = (X_1 \rightrightarrows X_0)$ a groupoid, and $p \colon \X \to \Tmodels{\sets}$ a functor where each model in the image of $p$ is indexed by a set of parameters $\Index$.  The geometric morphism $\p^\log$ is an equivalence of toposes
	\[
	\Sh(\Xlog) \simeq \topos_\theory
	\]
	if and only if $(\X,p)$ is conservative and eliminates parameters.
\end{coro}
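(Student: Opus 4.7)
The plan is to assemble the corollary directly from the two preceding results in this section, using the standard surjection/inclusion factorisation of geometric morphisms.

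First I would invoke the fact, cited just above the statement, that a geometric morphism of toposes is an equivalence if and only if it is both a geometric surjection and a geometric inclusion (this is the orthogonality of the surjection/inclusion factorisation, \cite[Corollary A4.2.11]{elephant}). So it suffices to show that $\p^\log \colon \Sh(\Xlog) \to \topos_\theory$ is a surjection precisely when $(\X,p)$ is conservative, and an inclusion precisely when $(\X,p)$ eliminates parameters.

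For the surjection half, I would apply \cref{lem:surj_iff_conservative} to the choice of factoring topologies $\tau_0 = \taulogo$ and $\tau_1 = \tauloga$. These are genuinely factoring topologies by \cref{logtopisfact} and \cref{lem:logtopaisfact}, so the lemma immediately gives that $\p^\log$ is a geometric surjection if and only if the set of models in the image of $p$ is conservative, i.e.\ $(\X,p)$ is conservative in the sense of \cref{df:conservandelimpara}. For the inclusion half, I would appeal directly to \cref{thm:elimparaimpliesrepr}, which states precisely that $\p^\log$ is an inclusion if and only if $(\X,p)$ eliminates parameters.

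Combining these two equivalences via the surjection/inclusion orthogonality yields the biconditional in the statement. The work is essentially bookkeeping; the only thing to double-check is that in \cref{lem:surj_iff_conservative}, the pair $(\taulogo,\tauloga)$ is a legitimate pair of factoring topologies, but this is exactly what \cref{logtopisfact} and \cref{lem:logtopaisfact} establish, and the fact that the two different factoring morphisms named $\p''$ (from the lemma) and $\p^\log$ (from the proposition) coincide is built into the construction in \cref{subsec:method}. There is no real obstacle here; the corollary is the expected synthesis of the two halves of the classification, one concerning conservativity and one concerning elimination of parameters.
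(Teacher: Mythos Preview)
Your proposal is correct and matches the paper's own proof essentially verbatim: the paper simply notes that a geometric morphism is an equivalence if and only if it is both a surjection and an inclusion (\cite[Corollary A4.2.11]{elephant}), and then invokes \cref{lem:surj_iff_conservative} and \cref{thm:elimparaimpliesrepr} for the two halves. Your additional bookkeeping about $(\taulogo,\tauloga)$ being a legitimate pair of factoring topologies is a reasonable sanity check that the paper leaves implicit.
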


Combining \cref{thm:reprimplieselimpara} and \cref{coro:conservative_and_elimpara_implies_repr} completes the proof of \cref{maintheorem}.  Moreover, we can also use these results to deduce the sense in which the logical topologies are, essentially, the only topologies that need be considered.

\begin{coro}
	Let $\theory$ be a geometric theory and let $\X = (X_1 \rightrightarrows X_0)$ be a groupoid.  If there exist topologies $\tau_0$ on $X_0$ and $\tau_1$ on $X_1$ making $\Xtt$ an open topological groupoid for which $ \Sh(\Xtt)\simeq \topos_\theory$, then there is a functor $p \colon \X \to \Tmodels{\sets}$ and an indexing of each model in the image of $p$ by a set of parameters $\Index$ such that
	\[\Sh({\X}_\taulogo^\tauloga) \simeq \topos_\theory \simeq \Sh(\Xtt).\]
\end{coro}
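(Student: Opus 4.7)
The plan is to obtain the desired equivalence $\Sh(\X_{\taulogo}^{\tauloga}) \simeq \topos_\theory$ as an almost immediate composition of the two halves of \cref{maintheorem} that have just been proved, rather than reworking the construction from scratch. The hypothesis supplies us with an open representing topological groupoid $\Xtt$, which is precisely the input required by \cref{thm:reprimplieselimpara}.

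First, I would invoke \cref{thm:reprimplieselimpara} on $\Xtt$. Inspecting its proof, this does more than merely produce a functor $p \colon \X \to \Tmodels{\sets}$: via \cref{thm:logictopo} and \cref{rem:alwaysdefwithparam}, it furnishes a specific indexing $\Index \paronto \X$ (obtained from local sections of $\pi_{\class{x:\top}}$) such that $\tau_0$ contains the logical topology for objects $\taulogo$ associated to this indexing. The same result yields that $(\X,p)$ eliminates parameters for this indexing, and \cref{lem:surj_iff_conservative} applied to the equivalence $\Sh(\Xtt) \simeq \topos_\theory$ (which is in particular a surjection) ensures that $(\X,p)$ is conservative.

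Second, having in hand a pair $(\X,p)$ together with an indexing $\Index \paronto \X$ which is simultaneously conservative and eliminates parameters, I would simply apply \cref{coro:conservative_and_elimpara_implies_repr}. This immediately yields the equivalence
\[
\Sh(\Xlog) \simeq \topos_\theory,
\]
and combining this with the hypothesised equivalence $\topos_\theory \simeq \Sh(\Xtt)$ gives the desired chain $\Sh(\Xlog) \simeq \topos_\theory \simeq \Sh(\Xtt)$.

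There is no genuine obstacle: the work has been done in the preceding sections, and this corollary merely packages the two implications together, saying that whenever \emph{some} open topological groupoid structure on $\X$ represents $\theory$, one may always also use the \emph{logical} topologies (for an appropriately chosen indexing) to represent $\theory$. The only point to watch is that the indexing produced by \cref{thm:reprimplieselimpara} depends on the given topology $\tau_0$ (since the parameters are built from local sections over $\tau_0$), so the indexing $\Index$ and the resulting logical topologies $\taulogo, \tauloga$ in the conclusion are not canonical but rather extracted from the data $(\tau_0,\tau_1)$.
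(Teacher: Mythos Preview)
Your proposal is correct and matches the paper's approach: the corollary is stated immediately after \cref{thm:reprimplieselimpara} and \cref{coro:conservative_and_elimpara_implies_repr}, and the paper treats it as a direct combination of these two results without giving further details. Your observation that the indexing $\Index$ is extracted from the given topology $\tau_0$ (via local sections, as in \cref{thm:logictopo}) is exactly the content underlying the statement.
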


Finally, we demonstrate how \cref{coro:when-topologies-are-T0} can be deduced from \cref{maintheorem}.

\begin{proof}[Proof of \cref{coro:when-topologies-are-T0}]
	The implication \cref{enum:coro:topologies-T0:elimpara} $\implies$ \cref{enum:coro:topologies-T0:eqv} is an immediate consequence of \cref{maintheorem}.  For the converse, suppose that there are $T_0$ topologies $\tau_0$ on $\tau_1$ on $X_0$ and $X_1$ for which $\topos_\theory\simeq \Sh(\Xtt)$.  We observe that the functor $p \colon \X \to \Tmodels{\sets}$ induced by the composite geometric morphism $w \circ j'\colon \Sh(\Xdd) \to \Sh(\Xtt) \simeq \topos_\theory$ is the inclusion of a subgroupoid, from which the result follows.  
	
	To see why this is the case, recall that, for an object $M \in X_0$, its image under $p$ is the model corresponding to the composite
	\[
	\begin{tikzcd}
		\sets \ar{r}{\Tilde{M}} & \Sh(X_0^{\tau_0}) \ar{r}{u_\X} & \Sh(\Xtt) \simeq \topos_\theory,
	\end{tikzcd}
	\]
	where $\Tilde{M} \colon \sets \to \Sh(X_0^{\tau_0})$ is the geometric morphism corresponding to $M$ (i.e.\ the inverse image part $\Tilde{M}^\ast$ sends a sheaf to its fibre over $M$).  As $\tau_0$ is a $T_0$ topology, then for distinct points $M, N \in X_0$, the geometric morphisms $\Tilde{M}, \Tilde{N}$ are also distinct, as are their composites $u_\X \circ \Tilde{M}, u_\X \circ \Tilde{N}$.  Thus, the action of $p$ on objects is injective.  Similarly, for an arrow $M \xrightarrow{\alpha} N \in X_1$, its image under $p$ is the isomorphism of models corresponding to the composite 2-cell $\Tilde{M} \Rightarrow \Tilde{N}$ in the diagram
	\[\begin{tikzcd}
		\sets \\
		& {\Sh(X_1^{\tau_1})} & {\Sh(X_0^{\tau_0})} \\
		& {\Sh(X_0^{\tau_0})} & \Sh(\Xtt) &[-30pt] \simeq \topos_\theory,
		\arrow["{\Tilde{\alpha}}", from=1-1, to=2-2]
		\arrow["\Tilde{M}", curve={height=-20pt}, from=1-1, to=2-3]
		\arrow["\Tilde{N}"', curve={height=20pt}, from=1-1, to=3-2]
		\arrow["\Sh(s)", from=2-2, to=2-3]
		\arrow["\Sh(t)"', from=2-2, to=3-2]
		\arrow[Rightarrow, from=2-3, to=3-2]
		\arrow["\sim"{marking, allow upside down}, shift left=2, draw=none, from=2-3, to=3-2]
		\arrow["u_\X", from=2-3, to=3-3]
		\arrow["u_\X"', from=3-2, to=3-3]
	\end{tikzcd}\]
	where the isomorphism $u_\X \circ \Sh(s) \cong u_\X \circ \Sh(t)$ is the canonical isomorphism deriving from the fact that $\X$ is a groupoid.  Once again, since $\tau_1$ is $T_0$, $\Tilde{\alpha}, \Tilde{\alpha}'$ are distinct for distinct arrows $\alpha, \alpha' \in X_1$, and thus $p$ is also injective on arrows. 
\end{proof}

\begin{ex}
	Any example of an open representing groupoid $\X$ for a geometric theory $\theory$ for which the induced functor $p \colon \X \to \Tmodels{\sets}$ is not an inclusion of a subgroupoid is, to some extent, pathological.  This is because it follows from \cref{maintheorem} that the image $p(\X) \subseteq \Tmodels{\theory}$ can be endowed with topologies making it an open representing groupoid for $\theory$ as well.
	
	We give a simple example of an example where $p$ is not the inclusion of a subgroupoid.  Let $\mathbb{I}$ be the theory of the terminal object, i.e.\ the propositional geometric theory with no propositional symbols and no axioms.  Then the category of models of $\mathbb{I}\text{-}{\bf Mod}(\ftopos)$, in any topos $\ftopos$, is equivalent to the trivial category $\1$, and so $\mathbb{I}$ is classified by $\sets$.  For any (non-empty) groupoid $\X$, there is a unique functor $p \colon \X \to \1$, and $(\X,p)$ is vacuously conservative and eliminates parameters.  Indeed, it is easily observed that if $X_0$ and $X_1$ are endowed with the indiscrete topologies, then $\Sh(\X) \simeq \sets$.
\end{ex}



\section{Applications}\label{sec:applications}

In this section we present some applications of \cref{maintheorem}.  From now on, we consider the special case of \cref{maintheorem} where $\X$ is a groupoid of set-based models of $\theory$, i.e.\ a subgroupoid of $\Tmodels{\sets}$, which we will refer to as a \emph{model groupoid}, and $p$ is this inclusion.  Our applications are divided as follows.
\begin{enumerate}[label = (\arabic*)]
	
	\item The first two sections justify our use of indexings of models, i.e.\ presenting models as a subquotient of a set of parameters.  \cref{sec:atomic} is devoted to the study of atomic theories.  These are the only theories whose models of a representing groupoid may be indexed by disjoint sets of parameters.  We will recover the logical `topological Galois theory' result of \cite{caramellogalois} that the automorphism group of a single model represents an atomic theory if and only if the model is conservative and ultrahomogeneous.  We also give a characterisation of Boolean toposes with enough points reminiscent of the characterisation established in \cite{blassscedrov}.
	
	\item In \cref{subsec:decidable}, we demonstrate that, instead of subquotients, we can present the models in a representing groupoid as subsets of a set of parameters only in the case when the theory has decidable equality.  We also generate examples of representing groupoids for decidable theories.
	
	\item We demonstrate in \cref{sec:etalecomplete} that every representing model groupoid is Morita-equivalent to its \emph{étale completion}, that is the model groupoid with the same objects and all possible isomorphisms between these constituent models.  We also show that the étale completion of a representing model groupoid can be calculated as a topological closure in the fashion of \cite[\S 4]{hodges}.
	
	\item We recover in \cref{sec:forssell} the representation theorems given by Butz and Moerdijk in \cite{BM} and by Awodey and Forssell in \cite{awodeyforssell,forssell} by demonstrating that the considered groupoids fall within a general framework of `maximal groupoids'.
	
	\item Finally, having studied how to generate representing groupoids of indexed structures for a given theory, we answer the converse direction and describe a theory which is represented by a given groupoid of indexed structures.  This extends the techniques developed in \cite[Theorem 4.14]{hodges} for subgroups of the topological permutation group on a set.
\end{enumerate}

\subsection{Atomic theories}\label{sec:atomic}

In this section we will study those model groupoids of a theory that eliminate parameters when their constituent models are indexed by disjoint sets of parameters.  We will observe in \cref{prop:disjoint_implies_atomic} that this requires the theory to be atomic.  

We will revisit Caramello's `topological Galois theory' and demonstrate that the results of \cite{caramellogalois} concerning atomic theories can be recovered via the classification theorem.  We will also give a characterisation of Boolean toposes with enough points in a manner reminiscent to \cite{blassscedrov}.

\begin{df}[Proposition D3.4.13 \cite{elephant}]\label{df:atomictheories}
	A geometric theory $\theory$ is \emph{atomic} if one of the following equivalent conditions is satisfied:
	\begin{enumerate}
		\item for each context $\vec{x}$, the frame of geometric formulae in context $\vec{x}$, ordered by entailment modulo the theory $\theory$, is generated by its atoms,
		\item the classifying topos $\topos_\theory$ of $\theory$ is \emph{atomic} (see \cite[\S C3.5]{elephant}).
	\end{enumerate}
	If $\theory$ is known to possess enough points, by Theorem 3.16 \cite{caramelloatomic} we can add a further equivalent condition to the list:
	\begin{enumerate}
		\setcounter{enumi}{2}
		\item every (model-theoretic) type of $\theory$ is \emph{isolated}, also called \emph{principal} and \emph{complete}, i.e.\ for each model $M$ of $\theory$ and tuple $\vec{n} \in M$, there is a formula $\chi_{\vec{n}}$, the \emph{minimal formula} of $\vec{n}$, such that:
		\begin{enumerate}
			\item for any other tuple $\vec{n}'$ of the same sort as $\vec{n}$ in another model $N$, then $N \vDash \chi_{\vec{n}}(\vec{n}')$ if and only if $\vec{n}$ and $\vec{n}'$ satisfy the same formulae 
			-- i.e.\ $\tp_M(\vec{n}) = \lrangle{\chi_{\vec{n}}}$, read as $\chi_{\vec{n}}$ \emph{isolates} the type of $\vec{n}$ (see \cite[\S 4.1]{marker});
			\item for all formulae $\varphi$ in context $\vec{x}$, either  $\theory$ proves $\chi_{\vec{m}} \vdash_{\vec{x}} \varphi$ or $\theory$ proves $\chi_{\vec{m}} \land \varphi \vdash_{\vec{x}} \bot$ -- equivalently, given a pair of tuples in two models, if $\tp_M(\vec{n}) \subseteq \tp_N(\vec{n}')$ then $\tp_M(\vec{n}) = \tp_N(\vec{n}')$.			
		\end{enumerate}
	\end{enumerate}
\end{df}

Recall also from \cite[Corollary C3.5.2]{elephant} that, under the assumption that $\theory$ has enough points, the properties that $\theory$ is an atomic geometric theory and that $\theory$ is a Boolean geometric theory (i.e.\ the classifying topos $\topos_\theory$ is Boolean) coincide.

\begin{exs}\label{exs:atomictheories}
	\begin{enumerate}
		\item\label{exs:enum:acf} The terminology \emph{minimal formula} is derived from the analogy with minimal polynomials.  We define the theory of algebraically closed fields of finite characteristic which are algebraic over their prime subfield as the theory for which, in addition to the usual axioms of an algebraically closed field, we also include as axioms the sequents
		\[
		\top \vdash_{\emptyset} \bigvee_{p \text{ prime}} \underbrace{1 + 1 \dots + 1}_{p \text{ times}} = 0, \ \ 
		\top \vdash_x \bigvee_{q \in \Z[x]} q(x), 
		\]
		in which the former expresses that the characteristic is finite while the latter expresses that the field is algebraic over its prime subfield.  This is an atomic theory with enough points.  For each single element $a$ in an algebraically closed field $F$ algebraic over its prime subfield, the minimal formula of $a$ is precisely the conjunction of the minimal polynomial of $a$ with the formula $\top \vdash_{\emptyset} {1 + 1 \dots + 1} = 0$ expressing the characteristic of the field.  For a tuple $\vec{a} = (a_1, \, \dots \, , a_n)$ of $F$, the minimal formula of $\vec{a}$ is the formula
		\[
		\bigwedge_{i = 1}^n q_i(x_1, \, \dots \, , x_i) \land \underbrace{1 + 1 \dots + 1}_{p \text{ times}} = 0,
		\]
		where $q_i(a_1, \, \dots \, a_{i - 1}, x_i)$ is the minimal polynomial of $a_i$ over $F(a_1, \, \dots \, a_{i - 1} )$, and where the field $F$ has characteristic $p$ (cf.\ \cref{prop:AIform_is_repr}).

		\item\label{exs:enum:decinf} The theory $\Dtheory_\infty$ of \emph{infinite decidable objects} is also an atomic theory.  It is the single-sorted theory with one binary relation $\neq$ and the axioms $x = y \land x \neq y \vdash_{x,y} \bot$, $\top \vdash x = y \lor x \neq y$, and, for each $n \in \N$,
		\[\top \vdash_x \exists y_1, \, \dots \, , y_n \, \bigwedge_{i \leqslant n} x \neq y_i \, \land \! \bigwedge_{i < j \leqslant n} y_i \neq y_j.\]
		The minimal formula of a tuple $\vec{n}$ in a model is the finite conjunction of the atomic formulae $x_i = x_j$, when the elements $n_i, n_j \in \vec{n}$ are equal, and $x_i \neq x_j$ when they are not.

	\end{enumerate}

	In a similar fashion to \cref{exs:enum:decinf}, we can deduce that the theory of dense linear orders without endpoints, denoted by $\DLO$, the theory of atomless Boolean algebras, and the theory of the Rado graph are all also atomic theories.  A formal proof that these theories are atomic can be found in \cite{caramellofraisse,caramellogalois}.
	
\end{exs}

\begin{prop}\label{prop:disjoint_implies_atomic}
	Let $\X$ be model groupoid for $\theory$ that is conservative and eliminates parameters for an indexing such that the set of parameters used to index each model $M \in X_0$ are mutually disjoint.  Then $\theory$ is an atomic theory.
\end{prop}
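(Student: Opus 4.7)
My plan is to show that, for each context $\vec{x}$, the frame of geometric formulae in context $\vec{x}$ modulo $\theory$-provable equivalence is generated by its atoms, which by \cref{df:atomictheories} will establish that $\theory$ is atomic. Since $(\X,p)$ is conservative and eliminates parameters, \cref{coro:conservative_and_elimpara_implies_repr} gives an equivalence $\Sh(\Xlog) \simeq \topos_\theory$, and by \cref{lem:subobjofform} this identifies the frame of formulae below $\form{\top}{x}$ with the subobject lattice of $\classv{\top}{x}_\X$ in $\Sh(\Xlog)$. So the problem reduces to showing that the stable open subsets of $\classv{\top}{x}_\X$ form a frame generated by atoms. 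My candidate atoms are the orbits $\classv{\chi_{\vec{m}}}{x}_\X$, where $\chi_{\vec{m}}$ is the parameter-free formula produced by applying elimination of parameters to $\class{\vec{x},\vec{m}:\vec{x} = \vec{m}}_\X$, as $\vec{m}$ ranges over tuples of parameters from $\Index$ that are interpreted in some model of $X_0$.

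First I would verify that each $\chi_{\vec{m}}$ is genuinely an atom. This is the main obstacle, and it is where disjointness will be crucial. Disjointness forces $\vec{m}$ to be interpreted in a unique model $M_{\vec{m}} \in X_0$, so by \cref{df:orbit} the definable
\[
\classv{\chi_{\vec{m}}}{x}_\X = \overline{\class{\vec{x},\vec{m}:\vec{x} = \vec{m}}}_\X
\]
coincides with the set-theoretic $X_1$-orbit $\lrset{\langle \alpha(\vec{m}), N\rangle}{\alpha \colon M_{\vec{m}} \to N \in X_1}$ of the point $\langle \vec{m}, M_{\vec{m}}\rangle$. Given any nonempty stable open $U$ inside this orbit, I would pick $\langle \vec{n}, N\rangle \in U$ with a witness $\alpha \colon M_{\vec{m}} \to N$, apply stability under $\alpha^{-1}$ to deduce $\langle \vec{m}, M_{\vec{m}}\rangle \in U$, and then apply stability under arbitrary $\beta \colon M_{\vec{m}} \to N' \in X_1$ to force $U$ to coincide with the full orbit. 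Combined with \cref{lem:subobjofsheaves} and conservativity, this shows that $\chi_{\vec{m}}$ is a minimal nonzero element of the frame (nonzero because $\langle \vec{m}, M_{\vec{m}}\rangle$ inhabits its interpretation). Without the disjointness hypothesis, the orbit would split into several pieces --- one per model interpreting $\vec{m}$ --- and this rigidity argument would collapse.

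Next I would show that these atoms generate. For any geometric formula $\varphi$ in context $\vec{x}$ and any $\langle \vec{n}, N\rangle \in \classv{\varphi}{x}_\X$, the indexing supplies parameters $\vec{m}$ whose interpretation is $\vec{n}$, and disjointness forces $M_{\vec{m}} = N$; consequently $M_{\vec{m}} \vDash \varphi(\vec{m})$, and since every $\alpha \colon M_{\vec{m}} \to N' \in X_1$ preserves $\varphi$, the entire orbit $\classv{\chi_{\vec{m}}}{x}_\X$ lies inside $\classv{\varphi}{x}_\X$. Hence $\classv{\varphi}{x}_\X$ decomposes as the union $\bigcup_{\vec{m}\,:\,M_{\vec{m}} \vDash \varphi(\vec{m})} \classv{\chi_{\vec{m}}}{x}_\X$, and conservativity upgrades this set-theoretic equality to the $\theory$-provable equivalence
\[
\varphi \dashv\vdash_{\vec{x}} \bigvee\nolimits_{\vec{m}\,:\,M_{\vec{m}} \vDash \varphi(\vec{m})} \chi_{\vec{m}}.
\]
This writes every formula as a join of atoms, completing the plan.
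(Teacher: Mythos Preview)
Your proof is correct, but it takes a more hands-on, combinatorial route than the paper. The paper observes that disjoint indexing makes each singleton $\{M\} = \class{m:\top}_\X$ (for any parameter $m$ interpreted in $M$) open in $X_0^\taulogo$, so that $X_0^\taulogo$ is discrete; then $\Sh(X_0^\taulogo)$ is an atomic topos, and since the canonical morphism $\Sh(X_0^\taulogo) \to \Sh(\Xlog) \simeq \topos_\theory$ is an open surjection (by \cref{lem:forget_action_surj}, \cref{lem:forget_action_open}, and \cref{lem:logicaltopsgiveopengrpd}), atomicity transfers to $\topos_\theory$ via a standard result from \cite{elephant}. Your argument instead works directly inside $\Sub_{\Sh(\Xlog)}(\classv{\top}{x}_\X)$: you identify the atoms explicitly as the single $X_1$-orbits $\overline{\class{\vec{x}=\vec{m}}}_\X = \classv{\chi_{\vec{m}}}{x}_\X$ and verify both atomicity and generation by elementary orbit-and-stability reasoning. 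The paper's proof is shorter and more conceptual, leaning on general topos-theoretic facts about atomicity under open surjections; yours is more self-contained and has the added benefit of pinpointing the atoms concretely, which dovetails nicely with the minimal formulae of \cref{df:atomictheories}(iii) and the later analysis in \cref{lem:atomicandhomoimplieselimpara}.
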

\begin{proof}
	As each $M \in X_0$ is disjointly indexed from every other $N \in X_0$, the space $X_0^\taulogo$ is discrete.  Hence, by \cite[Lemma C3.5.3]{elephant}, $\Sh(X_0^\taulogo)$ is an atomic topos.  Recall from \cref{lem:forget_action_surj} that there is an open surjective geometric morphism $\Sh(X_0^\taulogo) \to \Sh(\Xlog) \simeq \topos_\theory$, and thus by applying \cite[Lemma C3.5.1]{elephant} we obtain the desired result.
\end{proof}

We now turn to the theory of \cite{caramellogalois} and consider the groupoid consisting of the automorphism group of a single model.  We note that for this groupoid there is essentially only one indexing in that parameters can be conflated with the element of the model they index.  Thus, we will assume that the automorphism group is trivially indexed.  We will show that, if $\theory$ is an atomic theory with enough points, then the automorphism group of a single model eliminates parameters if and only if that model is ultrahomogeneous.  Thus, we deduce the principal result of \cite{caramellogalois}.

\paragraph{Elimination of parameters implies ultrahomogeneity.} 

We first observe that elimination of parameters by the automorphism group of a single model implies ultrahomogeneity.  Recall that the model $M$ is \emph{ultrahomogeneous} if each finite partial isomorphism
\[
\begin{tikzcd}
	\vec{m} \ar{r}{\sim} \ar[hook]{d} & \vec{n} \ar[hook]{d} \\
	M & M
\end{tikzcd}
\]
can be extended to a total isomorphism $M \xrightarrow{\alpha} M$.

\begin{lem}\label{lem:elimparaimplieshomo}
	If $M$ is a model of an arbitrary geometric theory $\theory$ such that the group $\Aut(M)$ eliminates parameters, then $M$ is ultrahomogeneous.
\end{lem}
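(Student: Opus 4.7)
The plan is to extract the automorphism extending the partial isomorphism directly from elimination of parameters applied to the tuple $\vec{m}$. The one-object groupoid $\X = (\Aut(M) \rightrightarrows \{M\})$ carries a canonical indexing where each parameter is conflated with the element of $M$ it indexes, so the definable with parameters $\class{\vec{y} = \vec{m}}_{\Aut(M)}$ is simply the singleton $\{\langle \vec{m}, M\rangle\}$, and its $\Aut(M)$-orbit is $\{\langle \alpha(\vec{m}), M\rangle : \alpha \in \Aut(M)\}$.

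By Remark \ref{enum:rem:df:conservandelimpara:sufficetochecktuple} (it suffices to eliminate parameters for the definables $\class{\vec{y} = \vec{m}}_\X$), the hypothesis furnishes a parameter-free geometric formula $\chi_{\vec{m}}$ such that
\[
\overline{\class{\vec{y} = \vec{m}}}_{\Aut(M)} \;=\; \classv{\chi_{\vec{m}}}{y}_{\Aut(M)}.
\]
Unpacking this equality, an element $\vec{n}' \in M$ satisfies $\chi_{\vec{m}}$ in $M$ if and only if there is some $\alpha \in \Aut(M)$ with $\alpha(\vec{m}) = \vec{n}'$; in particular, taking $\alpha = \id_M$, we have $M \vDash \chi_{\vec{m}}(\vec{m})$. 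The goal then reduces to showing that the partial isomorphism $\vec{m} \sim \vec{n}$ forces $M \vDash \chi_{\vec{m}}(\vec{n})$, for then $\vec{n}$ belongs to the orbit of $\vec{m}$ and the witnessing $\alpha$ is precisely the desired total automorphism extending the partial isomorphism.

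The main obstacle is this transfer step: a finite partial isomorphism of substructures preserves atomic formulas only, whereas $\chi_{\vec{m}}$ is in general a genuinely infinitary geometric formula that may involve existentials and disjunctions. I would handle this by a back-and-forth style extension: rather than attempting the transfer in one move, repeatedly apply elimination of parameters to progressively longer tuples $(\vec{m}, a_1, \dots, a_k)$, using Remark \ref{enum:rem:df:elimpara:maxformula} to control the shape of the resulting parameter-free formulas (they must at least entail the equalities $\bigwedge_{m_i = m_j} y_i = y_j$), and iteratively augment the partial isomorphism to each element of $M$, obtaining witnesses for any existential quantifiers in $\chi_{\vec{m}}$ and producing at the limit a total automorphism. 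A shorter alternative, which I would try first, is to argue that the stability of $\classv{\chi_{\vec{m}}}{y}_{\Aut(M)}$ under the $\Aut(M)$-action together with the characterisation of $\chi_{\vec{m}}$ as the orbit of a singleton forces it to depend only on the atomic diagram of $\vec{m}$, so that atomic-preservation of the partial isomorphism suffices.
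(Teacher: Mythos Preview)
Your setup is identical to the paper's: obtain a parameter-free $\varphi$ with $\overline{\class{\vec{x}=\vec{m}}}_{\Aut(M)} = \classv{\varphi}{x}_{\Aut(M)}$, observe $M \vDash \varphi(\vec{m})$, and then argue $M \vDash \varphi(\vec{n})$ to place $\vec{n}$ in the orbit. The paper's proof is exactly this, but it dispatches your ``transfer step'' in a single line: from the partial isomorphism $\vec{m} \xrightarrow{\sim} \vec{n}$ it concludes directly that $\vec{n} \in \classv{\varphi}{x}_{\Aut(M)}$.

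The gap you perceive stems from reading ``finite partial isomorphism'' as ``same atomic type''. The paper (following \cite{caramellogalois}) is using the reading ``same geometric type'': this is visible in the converse \cref{lem:atomicandhomoimplieselimpara}, where the paper writes ``$\vec{m},\vec{m}'$ have the same type and so there is a partial isomorphism $\vec{m} \xrightarrow{\sim} \vec{m}'$'', explicitly equating the two notions. Under this reading the transfer is immediate, since $\varphi$ is geometric, and your back-and-forth construction is not needed. Your ``shorter alternative'' is groping toward this but does not get there: stability under $\Aut(M)$ alone does not force the orbit to depend only on the atomic diagram, and indeed with the atomic-type reading the lemma would not obviously hold for an \emph{arbitrary} geometric theory. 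So the resolution is definitional, not a hidden argument.
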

\begin{proof}
	For a fixed tuple $\vec{m} \in M$, by hypothesis there is a formula without parameters such that
	\[
	\overline{\class{\vec{x} = \vec{m}}}_{\Aut(M)} = \classv{\varphi}{x}_{\Aut(M)}.
	\]
	If there is a partial isomorphism $\vec{m} \xrightarrow{\sim} \vec{n}$, then $\vec{m}, \vec{n} \in \classv{\varphi}{x}_{\Aut(M)}$.  Therefore, $\vec{n} \in \overline{\class{\vec{x} = \vec{m}}}_{\Aut(M)}$, and so there exists an automorphism $M \xrightarrow{\alpha} M$ such that $\alpha(\vec{m}) = \vec{n}$.
\end{proof}

\paragraph{Ultrahomogeneity and atomicity imply elimination of parameters.} 

We now give the opposite implication under the assumption that $\theory$ is atomic.  Combining \cref{lem:elimparaimplieshomo} above and \cref{lem:atomicandhomoimplieselimpara} below, we recover the principal logical result of \cite{caramellogalois}.

\begin{lem}\label{lem:atomicandhomoimplieselimpara}
	Let $\theory$ be an atomic theory with enough points.  If $M$ is a ultrahomogeneous model, then $\Aut(M)$ eliminates parameters.
\end{lem}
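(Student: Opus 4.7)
The plan is to produce, for each tuple of parameters $\vec{m} \in M$, an explicit witness $\chi$ to elimination of parameters, and to identify this witness with the minimal formula isolating the type of $\vec{m}$.

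First I would invoke Remark \ref{rem:df:conservandelimpara}\cref{enum:rem:df:conservandelimpara:sufficetochecktuple} to reduce the problem: it suffices to show that for every tuple $\vec{m} \in M$, there is a formula without parameters $\chi_{\vec{m}}$ in context $\vec{y}$ such that $\overline{\class{\vec{y} = \vec{m}}}_{\Aut(M)} = \classv{\chi_{\vec{m}}}{y}_{\Aut(M)}$. Since $\theory$ is atomic and has enough points, Definition \ref{df:atomictheories} supplies, for the tuple $\vec{m}$, a minimal formula $\chi_{\vec{m}}$ isolating the type of $\vec{m}$ in $M$. This will be my candidate.

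Next I would verify the two inclusions. The inclusion $\overline{\class{\vec{y} = \vec{m}}}_{\Aut(M)} \subseteq \classv{\chi_{\vec{m}}}{y}_{\Aut(M)}$ is straightforward: if $\vec{n} = \alpha(\vec{m})$ for some $\alpha \in \Aut(M)$, then $\alpha$ preserves satisfaction of geometric formulae, so $M \vDash \chi_{\vec{m}}(\vec{n})$. For the reverse inclusion, if $M \vDash \chi_{\vec{m}}(\vec{n})$, then by the isolation property we have $\tp_M(\vec{n}) = \tp_M(\vec{m})$, meaning that $\vec{m}$ and $\vec{n}$ satisfy exactly the same formulae in $M$. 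In particular they agree on all atomic formulae (including equalities among their coordinates), so the assignment $\vec{m} \mapsto \vec{n}$ defines a finite partial isomorphism of $M$. Ultrahomogeneity now extends this to an automorphism $\alpha \in \Aut(M)$ with $\alpha(\vec{m}) = \vec{n}$, placing $\vec{n}$ in the orbit.

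The only subtle point, and the step I expect to require the most care, is ensuring that ``same type'' really yields a partial isomorphism in the sense required by the ultrahomogeneity hypothesis. Since the theory is atomic with enough points, $\chi_{\vec{m}}$ captures the complete type, so one should argue that equality of types forces $\vec{m}$ and $\vec{n}$ to agree on atomic formulae over the signature $\Sigma$ (using the presentation of atomicity via isolated types, together with Morleyization if necessary when $\theory$ is not already coherent). Once this is checked, the two inclusions combine to give $\overline{\class{\vec{y} = \vec{m}}}_{\Aut(M)} = \classv{\chi_{\vec{m}}}{y}_{\Aut(M)}$, which by the initial reduction completes the proof that $\Aut(M)$ eliminates parameters.
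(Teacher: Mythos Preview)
Your proposal is correct and follows essentially the same approach as the paper: both identify the minimal formula $\chi_{\vec{m}}$ as the witness for elimination of parameters, establish the two inclusions via preservation of formulae under automorphisms and ultrahomogeneity respectively, and invoke \cref{rem:df:conservandelimpara}\cref{enum:rem:df:conservandelimpara:sufficetochecktuple} to reduce to the case $\overline{\class{\vec{y}=\vec{m}}}$. The paper's version is slightly terser on the ``same type yields partial isomorphism'' step that you flag as subtle, but the argument is the same.
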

\begin{proof}
	We claim that, for each tuple $\vec{m} \in M$, $\overline{\class{\vec{x} = \vec{m}}}_{\Aut(M)} = \classv{\chi_{\vec{m}}}{x}_{\Aut(M)}$, where $\chi_{\vec{m}}$ is the minimal formula of $\vec{m}$.  Since $M \vDash \chi_{\vec{m}}(\vec{m})$ by definition, one inclusion $\overline{\class{\vec{x} = \vec{m}}}_{\Aut(M)} \subseteq \classv{\chi_{\vec{m}}}{x}_{\Aut(M)}$ is immediate (see \cref{rem:df:conservandelimpara}\cref{enum:rem:df:elimpara:maxformula}).
	
	For the converse, if $\vec{m}' \in \classv{\chi_{\vec{m}}}{x}_{\Aut(M)}$, then $\vec{m},\vec{m}'$ have the same type and so there is a partial isomorphism $\vec{m} \xrightarrow{\sim} \vec{m}'$.  As $M$ is ultrahomogeneous, this extends to a total automorphism $\alpha \colon M \to M$ for which $\alpha(\vec{m}) = \vec{m}'$.  Hence, we obtain the converse inclusion
	\[
	\classv{\chi_{\vec{m}}}{x}_{\Aut(M)} \subseteq \overline{\class{\vec{x} = \vec{m}}}_{\Aut(M)}.
	\]		
	By \cref{rem:df:conservandelimpara}\cref{enum:rem:df:conservandelimpara:sufficetochecktuple}, this suffices to demonstrate that $\Aut(M)$ eliminates parameters.
\end{proof}

\begin{coro}[Theorem 3.1 \cite{caramellogalois}]\label{coro:topologicalgalois}
	Let $\theory$ be an atomic theory and let $G$ be a $T_0$ topological group.  There is an equivalence
	\[\topos_\theory \simeq \B G\]
	if and only if $G$ is the automorphism group of a conservative and ultrahomogeneous model $M$ of $\theory$.  Here, $G$ has been topologised with the \emph{Krull topology} (also called the pointwise convergence topology).  It is the coarsest topology making $G$ a topological group for which the subsets
	\[
	\lrset{M \xrightarrow{\alpha} M}{\alpha(\vec{n}) = \vec{n}},
	\]
	for every finite tuple $\vec{n} \in M$, form a basis of open neighbourhoods of the identity.
\end{coro}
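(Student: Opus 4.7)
The plan is to apply the $T_0$ case of the main theorem, \cref{coro:when-topologies-are-T0}, to the one-object open topological groupoid $(G \rightrightarrows \{\ast\})$, using the preceding two lemmas to translate between elimination of parameters and ultrahomogeneity. A preliminary computation identifies the logical topology for arrows with the Krull topology in this single-object setting: since $X_0$ has only one point, the only non-trivial subbasic opens in \cref{df:logtopforar} are those of the form $\class{\vec{b} \mapsto \vec{c}}_\X = \lrset{\alpha \in \Aut(M)}{\alpha(\vec{b}) = \vec{c}}$, and these are left cosets of pointwise stabilizers, forming a basis for the Krull topology. Moreover, for a one-object topological groupoid the topos of equivariant sheaves is by construction $\BG$.

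For the backward direction, assume $M$ is a conservative, ultrahomogeneous model of the atomic theory $\theory$. I would index $M$ trivially by its own elements, apply \cref{lem:atomicandhomoimplieselimpara} to conclude that the one-object groupoid $(\Aut(M) \rightrightarrows \{M\})$ eliminates parameters, and then invoke \cref{maintheorem} to obtain the equivalence $\topos_\theory \simeq \Sh(\Xlog)$, which by the preliminary identification is precisely $\BG$ with $G = \Aut(M)$ carrying the Krull topology.

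For the forward direction, suppose $\topos_\theory \simeq \BG$ for a $T_0$ topological group $G$. Viewing $\BG$ as sheaves on the $T_0$ open topological groupoid $G \rightrightarrows \{\ast\}$, \cref{coro:when-topologies-are-T0} realises this groupoid as a conservative model subgroupoid: there is a model $M$ of $\theory$ and an abstract embedding $G \hookrightarrow \Aut(M)$ for which the one-object groupoid $(G \rightrightarrows \{M\})$ eliminates parameters. Since the proof of \cref{lem:elimparaimplieshomo} uses only that the acting group eliminates parameters, $M$ is ultrahomogeneous. The main obstacle is the final step: upgrading the abstract subgroup embedding $G \hookrightarrow \Aut(M)$ to an equality of \emph{topological} groups. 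I would handle this by applying the just-proved backward direction to $M$ to get a second equivalence $\B\Aut(M)^{\mathrm{Krull}} \simeq \topos_\theory$; by \cref{prop:allfactaislog}, the topology on $G$ necessarily contains the Krull topology inherited from $\Aut(M)$, and the canonical comparison map between the two open representing topological groupoids over the common topos $\topos_\theory$, together with the $T_0$-separation axiom on $G$, forces the inclusion $G \hookrightarrow \Aut(M)$ to be a topological isomorphism.
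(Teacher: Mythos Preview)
Your backward direction is correct and is exactly what the paper intends: \cref{lem:atomicandhomoimplieselimpara} plus \cref{maintheorem} (or \cref{coro:conservative_and_elimpara_implies_repr}) give $\topos_\theory \simeq \Sh(\Xlog)$, and your identification of the logical topology for arrows on a single trivially-indexed model with the Krull topology is right.

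The forward direction has a genuine gap at the very last step. You correctly extract, via \cref{coro:when-topologies-are-T0}, a conservative model $M$ and an embedding of $G$ as a subgroup of $\Aut(M)$ that eliminates parameters, and your observation that the proof of \cref{lem:elimparaimplieshomo} only uses that the acting group eliminates parameters is valid, so $M$ is indeed ultrahomogeneous. But your final move---arguing that the ``canonical comparison map'' together with $T_0$ forces $G \hookrightarrow \Aut(M)$ to be an isomorphism of topological groups---does not go through, and in fact cannot: the paper's own \cref{ex:notallisos} constructs a proper $T_0$ subgroup $\X \subsetneq \Aut(\Q)$ (the boundedly additive automorphisms, with the logical/Krull subspace topology) which eliminates parameters and hence satisfies $\B\X \simeq \topos_{\DLO}$. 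An equivalence $\B G \simeq \B H$ simply does not force $G \cong H$ as topological groups; at best it forces $G$ to be dense in $\Aut(M)$ (cf.\ the \'etale-completion discussion in \cref{sec:etalecomplete}).

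The resolution is that the paper reads the corollary more narrowly than you do. The sentence preceding \cref{coro:topologicalgalois} says the result is obtained by ``combining \cref{lem:elimparaimplieshomo} and \cref{lem:atomicandhomoimplieselimpara}'', and the paragraph before \cref{lem:elimparaimplieshomo} fixes the setting: one starts with the full automorphism group $\Aut(M)$ of a given model (noting that for a single-object groupoid every indexing is essentially the trivial one), and the question is whether $\B\Aut(M)^{\text{Krull}} \simeq \topos_\theory$. Under that reading the two lemmas are the entire argument, and no step identifying an abstract $T_0$ group with a full automorphism group is needed. Your more ambitious reading---classifying \emph{all} $T_0$ groups $G$ with $\B G \simeq \topos_\theory$---is a strictly stronger statement that the paper neither proves nor claims to prove.
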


\begin{exs}[\S 5 \cite{caramellogalois}]\label{exs:atomictheoriesrevisited}
	We revisit some of the examples of atomic theories from \cref{exs:atomictheories} and describe conservative ultrahomogeneous models for them.
	\begin{enumerate}[label = (\alph*)]
		\item\label{exs:atomictheoriesrevisited:enum:decidable_inf_objs} Any infinite set, in particular $\N$, is a conservative ultrahomogeneous model for the theory $\Dtheory_\infty$ of infinite decidable objects.  Therefore, $\Dtheory_\infty$ is classified by $\B \Aut(\N)$, the \emph{Schanuel topos}.
		
		\item The rationals $\Q$, with their usual ordering, is a conservative and ultrahomogeneous model for the theory $\DLO$.  Its classifying topos is thus $\B \Aut(\Q)$.
		
		\item Let $\mathcal{R}$ denote the Rado graph.  It is a conservative and ultrahomogeneous model for its namesake theory, which is therefore classified by $\B \Aut(\mathcal{R})$.
	\end{enumerate}
\end{exs}

Recall from \cite[Theorem 3.16]{caramelloatomic} that if an atomic theory $\theory$ is also \emph{complete}, by which we mean that any model is conservative (or equivalently, by \cite[Proposition 3.9]{caramelloatomic}, for every sentence $\varphi$ either $\theory$ proves $\top \vdash \varphi$ or $\theory$ proves $\varphi \vdash \bot$), then the theory $\theory$ is also \emph{countably categorical}, i.e.\ any two countable models of $\theory$ are isomorphic.  Therefore, it suffices in \cref{coro:topologicalgalois} to take $M$ as the unique countable model of the theory -- if this exists -- since this will automatically also be an ultrahomogeneous model (see \cite[\S 10.1]{hodges}).

\begin{ex}\label{ex:atomic_but_not_ultra}
	Let $\theory$ be an atomic theory and let $M$ be a conservative model of $\theory$.  In order to assure the reader that ultrahomogeneity is a non-trivial requirement on $M$, despite the hefty conditions placed on $\theory$ by being an atomic theory, we describe a conservative model for the theory $\DLO$ which is not ultrahomogeneous.  Let $\R$ denote the real numbers with the usual ordering, and let $\R + \R$ denote the model whose underlying set is 
	\[\{\,1\,\} \times \R \cup \{\,2\,\} \times \R\]
	given the lexicographic ordering.  This is a conservative model of the theory of $\DLO$ (since this is a complete theory).  However, it is not ultrahomogeneous.  
	
	We note that, for any $r \in \R$, the partial isomorphism $(1,r) \mapsto (2,r)$ cannot be extended to a total automorphism of $\R + \R$.  If there did exist such a total automorphism of $\R + \R$, then the subset $\{\,1\,\} \times (-\infty,r) \subseteq \R + \R$,	being the down-segment of $(1,r)$, would necessarily be mapped isomorphically to the subset $\{\,1\,\} \times \R \cup \{\,2\,\} \times (-\infty,r) \subseteq \R + \R$, the down-segment of $(2,r)$.  However, the interval $(-\infty,r) \cong \{\,1\,\} \times (-\infty,r)$ is \emph{Dedekind-complete}, meaning that every subset of $(-\infty,r)$ with an upper bound has a least upper bound, while $\{\,1\,\} \times \R \cup \{\,2\,\} \times (-\infty,r)$ is not Dedekind-complete -- the subset $\{\,1\,\} \times \R \subseteq \{\,1\,\} \times \R \cup \{\,2\,\} \times (-\infty,r)$ does not have a least upper bound.
	
	This one model therefore serves as a counterexample to several natural questions arising from the study of representing groupoids for toposes.
	\begin{enumerate}
		\item Recall from \cite{JT} and \cite{dubuc} that a connected atomic topos is represented by the \emph{localic} automorphism group of any of its points.  This is because any point of a connected atomic topos is an open surjection (see \cite[Proposition VII.4.1]{JT}).  Being a complete atomic theory, the classifying topos $\DLO$ is a connected atomic topos.  Thus, the theory is represented by the {localic} group of automorphisms $\Aut(\R + \R)^{\rm loc}$ but not by the topological group of automorphisms of $\R + \R$ (see \cite{caramellogalois} or \cref{coro:topologicalgalois} above).
		
		The discrepancy occurs because the localic automorphism group $\Aut(\R + \R)^{\rm loc}$ constructed in \cite[Proposition 4.7]{dubuc} is not spatial.  The underlying locale $\Aut(\R + \R)^{\rm loc}$ of the localic automorphism group can be described as the classifying locale for the following geometric propositional theory.
		\begin{enumerate}
			\item For each pair of elements $x,y \in \R + \R$, we add a pair of basic propositions $[\alpha(x) = y]_{\R + \R}$ and $[x < y]_{\R + \R}$.

			\item For every pair $x,y \in \R + \R$ with $x < y$, we add the sequent $\top \vdash [x < y]_{\R + \R} $ as an axiom to the propositional theory, and for every quadruple $x,x',y,y' \in \R + \R$ where $x \neq x'$ and $y \neq y'$, we also add to our axioms the sequents
			\[
				[\alpha(x) = y]_{\R + \R} \, \land \, [\alpha(x') = y]_{\R + \R}  \vdash \bot, \ \ 
				[\alpha(x) = y]_{\R + \R} \, \land \, [\alpha(x) = y']_{\R + \R} \vdash \bot, \ \ 
				\top  \vdash \!\!\!\! \bigvee_{x \in \R + \R} \!\!\!\! [\alpha(x) = y]_{\R + \R},
			\]
			expressing that the symbol $\alpha$ encodes a bijection from $\R + \R$ to itself.  Additionally, we include the bidirectional sequent
			\[
			[\alpha(x) = y]_{\R + \R} \,\land\, [ \alpha(x') = y']_{\R + \R} \,\land\, [x < x']_{\R + \R} \dashv  \vdash [\alpha(x) = y]_{\R + \R} \,\land\, [\alpha(x') = y']_{\R + \R}\, \land \,[y < y']_{\R + \R},
			\]
			as an axiom, expressing that $\alpha$ encodes an automorphism of the linear order $\R + \R$.				
		\end{enumerate}

		A point $\alpha \colon \2 \to \Aut(\R + \R)^{\rm loc}$ of this locale evidently corresponds to an automorphism
		of the model $\R + \R$.  We therefore deduce by the above analysis that the non-trivial open of $\Aut(\R + \R)^{\rm loc}$ corresponding to the basic proposition $[(1,r) \mapsto (2,r)]_{\R + \R}$ is evaluated by $\alpha^{-1}$ as $
		\alpha^{-1}\left([(1,r) \mapsto (2,r)]_{\R + \R}\right) = \bot$.  Hence, $\Aut(\R + \R)^{\rm loc}$ is not a spatial locale.
		
		In summary, we have that $\topos_\DLO \simeq \B\Aut(\R + \R)^{\rm loc} \not \simeq \B\Aut(\R + \R)$.  It is then natural to wonder: if not $\DLO$, what theory is classified by $\B \Aut(\R + \R)$?  Such a theory is described in \cref{ex:theory_R+R} as an application of the techniques exposited in \cref{sec:theory_of_groupoid}.

		\item Let $\theory'$ be a theory of \emph{presheaf type}, i.e.\ the classifying topos of $\theory'$ is a presheaf topos.  By \cite[\S 6.1.1]{TST}, this presheaf topos can be chosen to be the topos $\sets^{{\rm f.p.}\Tmodels{\sets}\op}$, where ${\rm f.p.}\Tmodels{\sets}$ denotes the category of \emph{finitely presented models} (see \cite[Definition 6.1.11]{TST}).  We will say, in accordance with \cite[Definition 2.3(a)]{caramellofraisse}, that a model $M'$ of $\theory'$ is \emph{homogeneous with respect to the finitely presented models}, or \emph{f.p.-homogeneous} for short, if for every pair of finitely presented models $N,N'$ and homomorphisms $f \colon N \to M'$ and $g \colon N \to N'$ of $\theory'$-models, there exists a homomorphism of $\theory'$-models $h$ such that the triangle
		\[
		\begin{tikzcd}
			N \ar{r}{f} \ar{d}[']{g} & M' \\
			N' \ar[dashed]{ru}[']{h}
		\end{tikzcd}
		\]
		commutes.  Every ultrahomogeneous model is f.p.-homogeneous (see \cite[Remark 2.4(a)]{caramellofraisse}).
		
		The theory of linear orders is a theory of presheaf type by \cite[\S VIII.8]{SGL}, and the finitely presented linear orders are simply the finite linear orders.  Moreover, the dense linear orders without endpoints are precisely those linear orders that are f.p.-homogeneous (see \cite[Remark 3.8(a)]{caramellogalois}).  Thus, $\R + \R$ is an example of a f.p.-homogeneous model that is not ultrahomogeneous.
	\end{enumerate}
\end{ex}

\paragraph{Boolean toposes with enough points.}

Recall that a topos with enough points is Boolean if and only if it is atomic.  Extending \cref{coro:topologicalgalois}, we can use the classification theorem to characterise Boolean toposes with enough points.  In \cite{blassscedrov}, Blass and \v{S}\v{c}edrov gave the special case for Boolean coherent toposes of our characterisation, stated in \cref{coro:booltopos} below.  Our construction is reminiscent of their argument.

We first require one lemma on the \emph{quotient theories} of a theory classified by a Boolean topos.  Recall that a quotient theory $\theory'$ of $\theory$ is a theory over the same signature whose axioms include the axioms of $\theory$.

\begin{lem}\label{lem:quotientofbool}
	If $\theory$ is a geometric theory whose classifying topos is Boolean, then every \emph{quotient theory} of $\theory$
	is determined by the addition of a single extra sentence $\top \vdash_{\emptyset} \varphi$ as an axiom.
\end{lem}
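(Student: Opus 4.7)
The plan is to leverage the standard bijective correspondence between quotient theories of $\theory$ and subtoposes of the classifying topos $\topos_\theory$, already invoked in \cref{rem:subtoposiselimpara} via \cite[Theorem 3.2.5]{TST}. Under this correspondence, the quotient theory obtained from $\theory$ by adding the single sentential axiom $\top \vdash_\emptyset \varphi$ corresponds to the \emph{open} subtopos $\topos_\theory / \form{\varphi}{\emptyset} \hookrightarrow \topos_\theory$ determined by the subterminal $\form{\varphi}{\emptyset} \hookrightarrow \form{\top}{\emptyset}$. So the content of the lemma is precisely that every subtopos of $\topos_\theory$ is of this open form, and I will reduce the statement to this topos-theoretic fact.

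First I would invoke the structural result that in a Boolean topos $\topos$, every local operator on the subobject classifier $\Omega$ arises from a subterminal: explicitly, every local operator has the form $j_U(-) = U \vee -$ for some $U \in \Sub(1_\topos)$, so every subtopos of $\topos$ is open (see e.g.\ \cite[Example A4.5.14(b) and Corollary A4.5.22]{elephant}). The underlying reason is that the Heyting algebra of subobjects in a Boolean topos is a Boolean algebra, on which every nucleus is determined by its value at $\bot$. Applying this to $\topos = \topos_\theory$, any subtopos is of the form $\topos_\theory / U$ for some subterminal $U$. Next, by \cref{lem:subobjofform} applied in the empty context, every subterminal of $\topos_\theory$ is of the form $\form{\varphi}{\emptyset}$ for some geometric sentence $\varphi$. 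Finally, I would unpack the universal property of the classifying topos: geometric morphisms $\ftopos \to \topos_\theory / \form{\varphi}{\emptyset}$ correspond exactly to those $\theory$-models $M$ in $\ftopos$ whose interpretation of $\form{\varphi}{\emptyset}$ is the terminal object, i.e.\ to models of $\theory + \{\top \vdash_\emptyset \varphi\}$. This identifies the quotient theory classified by the open subtopos with $\theory + \{\top \vdash_\emptyset \varphi\}$, completing the argument.

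The hard part, or rather the only non-bookkeeping step, is the appeal to the fact that every subtopos of a Boolean topos is open; the remainder of the argument consists of routine translation between axioms, subobjects of the universal model, and open subtoposes. Since this result is a well-documented piece of classical topos theory, the proof as a whole will be short, amounting mainly to citing the right structural lemma about local operators on Boolean toposes and chasing the bijection with quotient theories.
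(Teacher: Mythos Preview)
Your proposal is correct and coincides with the paper's topos-theoretic argument: both reduce the claim to the fact that every subtopos of a Boolean topos is open (the paper cites \cite[Corollary 3.5]{open} rather than the Elephant), and then identify the subterminal with a geometric sentence via \cref{lem:subobjofform}. The paper additionally offers a second, purely syntactic derivation---rewriting each axiom $\varphi_i \vdash_{\vec{x}_i} \psi_i$ as $\top \vdash_\emptyset \forall \vec{x}_i\, (\varphi_i \to \psi_i)$ and then conjoining, using that in a Boolean classifying topos every infinitary first-order formula is geometrically definable---which you may wish to note as an alternative.
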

\begin{proof}
	There are two ways to see this: one topos-theoretic, and one syntactic.  We include both, though of course they are merely translations of one another.
	
	A quotient theory $\theory'$ of $\theory$ is classified by a subtopos of $\topos_\theory$.  Every subtopos of a Boolean topos is \emph{open} by \cite[Corollary 3.5]{open}.  Therefore, being an open subtopos of $\topos_\theory$, the topos $\topos_{\theory'}$ corresponds to a subterminal in $\topos_\theory$, i.e.\ a sentence $\{\,\emptyset:\varphi\,\}$.
	
	Alternatively, if $\topos_\theory$ is a Boolean topos, then we recall from \cite[\S D3.4]{elephant} that every formula of \emph{infinitary first order logic} (i.e.\ including negation $\neg$, implication $\to$, universal quantification $\forall$, and infinitary conjunction $\bigwedge$) is $\theory$-provably equivalent to a geometric formula.  We are therefore free to manipulate geometric sequents as though they existed in infinitary first order logic.  Hence, we easily recognise that any quotient theory $\theory' = \theory \cup \lrset{\varphi_i \vdash_{\vec{x}_i} \psi_i}{i \in I}$ of $\theory$ is Morita equivalent to the theory
	\[\theory' \equiv  \theory \cup \lrset{\top \vdash_{\emptyset} \forall \vec{x}_i \, \varphi_i \to \psi_i }{i \in I} \equiv \theory \cup \left\{ \,  \top \vdash_{\emptyset} \bigwedge_{i \in I} \forall \vec{x}_i \, \varphi_i \to \psi_i\,\right\}.\]
\end{proof}

\begin{coro}[cf.\ Theorem 2 \cite{blassscedrov}]\label{coro:booltopos}
	A topos $\topos$ with enough points is Boolean if and only if there is a set of topological groups $\{\,G_x \mid x \in X_0 \,\}$ such that
	\[ \topos \simeq \coprod_{x \in X_0} \B G_x.\]
\end{coro}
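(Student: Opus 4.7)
The ``if'' direction is routine: each $\B G_x$ is atomic with a point, hence Boolean, and a small coproduct of atomic toposes with enough points inherits both properties. For the converse, write $\topos \simeq \topos_\theory$ for some atomic theory $\theory$ with enough points, as permitted by \cite[Corollary C3.5.2]{elephant}. The plan is to apply \cref{maintheorem} to a model groupoid built out of one ultrahomogeneous model per completion of $\theory$. Since $\theory$ is atomic, the Boolean algebra $\Sub_{\topos_\theory}(1)$ is atomic; its atoms $\{\chi_x\}_{x \in X_0}$ correspond bijectively to the completions $\theory_x = \theory \cup \{\top \vdash \chi_x\}$ of $\theory$. Using the enough-points assumption for each $\theory_x$ together with standard prime/atomic-model theory, I would select for each $x$ a conservative, ultrahomogeneous model $M_x \vDash \theory_x$, and form the model groupoid $\X = \coprod_x \Aut(M_x)$, in which each $M_x$ is indexed by its own elements, with parameter sets pairwise disjoint across different $x$.

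Conservativity of $\X$ is then immediate, since the collection $\{M_x\}_x$ realizes every atom $\chi_x$. For elimination of parameters, fix a tuple $\vec{m}$ in some $M_x$; by the argument of \cref{lem:atomicandhomoimplieselimpara}, its orbit inside $M_x$ coincides with $\classv{\chi_{\vec{m}}}{x}_{M_x}$, where $\chi_{\vec{m}}$ denotes the minimal formula of $\vec{m}$. For $y \neq x$, any tuple in $M_y$ realizing $\chi_{\vec{m}}$ would share the tuple-type $\tp_{M_x}(\vec{m})$ and hence force $\Th(M_y) = \Th(M_x)$, contradicting $\theory_y \neq \theory_x$; thus $\classv{\chi_{\vec{m}}}{x}_{M_y} = \emptyset$ and $\overline{\class{\vec{x} = \vec{m}}}_\X = \classv{\chi_{\vec{m}}}{x}_\X$. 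By \cref{rem:df:conservandelimpara}\cref{enum:rem:df:conservandelimpara:sufficetochecktuple}, $\X$ eliminates parameters, so \cref{maintheorem} yields $\Sh(\Xlog) \simeq \topos$. Because parameter sets are pairwise disjoint and each completion is moreover isolated by its atom (so that each singleton $\{M_x\}$ is the basic open $\class{\emptyset:\chi_x}_\X$ of the logical topology), $X_0^\taulogo$ is discrete, $\Xlog$ decomposes into its connected components $\Aut(M_x)^\tauloga$, and $\topos \simeq \coprod_x \B G_x$ with $G_x := \Aut(M_x)^\tauloga$.

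\textbf{The main obstacle} is producing a conservative ultrahomogeneous model $M_x$ for each completion $\theory_x$. Over a countable signature, this is the classical existence and uniqueness of the prime (equivalently, atomic) model of a complete atomic theory, which is automatically ultrahomogeneous; over larger signatures, one must invoke more delicate back-and-forth or Fraïssé-style constructions, but the overall recipe via \cref{maintheorem} is unchanged.
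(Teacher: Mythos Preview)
Your proposal is correct and follows essentially the same approach as the paper: build a bouquet groupoid consisting of one ultrahomogeneous model per completion of $\theory$, verify conservativity and elimination of parameters, and invoke \cref{maintheorem}. The only notable difference is that where the paper first obtains a formula $\varphi$ from \cref{lem:atomicandhomoimplieselimpara} and then conjuncts with an isolating sentence $\xi_M$ supplied by \cref{lem:quotientofbool}, you observe directly that the minimal formula $\chi_{\vec{m}}$ already has empty interpretation in the other $M_y$ (since realizing it would force the same complete theory); this is a minor streamlining rather than a different route, and your use of the atoms $\chi_x \in \Sub_{\topos_\theory}(1)$ plays the same role as the paper's \cref{lem:quotientofbool}.
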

\begin{proof}
	Firstly, we recognise that a topos of the form $\coprod_{x \in X_0} \B G_x$ is really just the topos of sheaves for the topological groupoid
	\[
	\G = \left(
	\coprod_{x \in X_0}  G_x \rightrightarrows \coprod_{x \in X_0} 1 \cong X_0^\delta
	\right).
	\]
	This topological groupoid is, of course, automatically open.  We could call groupoids of this form the \emph{bouquet} groupoids since, when written out diagrammatically, they appear as a collection of `flowers' -- the group elements $g \in G_x$ being the `petals', e.g.\
	\[\begin{tikzcd}
		G_x \arrow[loop, in=120, out=60, distance = 30pt] 
		\arrow[loop, in=235, out=175, distance = 30pt]
		\arrow[loop, in=5, out=305, distance = 30pt]
		\arrow["\dots", phantom, loop, distance= 15pt, in=305, out=235]
		& \dots & G_y.
		\arrow[loop, in=120, out=60, distance = 30pt] 
		\arrow[loop, in=235, out=175, distance = 30pt]
		\arrow[loop, in=5, out=305, distance = 30pt]
		\arrow["\dots", phantom, loop, distance= 15pt, in=305, out=235]
	\end{tikzcd}\]
	By \cref{prop:disjoint_implies_atomic}, if $\topos \simeq \coprod_{x \in X_0} \B G_x$ for some set of groups $\{\,G_x \mid x \in X_0\,\}$, then $\topos$ is Boolean.
	
	For the converse direction, let $\theory$ be a geometric theory, over a signature $\Sigma$, classified by the topos $\topos$.  By the hypotheses, $\theory$ is an atomic theory with enough points.  We can therefore find a conservative set of models $X_0$ for $\theory$.  In fact we can choose each model $M \in X_0$ to be ultrahomogeneous since, via a standard result in model theory, every model is an elementary substructure of a ultrahomogeneous model (see \cite[\S 10.2]{hodges}).  Moreover, we can evidently choose the models $M \in X_0$ to be pairwise elementarily inequivalent (i.e.\ no two models satisfy all the same sentences) and also pairwise disjoint.  Therefore, by \cref{lem:atomicandhomoimplieselimpara} we deduce that each automorphism group $\Aut(M)$ eliminates parameters.
	
	Thus, when given the trivial indexing the model groupoid $\coprod_{M \in X_0} \Aut(M)$, obtained by taking as objects the models $M \in X_0$ and as arrows all automorphisms, also eliminates parameters.  To see this, we first note that for each definable with parameters $\class{\vec{x},\vec{m}:\psi}_{\coprod_{M \in X_0} \Aut(M)}$, the parameters $\vec{m}$ are only instantiated in one model $M \in X_0$ (since the models of $X_0$ were chosen to be pairwise disjoint).  Therefore, since there are no arrows between distinct models of our groupoid, (modulo a transparent abuse of notation) we have that
	\[
	\overline{\class{\vec{x},\vec{m}:\psi}}_{\coprod_{M \in X_0} \Aut(M)} = \overline{\class{\vec{x},\vec{m}:\psi}}_{\Aut(M)}.
	\]
	Let $\varphi$ be a formula without parameters such that $\overline{\class{\vec{x},\vec{m}:\psi}}_{\Aut(M) } = \classv{\varphi}{x}_{\Aut(M)}$.
	
	We are not quite done since $ \classv{\varphi}{x}_{\Aut(M)} \neq \classv{\varphi}{x}_{\coprod_{M \in X_0}\Aut(M)} $.  Instead, we must find a formula that isolates those realisations of $\varphi$ in $M$ from those in other models $M' \in X_0$.  This is achieved by \cref{lem:quotientofbool}.  Let $\theory_M$ denote the \emph{theory of the model} $M$, i.e the set of geometric sequents
	\[
	\theory_M = \lrset{\chi \vdash_{\vec{x}} \xi}{\classv{\chi}{x}_M \subseteq \classv{\xi}{x}_M}.
	\]
	This is evidently a quotient theory of $\theory$.  Thus, by \cref{lem:quotientofbool}, there exists a sentence $\xi_M$ such that $M$ is the only model in $X_0$ which satisfies $\xi_M$ -- since the models of $X_0$ were chosen to be pairwise elementarily inequivalent.  Therefore, we have that
	\[\overline{\class{\vec{x},\vec{m}:\psi}}_{\coprod_{M \in X_0} \Aut(M)} = \classv{\varphi \land \xi_M}{x}_{\coprod_{M \in X_0}\Aut(M)}\]
	as required.  Thus, $\coprod_{M \in X_0} \Aut(M)$ is a conservative model groupoid for $\theory$ that eliminates parameters and so, by \cref{thm:elimparaimpliesrepr}, we conclude that $\topos \simeq \coprod_{M \in X_0} \B \Aut(M)$, once each automorphism group has been suitably topologised.
\end{proof}

\begin{ex}[Proposition 2.4 \cite{caramellojohnstone}]
	We return to the theory of algebraically closed fields of finite characteristic that are algebraic over their prime subfield from \cref{exs:atomictheories}\cref{exs:enum:acf}.  Being an atomic theory, by \cref{coro:booltopos} we know it can be presented as a coproduct of toposes of actions by topological groups.  Indeed, the theory is classified by the topos
	\[
	\coprod_{p \text{ prime}} \B \Aut\left(\overline{\Z/\langle p \rangle}\right),
	\]
	where $\overline{\Z/\langle p \rangle}$ is the algebraic closure of $\Z/\langle p \rangle$, and $\Aut\left(\overline{\Z/\langle p \rangle}\right)$ has been topologised with the usual Krull topology.  This is precisely \cite[Proposition 2.4]{caramellojohnstone}.
\end{ex}

The principal result of \cite{blassscedrov} classifies Boolean coherent toposes.  As coherent toposes have enough points, the result can be obtained from \cref{coro:booltopos} by discerning when the topos $\coprod_{x \in X_0} \B G_x$ is coherent.  This occurs when $X_0$ is a finite set and each $G_x$ is a \emph{coherent} topological group.  We refer to \cite{blassscedrov} for the details.


\subsection{Decidable theories}\label{subsec:decidable}

We saw in \cref{prop:disjoint_implies_atomic} that we cannot, in general, require that the models in a representing model groupoid are disjointly indexed.  In this subsection, we demonstrate further that nor can we remove the requirement that multiple parameters may index the same element of a model, i.e.\ that each model is presented as a subquotient of its set of parameters rather than a subset.  We will observe that this is possible only if the theory is decidable.

\begin{df}
	A geometric theory $\theory$ over a signature $\Sigma$ is \emph{decidable} if, for each pair of free variables $x,x'$ of the same sort of $\Sigma$, there is a formula in context $x,x'$, which we suggestively denote as $x \neq x'$, such that $\theory$ proves the sequents
	\[
	x = x' \land x \neq x' \vdash_{x,x'} \bot , \ \ \ \top \vdash_{x,x'} x= x' \lor x \neq x'.
	\]
\end{df}

Let $\X = (X_1 \rightrightarrows X_0)$ be a model groupoid for a geometric theory $\theory$.  Consider the trivial indexing of $\X$, described in \cref{ex:indexedstructures}\cref{ex:indexedstructures:enum:trivial}, with parameters as the elements of the constituent models $\bigcup_{M \in X_0} M$.  For this indexing, each element $n \in M \in X_0$ is indexed by precisely one parameter.

It is not hard to see that, up to isomorphism, this is the unique indexing of $\X$ with this property.  Suppose that $\X$ is indexed by a set of parameters $\Index$ in such a way that, for each $n \in M \in X_0$, there is a unique parameter $m \in \Index$ that indexes $n$.  This is equivalent to presenting each model $M \in X_0$ as a subset of the set of parameters, rather than a subquotient.  By replacing the underlying set of $M$ with the corresponding subset of parameters, we have now trivially indexed $\X$.

\begin{prop}\label{prop:subsetsimpliesdecidable}
	Let $\X = (X_1 \rightrightarrows X_0)$ be a conservative model groupoid for a theory $\theory$.  If, when $\X$ is given the trivial indexing, $\X$ eliminates parameters, then $\theory$ is a decidable theory.
\end{prop}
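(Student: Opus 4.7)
The plan is to explicitly construct, for each sort of $\Sigma$, a formula playing the role of $x \neq x'$, by eliminating parameters on the ``diagonal complement'' of $\classv{\top}{x,x'}_\X$. Fix two variables $x,x'$ of a chosen sort. Under the trivial indexing, the set of parameters is $\Index = \bigcup_{M\in X_0} M$, with each element of a model indexing exactly itself. For each $M\in X_0$ and each pair $m,m'\in M$ with $m\neq m'$, consider the definable with parameters
\[
\class{x,x',m,m':x=m\,\land\,x'=m'}_\X,
\]
whose only point is $\langle(m,m'),M\rangle$. Its orbit under $X_1$ is
\[
\overline{\class{x,x',m,m':x=m\,\land\,x'=m'}}_\X \;=\; \{\,\langle(\alpha(m),\alpha(m')),N\rangle \mid M\xrightarrow{\alpha}N\in X_1\,\},
\]
and, crucially, every element of this orbit is a pair of \emph{distinct} elements of $N$, because $\alpha$ is a bijection and $m\neq m'$.

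Since $(\X,\mathrm{incl})$ eliminates parameters, each such orbit is $\classv{\phi_{M,m,m'}}{x,x'}_\X$ for some geometric formula $\phi_{M,m,m'}$ in the (parameter-free) context $x,x'$. Because $\X$ is a small groupoid, the indexing collection $\{(M,m,m'):M\in X_0,\;m\neq m'\in M\}$ is a set, so we may form the geometric disjunction
\[
\phi(x,x')\;:=\;\bigvee_{\substack{M\in X_0\\ m\neq m'\in M}}\phi_{M,m,m'}.
\]
I claim $\classv{\phi}{x,x'}_\X = \{\,\langle(n,n'),N\rangle \mid n\neq n'\text{ in }N\,\}$. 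The inclusion $\subseteq$ is the observation above. For $\supseteq$, if $n\neq n'$ in $N\in X_0$, take $M=N$, $m=n$, $m'=n'$ and $\alpha=\id_N\in X_1$; then $\langle(n,n'),N\rangle$ lies in the orbit of $\class{x,x',n,n':x=n\,\land\,x'=n'}_\X$, hence in $\classv{\phi_{N,n,n'}}{x,x'}_\X\subseteq\classv{\phi}{x,x'}_\X$.

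Finally, reading off set-theoretic inclusions and applying conservativity of $(\X,\mathrm{incl})$: the identity $\classv{x=x'\,\land\,\phi}{x,x'}_\X=\emptyset$ yields $\theory\vdash x=x'\,\land\,\phi\vdash_{x,x'}\bot$, and the identity $\classv{\top}{x,x'}_\X=\classv{x=x'\,\lor\,\phi}{x,x'}_\X$ yields $\theory\vdash\top\vdash_{x,x'}x=x'\,\lor\,\phi$. Taking $x\neq x'$ to be $\phi$ for each sort proves that $\theory$ is decidable. The only mild subtlety—where one must be careful rather than where any real obstacle lies—is confirming that the disjunction defining $\phi$ is indexed by a set, which is immediate from smallness of $\X$; everything else is a direct application of elimination of parameters and conservativity.
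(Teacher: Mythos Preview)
Your proof is correct, with one minor inaccuracy: the definable $\class{x,x',m,m':x=m\land x'=m'}_\X$ need not have $\langle(m,m'),M\rangle$ as its \emph{only} point, since under the trivial indexing the parameter $m$ is interpreted in every model $N\in X_0$ that happens to contain $m$ as an element. This does not affect the argument, because all such points still have $m\neq m'$, and the orbit therefore still lies entirely inside the off-diagonal.

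Your route differs from the paper's in a useful way. The paper invokes the full classification theorem (\cref{maintheorem}) to obtain the equivalence $\topos_\theory\simeq\Sh(\Xlog)$, then works on the sheaf side: it shows that $\bigcup_{m\neq m'}\class{x=m\land x'=m'}_\X$ is a stable open of $\classv{\top}{x,x'}_\X$ and hence a subobject, verifies it complements $\class{x=x'}_\X$, and finally transports this complement back along the subobject-lattice isomorphism $\Sub_{\topos_\theory}(\{x,x':\top\})\cong\Sub_{\Sh(\Xlog)}(\class{x,x':\top}_\X)$ to obtain a formula. You instead bypass the classification theorem entirely, applying elimination of parameters orbit-by-orbit to produce explicit formulae $\phi_{M,m,m'}$, taking their (set-indexed) disjunction, and then reading off the two sequents directly from conservativity. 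Your argument is more elementary and self-contained; the paper's is shorter once the heavy machinery is in place and makes the topos-theoretic content (existence of a complement in a subobject lattice) more transparent.
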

\begin{proof}
	We must show that, for each pair of free variables $x,x'$ of the same sort, the formula $x=x'$ has a complement modulo the theory $\theory$.  As $\X$ is conservative and eliminates parameters it is a representing model groupoid by \cref{maintheorem}, and so by the isomorphism 
	\[\Sub_{\topos_\theory}(\{\,x,x':\top\,\}) \cong \Sub_{\Sh\left(\Xlog\right)}(\class{x,x':\top}_\X),\]
	finding a complement for the formula $x = x'$ is equivalent to showing that $\class{x=x'}_\X$ has a complement.  We claim that this complement, which we denote by $\class{x\neq x'}_\X$, is given by
	\[
	\bigcup_{\substack{m,m' \in \bigcup_{M \in X_0} M \\ m \neq m'}} \class{x = m \land x' = m'}_\X .
	\]
	We must first show that $\class{x\neq x'}_\X$ does indeed define a subobject, i.e.\ a stable open subset, of $ \class{x,x':\top}_\X $.

	The subset $\class{x\neq x'}_\X$ is the union of opens, and therefore open itself.  Now suppose we are given an element $\lrangle{n,n',M} \in \class{x \neq x'}_\X$.  By the definition of $\class{x \neq x'}_\X$, $n \neq n'$ and so $\alpha(n) \neq \alpha(n')$ for any isomorphism $M \xrightarrow{\alpha} N$.  Therefore, $\lrangle{\alpha(n,n'),N} $ is also an element of $\class{x\neq x'}_\X$.  Thus, $\class{x\neq x'}_\X$ is a stable open as desired.
	
	It is now not hard to conclude that $\class{x \neq x'}_\X$ is a complement to $\class{x=x'}_\X$.  Given $\lrangle{n,n',M} \in \class{x,x':\top}_\X$, either $n=n'$ or $n \neq n'$, yielding 
	\[\class{{x}={x}'}_\X \cup \class{{x} \neq {x}'}_\X = \class{{x},{x}':\top}_\X.\]
	It is similarly easy to conclude that $\class{{x}={x}'}_\X \cap \class{{x} \neq {x}'}_\X = \emptyset$.
\end{proof}


\paragraph{Examples of representing groupoids for decidable theories.}

Finally, we present a useful result from which it is possible to easily generate representing groupoids for many theories.  Note that the only theories that can be represented by the groupoids generated using the below method must also be decidable.  This can be seen by an application of \cref{prop:subsetsimpliesdecidable}.

\begin{prop}\label{prop:generate_decidable_grpds}
	Let $\theory$ be geometric theory over a signature $\Sigma$.  Suppose that a conservative set of models for $\theory$ can be found as substructures of an ultrahomogeneous $\Sigma$-structure $U$ whose theory $\theory_U$ (i.e.\ the theory over $\Sigma$ whose axioms are all sequents satisfied by $U$) is atomic, and moreover the minimal formula $\chi_{\vec{n}}$ of any tuple of elements $\vec{n} \in U$ is quantifier free.  Then the model groupoid $\Sub_\theory(U)$ of $\theory$,
	\begin{enumerate}
		\item whose objects are the substructures of $U$ that are models of $\theory$,
		\item and whose arrows are all isomorphisms between these,
	\end{enumerate}
	is a representing model groupoid for $\theory$, i.e.\ there is an indexing of $\Sub_\theory(U)$ such that
	\[\topos_\theory \simeq \Sh\left(\Sub_\theory(U)^\tauloga_\taulogo\right).\]
\end{prop}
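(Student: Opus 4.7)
The plan is to invoke \cref{maintheorem} using the evident indexing in which each $M \in \Sub_\theory(U)$ is identified with its underlying subset of $U$, so that the parameter set $\Index$ is (the disjoint union of sorts of) $U$ itself and each parameter $u \in U$ is interpreted in $M$ as $u$ whenever $u \in M$. Conservativity of the inclusion $\Sub_\theory(U) \hookrightarrow \Tmodels{\sets}$ is immediate from the hypothesis that a conservative set of $\theory$-models already sits amongst the substructures of $U$, so the only real content is verifying elimination of parameters.

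By \cref{rem:df:conservandelimpara}\cref{enum:rem:df:conservandelimpara:sufficetochecktuple}, it will suffice to exhibit, for each tuple $\vec{m} \in U$, a formula $\chi$ over $\Sigma$ without parameters with
\[
\overline{\class{\vec{y} = \vec{m}}}_{\Sub_\theory(U)} = \classv{\chi}{y}_{\Sub_\theory(U)}.
\]
I will take $\chi$ to be the minimal formula $\chi_{\vec{m}}$ of $\vec{m}$ in the atomic theory $\theory_U$, which is quantifier-free by hypothesis. The forward inclusion will follow routinely: given $\langle \vec{n}, N\rangle$ in the orbit, there is some $M \in \Sub_\theory(U)$ containing $\vec{m}$ and an isomorphism $\alpha \colon M \to N$ sending $\vec{m}$ to $\vec{n}$, and quantifier-freeness of $\chi_{\vec{m}}$ will let its satisfaction descend from $U$ to $M$ and then transfer across $\alpha$ to yield $N \vDash \chi_{\vec{m}}(\vec{n})$.

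The reverse inclusion is the heart of the argument. Given $N \vDash \chi_{\vec{m}}(\vec{n})$ with $\vec{n} \in N \subseteq U$, quantifier-freeness will lift the satisfaction back up to $U \vDash \chi_{\vec{m}}(\vec{n})$; atomicity of $\theory_U$ will then ensure that $\chi_{\vec{m}}$ isolates the type of $\vec{m}$ in $U$, so $\vec{m}$ and $\vec{n}$ realise the same type in $U$; and ultrahomogeneity of $U$ will extend the resulting partial isomorphism $\vec{m} \mapsto \vec{n}$ to an automorphism $\beta \colon U \to U$. Setting $M := \beta^{-1}(N)$, I will verify that $M$ is a substructure of $U$ isomorphic to $N$ via $\beta|_M$, hence itself a model of $\theory$ and therefore an object of $\Sub_\theory(U)$, that $\vec{m} \in M$ with $\beta|_M(\vec{m}) = \vec{n}$, and that $\beta|_M$ is an arrow of the groupoid placing $\langle\vec{n}, N\rangle$ in the orbit of $\langle\vec{m}, M\rangle$. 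Combined with the forward inclusion, this yields elimination of parameters, and \cref{coro:conservative_and_elimpara_implies_repr} then delivers the desired equivalence $\topos_\theory \simeq \Sh(\Sub_\theory(U)^{\tauloga}_{\taulogo})$.

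The principal obstacle is exactly this reverse inclusion, where one must temporarily leave the groupoid $\Sub_\theory(U)$ to obtain the partial isomorphism inside the ambient $U$ and then re-descend the resulting automorphism to an arrow of the groupoid. The three hypotheses on $U$ (quantifier-free minimal formulas, ultrahomogeneity of $U$, and atomicity of $\theory_U$) each play a single distinct role in that descent, with the quantifier-free hypothesis mediating between a potentially tiny substructure $N$ and the full ambient $U$ where the model-theoretic tools apply.
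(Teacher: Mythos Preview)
Your proposal is correct and follows essentially the same route as the paper: the same indexing by elements of $U$, the same candidate $\chi_{\vec{m}}$, and the same reverse-inclusion argument via an automorphism $\beta$ of $U$ with $M := \beta^{-1}(N)$. You are in fact slightly more explicit than the paper in spelling out why $\vec{m}$ and $\vec{n}$ have the same type in $U$ (lifting via quantifier-freeness before invoking atomicity), which the paper compresses into the single phrase ``by the ultrahomogeneity of $U$''.
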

\begin{proof}
	By hypothesis, the model groupoid $\Sub_\theory(U)$ is conservative.  It remains to show that the the groupoid $\Sub_\theory(U)$ has an indexing by a set of parameters for which the groupoid eliminates parameters.  The indexing set we use are the elements of the $\Sigma$-structure $U$.  The indexing of a model $M \in \Sub_\theory(U)$ is determined by the inclusion $M \subseteq U$ of $M$ as a substructure of $U$.
	
	Since $U$ is an ultrahomogeneous model of the atomic theory $\theory_U$, by \cref{lem:atomicandhomoimplieselimpara} we have that, for each tuple $\vec{m} \in U$,
	\[
	\overline{\class{\vec{x} = \vec{m}}}_{\Aut(U)} = \classv{\chi_{\vec{m}}}{x}_{\Aut(U)},
	\]
	where $\chi_{\vec{m}}$ is the minimal formula of $\vec{m} \in U$.  
	
	We claim that $\overline{\class{\vec{x} = \vec{m}}}_{\Sub_\theory(U)} = \classv{\chi_{\vec{m}}}{x}_{\Sub_\theory(U)}$.  Since $\chi_{\vec{m}}$ is a quantifier free formula, $M \vDash \chi_{\vec{m}}(\vec{m})$ for any $\theory$-model $M \subseteq U$ that contains $\vec{m}$.  Thus, the first inclusion $\overline{\class{\vec{x} = \vec{m}}}_{\Sub_\theory(U)} \subseteq \classv{\chi_{\vec{m}}}{x}_{\Sub_\theory(U)}$ follows from \cref{rem:df:conservandelimpara}\cref{enum:rem:df:elimpara:maxformula}.  Conversely, given another witness $\vec{n}$ of $\chi_{\vec{m}}$ in a $\theory$-model $N \subseteq U$, there exists, by the ultrahomogeneity of $U$, an automorphism $\alpha$ of $U$ that sends $\vec{m} $ to $\vec{n}$.  We have that $\alpha^{-1}(N),N \subseteq U$ constitute a pair $\theory$-models, and $\alpha|_{\alpha^{-1}(N)} \colon \alpha^{-1}(N) \to N
	$ is an isomorphism of $\Sigma$-structures that sends $\vec{m} \in \alpha^{-1}(N)$ to $\vec{n} \in N$.  Therefore, we have that $\lrangle{\vec{n},N} \in \overline{ \class{\vec{x} = \vec{m}}}_{\Sub_\theory(U)}$, completing the reverse inclusion.
\end{proof}

\begin{exs}\label{ex:groupoids_for_decidables}
	We apply \cref{prop:generate_decidable_grpds} to give a pair of simple examples of model groupoids for decidable theories.
	\begin{enumerate}
		\item(\S 2.4.1 \cite{awodeyforssell}) The theory of \emph{decidable objects} is the single-sorted theory with one binary predicate $\neq$ and the axioms
		\[
		x = x' \land x \neq x' \vdash_{x,x'} \bot , \ \ \ \top \vdash_{x,x'} x= x' \lor x \neq x'.
		\]
		As observed in \cref{exs:atomictheoriesrevisited}\cref{exs:atomictheoriesrevisited:enum:decidable_inf_objs}, the natural numbers is an ultrahomogeneous model of the theory of decidable objects.  The theory of $\N$ is the theory of infinite decidable objects $\Dtheory_\infty$ from \cref{exs:atomictheories}\cref{exs:enum:decinf}, an atomic theory whose minimal formulae are quantifier free.
		
		Moreover, the subsets of $\N$ are a conservative set of models for the theory of decidable objects.  Hence, by an application of \cref{prop:generate_decidable_grpds}, the theory of decidable objects is classified by
		\[
		\Sh\left(\Sub(\N)^{\tauloga}_\taulogo\right).
		\]

		\item\label{ex:enum:alg_ext} Let $K$ be a field.  We denote by $\theory_{(-/K)}$ the theory of \emph{algebraic extensions} of $K$.  This is the single-sorted theory over the signature consisting of the standard signature of a ring with an additional constant symbol for each element of $K$.  The axioms of $\theory_{(-/K)}$ consist of the following:
		\begin{enumerate}
			\item the standard axioms of a field and an axiom $\top \vdash \varphi(\vec{k})$
			for each sentence $\varphi$ with constants $\vec{k} \in K$ satisfied by $K$, ensuring that each model of $\theory_{(-/K)}$ is a field extension of $K$,
			\item and the sequent
			\[
			\top \vdash_x \bigvee_{q \in K[x]} q(x) = 0,
			\]
			expressing that any model is an algebraic extension of $K$.
		\end{enumerate}
		The algebraic closure $\overline{K}$ of $K$ is an ultrahomogeneous structure whose theory $\Th\left(\overline{K}\right)$ is atomic and whose minimal formulae are quantifier free (cf.\ \cref{exs:atomictheories}\cref{exs:enum:acf}).  Hence, by an application of \cref{prop:generate_decidable_grpds}, the theory $\theory_{(-/K)}$ is classified by the topos
		\[
		\Sh\left(\Sub\left(\overline{K}\right)^\tauloga_\taulogo\right),
		\]
		where $\Sub\left(\overline{K}\right)$ is the groupoid of intermediate extensions of $K$, and all isomorphisms between these.
		
		By \cref{prop:subsetsimpliesdecidable}, the theory $\theory_{(-/K)}$ is decidable.  Indeed, we can identify the complement of the equality predicate as the formula $\exists y \ y \cdot (x-x') = 1$.
	\end{enumerate}
\end{exs}



\subsection{\'{E}tale complete groupoids}\label{sec:etalecomplete}

We now study the behaviour of a representing model groupoid when we expand its set of arrows, inspired by the consequence of the descent theory of Joyal and Tierney \cite[Theorem VIII.3.2]{JT} that every open localic groupoid is \emph{Morita-equivalent} to its \emph{étale completion} (see \cite[Definition 7.2]{cont1}).  Loosely speaking, a localic groupoid is étale complete if every isomorphism between points in its space of objects, viewed as points of the topos of equivariant sheaves, is instantiated in its space of arrows (this can be made precise by speaking of \emph{generalised} points).  We will study the analogous topological definition.

\begin{dfs}[cf.\ Definition 7.2 \cite{cont1}]\label{df:etale-complete}
	Let $\theory$ be a geometric theory and let $\X = (X_1 \rightrightarrows X_0)$ be a model groupoid for $\theory$.
	\begin{enumerate}
		\item\label{enum:df:etale-complete:ec} The groupoid $\X$ is said to be \emph{étale complete} if every $\theory$-model isomorphism between models $M, N \in X_0$ is instantiated in $X_1$.  
		\item We denote by $\hat{\X}$ the \emph{étale completion} of $\X$.  This is the model groupoid $\hat{\X} = (\hat{X}_1 \rightrightarrows X_0)$ whose set of objects is the same set of models $X_0$ as for $\X$, but whose arrows are all $\theory$-model isomorphisms between models $M, N \in X_0$.
	\end{enumerate} 
\end{dfs}

\begin{rem}
	Let $\X$ be a model groupoid of a theory $\theory$.  Conflating functions $f \colon Y \to Z$ with their induced geometric morphisms $\sets/Y \to \sets/X$, \cref{df:etale-complete}\cref{enum:df:etale-complete:ec} expresses that for every natural isomorphism
	\[\begin{tikzcd}
		\sets & {\sets/X_0} \\
		{\sets/X_0} & {\topos_\theory},
		\arrow["M", from=1-1, to=1-2]
		\arrow["N"', from=1-1, to=2-1]
		\arrow["\alpha"', Rightarrow, from=1-2, to=2-1]
		\arrow["\sim"{marking, allow upside down}, shift left=2, draw=none, from=1-2, to=2-1]
		\arrow["\p", from=1-2, to=2-2]
		\arrow["\p"', from=2-1, to=2-2]
	\end{tikzcd}\]
	there is a point $\breve{\alpha} \in X_1$ such that $\alpha$ is the composite 2-cell $\p \circ M \Rightarrow \p \circ N$ in the diagram
	\[\begin{tikzcd}
		\sets \\
		& {\sets/X_1} & {\sets/X_0} \\
		& {\sets/X_0} & {\topos_\theory},
		\arrow["{\breve{\alpha}}", from=1-1, to=2-2]
		\arrow["M", curve={height=-20pt}, from=1-1, to=2-3]
		\arrow["N"', curve={height=20pt}, from=1-1, to=3-2]
		\arrow["s", from=2-2, to=2-3]
		\arrow["t"', from=2-2, to=3-2]
		\arrow[Rightarrow, from=2-3, to=3-2]
		\arrow["\sim"{marking, allow upside down}, shift left=2, draw=none, from=2-3, to=3-2]
		\arrow["\p", from=2-3, to=3-3]
		\arrow["\p"', from=3-2, to=3-3]
	\end{tikzcd}\]
	where the isomorphism $\p \circ s \cong \p \circ t$ is the canonical isomorphism deriving from the fact that $\X$ is a groupoid.  (The analogy with the `bi-pullback' definition of \'etale completeness exposited in \cite[\S 7]{cont1} is now limpid.)
\end{rem}

So far in \cref{sec:atomic} and \cref{subsec:decidable}, the only specific examples we have considered are all étale complete model groupoids.  Our classification given in \cref{maintheorem} is powerful enough to also recognise a representing model groupoid even when it is not étale complete.  We give an example of a representing model groupoid that is étale incomplete in \cref{ex:notallisos}.  However, we will observe that, just as is the case for localic groupoids, every open topological groupoid is \emph{Morita-equivalent} to its étale completion, by which we mean that the toposes of equivariant sheaves are equivalent.

\begin{ex}\label{ex:notallisos}
	Let $\theory$ be an atomic theory and $M$ a ultrahomogeneous and conservative model of $\theory$.  We note that we do not require all the automorphisms of $M$ in order to extend every possible partial isomorphism of finite substructures.  Therefore, if we were to take a non-trivial subgroup of $\Aut(M)$ that contains all those automorphisms induced by such partial isomorphisms, we would still be able to use the ultrahomogeneity property that was so crucial in proving that $\Aut(M)$ eliminates imaginaries in \cref{lem:atomicandhomoimplieselimpara}.  We manufacture such an example below.

	We once again consider the theory of dense linear orders without endpoints $\DLO$.  Recall from \cref{exs:atomictheoriesrevisited} that this theory is represented by the automorphism group $\Aut(\Q)$.  We show that we can take a (topologically dense) subgroup $\X$ of $\Aut(\Q)$ which does not contain all automorphisms, and yet $\X$ eliminates parameters and hence is a representing group.
	
	We note that, for any rational number $r \in \Q$, the map $p \mapsto p + r$ is an automorphism of $\Q$.  We will say that an automorphism $\alpha \colon \Q \to \Q$ is \emph{boundedly additive} if, apart from a bounded interval, $\alpha$ is given by addition.  Explicitly, $\alpha$ is boundedly additive if there are bounded (closed) intervals $[q_1,r_1] \subseteq \Q$ and $[q_2,r_2] \subseteq \Q$ such that:
	\begin{enumerate}
		\item firstly, $\alpha$ maps $[q_1,r_1]$ to $[q_2,r_2]$,
		\item on the interval $(-\infty,q_1)$, $\alpha$ acts by $p \mapsto p + q_2 - q_1$,
		\item and on the interval $(r_1, \infty)$, $\alpha$ acts by $p \mapsto p + r_2 - r_1$.
	\end{enumerate} 
	The identity is clearly boundedly additive, and if $\alpha$ and $\gamma$ are boundedly additive, then by choosing a sufficiently large interval we can ensure that their composite $\alpha \circ \gamma$ is boundedly additive too.  Let $\X$ denote the subgroup of $\Aut(\Q)$ of boundedly additive automorphisms.
	
	We claim that for any tuple $\vec{q}_1 \in \Q$, we have that $\overline{\class{\vec{y} = \vec{q}_1}}_\X = \classv{\chi_{\vec{q}_1}}{y}_\X$, where $\chi_{\vec{q}_1}$ is the minimal formula of $\vec{q}_1$, and thus that $\X$ eliminates parameters.  We automatically have one inclusion $\overline{\class{\vec{y} = {\vec{q}}_1}}_\X \subseteq \classv{\chi_{{\vec{q}}_1}}{y}_\X$.  For the converse, we must show that for any other tuple $\vec{q}_2 \in \Q$ with the same order type as $\vec{q}_1$, there is a boundedly additive automorphism $\alpha \colon \Q \to \Q$ that maps $\vec{q}_1$ onto $\vec{q}_2$.  This is straightforward.  Let $q_1$ and $r_1$ denote, respectively, the least and greatest elements of $\vec{q}_1$, and similarly define $q_2, r_2$ for $\vec{q}_2$.  Using the standard back-and-forth methods one uses to show that $\Q$ is ultrahomogeneous (see \cite[\S 3.2]{hodges}), we can construct an order isomorphism $[q_1,r_1] \cong [q_2,r_2]$ that maps $\vec{q}_1$ onto $\vec{q}_2$.  It is now clear that this can be extended to a total and boundedly additive automorphism of $\Q$.  Thus, $\X$ is an automorphism subgroup on a conservative model that eliminates parameters, and hence a representing group of the theory.
	
	The subgroup $\X$ is not the whole group $\Aut(\Q)$.  An example of an automorphism of $\Q$ that is not boundedly additive can be constructed out of one which is.  Firstly we note that $\Q$ is order isomorphic to countably many copies of itself given the lexicographic ordering since, given some irrational $a$, 
	\[\Q \cong \bigcup_{n \in \Z} (a+n,a+n+1) \cong \coprod_{\omega_0} \Q.\]  
	Let $\alpha$ be a boundedly additive automorphism whose non-additive part $[q_1,r_1] \cong [q_2,r_2]$ is non-additive (the automorphism $\alpha$ could be, for example, the total automorphism induced by the partial isomorphism $1<2<4 \mapsto 1<3<4$).  An automorphism of $\Q$ which is not boundedly additive is now obtained via the composite
	\[
	\begin{tikzcd}
		{\displaystyle \Q \cong \coprod_{\omega_0} \Q} \ar{rr}{\coprod_{\omega_0} \alpha} && {\displaystyle \coprod_{\omega_0} \Q \cong \Q.}
	\end{tikzcd}
	\]
\end{ex}

\begin{prop}\label{prop:canaddisos}
	Let $\X = (X_1 \rightrightarrows X_0)$ be a model groupoid for a geometric theory $\theory$, indexed by a set of parameters $\Index$, that is conservative and eliminates parameters.  For any other model groupoid $\X' = (X'_1 \rightrightarrows X'_0)$ such that $X_0 = X'_0$ and $X_1 \subseteq X'_1$, i.e.\ $\X$ is a surjective on objects subgroupoid of $\X'$, then $\X'$ is also a conservative model groupoid that eliminates parameters when the models $M \in X_0 = X'_0$ are given the same indexing by $\Index$.
\end{prop}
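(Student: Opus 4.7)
The plan is to observe that both hypotheses transfer almost tautologically, once one recalls that the definables $\class{\vec{x},\vec{m}:\psi}_\X$ and $\classv{\varphi}{x}_\X$ (for $\varphi$ parameter-free) depend only on the set of objects $X_0$, whereas the orbit operation $\overline{(-)}$ is the only place the arrows play a role.

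First I would dispense with conservativity. Conservativity of $(\X,p)$ (as in \cref{df:conservandelimpara}) concerns only the inclusions $\classv{\phi}{x}_\X \subseteq \classv{\psi}{x}_\X$, and these definables depend exclusively on $X_0$. Since $X_0 = X_0'$, any failure of conservativity for $\X'$ would be witnessed by the same pair of formulae as for $\X$, giving conservativity of $\X'$ for free.

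Next I would turn to elimination of parameters, which is the content of the statement. Fix a definable with parameters $\class{\vec{x},\vec{m}:\psi}_{\X'}$. By hypothesis applied to $\X$, there exists a parameter-free formula $\form{\varphi}{x}$ such that $\overline{\class{\vec{x},\vec{m}:\psi}}_\X = \classv{\varphi}{x}_\X$. I claim the same $\varphi$ works for $\X'$, i.e.\ $\overline{\class{\vec{x},\vec{m}:\psi}}_{\X'} = \classv{\varphi}{x}_{\X'}$. The key observation is that $\classv{\varphi}{x}_{\X'}$ (which equals $\classv{\varphi}{x}_\X$ as a set, since $X_0 = X_0'$) is stable under the $X_1'$-action: any $\theory$-model isomorphism $M \xrightarrow{\alpha} N$ in $X_1'$ preserves the interpretation of a parameter-free formula $\varphi$, so $\langle\vec{n},M\rangle \in \classv{\varphi}{x}_{\X'}$ forces $\langle\alpha(\vec{n}),N\rangle \in \classv{\varphi}{x}_{\X'}$. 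Since $\classv{\varphi}{x}_{\X'}$ is an $\X'$-stable subset of $\classv{\top}{x}_{\X'}$ containing $\class{\vec{x},\vec{m}:\psi}_{\X'} = \class{\vec{x},\vec{m}:\psi}_\X \subseteq \classv{\varphi}{x}_\X$, the minimality of the orbit (\cref{df:orbit}) gives $\overline{\class{\vec{x},\vec{m}:\psi}}_{\X'} \subseteq \classv{\varphi}{x}_{\X'}$. Conversely, the inclusion $X_1 \subseteq X_1'$ yields $\overline{\class{\vec{x},\vec{m}:\psi}}_{\X'} \supseteq \overline{\class{\vec{x},\vec{m}:\psi}}_\X = \classv{\varphi}{x}_\X = \classv{\varphi}{x}_{\X'}$, closing the loop.

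There is essentially no obstacle here; the only point to be slightly careful about is not to confuse the two orbit closures, and to notice that parameter-free definables are automatically stable under \emph{any} groupoid of $\theory$-model isomorphisms on the same set of objects, which is what makes the stability of $\classv{\varphi}{x}_{\X'}$ immediate without any further hypothesis on $\X'$.
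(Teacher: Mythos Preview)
Your proof is correct and follows essentially the same approach as the paper: both establish that $\classv{\varphi}{x}_{\X'} = \classv{\varphi}{x}_\X$ since $X_0 = X_0'$, obtain one inclusion from $X_1 \subseteq X_1'$, and obtain the other from the fact that parameter-free definables are stable under any $\theory$-model isomorphism. The only cosmetic difference is that you invoke the minimality characterisation of the orbit from \cref{df:orbit}, whereas the paper performs the equivalent element chase explicitly.
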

\begin{proof}
	We first note that, since $\X$ and $\X'$ contain as objects the same models given the same indexing by the parameters $\Index$, for a formula $\psi$ and a tuple of parameters $\vec{m}$, we have that $\class{\vec{x},\vec{m}:\psi}_\X = \class{\vec{x},\vec{m}:\psi}_{\X'}$.  We also conclude that $\X'$ is conservative since $\X$ is.
	
	Let $\class{\vec{x},\vec{m}:\psi}_\X$ be a definable with parameters.  Since $\X$ eliminates parameters, there is a formula $\varphi$ such that
	\[
	\overline{\class{\vec{x},\vec{m}:\psi}}_\X = \classv{\varphi}{x}_\X .
	\]
	We claim that $\overline{\class{\vec{x},\vec{m}:\psi}}_{\X'} = \classv{\varphi}{x}_{\X'}$ too.  One inclusion is immediate since
	\[
	\classv{\varphi}{x}_{\X'} = \classv{\varphi}{x}_{\X} = \overline{\class{\vec{x},\vec{m}:\psi}}_\X \subseteq \overline{\class{\vec{x},\vec{m}:\psi}}_{\X'}.
	\]
	For the converse inclusion, for each element $\lrangle{\vec{n},N} \in \overline{\class{\vec{x},\vec{m}:\psi}}_{\X'}$, there exists some model $M$ and $\vec{n}'$ such that $M \vDash \psi(\vec{n}',\vec{m})$ and a $\theory$-model isomorphism $M \xrightarrow{\alpha} N \in X'_1$ such that $\alpha(\vec{n}') = \vec{n}$.  Hence, $M \vDash \varphi(\vec{n}')$ since
	\[
	\lrangle{\vec{n}',M} \in \class{\vec{x},\vec{m}:\psi}_\X \subseteq \overline{\class{\vec{x},\vec{m}:\psi}}_\X = \classv{\varphi}{x}_\X,
	\]
	and so $N \vDash \varphi(\vec{n})$ too.
\end{proof}

Hence, we are immediately able to deduce the following.

\begin{coro}\label{coro:eqtoetalecompletion}
	If $\X = (X_1 \rightrightarrows X_0)$ is a representing model groupoid for $\theory$, then $\X$ is {Morita-equivalent} to its étale completion, i.e.\ there exists an indexing of the models $M \in X_0$ such that
	\[\Sh\left(\Xlog\right) \simeq \Sh\left(\hat{\X}_\taulogo^\tauloga\right).\]
\end{coro}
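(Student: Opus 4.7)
The plan is to deduce the corollary directly from Proposition \ref{prop:canaddisos}, together with the forward direction of the classification theorem. Since $\X$ is assumed to be a representing model groupoid for $\theory$, Proposition \ref{thm:reprimplieselimpara} supplies an indexing $\Index \paronto \X$ of the models $M \in X_0$ by a set of parameters $\Index$ with respect to which $\X$ is conservative and eliminates parameters. I will equip the étale completion $\hat{\X} = (\hat{X}_1 \rightrightarrows X_0)$ with this same indexing on each of its objects (which, by definition of $\hat{\X}$, agree with those of $\X$).

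Next, I observe that $\X$ embeds into $\hat{\X}$ as a wide subgroupoid: the two groupoids share the same object set $X_0$, and the arrow inclusion $X_1 \subseteq \hat{X}_1$ holds because every arrow of $\X$ is already a $\theory$-model isomorphism between elements of $X_0$. This is precisely the hypothesis of Proposition \ref{prop:canaddisos}, which I then apply to conclude that $\hat{\X}$, equipped with the inherited indexing by $\Index$, is also conservative and eliminates parameters. At this stage, the indexing claim in the statement of the corollary is fulfilled by this common indexing.

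To finish, I invoke Corollary \ref{coro:conservative_and_elimpara_implies_repr} separately for $\X$ and for $\hat{\X}$, which produces the two equivalences $\Sh(\Xlog) \simeq \topos_\theory$ and $\topos_\theory \simeq \Sh(\hat{\X}_\taulogo^\tauloga)$; composing them yields the claimed Morita equivalence.

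I anticipate no real obstacle. The only conceptual point is that, although enlarging the set of arrows from $X_1$ to $\hat{X}_1$ genuinely changes the groupoid and its logical topology on arrows $\tauloga$, neither the conservativity condition (which depends only on the set of objects and the theory) nor the elimination-of-parameters condition (whose closure under the $X_1$-action only grows as more arrows are added, while the target formula $\varphi$ is witnessed by the original $\X$) is disturbed by the enlargement; both of these are exactly what Proposition \ref{prop:canaddisos} verifies, so the corollary reduces to bookkeeping.
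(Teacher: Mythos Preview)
Your proposal is correct and follows exactly the approach the paper intends: the corollary is stated immediately after Proposition~\ref{prop:canaddisos} with the remark that it is an immediate consequence, and your write-up simply spells out the bookkeeping (obtain the indexing via \cref{thm:reprimplieselimpara}, apply \cref{prop:canaddisos} to the wide inclusion $\X \subseteq \hat{\X}$, then invoke \cref{coro:conservative_and_elimpara_implies_repr} twice).
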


\paragraph{The étale completion as the topological closure.}

The étale completion of a model groupoid can be calculated entirely topologically via an adaptation of \cite[Theorem 4.14]{hodges}.  Therein, it is demonstrated that, for a subgroup $G \subseteq {\bf Sym}(A)$ of the permutation group on a set $A$, the following are equivalent:
\begin{enumerate}
	\item the subgroup $G \subseteq {\bf Sym}(A)$ is a closed set, when $ {\bf Sym}(A)$ is endowed with the Krull topology (also called the pointwise convergence topology),
	
	\item the group $G$ is the automorphism group of the set $A$ when equipped with a $\Sigma$-structure, for some single-sorted signature $\Sigma$.
\end{enumerate}
We present how this result can be adapted to calculate the étale completion.

Let $\X = (X_1 \rightrightarrows X_0)$ be a model groupoid for a geometric theory $\theory$ over a signature $\Sigma$ with an indexing $\Index \paronto \X$ by a set of parameters $\Index$.  For each pair $M, N \in X_0$, we define the \emph{hom-space} $\Hom_{\X}(M,N)$ as the subspace
\[
\Hom_{\X}(M,N) = s^{-1}(M) \cap t^{-1}(N) \subseteq X_1^\tauloga.
\]
Equivalently, $\Hom_{\X}(M,N)$ is the set of isomorphisms $M \xrightarrow{\alpha} N \in X_1$ endowed with the topology generated by the basis
\[
\class{\vec{b} \mapsto \vec{c}}_{\X(M,N)} = \lrset{M \xrightarrow{\alpha} N }{\alpha(\vec{b}) = \vec{c}},
\]
for each pair of tuples of parameters $\vec{b},\vec{c} \in \Index$.

If we were to forget that the models $M$ and $N$ had $\Sigma$-structure, we could still construct a hom-space $\Iso[M,N]$ of \emph{all} isomorphisms between the underlying sets interpreting the sorts of $M, N$.  The space $\Iso[M,N]$ is endowed with the analogous topology generated by the basis
\[
\class{\vec{b} \mapsto \vec{c}}_{\Iso[M,N]} = \lrset{M \xrightarrow{\alpha} N }{\alpha(\vec{b}) = \vec{c}},
\]
for each pair of tuples of parameters $\vec{b},\vec{c} \in \Index$.  Evidently, $\Hom_\X(M,N)$ can be embedded as a subspace into $\Iso[M,N]$.

\begin{prop}[Theorem 4.14 \cite{hodges}]
	Suppose that $\X$ eliminates parameters.  For each pair $M,N \in X_0$, the hom-space $\Hom_{\hat{\X}}(M,N)$ in the étale completion $\hat{\X}$ is the topological closure of the subspace $\Hom_\X(M,N) \subseteq \Iso[M,N]$.
\end{prop}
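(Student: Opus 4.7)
The plan is to prove the stated equality by establishing both inclusions between $\Hom_{\hat{\X}}(M,N)$ and the topological closure of $\Hom_\X(M,N)$ in $\Iso[M,N]$. The easier direction $\overline{\Hom_\X(M,N)} \subseteq \Hom_{\hat{\X}}(M,N)$ follows from the fact that being a $\theory$-model isomorphism is a pointwise condition. For $\beta$ in the closure, any atomic formula $\psi$ over $\Sigma$, and any tuple of parameters $\vec{b}$ interpreted in $M$, the basic open $\class{\vec{b} \mapsto \beta(\vec{b})}_{\Iso[M,N]}$ meets $\Hom_\X(M,N)$, providing an $\alpha \in X_1$ with $\alpha(\vec{b}) = \beta(\vec{b})$. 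Since $\alpha$ preserves $\psi$, so does $\beta$; applying the same argument to $\beta^{-1}$ (which lies in $\overline{\Hom_\X(N,M)}$ via the inversion homeomorphism $X_1 \to X_1$) yields the reverse implication, so $\beta$ is a $\theory$-model isomorphism.

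The substantive direction is $\Hom_{\hat{\X}}(M,N) \subseteq \overline{\Hom_\X(M,N)}$. Fix a $\theory$-model isomorphism $\beta \colon M \to N$ and a basic open $\class{\vec{b} \mapsto \vec{c}}_{\Iso[M,N]}$ around $\beta$, so that $\beta(\vec{b}) = \vec{c}$ in $N$. I would apply elimination of parameters to the definable with parameters $\class{\vec{y}, \vec{b} : \vec{y} = \vec{b}}_\X$, yielding a geometric formula $\chi(\vec{y})$ without parameters such that
\[
\overline{\class{\vec{y}, \vec{b} : \vec{y} = \vec{b}}}_\X = \classv{\chi}{y}_\X.
\]
Since $\lrangle{\vec{b}, M}$ lies in the original definable, $M \vDash \chi(\vec{b})$; since $\beta$ is a $\theory$-model isomorphism, $N \vDash \chi(\vec{c})$, placing $\lrangle{\vec{c}, N}$ in the orbit. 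Unfolding the orbit definition produces an arrow $\alpha \in X_1$ with target $N$ mapping the interpretation of $\vec{b}$ in its source to $\vec{c}$, which lies in $\Hom_\X(M,N) \cap \class{\vec{b} \mapsto \vec{c}}_{\Iso[M,N]}$ and witnesses $\beta$ as a limit point.

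The hardest part of this proof is in the reverse inclusion: one must ensure that the witnessing arrow obtained from the orbit definition can be chosen to have source precisely $M$, rather than some other $M_0 \in X_0$ that happens to interpret $\vec{b}$. This is the key place where the groupoid structure of $\X$ must be exploited, in a manner parallel to the single-object permutation-group argument of \cite[Theorem 4.14]{hodges}; specifically, one tracks the $X_1$-orbit as generated from the canonical witness $\lrangle{\vec{b}, M}$ of the definable with parameters, and uses the ability to compose and invert arrows in $\X$ to transport any orbit witness back to an arrow emanating from $M$.
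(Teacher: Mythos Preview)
Your overall approach mirrors the paper's: for the easy direction you approximate an accumulation point by arrows in $X_1$ tuple-by-tuple, and for the hard direction you apply elimination of parameters to $\class{\vec{y}=\vec{b}}_\X$ to land $\langle\vec{c},N\rangle$ in the orbit and extract a witnessing arrow. The paper does exactly this, and in fact simply asserts that the witness $\gamma$ has source $M$.

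You are right to flag the source of the witnessing arrow as the crux, but your proposed resolution does not work. The definable $\class{\vec{y}=\vec{b}}_\X$ is \emph{not} generated from the single point $\langle\vec{b},M\rangle$: by definition it contains $\langle\vec{b},M'\rangle$ for \emph{every} $M'\in X_0$ in which the parameters $\vec{b}$ are interpreted. Consequently its $X_1$-orbit is the union of the orbits of all of these seeds, and a point $\langle\vec{c},N\rangle$ lying in the orbit only guarantees an arrow $\gamma\colon M_0\to N$ in $X_1$ with $\gamma(\vec{b})=\vec{c}$ for \emph{some} $M_0$ interpreting $\vec{b}$. There is no composition-and-inversion trick that transports this back to $M$, because nothing in the hypotheses provides any arrow of $X_1$ between $M$ and $M_0$.

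This is a genuine obstruction, not a technicality. Take $\theory$ to be the theory of a single constant with axiom $\top\vdash_x x=c$, let $X_0=\{M,N\}$ be two one-element models on disjoint underlying sets, let $X_1$ contain only identities, and index both models by a single shared parameter. Every definable with parameters is already $\classv{\top}{x}_\X$ or $\classv{\bot}{x}_\X$, so $\X$ eliminates parameters; yet $\Hom_\X(M,N)=\emptyset$ while $\Hom_{\hat\X}(M,N)$ is a singleton. Thus the inclusion $\Hom_{\hat\X}(M,N)\subseteq\overline{\Hom_\X(M,N)}$ fails, and no argument along the lines you (or the paper) sketch can succeed without an additional hypothesis linking models that share parameters via arrows of $X_1$.
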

\begin{proof}
	We must show that a point $M \xrightarrow{\alpha} N \in \Iso[M,N]$ is an accumulation point of $\Hom_\X(M,N)$ if and only if it is an isomorphism of $M$ and $N$ as $\Sigma$-structures.
	
	Fist, suppose that $\alpha$ preserves $\Sigma$-structure, and let $\class{\vec{b} \mapsto \vec{c}}_{\Iso[M,N]}$ be any basic open neighbourhood of $\alpha$.  Since $\X$ eliminates parameters, there exists a formula $\chi$ over $\Sigma$ such that
	\[
	\overline{\class{\vec{x} = \vec{b}}}_\X  = \classv{\chi}{x}_\X = \classv{\chi}{x}_{\hat{\X}}.
	\]
	The isomorphism $\alpha$ preserves the interpretation of $\chi$, and so $\alpha(\vec{b}) = \vec{c} \in \classv{\chi}{x}_{\hat{\X}} = \overline{\class{\vec{x} = \vec{b}}}_\X$.  Therefore, there exists an isomorphism $M \xrightarrow{\gamma} N \in \Hom_\X(M,N) \subseteq \Iso[M,N]$ such that $\gamma \in \class{\vec{b} \mapsto \vec{c}}_{\Iso[M,N]}$.  Hence, $\alpha$ is an accumulation point of $\Hom_{\X}(M,N)$.
	
	Conversely, if $\alpha$ is an accumulation point of $\Hom_\X(M,N)$, then for each tuple of parameters $\vec{m} \in \Index$, there is an isomorphism of $\Sigma$-structures $M \xrightarrow{\gamma} N$ such that $\alpha(\vec{m}) = \gamma(\vec{m})$.  Thus, since every tuple of elements of $M$ is the interpretation of some tuple of parameters, $\alpha$ preserves the $\Sigma$-structure.
\end{proof}



\subsection{Maximal groupoids}\label{sec:forssell}

We now turn to the topological groupoids considered in the works of Awodey, Butz, Forssell and Moerdijk \cite{BM,awodeyforssell,forssell} and demonstrate that these too fall within our general framework.  Let $\X$ be a model groupoid for a geometric theory $\theory$ indexed by a set of parameters $\Index$.  Recall from \cref{rem:df:conservandelimpara}\cref{enum:rem:df:elimpara:maxformula} that, for every tuple of parameters $\vec{m} \in \Index$, the formula in context $\left\{\vec{x}:\bigwedge_{ m_i = m_j} x_i = x_j\right\}$ can be thought of as a `universal upper bound' for elimination of parameters in that we always have an inclusion
\[
\overline{\class{\vec{x} = \vec{m}}}_\X \subseteq \lrclass{\vec{x}: \bigwedge_{ m_i = m_j} x_i = x_j}_\X.
\]
The particular model groupoids $\X$ considered in \cite{BM,awodeyforssell,forssell} can be considered to be \emph{maximal} in the sense that this inclusion is an equality
\[
\overline{\class{\vec{x} = \vec{m}}}_\X = \lrclass{\vec{x}: \bigwedge_{ m_i = m_j} x_i = x_j}_\X
\]
(this is observed, for instance, in \cite[Lemma 3.1.2.1]{forssellphd}).  The topological groupoids considered in \cite{BM,awodeyforssell,forssell} are also closely related to the original Joyal-Tierney representation result \cite[Theorem VIII.3.2]{JT} that every topos is the topos of sheaves on an open localic groupoid.  This is elaborated further in \cite[\S 4.4]{myselfandgraham} where it is shown that if the theory is countable, the constructions coincide.

We briefly motivate the use of what we call \emph{Forssell groupoids}.  If we were to equate open representing groupoids of a theory $\theory$ with those model groupoids from which all other models can be reconstructed, then intuitively the groupoid of all models contains sufficient information.  However, this is not a small groupoid.  We might imagine that is suffices to restrict to the groupoid of all models of some sufficiently large cardinality.  That this is the case is demonstrated in \cite{awodeyforssell,forssellphd,forssell}.

\begin{df}[\S 1.2 \cite{awodeyforssell}, \S 3.1 \cite{forssell}]
	Let $\theory$ be a geometric theory and let $\Indexset$ be an infinite set.  The \emph{Forssell groupoid} $\FG(\Indexset)$ is the étale complete groupoid of all models whose underlying sets are subquotients of $\Indexset$, i.e.\ the groupoid of all $\Indexset$-indexed models.
\end{df}

If $\theory$ is a geometric theory whose $\Indexset$-indexed models are conservative, then the groupoid $\FG(\Indexset)$ is an open representing groupoid.  Thus, by the classification in \cref{maintheorem}, we know that there exists an indexing of $\FG(\Indexset)$ for which the groupoid eliminates parameters.  Given the construction of $\FG(\Indexset)$, we would expect this to be the already present indexing by $\Indexset$.  Indeed, that $\FG(\Indexset)$ eliminates parameters for this indexing was shown in Lemma 3.4 \cite{forssell}.

A similar idea is pursued in the work of Butz and Moerdijk \cite{BM} through the use of \emph{enumerated models}.

\begin{dfs}[\S 2 \cite{BM}]
	Let $\theory$ be a geometric theory and let $\Indexset$ be an infinite set.
	\begin{enumerate}
		\item A $\theory$-model $M$ is said to be $\Indexset$-\emph{enumerated} if it is $\Indexset$-indexed and each element $n \in M$ is indexed by infinitely many parameters.
		\item The \emph{Butz-Moerdijk groupoid} $\BM(\Indexset)$ is the étale complete groupoid of all $\Indexset$-enumerated models.
	\end{enumerate}
\end{dfs}

We will show that both the Forssell groupoids of all $\Indexset$-indexed models studied in \cite{awodeyforssell,forssell,forssellphd} and the Butz-Moerdijk groupoids of all $\Indexset$-enumerated models of \cite{BM} fall within our framework via the following consequence of \cref{maintheorem}.

\begin{prop}\label{prop:maximalgrpds}
	Let $\X$ be an étale complete model groupoid for $\theory$ with an indexing by a set of parameters $\Index$ satisfying the following properties.
	\begin{enumerate}
		\item\label{enum:infinite_index} The indexing set $\Index$ is infinite.
		\item\label{enum:finite_reindexing} The set of models $X_0$ is \emph{closed under finite reindexing} -- by which we mean that for each $M \in X_0$ with an indexing $ \Index \paronto M$, then for any partial surjection $\Index \paronto \Index$ whose domain is a cofinite subset of $\Index$ and whose fibre at each $m \in \Index$ is finite, $X_0$ also contains an isomorphic model $M' \cong M$ whose indexing is given by the composite $\Index \paronto \Index \paronto M \cong M'$, i.e.\ we can add or remove finitely many parameters to the indexing of a model $M \in X_0$.

	\end{enumerate}
	Then the model groupoid $\X$ eliminates parameters.
\end{prop}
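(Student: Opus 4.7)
My plan is to verify the criterion from \cref{rem:df:conservandelimpara}\cref{enum:rem:df:conservandelimpara:sufficetochecktuple}: it suffices to show that for each tuple $\vec{m}$ of parameters there is a parameter-free formula $\chi$ with $\overline{\class{\vec{y}=\vec{m}}}_\X = \classv{\chi}{y}_\X$. The only candidate compatible with the universal upper bound of \cref{rem:df:conservandelimpara}\cref{enum:rem:df:elimpara:maxformula} is $\chi_{\vec{m}}(\vec{y}) \equiv \bigwedge_{m_i = m_j} y_i = y_j$, and the inclusion $\overline{\class{\vec{y}=\vec{m}}}_\X \subseteq \classv{\chi_{\vec{m}}}{y}_\X$ is automatic, so all the work lies in the reverse inclusion.

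For the reverse inclusion I would fix $\lrangle{\vec{n}, N} \in \classv{\chi_{\vec{m}}}{y}_\X$ and aim to produce an $M' \in X_0$, a suitable $\Index$-indexing of $M'$ interpreting $\vec{m}$, and an isomorphism $\alpha \colon M' \to N$ in $X_1$ with $\alpha(\vec{m}) = \vec{n}$. Let $\mu_1, \ldots, \mu_r$ be the distinct parameters occurring in $\vec{m}$, and $n^{(1)}, \ldots, n^{(r)}$ the corresponding distinct values of $\vec{n}$; since $N$ is $\Index$-indexed, each $n^{(l)}$ has some indexing parameter $\tilde{p}_l \in \Index$. Because $\Index$ is infinite, I can choose fresh parameters $q_1, \ldots, q_r$ disjoint from $\{\mu_l, \tilde{p}_l\}_l$, and define a total surjection $\sigma \colon \Index \to \Index$ by $\sigma(\mu_l) = \tilde{p}_l$, $\sigma(q_l) = \mu_l$ for $l = 1, \ldots, r$, and $\sigma(p) = p$ otherwise. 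Each fibre of $\sigma$ has size at most two, so $\sigma$ qualifies as a finite reindexing in the sense of hypothesis \cref{enum:finite_reindexing}.

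Closure under finite reindexing applied to $N$ along $\sigma$ then supplies the desired $M' \in X_0$ with an isomorphism $\alpha \colon M' \to N$ of $\theory$-models whose composed indexing is $\iota_N \circ \sigma$; tracing the definitions, $\mu_l$ is interpreted in $M'$ precisely as $\alpha^{-1}(n^{(l)})$, so $\alpha$ sends the interpretation of $\vec{m}$ in $M'$ to $\vec{n}$. Étale completeness of $\X$ then promotes $\alpha$ to a morphism of $X_1$, placing $\lrangle{\vec{n}, N}$ in $\overline{\class{\vec{y}=\vec{m}}}_\X$ as required. The only delicate point is the construction of $\sigma$: one must engineer surjectivity onto the displaced parameters $\mu_l$ without accidentally expelling the $\tilde{p}_l$ from the image, and the auxiliary $q_l$'s (available precisely because $\Index$ is infinite) perform this bookkeeping.
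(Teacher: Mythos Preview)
Your overall strategy matches the paper's exactly: reduce via \cref{rem:df:conservandelimpara}\cref{enum:rem:df:conservandelimpara:sufficetochecktuple} to the equality $\overline{\class{\vec{y}=\vec{m}}}_\X = \classv{\bigwedge_{m_i=m_j} y_i=y_j}{y}_\X$, get one inclusion from \cref{rem:df:conservandelimpara}\cref{enum:rem:df:elimpara:maxformula}, and for the other reindex $N$ along a map $\Index \paronto \Index$ taking $\vec{m}$ to parameters for $\vec{n}$, then invoke \'etale completeness.  The paper merely asserts that such a reindexing exists; you attempt an explicit construction, and that is where the gap lies.

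Your map $\sigma$ is \emph{not} a surjection: the auxiliary parameters $q_1,\dots,q_r$ do not lie in its image (each $q_l$ is mapped to $\mu_l$, the $q_l$ were chosen disjoint from $\{\mu_l,\tilde{p}_l\}$, and every other $p$ is fixed).  Hypothesis \cref{enum:finite_reindexing} requires a partial \emph{surjection} $\Index \paronto \Index$, so as written it cannot be invoked; your closing remark about ``engineering surjectivity'' handles the displaced $\mu_l$'s but overlooks that the very $q_l$'s introduced to hit them are themselves now displaced.  The repair is a Hilbert-hotel shift: pick a countably infinite family of fresh parameters $q_l = q_l^{(0)}, q_l^{(1)}, q_l^{(2)},\dots$ and set $\sigma\bigl(q_l^{(j+1)}\bigr)=q_l^{(j)}$, which makes $\sigma$ surjective while keeping all fibres of size at most two and the domain cofinite.  (A minor separate point: the values $n^{(1)},\dots,n^{(r)}$ need not be distinct, since $\chi_{\vec{m}}$ imposes no inequalities; this does not affect the argument but is a misclaim.)
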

\begin{proof}
	By \cref{rem:df:conservandelimpara}\cref{enum:rem:df:conservandelimpara:sufficetochecktuple}, it suffices to show that, for each tuple of parameters $\vec{m} \in \Index$, $\overline{\class{\vec{x}=\vec{m}}}_\X$ is definable without parameters.  We claim that, for each tuple $\vec{m} \in \Index$,
	\[
	\overline{\class{\vec{x}=\vec{m}}}_\X = \lrclass{\vec{x}:\bigwedge_{ m_i = m_j} x_i = x_j }_\X ,
	\]
	where the (finite) conjunction $\bigwedge_{ m_i = m_j} x_i = x_j$ ranges over the elements $m_i,m_j \in \vec{m}$ that are equal.  As observed in \cref{rem:df:conservandelimpara}\cref{enum:rem:df:elimpara:maxformula}, there is an evident inclusion 
	\[\overline{\class{\vec{x}=\vec{m}}}_\X \subseteq \lrclass{\vec{x}:\bigwedge_{ m_i = m_j} x_i = x_j }_\X,\]
	so it remains to demonstrate the reverse inclusion.  
	
	Suppose we are given an element
	\[\lrangle{\vec{n},M} \in \lrclass{\vec{x}:\bigwedge_{ m_i = m_j} x_i = x_j }_\X,\]
	and let $\vec{m}' \in \Index$ be a tuple of parameters that indexes $\vec{n}$ under the indexing $\Index \paronto M$. Since our indexing set is infinite by \cref{enum:infinite_index}, there exists some partial surjection $\Index \paronto \Index$ whose domain is cofinite, whose fibres are finite, and moreover sends the tuple $\vec{m}$ to $\vec{m}'$.  Thus, by hypothesis \cref{enum:finite_reindexing}, $X_0$ contains an isomorphic model $M' \cong M$ whose indexing is given by the composite $\Index \paronto \Index \paronto M \cong M'$, and so the tuple $\vec{n}' \in M'$ corresponding to $\vec{n} \in M$ under the isomorphism $M \cong M'$ is indexed by $\vec{m}$.  Thus, as $\X$ is étale complete, we obtain that $\lrangle{\vec{n},M} \in \overline{ \class{\vec{x} = \vec{m}}}_\X$ as required.
\end{proof}

\begin{coro}[Theorem 1.4.8 \cite{awodeyforssell}, Theorem 5.1 \cite{forssell}, \cite{BM}]
	Let $\theory$ be a geometric theory and let $\Indexset$ be an infinite set.
	\begin{enumerate}
		\item The $\Indexset$-indexed models of $\theory$ are conservative if and only if, by endowing $\FG(\Indexset)$ with the logical topologies we obtain a representing groupoid
		\[
		\Sh\left(\FG(\Indexset)^\tauloga_\taulogo\right) \simeq \topos_\theory.
		\]
		\item The $\Indexset$-enumerated models of $\theory$ are conservative if and only if, by endowing $\BM(\Indexset)$ with the logical topologies we obtain a representing groupoid
		\[
		\Sh\left(\BM(\Indexset)^\tauloga_\taulogo\right) \simeq \topos_\theory.
		\]
	\end{enumerate}
\end{coro}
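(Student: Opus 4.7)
The plan is to derive both parts of the corollary from Theorem \ref{maintheorem} via Proposition \ref{prop:maximalgrpds}. In each case, the forward (``if'') direction is immediate: if $\Sh(\X^\tauloga_\taulogo) \simeq \topos_\theory$ where $\X$ is either $\FG(\Indexset)$ or $\BM(\Indexset)$, then by Lemma \ref{lem:surj_iff_conservative} the set of objects $X_0$ is a conservative set of models, so the corresponding class ($\Indexset$-indexed, respectively $\Indexset$-enumerated) is conservative. For the reverse direction, conservativity of the class of models gives us half of condition (ii) of Theorem \ref{maintheorem}, and it remains only to show that $\FG(\Indexset)$ and $\BM(\Indexset)$ eliminate parameters when endowed with their natural $\Indexset$-indexings.

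Both $\FG(\Indexset)$ and $\BM(\Indexset)$ are étale complete by definition, so the strategy is to verify the two hypotheses of Proposition \ref{prop:maximalgrpds} in each case. The indexing set $\Indexset$ is infinite by assumption, so hypothesis \cref{enum:infinite_index} is trivial. The real work is verifying hypothesis \cref{enum:finite_reindexing}, closure under finite reindexing.

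For $\FG(\Indexset)$: given a model $M \in \FG(\Indexset)$ with indexing $\Indexset \paronto M$ and a partial surjection $f \colon \Indexset \paronto \Indexset$ with cofinite domain and finite fibres, the composite $\Indexset \xrightarrow{f} \Indexset \paronto M$ is again a partial surjection (its domain is a subset of a cofinite set, and it is surjective onto $M$ since $f$ has cofinite image covering almost all parameters that indexed $M$). Hence the reindexed model belongs to $\FG(\Indexset)$. For $\BM(\Indexset)$: the same composite is considered, and we must additionally check that each element of $M$ is still indexed by \emph{infinitely} many parameters under the new indexing. This is where we use that the fibres of $f$ are finite and $M$ was originally $\Indexset$-enumerated: an element $n \in M$ was originally indexed by an infinite set $S \subseteq \Indexset$ of parameters, and the preimage $f^{-1}(S)$ is infinite (the domain of $f$ is cofinite, so cofinitely many elements of $\Indexset$ lie over $S$, since the fibres over the finite set $\Indexset \setminus S$ are finite). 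Hence $n$ is still indexed by infinitely many parameters in the new indexing, and the reindexed model lies in $\BM(\Indexset)$.

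With both hypotheses of Proposition \ref{prop:maximalgrpds} verified, we conclude that $\FG(\Indexset)$ and $\BM(\Indexset)$ each eliminate parameters for their natural $\Indexset$-indexing. Combined with conservativity, Theorem \ref{maintheorem} then yields the desired equivalences $\Sh\left(\FG(\Indexset)^\tauloga_\taulogo\right) \simeq \topos_\theory$ and $\Sh\left(\BM(\Indexset)^\tauloga_\taulogo\right) \simeq \topos_\theory$. The only subtle point is the verification of finite reindexing for $\BM(\Indexset)$, where the ``infinitely many parameters per element'' condition must be preserved; this is really the only potential obstacle, and it is handled by the elementary observation that cofinite sets meet every infinite set in an infinite subset.
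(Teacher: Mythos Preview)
Your approach is essentially identical to the paper's: both invoke \cref{prop:maximalgrpds} and verify its hypotheses for $\FG(\Indexset)$ and $\BM(\Indexset)$, with the paper being terser and only spelling out the $\BM$ case.

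One small correction to your justification in the $\BM$ case: you write that $f^{-1}(S)$ is infinite ``since the fibres over the finite set $\Indexset \setminus S$ are finite'', but $\Indexset \setminus S$ need not be finite --- $S$ is merely an infinite subset of $\Indexset$, not a cofinite one. The conclusion is nonetheless correct, for a simpler reason: $f$ is a partial \emph{surjection} onto $\Indexset$, so $f$ restricted to $f^{-1}(S)$ surjects onto the infinite set $S$, and hence $f^{-1}(S)$ is infinite. (The finite-fibres hypothesis is not actually needed at this step; it is used in the proof of \cref{prop:maximalgrpds} itself, to ensure the reindexing map required there exists.) Your closing sentence about cofinite sets meeting infinite sets is a true fact but not the operative one here.
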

\begin{proof}
	The proof is simply to recognise that Forssell groupoids and Butz-Moerdijk groupoids satisfy the conditions of \cref{prop:maximalgrpds}.  We expand the details for Butz-Moerdijk groupoids.  By hypothesis, $\Indexset$ is infinite.  Since the set of parameters that index each $n \in M\in \BM(\Indexset)$ is infinite, we can change finitely many of the parameters and still end up with a $\Indexset$-enumerated model, and so $\BM(\Indexset)$ is also closed under finite reindexing.
\end{proof}

Using \cref{prop:maximalgrpds}, we can easily deduce that other similar indexed model groupoids are representing, e.g.\ the groupoid of all $\Indexset$-\emph{finitely indexed} models of a theory $\theory$ -- i.e.\ those models that are $\Indexset$-indexed and whose equivalence class of each $n \in M$ is finite.  Also using maximal groupoids, we are able to deduce a useful construction for positing the existence of representing model groupoids with certain structures present in the objects.

\begin{coro}\label{coro:fgallowsbiggerandbigger}
	Let $\theory$ be a geometric theory.  If $W$ is a conservative set of $\theory$-models, then there exists a groupoid $\X = (X_1 \rightrightarrows X_0)$ of $\theory$-models which represents $\theory$ for which every model $M \in X_0$ is isomorphic to some $M' \in W$.
\end{coro}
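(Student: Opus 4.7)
The plan is to build a variant of the Forssell groupoid where the allowed underlying models are restricted to those isomorphic to a member of $W$, and then verify that this restricted groupoid still satisfies the hypotheses of \cref{prop:maximalgrpds}. First, pick an infinite set $\Indexset$ large enough that every $M' \in W$ admits an indexing $\Indexset \paronto M'$; since $W$ is a set, such a $\Indexset$ exists (we may take $\Indexset$ to be any infinite cardinal of size at least $\sup_{M' \in W} |M'|$).

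Next, define the model groupoid $\X = (X_1 \rightrightarrows X_0)$ by letting $X_0$ consist of every $\Indexset$-indexed $\theory$-model $M$ such that $M \cong M'$ as $\theory$-models for some $M' \in W$, and letting $X_1$ consist of \emph{all} $\theory$-model isomorphisms between members of $X_0$, so that $\X$ is étale complete in the sense of \cref{df:etale-complete}. By construction, each $M \in X_0$ is isomorphic to some $M' \in W$, as required. The set $X_0$ is itself a set because it sits inside the set of $\Indexset$-indexed $\Sigma$-structures.

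We then verify the hypotheses of \cref{prop:maximalgrpds}. Condition~\cref{enum:infinite_index} is immediate since $\Indexset$ is infinite. For condition~\cref{enum:finite_reindexing}, suppose $M \in X_0$ and let $\Indexset \paronto \Indexset$ be a partial surjection with cofinite domain and finite fibres; composing with $\Indexset \paronto M$ gives a new indexing of an isomorphic copy $M'' \cong M$. Since $M \cong M'$ for some $M' \in W$, we also have $M'' \cong M'$, hence $M'' \in X_0$. Thus $X_0$ is closed under finite reindexing, and \cref{prop:maximalgrpds} yields that $\X$ eliminates parameters for its $\Indexset$-indexing.

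Finally, $\X$ is conservative: for every $M' \in W$, choose some indexing $\Indexset \paronto M'$, producing an element of $X_0$ isomorphic to $M'$; conservativity of $W$ then transfers to $X_0$ because satisfaction of geometric sequents is invariant under $\theory$-model isomorphism. Applying \cref{maintheorem} (equivalently \cref{coro:conservative_and_elimpara_implies_repr}), there exist topologies on $X_0$ and $X_1$ making $\X$ an open topological groupoid with $\Sh(\X) \simeq \topos_\theory$, completing the proof. The only mildly delicate point is the choice of $\Indexset$ ensuring that $W$ is actually realised inside $X_0$, but once the cardinal bound is made explicit there is no genuine obstacle; everything else is a direct reduction to the maximal-groupoid framework of \cref{prop:maximalgrpds}.
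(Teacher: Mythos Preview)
Your proof is correct and follows essentially the same approach as the paper: both construct the étale complete groupoid of all $\Indexset$-indexed models isomorphic to some member of $W$ and then invoke \cref{prop:maximalgrpds}. Your version is simply more explicit, spelling out the verification of closure under finite reindexing and the transfer of conservativity, whereas the paper leaves these checks to the reader.
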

\begin{proof}
	Let $\Index$ be an infinite indexing set for $W$, and let $\X = (X_1 \rightrightarrows X_0)$ be the étale complete model groupoid of all $\Index$-indexed models of $\theory$ that are isomorphic to some model contained in $W$.  By construction, $\X$ is a conservative groupoid, and $\X$ also eliminates parameters since it satisfies the hypotheses of \cref{prop:maximalgrpds}.
\end{proof}



\subsection{A theory classified by an indexed groupoid}\label{sec:theory_of_groupoid}

Let $\theory$ be a geometric theory over a signature $\Sigma$ with enough set-based models.  The methods of \cref{sec:forssell} ensure that we can always find a groupoid of $\Sigma$-structures, with an indexing by parameters $\Index$, for which the resultant open topological groupoid is a representing groupoid for the theory $\theory$.

In this section, we consider the converse problem: given a groupoid $\X$ of $\Sigma$-structures with an indexing $\Index \paronto \X$, what is a theory classified by the topos $\Sh(\Xlog)$ of sheaves on the resulting topological groupoid?  It arises that, in general, we cannot choose a theory over the same signature $\Sigma$.  Instead, we must choose a relational/localic extension.  This extends the correspondence between relational extensions and closed subgroups of the permutation group found in \cite[Theorem 4.14]{hodges} and discussed in \cref{sec:etalecomplete}.

\begin{df}\label{df:Sigma_of_index}
	Let $\Sigma$ be a signature, and let $\X$ be a groupoid of $\Sigma$-structures with an indexing $\Index \paronto \X$ (i.e.\ $\X$ is an indexed model groupoid for $\Etheory_\Sigma$, the empty theory over the signature $\Sigma$).  We denote by $\Sigma_{\Index \to \X}$ the relational extension of the signature $\Sigma$ which adds, for each tuple of parameters $\vec{m} \in \Index$, a relation symbol $R_{{\vec{m}}}$ of the same sort as $\vec{m}$.
\end{df}

The groupoid of $\Sigma$-structures $\X = (X_1 \rightrightarrows X_0)$ is automatically a groupoid of $\Sigma_{\Index \to \X}$-structures.  For each $\Sigma$-structure $M \in X_0$, we interpret $R_{{\vec{m}}}$ as the subset
\[
\lrset{\vec{n} \in M}{\lrangle{\vec{n},M} \in \overline{\class{\vec{x}=\vec{m}}}_\X}.
\]
The subset $\overline{\class{\vec{x}=\vec{m}}}_\X \subseteq \coprod_{M \in X_0} M^{\vec{m}}$ is, by definition, stable, and thus every isomorphism $M \xrightarrow{\alpha} N \in X_1$ preserves the interpretation of the relation $R_{{\vec{m}}}$.  Hence, $\alpha$ is also an isomorphism of $\Sigma_{\Index \to \X}$-structures.

\begin{df}\label{df:theory-of-indexed-groupoid}
	Let $\X$ be a groupoid of $\Sigma$-structures with an indexing $\Index \paronto \X$.  We denote by $\theory_{\Index \paronto \X}$ the \emph{theory of the indexed groupoid}.  It is the theory over the signature $\Sigma_{\Index \to \X}$ whose axioms are precisely those sequents $\varphi \vdash_{\vec{x}} \psi$ over $\Sigma_{\Index \to \X}$ which are satisfied in all structures $M \in X_0$ (once each $M \in X_0$ is interpreted as a $\Sigma_{\Index \paronto \X}$-structure).
\end{df}

\begin{coro}\label{coro:theory_for_grpd}
	Let $\Index \paronto \X$ be an indexed groupoid of $\Sigma$-structures.  There is an equivalence of toposes
	\[
	\Sh\left(\Xlog\right) \simeq \topos_{\theory_{\Index \paronto \X}}.
	\]
\end{coro}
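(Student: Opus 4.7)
The plan is to view $(\X, \Index \paronto \X)$ as a model groupoid for $\theory_{\Index \paronto \X}$ (after expanding each $M \in X_0$ to a $\Sigma_{\Index \to \X}$-structure via the interpretation $R_{\vec{m}}^M := \{\,\vec{n} \in M \mid \lrangle{\vec{n},M} \in \overline{\class{\vec{x}=\vec{m}}}_\X\,\}$ from \cref{df:Sigma_of_index}) and then apply \cref{coro:conservative_and_elimpara_implies_repr}. First I would check that this extension does give a functor $p \colon \X \to \theory_{\Index \paronto \X}\text{-}{\bf Mod}(\sets)$: functoriality reduces to the observation that every $\alpha \in X_1$ preserves each $R_{\vec{m}}$, which holds because $\overline{\class{\vec{x}=\vec{m}}}_\X$ is a stable subset of $\classv{\top}{x}_\X$; and the image of $p$ lies in $\theory_{\Index \paronto \X}\text{-}{\bf Mod}(\sets)$ by the very definition of the theory's axiom set.

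Next I would verify the two hypotheses of \cref{coro:conservative_and_elimpara_implies_repr}. Conservativity is tautological: if a pair of geometric $\Sigma_{\Index \to \X}$-formulas satisfies $\classv{\varphi}{x}_\X \subseteq \classv{\psi}{x}_\X$, then the sequent $\varphi \vdash_{\vec{x}} \psi$ is validated by every $M \in X_0$ and is therefore an axiom, hence a theorem, of $\theory_{\Index \paronto \X}$ by \cref{df:theory-of-indexed-groupoid}. Elimination of parameters is equally immediate, by design of the signature extension: for any tuple $\vec{m} \in \Index$, the parameter-free atomic formula $R_{\vec{m}}(\vec{x})$ defines exactly the orbit $\overline{\class{\vec{x}=\vec{m}}}_\X$, and by \cref{rem:df:conservandelimpara}\cref{enum:rem:df:conservandelimpara:sufficetochecktuple} it is enough to check elimination of parameters for these tuple-level definables.

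With both conditions verified, \cref{coro:conservative_and_elimpara_implies_repr} delivers the desired equivalence $\Sh(\Xlog) \simeq \topos_{\theory_{\Index \paronto \X}}$, where the logical topologies $\taulogo$ and $\tauloga$ are those generated by the geometric formulas of $\Sigma_{\Index \to \X}$. I do not anticipate any genuine obstacle: the relational extension $\Sigma_{\Index \to \X}$ was engineered precisely so that elimination of parameters would hold by definition, and the main theorem then bridges the gap between the indexed groupoid and a classifying topos for free. The only mild caveat, requiring a sentence of clarification rather than any real work, is that the notation $\Xlog$ in the statement must be read relative to the extended signature $\Sigma_{\Index \to \X}$ rather than $\Sigma$ itself; this is the natural reading once one recognises $(\X,p)$ as a model groupoid for a theory over $\Sigma_{\Index \to \X}$.
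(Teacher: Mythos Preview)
Your proposal is correct and follows essentially the same route as the paper: verify conservativity (tautological from \cref{df:theory-of-indexed-groupoid}) and elimination of parameters (immediate since $\overline{\class{\vec{x}=\vec{m}}}_\X = \classv{R_{\vec{m}}}{x}_\X$ by construction), then invoke the classification theorem. The paper cites \cref{maintheorem} and explicitly notes that \cref{lem:logicaltopsgiveopengrpd} gives openness of $\Xlog$, whereas you cite the sharper \cref{coro:conservative_and_elimpara_implies_repr} directly; these are interchangeable here, and your reading of $\Xlog$ as taken over $\Sigma_{\Index \to \X}$ agrees with the paper's intended meaning.
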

\begin{proof}
	By the definition of $\theory_{\Index \paronto \X}$, the groupoid $\X$ is a conservative groupoid for $\theory_{\Index \paronto \X}$.  Next, by the construction of the signature $\Sigma_{\Index \paronto \X}$, the groupoid $\X$ eliminates parameters as a groupoid of $\Sigma_{\Index \paronto \X}$-structures.  Explicitly, for each tuple of parameters, we have that
	\[
	\overline{\class{\vec{x} = \vec{m}}}_\X = \classv{R_{\vec{m}}}{x}_\X.
	\]
	Thus, when endowed with the logical topologies, $\Xlog$ is an open topological groupoid (by \cref{lem:logicaltopsgiveopengrpd}) and $\Sh(\Xlog)$ classifies $\theory_{\Index \paronto \X}$ by \cref{maintheorem}.
\end{proof}

\begin{ex}[The theory of a generic Dedekind section]\label{ex:theory_R+R}
	Let $\X$ be a groupoid of $\Sigma$-structures with an indexing $\Index \paronto \X$.  While the signature $\Sigma_{\Index \paronto \X}$ constructed in \cref{df:Sigma_of_index} ensures that $\X$ eliminates parameters over the signature $\Sigma_{\Index \paronto \X}$, we may however wish to refrain from adding too many symbols to our signature.
	
	Evidently, we do not need to add a new relation symbol $R_{\vec{m}}$ for every tuple of parameters $\vec{m} \in \Index$, but only those for which the orbit $\overline{\class{\vec{x}= \vec{m}}}_\X$ is not definable without parameters.  By making astute choices about how to expand the signature, we can minimise the number of new symbols we must add.

	Recall from \cref{ex:atomic_but_not_ultra} that $\R + \R$ is a model for the theory of dense linear orders without endpoints that is not ultrahomogeneous, and consequently the automorphism group $\Aut(\R+\R)$ does not eliminate parameters (where we have assumed that $\R + \R$ is trivially indexed).  We describe a theory classified by the topos $\B \Aut(\R + \R)$ using the above techniques.

	For $i = 1,2$, the automorphism group $\Aut(\R + \R)$ acts transitively on the subset $\{\,i\,\} \times \R \subseteq \R + \R$, i.e.\ for any $r \in \R$, $\overline{\class{x = (i,r)}}_{\Aut(\R + \R)} = \{\,i\,\} \times \R \subseteq \R + \R$, and so we are motivated to consider the localic extension of $\DLO$ by the addition of a pair of unary relation symbols $U_1$ and $U_2$, where these symbols are interpreted in the model $\R + \R$ as the subsets $\class{U_1(x)}_{\R + \R} = \{\,1\,\} \times \R, $ and $\class{U_2(x)}_{\R + \R} = \{\,2\,\} \times \R$.  The automorphism group $\Aut(\R + \R)$ eliminates parameters over this expanded signature.  Namely, we have that $\overline{\class{\vec{x} = \vec{m}}}_{\Aut(\R + \R)}$ is given by
	\[
	\lrclass{\vec{x}: 
		\bigwedge_{\substack{m_i , m_j \in \vec{m}, \\ m_i = m_j}}\!\!\! x_i = x_j \land 
		\!\!\!\bigwedge_{\substack{m_i , m_j \in \vec{m}, \\ m_i < m_j}}\!\!\! x_i < x_j \land 
		\!\!\!\bigwedge_{\substack{m_i\in \vec{m}, \\ m_i \in \{\,1\,\}\times \R}} \!\!\!U_1(x_i) \land 
		\!\!\!\bigwedge_{\substack{m_j\in \vec{m}, \\ m_j \in \{\,2\,\}\times \R}}\!\!\! U_2(x_j) 
	}_{\Aut(\R + \R)}.
	\]
	
	Thus, in a manner similar to \cref{coro:theory_for_grpd}, we deduce that the topos $\B \Aut(\R + \R)$ classifies the localic expansion of the theory $\DLO$ by two unary predicates whose axioms are those sequents satisfied in the model $\R + \R$, which we denote by $\theory_{\R + \R}$.  The sequents
	\begin{align*}
		U_1(x) \land U_2(x) & \vdash_x \bot, & x < y & \vdash_{x,y} U_1(x) \lor U_2(y), \\
		\top & \vdash_\emptyset \exists x \ U_1(x), & \top & \vdash_\emptyset \exists y \ U_2(y), \\
		U_1(x) \land y < x & \vdash_{x,y} U_1(y), & U_2(y) \land y < x & \vdash_{x,y} U_2(y), \\
		U_1(x) &\vdash_x \exists y \ U_1(y) \land x < y, & U_2(y) & \vdash_y \exists x \ U_2(x) \land x < y
	\end{align*}
	(in addition to those from $\DLO$) suffice to generate this theory.

	The theory $\theory_{\R + \R}$ can be likened to a \emph{theory of Dedekind sections} for an arbitrary dense linear order without endpoints\footnote{In \cite[\S 3.5]{vickerslocales}, Vickers describes, as a localic expansion of the theory of dense linear orders without endpoints $\DLO$, a theory of Dedekind sections on the rationals.  Tacitly, an interpretation of the rationals is fixed by introducing a constant symbol $c_q$ for each rational $q \in \Q$ and axioms
		\begin{enumerate}
			\item $\top \vdash c_p < c_q$, for each pair of rationals $p,q$ with $p < q$,
			\item and $\top \vdash_x \bigvee_{q \in \Q} x = c_q$.
		\end{enumerate}
		Consequently, there are no non-trivial isomorphisms of models.}.  Indeed, the rationals $\Q$ can be made into a model of $\theory_{\R + \R}$ with the interpretations
	\[
	\class{U_1(x)}_\Q = (-\infty,a) \text{ and } \class{U_2(x)}_\Q = (a,\infty)
	\]
	for any irrational $a$.  Of course, not every automorphism of $\Q$ as a linear order will preserve the further $U_1$ and $U_2$ structure.  In contrast, $\R$ does not admit an interpretation as a $\theory_{\R + \R}$-model.
\end{ex}



\section{A worked example: the theory of algebraic integers}\label{sec:algint}

We have seen in \cref{sec:atomic} that the examples of representing groups/groupoids considered in \cite{blassscedrov} and \cite{caramellogalois} can be subsumed by the classification result \cref{maintheorem}.  Similarly, in \cref{sec:forssell} we demonstrated that the `maximal' representing groupoids constructed in \cite{forssell,forssellphd,awodeyforssell,BM} also fall within the scope of \cref{maintheorem}.  Examples of representing groupoid that do not directly originate via the methods exposited therein have been given in \cref{ex:groupoids_for_decidables}\cref{ex:enum:alg_ext} and \cref{ex:notallisos}.

In this section, we study in further detail another representing groupoid that does not arise from the previous approaches found in the literature.  The theory we consider is the theory of \emph{algebraic integers}.

\paragraph{The theory of algebraic integers.}

For each prime $p$, the monic minimal polynomial of each element of the field $\overline{\Z / \langle{p}\rangle}$ has integer coefficients.  In this sense, the algebraic numbers and algebraic integers modulo $p$ coincide, and the field $\overline{\Z / \langle{p}\rangle}$ is a model for a theory of algebraic integers.  Along with the standard ring of algebraic integers $\overline{\Z}$, these rings can be axiomatised as follows.

\begin{df}
	We denote by $\AItheory$ the (geometric) theory of \emph{algebraic integers}.  It is the single-sorted theory over the signature of rings whose axioms are the following:
	\begin{enumerate}
		\item the standard axioms of a commutative ring,
		\item the sequent
		\[
		x \cdot y = 0 \vdash_{x,y} x= 0 \lor y = 0,
		\]
		expressing that any model is an integral domain,
		\item the sequent 
		\[
		\top \vdash_x \bigvee_{q \in \Z[x]_{\rm monic}} q(x),
		\]
		where $\Z[x]_{\rm monic}$ is the set of the \emph{monic} polynomials with integer coefficients, expressing that every element is in the \emph{integral closure} of the prime subring,
		\item and for each $n \in \N$, the sequent
		\[
		\top \vdash_{x_{n-1}, \dots , x_0} \exists y \ y^n + x_{n-1} y^{n-1} + \dots + x_0 ,
		\]
		expressing that the model is algebraically closed with respect to monic polynomials.
	\end{enumerate}
\end{df}

\begin{lem}\label{lem:models_of_ai}
	Every model of $\AItheory$ is isomorphic to $\overline{\Z}$ or $\overline{\Z / \langle p \rangle}$ for some prime $p$.  
\end{lem}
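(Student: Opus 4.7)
\textbf{Proof proposal for \cref{lem:models_of_ai}.} The plan is to split into the two possibilities for the characteristic of the integral domain $M$ and, in each case, show that the axioms force $M$ to be the expected algebraic closure.

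First, I would observe that since axiom (b) makes $M$ an integral domain, its characteristic is either $0$ or some prime $p$. Consider first the positive characteristic case. Given nonzero $\alpha \in M$, axiom (c) yields a monic $q \in \Z[x]$ with $q(\alpha) = 0$; reducing mod $p$ and factoring out the maximal power of $x$ gives $\bar q(x) = x^k \bar r(x)$ with $\bar r(0) \neq 0$. Since $M$ is an integral domain and $\alpha \neq 0$, it follows that $\bar r(\alpha) = 0$, and rearranging exhibits $\alpha$ as a unit. Thus $M$ is a field containing $\Z/\langle p \rangle$. Axiom (c) then says every element of $M$ is algebraic over $\Z/\langle p \rangle$, and axiom (d) says every monic (hence, after normalising, every) polynomial over $M$ has a root in $M$. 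Therefore $M$ is an algebraic closure of $\Z/\langle p \rangle$, and uniqueness of algebraic closures gives $M \cong \overline{\Z/\langle p \rangle}$.

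Next, for the characteristic zero case, I would form the field of fractions $K$ of $M$ and show $K \cong \overline{\Q}$. Every $\alpha \in K$ has the form $a/b$ with $a, b \in M$ integral over $\Z$ by axiom (c), so $K$ is algebraic over $\Q$. To see that $K$ is algebraically closed, given $p(x) = a_n x^n + \cdots + a_0 \in K[x]$ with $a_n \neq 0$, I would clear denominators to assume $a_i \in M$ and then form the monic polynomial
\[
q(y) = y^n + a_{n-1} y^{n-1} + a_n a_{n-2} y^{n-2} + \cdots + a_n^{n-1} a_0 \in M[y],
\]
which has a root $\beta \in M$ by axiom (d); then $\beta/a_n \in K$ is a root of $p$. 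Hence $K$ is algebraically closed and algebraic over $\Q$, so $K \cong \overline{\Q}$.

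Finally, I would identify $M$ inside $K$ with $\overline{\Z}$. The inclusion $M \subseteq \overline{\Z}$ is immediate from axiom (c). For the reverse inclusion, let $\beta \in \overline{\Z}$ with monic minimal polynomial $q \in \Z[x] \subseteq M[x]$. Here is the step that requires a little care: axiom (d) only produces \emph{some} root of $q$ in $M$, not necessarily $\beta$. I would handle this by iterating: having found $\alpha_1 \in M$ with $q(\alpha_1) = 0$, polynomial division gives $q(x) = (x - \alpha_1) r(x)$ with $r \in M[x]$ monic of strictly smaller degree; applying axiom (d) to $r$ and continuing inductively splits $q$ completely over $M$ as $q(x) = \prod (x - \alpha_i)$. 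Comparing this factorisation with the factorisation of $q$ over $\overline{\Q} \supseteq K \supseteq M$ and using uniqueness of factorisation in $\overline{\Q}[x]$, $\beta$ must coincide with some $\alpha_i$ and hence lies in $M$. This gives $M = \overline{\Z}$ and completes the proof. The only mildly subtle point in the argument is precisely this splitting step; everything else is a direct application of the axioms.
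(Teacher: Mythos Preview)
Your proof is correct and follows essentially the same strategy as the paper: split by characteristic, pass to the field of fractions, and use axioms (c) and (d) to obtain the two inclusions $M \subseteq \overline{\Z}$ and $\overline{\Z} \subseteq M$. The paper is terser---it simply embeds everything into $\overline{\Frac(R)}$ rather than proving (as you do, via the scaling trick) that $\Frac(M)$ is already $\overline{\Q}$, and it leaves the splitting argument for the reverse inclusion implicit---but the substance is the same.
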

\begin{proof}
	A model $R$ of $\AItheory$ is an integral domain, and it is isomorphic to $\overline{\Z}$ or $\overline{\Z / \langle p \rangle}$ for some prime $p$ depending on the characteristic of $R$.  We will show that if $R$ has characteristic $0$, then $R \cong \overline{\Z}$.  The proof in the case where $R$ has finite characteristic is almost identical.
	
	Since $R$ is an integral domain, we can form its field of fractions $\Frac(R)$ as well as the algebraic closure $\overline{\Frac(R)}$ of this field.  Subsequently, there exist isomorphic copies of $\Z, \overline{\Z}, \Q$ and $\overline{\Q}$ inside $\overline{\Frac(R)}$ along with the following inclusions of rings:
	\[
	\begin{tikzcd}[row sep=tiny]
		& R \ar[hook]{rr} && \Frac(R) \ar[hook]{rd} & \\
		\Z \ar[hook]{ru} \ar[hook]{rd} \ar[hook]{rr} && \Q \ar[hook]{ru} \ar[hook]{rd} && \overline{\Frac(R)}. \\
		& \overline{\Z} \ar[hook]{rr} && \overline{\Q} \ar[hook]{ru} &
	\end{tikzcd}
	\]
	Viewed as subrings of $\overline{\Frac(R)}$, the condition that $R$ is algebraic over its prime subring ensures that $R \subseteq \overline{\Z}$, while the condition that $R$ is algebraically closed with respect to monic polynomials ensures the converse inclusion.
\end{proof}

Since $\AItheory$ is not an atomic theory (for example, there is no single geometric formula that is provably equivalent to the infinite conjunction $\bigwedge_{p \text{ prime}} p \neq 0$), the classifying topos $\topos_\AItheory$ cannot be equivalent to a topos of sheaves on a simple disjoint coproduct of automorphism groups as in \cref{coro:booltopos}.  Instead, we must search further afield for a representing groupoid.

We note as well that the standard ring of algebraic integers $\overline{\Z}$ plays a special role amongst all models of the theory $\AItheory$.  It is not a conservative model -- this is clear since $\overline{\Z}$ satisfies the sequent
\[
\underbrace{1 + 1 + \dots + 1}_{p \text{ times}} = 0 \vdash \bot ,
\]
but $\AItheory$ has a model of cardinality $p$.  However, $\overline{\Z}$ does have the property that $\overline{\Z}$ satisfies a (geometric) sentence $\varphi$ if and only if $\theory$ proves the sequent $\top \vdash_\emptyset \varphi$.  For this reason, we will say that $\overline{\Z}$ is \emph{sentence-complete}.  Below, we construct a representing groupoid for $\AItheory$ where the fact that $\overline{\Z}$ is sentence-complete can be captured topologically, as seen in \cref{coro:algint_is_sent_complete}.

\paragraph{A representing groupoid for the theory of algebraic integers.}

Let $a \in \overline{\Z}$ be an algebraic integer, and let $q_a$ be its minimal polynomial.  For any ring homomorphism  $f \colon \overline{\Z} \to \overline{\Z / \langle{p}\rangle}$ for $p$ a prime, $f(a)$ is also a root of the polynomial $q_a$.  However, over $\Z / \langle p \rangle$, the polynomial $q_a$ may no longer be irreducible.  Suppose that $q_a$ factors into irreducible polynomials as $q_a^1 q_a^2 \dots q_a^k$ over $\Z / \langle p \rangle$.  Then $f(a) \in \overline{\Z / \langle{p}\rangle}$ has as its minimal polynomial $q^i_a$ for some $i$.

Assuming the axiom of choice, there exists a maximal ideal $M$ of $\overline{\Z}$ which contains both the prime $p$ and $q_a^i(a)$.  Taking the quotient ring yields a field $\overline{\Z}/M$ which is moreover an algebraic closure for its prime subfield $\Z / \langle p \rangle$.  Hence, by uniqueness of algebraic closures, we deduce the existence of a surjective ring homomorphism
\[
\begin{tikzcd}
	\pi_M \colon \overline{\Z} \ar[two heads]{r} & \overline{\Z}/M \cong \overline{\Z / \langle p \rangle}
\end{tikzcd}
\]
with the property that $\pi_M(a)$ has minimal polynomial $q_a^i$.

\begin{df}\label{df:repr_grpd_for_AI}
	Let $\AIform = (\AIform_1 \rightrightarrows \AIform_0)$ denote the étale complete model groupoid for $\AItheory$ whose underlying set of objects $\AIform_0$ is constructed as follows:
	\begin{enumerate}
		\item $\AIform_0$ contains one copy of the model $\overline{\Z}$;
		\item we add, for each prime $p$ and each maximal ideal $M \subseteq \overline{\Z}$ containing $p$, a copy of the model $\overline{\Z / \langle{p}\rangle}$.
	\end{enumerate}
	That is, $\AIform$ is the groupoid
	\[
	\Aut\left(\overline{\Z}\right) + \coprod_{p \text{ prime}} {\bf ConGrpd}\left(\left\{\, M \subseteq \overline{\Z} \text{ a maximal ideal containing } p \,\right\}, \Aut\left(\overline{\Z / \langle p \rangle}\right)\right),
	\]  
	where we use the notation ${\bf ConGrpd}(Y,G)$, for a set $Y$ and a group $G$, to denote the (unique) connected groupoid whose objects are $Y$ and, for each pair $y , y' \in Y$, $\Hom_{{\bf ConGrpd}(Y,G)}(y,y') =G$.
	
	We endow $\AIform$ with the indexing whose set of parameters are the algebraic integers $\overline{\Z}$.  The model $\overline{\Z}$ is given the trivial indexing of itself by its own elements.  Meanwhile, the indexing of the model $\overline{\Z / \langle p \rangle}$ corresponding to the maximal ideal $M \subseteq \overline{\Z}$ is determined by the surjective ring homomorphism $\pi_M \colon \overline{\Z} \twoheadrightarrow \overline{\Z}/M \cong \overline{\Z / \langle{p}\rangle}$.  To make this indexing explicit, we will not abuse notation (as we have done in the rest of the paper) and instead denote the interpretation of the parameter $a \in \overline{\Z}$ in a copy of $\overline{\Z / \langle p \rangle}$ by $\pi_M(a)$.
\end{df}

\begin{prop}\label{prop:AIform_is_repr}
	The groupoid $\AIform$, with the indicated indexing $\overline{\Z} \paronto {\AIform}$, is conservative and eliminates parameters.  Therefore, there is an equivalence of toposes
	\[
	\topos_\AItheory \simeq \Sh\left(\AIform^\tauloga_\taulogo\right).
	\]
\end{prop}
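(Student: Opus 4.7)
The plan is to verify the two hypotheses of \cref{maintheorem}, namely that $\AIform$, with the indicated indexing $\overline{\Z} \paronto \AIform$, is conservative and eliminates parameters. Conservativity is essentially immediate: by \cref{lem:models_of_ai}, every set-based model of $\AItheory$ is isomorphic to $\overline{\Z}$ or to $\overline{\Z/\langle p \rangle}$ for some prime $p$, and each of these appears as an object of $\AIform_0$. Provided $\AItheory$ has a conservative set of $\sets$-models (which will follow from the concrete analysis below), $\AIform_0$ is therefore also conservative.

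For elimination of parameters, given a tuple $\vec{a} \in \overline{\Z}^n$, I will exhibit a geometric formula $\varphi_{\vec{a}}(\vec{x})$ over the signature of rings such that $\overline{\class{\vec{x} = \vec{a}}}_\AIform = \classv{\varphi_{\vec{a}}}{x}_\AIform$. Consider the kernel $I_{\vec{a}}$ of the evaluation map $\Z[x_1, \ldots, x_n] \to \overline{\Z}$ sending $x_i \mapsto a_i$; by Hilbert's basis theorem, write $I_{\vec{a}} = (p_1, \ldots, p_k)$ and define
\[
\varphi_{\vec{a}}(\vec{x}) = \bigwedge_{j=1}^k p_j(\vec{x}) = 0.
\]
The inclusion $\overline{\class{\vec{x} = \vec{a}}}_\AIform \subseteq \classv{\varphi_{\vec{a}}}{x}_\AIform$ is routine: each projection $\pi_M$ and each arrow of $\AIform_1$ is a ring homomorphism and therefore preserves the polynomial identities $p_j(\vec{a}) = 0$.

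The reverse inclusion is the main obstacle, and I will treat the two kinds of objects of $\AIform_0$ separately. When $N = \overline{\Z}$, the interpretation of $\vec{a}$ in $N$ is $\vec{a}$ itself, so the orbit is $\Aut(\overline{\Z}) \cdot \vec{a}$. Here I will use that every ring automorphism of $\overline{\Z}$ fixes $\Z$ pointwise (as the prime subring) and extends uniquely to an automorphism of $\Frac(\overline{\Z}) = \overline{\Q}$ fixing $\Q$, giving $\Aut(\overline{\Z}) \cong \Aut(\overline{\Q}/\Q)$. The desired equality between $\Aut(\overline{\Z}) \cdot \vec{a}$ and the vanishing locus of $I_{\vec{a}}$ in $\overline{\Z}^n$ then follows from the standard Galois-theoretic fact that any $\Q$-algebra isomorphism $\Q[\vec{a}] \cong \Q[\vec{n}]$ between finite subextensions of $\overline{\Q}$ extends to an automorphism of $\overline{\Q}$. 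When $N$ is a copy of $\overline{\Z/\langle p\rangle}$ and $\vec{n} \in N^n$ satisfies $\varphi_{\vec{a}}$, the assignment $a_i \mapsto n_i$ defines a ring homomorphism $\psi_0 \colon \Z[\vec{a}] \to N$ whose kernel $\mathfrak{p}$ is a maximal ideal containing $p$ (since $\Z[\vec{a}]$ is a one-dimensional Noetherian domain and $p \in \mathfrak{p}$ is nonzero). By lying-over, using that $\overline{\Z}$ is integral over $\Z[\vec{a}]$, $\mathfrak{p}$ extends to a maximal ideal $M \subseteq \overline{\Z}$ with $M \cap \Z[\vec{a}] = \mathfrak{p}$; moreover, $\overline{\Z}/M$ is an algebraic closure of $\Z/\langle p \rangle$, since $\overline{\Z}$ is algebraically closed under monic polynomials. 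The maps $\psi_0$ and $\pi_M|_{\Z[\vec{a}]}$ both factor through $\Z[\vec{a}]/\mathfrak{p}$ as embeddings of a finite field into algebraic closures of $\Z/\langle p \rangle$, and therefore differ by an isomorphism $\alpha \colon N_M \to N$; this $\alpha$ lies in $\AIform_1$ by étale completeness and satisfies $\alpha(\pi_M(\vec{a})) = \vec{n}$, placing $\langle \vec{n}, N \rangle$ in the orbit of $\vec{a}$.

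Having verified both conservativity and elimination of parameters, the conclusion $\topos_\AItheory \simeq \Sh(\AIform^\tauloga_\taulogo)$ follows at once from \cref{maintheorem}.
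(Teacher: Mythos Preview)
Your proof is correct and follows the same overall strategy as the paper: verify conservativity via \cref{lem:models_of_ai}, then show elimination of parameters by exhibiting, for each tuple $\vec{a} \in \overline{\Z}$, a formula whose definable coincides with the orbit $\overline{\class{\vec{x}=\vec{a}}}_\AIform$, treating the characteristic $0$ and characteristic $p$ cases separately.

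The difference lies in the choice of eliminating formula and the mechanics of the characteristic~$p$ argument. The paper takes $\varphi_{\vec{a}}$ to be the conjunction of iterated minimal polynomials $q_{a_i}(x_1,\dots,x_i)=0$, where $q_{a_i}$ is minimal for $a_i$ over $\Z[a_1,\dots,a_{i-1}]$; for the characteristic~$p$ case it then builds the maximal ideal $M$ directly by Zorn's lemma on the ideal generated by $p$ together with lifts of the minimal polynomials of the target tuple $\vec{w}$. Your choice of $\varphi_{\vec{a}}$ as a finite set of generators for the full kernel $I_{\vec{a}}$ is less explicit but has the advantage that the existence of the ring homomorphism $\Z[\vec{a}]\to N$ is automatic, and the passage to a maximal ideal $M\subseteq\overline{\Z}$ is a clean application of lying-over rather than an ad hoc construction. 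The paper's formula, on the other hand, is more concrete and resonates with the ``minimal formula'' computations in \cref{exs:atomictheories}\cref{exs:enum:acf}. Both routes arrive at the same conclusion with comparable effort.
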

\begin{proof}
	By \cref{lem:models_of_ai}, the set of objects $\AIform_0$ is a conservative set of models.  By \cref{rem:df:conservandelimpara}\cref{enum:rem:df:conservandelimpara:sufficetochecktuple}, to show that $\AIform$ eliminates parameters it suffices to demonstrate that, for each tuple of parameters $\vec{a} \in \overline{\Z}$, the orbit $\overline{\class{\vec{x} = \vec{a}}}_\AIform$ is definable without parameters.  We claim that 
	\[
	\overline{\class{\vec{x} = \vec{a}}}_\AIform = \classv{ q_{a_1}(x_1) = 0 \land q_{a_2}(x_1,x_2) = 0 \land \dots \land q_{a_n}(x_1, \dots , x_n) = 0}{x}_\AIform,
	\]
	where $q_{a_i}(a_1, \dots , a_{i-1}, x_i)$ denotes a minimal polynomial of $a_i \in \vec{a}$ over $\Z[a_1, \dots , a_{i-1}]$.
	
	By definition, the tuple $\vec{a} \in \overline{\Z}$ satisfies the formula
	\[
	\overline{\Z} \vDash q_{a_1}(a_1) = 0 \land q_{a_2}(a_1,a_2) = 0 \land \dots \land q_{a_n}(a_1, \dots , a_n) = 0.
	\]
	Moreover, since the interpretation of the parameters $\vec{a}$ in any other model $\overline{\Z / \langle p \rangle} \in \AIform_0$ is determined by a ring homomorphism $\pi_M \colon \overline{\Z} \twoheadrightarrow \overline{\Z / \langle{p}\rangle}$, we conclude that
	\begin{align*}
		\overline{\Z/ \langle p \rangle} & \vDash \pi_M(q_{a_1}(a_1)) = 0 \land  \dots \land \pi_M(q_{a_n}(a_1, \dots , a_n)) = 0, \\
		& \vDash q_{a_1}(\pi_M(a_1)) = 0 \land  \dots \land q_{a_n}(\pi_M(a_1), \dots , \pi_M(a_n)) = 0.
	\end{align*}
	Consequently, there is an inclusion
	\[
	\class{\vec{x} = \vec{a}}_\AIform \subseteq \overline{\class{\vec{x} = \vec{a}}}_\AIform \subseteq \classv{ q_{a_1}(x_1) = 0 \land \dots \land q_{a_n}(x_1, \dots , x_n) = 0}{x}_\AIform .
	\]

	We turn to the converse inclusion.  Since the automorphism group $\Aut\left(\overline{\Z}\right)$ acts transitively on the set of solutions to an irreducible polynomial, we have that
	\[
	\classv{ q_{a_1}(x_1) = 0 \land  \dots \land q_{a_n}(x_1, \dots , x_n) = 0}{x}_\AIform \cap \overline{\Z} \subseteq \overline{\class{\vec{x} = \vec{a}}}_{\Aut\left(\overline{\Z}\right)} = \overline{\class{\vec{x} = \vec{a}}}_\AIform \cap \overline{\Z} .
	\]
	Now let $\vec{w} \in \overline{\Z / \langle p \rangle}$ be a tuple for which
	\[
	\overline{\Z/ \langle p \rangle} \vDash q_{a_1}(w_1) = 0 \land q_{a_2}(w_1,w_2) = 0 \land \dots \land q_{a_n}(w_1, \dots , w_n) = 0,
	\]
	and let $q_{w_i}(x_1,\dots, x_i)$ denote a minimal polynomial of $w_i$ over $\Z / \langle p \rangle [w_1, \dots , w_{i-1}]$.  By Zorn's lemma, we can extend the non-trivial ideal on $\overline{\Z}$ generated by the set $\left\{\,p , q_{w_1}(a_1), \dots , q_{w_n}(a_1, \dots , a_n)\,\right\}$ to a maximal ideal $M \subseteq \overline{\Z}$.  Hence, there is an isomorphic copy of $\overline{\Z / \langle{p}\rangle}$ in $\AIform_0$ in which a minimal polynomial of $\pi_M(a_i)$ over $\Z / \langle{p}\rangle[\pi_M(a_1), \dots , \pi_M(a_{i-1})]$ is given by $q_{w_i}$.  Thus, there exists an automorphism of the field extension $\overline{\Z / \langle p \rangle}$ sending $\pi_M(\vec{a})$ to $\vec{w}$, completing the proof of the converse inclusion
	\[
	\classv{ q_{a_1}(x_1) = 0 \land  \dots \land q_{a_n}(x_1, \dots , x_n) = 0}{x}_\AIform \subseteq \overline{\class{\vec{x} = \vec{a}}}_\AIform .
	\]
\end{proof}

\paragraph{Properties of the space of objects.}

We now describe the space of objects $\AIform_0^\taulogo$ in more detail.  Eventually, we will observe that $\AIform_0^\taulogo$ can be described, up to homeomorphism, in entirely topological terms.  Recall that a basic open of $\AIform_0^\taulogo$ is given by the interpretation of a sentence with parameters $\classv{\varphi}{m}_\AIform \subseteq \AIform_0$.  Recall also from \cref{rem:taulogo:atomic_sents_only} that it suffices to consider only the atomic sentences with parameters, which in the case of rings amounts to formulae of the form $q(\vec{a}) = 0$, for some tuple of parameters $\vec{a}$ and a polynomial $q$.

First, we note that the subset of $\AIform_0$ consisting only of models of the form $\overline{\Z / \langle p \rangle}$ is an open subset in $\taulogo$.  Namely, it is the subset
\[
\class{p=0}_\AIform \cong \lrset{M \subseteq \overline{\Z}}{p \in M, \,M \text{ a maximal ideal}} \subseteq \AIform_0.
\]

\begin{lem}
	For each prime $p$, the subspace $\lrclass{p = 0}_\AIform \cong \lrset{M \subseteq \overline{\Z}}{p \in M, \,M \text{ a maximal ideal}} $ of $\AIform_0^\taulogo$ is homeomorphic to the Cantor space $2^\N$.
\end{lem}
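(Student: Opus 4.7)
The plan is to identify the subspace $\class{p=0}_\AIform$ with a profinite space of compatible systems of prime ideals and then invoke the classical characterisation of the Cantor space as the unique (up to homeomorphism) non-empty, compact, metrisable, totally disconnected, perfect space.

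First I would unpack the subspace topology. By the construction of $\AIform$ and its indexing $\overline{\Z} \paronto \AIform$, the underlying set of $\class{p=0}_\AIform$ is the set of maximal ideals of $\overline{\Z}$ containing $p$. Using the fact that atomic sentences suffice to generate $\taulogo$ (\cref{rem:taulogo:atomic_sents_only}), and that any atomic ring-theoretic sentence with parameters from $\overline{\Z}$ reduces to $b = 0$ for some $b \in \overline{\Z}$, a basis of opens for the subspace is given by the sets $U_b := \{M : b \in M\}$ for $b \in \overline{\Z}$. I would moreover show that each $U_b$ is clopen, since its complement $\{M : b \notin M\}$ coincides with $\class{b : \exists y \, b \cdot y = 1}_\AIform \cap \class{p=0}_\AIform$, which is open in the logical topology because each quotient $\overline{\Z}/M \cong \overline{\mathbb{F}_p}$ is a field, so $b \notin M$ if and only if the image of $b$ is invertible.

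Next I would realise the set of maximal ideals of $\overline{\Z}$ above $p$ as the profinite limit $\varprojlim_L \mathrm{Max}_p(\mathcal{O}_L)$, indexed by the filtered system of finite Galois subextensions $L/\Q$ of $\overline{\Q}$, where $\mathrm{Max}_p(\mathcal{O}_L)$ denotes the finite set of maximal ideals of $\mathcal{O}_L$ above $p$. The bijection sends $M$ to the compatible family $(M \cap \mathcal{O}_L)_L$, with inverse $(\mathfrak{p}_L)_L \mapsto \bigcup_L \mathfrak{p}_L$. Comparing bases, $U_b$ for $b \in \mathcal{O}_L$ is the preimage of a subset of the finite set $\mathrm{Max}_p(\mathcal{O}_L)$ under the projection, and every basic open of the profinite limit arises in this way; hence this bijection is a homeomorphism with the profinite topology.

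Thus $\class{p=0}_\AIform$ is a non-empty profinite space, automatically compact, Hausdorff and totally disconnected, and second countable (hence metrisable) since $\overline{\Q}$ has only countably many finite Galois subextensions of $\Q$. The remaining and most delicate step is to verify that $\class{p=0}_\AIform$ has no isolated points, i.e.\ that for each finite Galois $L/\Q$ and each prime $\mathfrak{p}$ of $\mathcal{O}_L$ above $p$, there exists a finite Galois extension $L'/\Q$ containing $L$ in which $\mathfrak{p}$ has at least two primes above it. I would address this by taking $L' = L(\zeta_\ell)$ for a suitable rational prime $\ell \neq p$: since $L'/L$ is unramified at $\mathfrak{p}$, the number of primes above $\mathfrak{p}$ is $[L':L]$ divided by the order of $\mathrm{Frob}_\mathfrak{p}$, which acts on $\zeta_\ell$ by $\zeta_\ell \mapsto \zeta_\ell^{N(\mathfrak{p})}$. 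Selecting $\ell \geq 7$ to be a prime factor of $N(\mathfrak{p})^2 + N(\mathfrak{p}) + 1$ (which always exists by elementary analysis, and is automatically coprime to $p$) forces this order to divide $3$, while $[L':L]$ can be arranged to exceed $3$, yielding the required splitting and completing the proof.
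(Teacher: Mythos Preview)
Your approach differs substantially from the paper's. The paper embeds $\class{p=0}_\AIform$ into $2^{\overline{\Z}} \cong 2^\N$, verifies that the subspace topology inherited from $\AIform_0^{\taulogo}$ agrees with the one inherited from $2^{\overline{\Z}}$ (using essentially the same clopen observation you make, that $\{M : a \notin M\}$ corresponds to $\class{\exists y\ a\cdot y = 1}_\AIform$), checks that the image is closed, and then appeals to the statement that any uncountable closed subspace of the Cantor set is again a Cantor set. Your route---realising the space as the profinite limit $\varprojlim_L \mathrm{Max}_p(\mathcal{O}_L)$ and verifying Brouwer's four conditions directly---is more structural and makes explicit where the real content lies, namely in perfectness, whereas the paper packages this into a general topological claim together with uncountability.

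That said, your verification of perfectness has a gap. Having fixed $L$ and $\mathfrak{p}$ with $q = N(\mathfrak{p})$, you pick a prime $\ell \geq 7$ dividing $q^2+q+1$, forcing $\mathrm{ord}_\ell(q)\mid 3$; but the claim that ``$[L':L]$ can be arranged to exceed $3$'' is not justified. One has $[L(\zeta_\ell):L] = (\ell-1)/[L\cap\Q(\zeta_\ell):\Q]$, and nothing prevents $L$ from already containing $\zeta_\ell$ for \emph{every} prime $\ell$ dividing $q^2+q+1$, in which case $L'=L$. Since $q$ is determined by $(L,\mathfrak{p})$, there is no residual freedom in the choice of $\ell$. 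A clean repair: observe that $\mathrm{Gal}(\overline{\Q}/\Q)$ acts continuously and transitively on the profinite space $\class{p=0}_\AIform$, and that this space is infinite (for instance, the compositum of $k$ linearly disjoint quadratic fields in which $p$ splits already gives $2^k$ primes above $p$). An infinite compact Hausdorff space carrying a transitive action of a compact group is homogeneous and therefore has no isolated points.
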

\begin{proof}
	We first remark that the Cantor space $2^\N$ is homeomorphic to any uncountable closed subspace of itself (this is a consequence of Brouwer's characterisation of the Cantor space).  Thus, it suffices to show that $\class{p = 0}_\AIform$, an uncountable space, is homeomorphic to a closed subspace of $2^\N$.
	
	There is an evident inclusion
	\[
	\lrclass{p = 0}_\AIform \cong \lrset{M \subseteq \overline{\Z}}{p \in M, \,M \text{ a maximal ideal}} \subseteq 2^{\overline{\Z}} \cong 2^\N.
	\]
	We must first show that the induced topology on $\lrclass{p = 0}_\AIform$ as a subspace of $\AIform_0^\taulogo$ is the same as that induced as a subspace of $2^{\overline{\Z}}$.  The topology on $2^{\overline{\Z}}$ is generated by the basic opens $\lrset{M}{a \in M}$ and $\lrset{M}{a \not \in M}$, for each algebraic integer $a \in \overline{\Z}$.
	
	Under the bijection $\class{p=0}_\AIform \cong \lrset{M}{p \in M}$, the former corresponds to the open $\class{a = 0 }_\AIform \cap \class{ p = 0}_\AIform$.  To show that the latter $\lrset{M}{a \not \in M}$ is also open, we note that if $a \not \in M$, then $\pi_M(a) $ is a non-zero element of the field $\overline{\Z} / M \cong \overline{Z / \langle p \rangle}$ and hence invertible.  Thus, under the bijection $\class{p=0}_\AIform \cong \lrset{M}{p \in M}$, we have that $\class{\exists y \ a \cdot y = 1}_\AIform \cap \class{p = 0}_\AIform \cong \lrset{M}{a \not \in a}$.  Hence, the topology on $\class{p = 0}_\AIform$ induced as a subspace of $\AIform_0^\taulogo$ contains that as induced as a subspace of $2^{\overline{\Z}}$.  For the reverse inclusion of topologies, by \cref{rem:taulogo:atomic_sents_only} it suffices to note that
	\[
	\class{q(\vec{a}) = 0}_\AIform \cap \class{p = 0}_\AIform \cong \lrset{M}{q(\vec{a}) \in M}.
	\]
	Thus, $\class{p = 0}_\AIform$ is a subspace of $2^{\overline{\Z}}$.
	
	It remains to show that $\class{p=0}_\AIform$ is a closed subset of $2^{\overline{\Z}}$.  It is straightforward to demonstrate that the complement $2^{\overline{\Z}} \setminus \class{p=0}_\AIform$ is open once we recall that, in $\overline{\Z}$, the maximal ideals are precisely the non-zero prime ideals.  If $P$ fails any of the conditions to be a prime ideal containing $p$, it easy to find an open neighbourhood of $P$ that is contained entirely in the complement $2^{\overline{\Z}} \setminus \class{p = 0}_\AIform$.  As an example, if $P \subseteq \overline{\Z}$ contains the product $a \cdot b$ but neither $a$ nor $b$, i.e.\ $P$ is not \emph{prime}, then 
	\[
	\lrset{P \subseteq \overline{\Z}}{a \cdot b \in P} \cap \lrset{P \subseteq \overline{\Z}}{a \not\in P} \cap \lrset{P \subseteq \overline{\Z}}{ b \not  \in P} 
	\]
	is such an open neighbourhood of $P$.  Thus, $\class{p=0}_\AIform \subseteq 2^{\overline{\Z}}$ is closed, from which the result follows.
\end{proof}

Next, we deduce that the point $\overline{\Z} \in \AIform_0$ is a \emph{universal accumulation point} of $\AIform_0$, by which we mean that $\overline{\Z}$ is an accumulation point of every subset $S \subseteq \AIform_0 \setminus \{\,\overline{\Z}\,\}$, or rather -- the only open containing $\overline{\Z}$ is the whole space.  This is because if $\overline{\Z}$ is contained in a basic open $\class{q(\vec{a}) = 0}_\AIform$, i.e.\ $\Z \vDash q(\vec{a}) = 0$, then $\overline{\Z/ \langle p \rangle } \vDash q(\pi_M(\vec{a}))$ for each maximal ideal $M$ of $\overline{\Z}$ containing $p$.  Hence, each copy of $\overline{\Z / \langle p \rangle} \in \AIform_0$ is also contained in the open $\class{q(\vec{a}) = 0}_\AIform$, and thus $\class{q(\vec{a}) = 0}_\AIform = \AIform_0$.  As a consequence, we deduce the following.

\begin{coro}\label{coro:algint_is_sent_complete}
	The ring of algebraic integers $\overline{\Z}$ is a `sentence-complete' model of $\AItheory$.
\end{coro}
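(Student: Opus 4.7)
The plan is to deduce sentence-completeness of $\overline{\Z}$ from the fact, established immediately prior to the corollary, that $\overline{\Z}$ is a universal accumulation point of $\AIform_0^\taulogo$ — that is, every open subset of $\AIform_0^\taulogo$ containing $\overline{\Z}$ is the whole space. (A priori the author only verified this for basic opens $\class{q(\vec{a})=0}_\AIform$, but since an arbitrary open is a union of basic opens, any open containing $\overline{\Z}$ contains some basic open around $\overline{\Z}$, which already equals $\AIform_0$, so the property extends to all opens.)

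The first step is to translate the statement ``$\AItheory$ proves $\top \vdash_\emptyset \varphi$'' across the equivalence $\topos_\AItheory \simeq \Sh(\AIform^\tauloga_\taulogo)$ given by \cref{prop:AIform_is_repr}. Under this equivalence the interpretation $\form{\top}$ of truth in the empty context corresponds to the terminal object of $\Sh(\AIform^\tauloga_\taulogo)$, whose underlying space is $\AIform_0^\taulogo$ itself; and $\form{\varphi}$, for a sentence $\varphi$ in the empty context, corresponds to the subobject $\class{\varphi}_\AIform \subseteq \AIform_0^\taulogo$. By \cref{lem:subobjofsheaves}, subobjects of the terminal object of $\Sh(\AIform^\tauloga_\taulogo)$ are exactly the open subsets of $\AIform_0^\taulogo$ (every subset is automatically stable in the empty context, since the action is trivial on the terminal). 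Thus $\AItheory$ proves $\top \vdash_\emptyset \varphi$ iff $\class{\varphi}_\AIform = \AIform_0$, i.e.\ iff every model in $\AIform_0$ satisfies $\varphi$; this latter reformulation is also a direct consequence of the conservativity of $\AIform_0$.

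The second step is the actual argument. The ``only if'' direction is soundness: if $\AItheory$ proves $\top \vdash_\emptyset \varphi$ then $\overline{\Z} \vDash \varphi$ since $\overline{\Z}$ is a model of $\AItheory$. For the ``if'' direction, suppose $\overline{\Z} \vDash \varphi$. Then $\overline{\Z} \in \class{\varphi}_\AIform$, and $\class{\varphi}_\AIform$ is an open subset of $\AIform_0^\taulogo$ by the first step. Applying the universal accumulation point property yields $\class{\varphi}_\AIform = \AIform_0$, so every model in $\AIform_0$ satisfies $\varphi$, and by conservativity $\AItheory$ proves $\top \vdash_\emptyset \varphi$.

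There is no serious obstacle here; the work has already been done in constructing the representing groupoid $\AIform$ and pinning down the local structure of $\AIform_0^\taulogo$ around $\overline{\Z}$. The only subtle point is recognising that the universal accumulation property, stated for basic opens, automatically passes to all opens, and that subobjects of the terminal object in $\Sh(\AIform^\tauloga_\taulogo)$ are precisely arbitrary opens of $\AIform_0^\taulogo$ rather than just stable opens — which is why a topological property of the single point $\overline{\Z}$ suffices to control the whole subobject lattice of the terminal.
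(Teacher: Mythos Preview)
Your approach is essentially the paper's: the corollary is stated there without proof, as an immediate consequence of the universal accumulation point property combined with conservativity, exactly as in your ``second step''.

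One correction to your first step, though it does not affect the overall argument. The action on the terminal object of $\Sh(\AIform^\tauloga_\taulogo)$ is \emph{not} trivial: the terminal is $\AIform_0^\taulogo$ with the action $(M,\alpha) \mapsto t(\alpha)$, so a subobject is an open subset closed under the orbit relation. Since each $\class{p=0}_\AIform$ is a single orbit but is homeomorphic to Cantor space, there are many open subsets of $\AIform_0^\taulogo$ that are not stable, and hence subobjects of the terminal are a proper subclass of the opens. Fortunately you do not need this identification: as you yourself note, the equivalence ``$\AItheory$ proves $\top \vdash_\emptyset \varphi$ iff every model in $\AIform_0$ satisfies $\varphi$'' follows directly from conservativity, and the only fact about $\class{\varphi}_\AIform$ you use in the second step is that it is open in $\taulogo$, which is immediate from the definition of the logical topology. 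So the proof stands once the erroneous parenthetical is removed.
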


Thus, as expressed below, we are able to give a description of the space $\AIform_0$ devoid of any mention of algebraic structures from which it derives.

\begin{coro}
	The space of objects $\AIform_0^\taulogo$ is obtained by the addition of a universal accumulation point to the coproduct of countably many copies of the Cantor space.
\end{coro}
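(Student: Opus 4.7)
The plan is to assemble the corollary directly from the two preceding results: each $\class{p=0}_\AIform$ is homeomorphic to the Cantor space, and $\overline{\Z}$ is a universal accumulation point. The work consists of verifying that the pieces fit together in the claimed coproduct-plus-point fashion.

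First I would observe that, by \cref{lem:models_of_ai}, $\AIform_0 \setminus \{\,\overline{\Z}\,\}$ decomposes as the disjoint set-theoretic union $\bigsqcup_{p \text{ prime}} \class{p=0}_\AIform$, since each model other than $\overline{\Z}$ lies in exactly one $\class{p=0}_\AIform$ (its characteristic is a well-defined prime). There are countably many primes, hence countably many pieces.

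Next I would check that this decomposition realises $\AIform_0 \setminus \{\,\overline{\Z}\,\}$ as the topological coproduct of the Cantor spaces. Each $\class{p=0}_\AIform$ is open in $\AIform_0^\taulogo$ by construction, and its complement in $\AIform_0 \setminus \{\,\overline{\Z}\,\}$ is the union $\bigcup_{q \neq p} \class{q=0}_\AIform$ of opens, hence open, so each $\class{p=0}_\AIform$ is clopen in $\AIform_0 \setminus \{\,\overline{\Z}\,\}$. Combined with the open embedding $\class{p=0}_\AIform \hookrightarrow \AIform_0^\taulogo$, a standard check shows that the subspace topology on $\AIform_0 \setminus \{\,\overline{\Z}\,\}$ agrees with the coproduct topology induced from the Cantor spaces.

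Finally, the preceding remark (established just before \cref{coro:algint_is_sent_complete}) shows that every basic open $\classv{q(\vec{a})=0}{a}_\AIform$ of $\AIform_0^\taulogo$ containing $\overline{\Z}$ must be all of $\AIform_0$; hence the only open neighbourhood of $\overline{\Z}$ is the whole space. This is precisely the condition that $\AIform_0^\taulogo$ is obtained from $\AIform_0 \setminus \{\,\overline{\Z}\,\}$ by adjoining $\overline{\Z}$ as a universal accumulation point, giving the corollary. There is no substantial obstacle here; all the content has already been extracted in the two previous lemmas, and the only thing worth being careful about is the comparison of the subspace and coproduct topologies, which is routine once we know each $\class{p=0}_\AIform$ is clopen away from $\overline{\Z}$.
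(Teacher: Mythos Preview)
Your proposal is correct and matches the paper's approach: the paper states this corollary without proof, treating it as an immediate consequence of the preceding lemma (each $\class{p=0}_\AIform$ is a Cantor space) and the observation that $\overline{\Z}$ is a universal accumulation point. Your proof plan simply fills in the routine verification that the disjoint open pieces $\class{p=0}_\AIform$ assemble into a coproduct on the complement of $\overline{\Z}$, which is exactly the content the paper leaves implicit.
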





\section*{Acknowledgements}

This work was produced while a PhD student at the University of Insubria.  I am grateful to my PhD supervisor, Olivia Caramello, for her suggestion of the topic and her valuable support along the way.  I also acknowledge the financial support I received from the Insubria-Huawei studentship for the project ``Grothendieck toposes for information and computation''.



\end{document}